\numberwithin{figure}{section}
\numberwithin{equation}{section}
\DeclareFontFamily{U}{mathb}{\hyphenchar\font45}
\DeclareFontShape{U}{mathb}{m}{n}{
	<-6> mathb5 <6-7> mathb6 <7-8> mathb7
	<8-9> mathb8 <9-10> mathb9
	<10-12> mathb10 <12-> mathb12
}{}
\DeclareSymbolFont{mathb}{U}{mathb}{m}{n}
\DeclareMathSymbol{\llcurly}{\mathrel}{mathb}{"CE}
\DeclareMathSymbol{\ggcurly}{\mathrel}{mathb}{"CF}
\definecolor{Red}{cmyk}{0,1,1,0}
\definecolor{Blue}{cmyk}{1,1,0,0}
\theoremstyle{plain}
\newtheorem{maintheorem}{Theorem}
\newtheorem{maincorollary}{Corollary}
\newtheorem{theorem}{Theorem }[section]
\newtheorem{proposition}[theorem]{Proposition}
\newtheorem{lemma}[theorem]{Lemma}
\newtheorem{corollary}[theorem]{Corollary}
\theoremstyle{definition} \theoremstyle{remark}
\newtheorem{remark}[theorem]{Remark}
\newtheorem{example}[theorem]{Example}
\newtheorem{definition}[theorem]{Definition}
\newcommand{\ud}{\underline}
\newcommand{\al} {\alpha}
\newcommand{\de} {\delta}       
\newcommand{\vep}{\varepsilon}
      \newcommand{\La}{\Lambda}
\newcommand{\om} {{\underline{\omega}}}       
\newcommand{\Z}{\mathbb{Z}}
\newcommand{\N}{\mathbb{N}}
\newcommand{\R}{\mathbb{R}}
\newcommand{\supp}{\operatorname{supp}}
\newcommand{\topp}{\operatorname{top}}
\newcommand{\ov}{\overline}
\newcommand{\htop}{h_{\topp}}
\newcommand{\cI}{{\mathcal I}}
\newcommand{\cM}{\mathcal{M}}
\newcommand{\cG}{\mathcal{G}}
 \title[Lyapunov irregular points]{Lyapunov non-typical behavior for linear cocycles \\ through  the lens of semigroup actions}
 \date{June 2021}
\begin{document}

\author{Giovane Ferreira and Paulo Varandas}
\address{Giovane Ferreira, Departamento de Matem\'atica, Universidade Federal do Maranh\~ao\\
Av. dos Portugueses, 1966, Vila Bacanga, 65065-545 S\~ao Lu\'is, MA}
\email{giovane.ferreira@ufma.br}

\address{Paulo Varandas, Departamento de Matem\'atica, Universidade Federal da Bahia\\
Av. Ademar de Barros s/n, 40170-110 Salvador, Brazil \& CMUP - Universidade do Porto, Portugal}
\email{paulo.varandas@ufba.br}

\keywords{Random products of matrices, projective linear maps, strong projective accessibility, Lyapunov irregular behavior, irregular sets, semigroup actions, frequent hitting times, entropy}

\maketitle
 \begin{abstract} 
 The celebrated Oseledets theorem \cite{O}, building over seminal works of Furstenberg and Kesten on random products of matrices and random variables taking values on  non-compact semisimple Lie groups \cite{FK,Furstenberg}, ensures that the Lyapunov exponents of $\mathrm{SL}(d,\mathbb R)$-cocycles $(d\geqslant 2)$ over the shift 
are well defined for all points in 
 a total probability set, ie, a full measure subset for all invariant probabilities. Given a locally constant $\mathrm{SL}(d,\mathbb R)$-valued cocycle we are interested 
 both in the set of points on the shift space for which some Lyapunov exponent is not well defined, and in the set of directions on the projective space 
 $\mathbf P \mathbb R^d$ along which there exists no well defined exponential growth rate of vectors for a certain product of matrices.  
We prove that if the semigroup generated by finitely many matrices in $\mathrm{SL}(d,\mathbb R)$ is not compact and is strongly projectively accessible then 
there exists a dense set of directions in $\mathbf P \mathbb R^d$ along which the Lyapunov exponent of a typical product of such matrices is not well defined. 
As a consequence, we deduce that the presence of Lyapunov non-typical behavior is prevalent among $SL(3,\mathbb R)$-valued hyperbolic cocycles. 
These results arise as a consequence of the more general description of the set of non-typical points for Ces\`aro  averages of continuous observables 
and infinite paths in finitely generated semigroup actions by homeomorphisms on a compact metric space $X$ under a mild hitting times condition.

\end{abstract}

\tableofcontents

\section{Introduction} 

\subsection{Linear cocycles} 

Important results of Furstenberg, Kesten, Oseledets, Kingman, Ledrappier, Guivarc'h, 
Raugi, Margulis and other mathematicians, initiated in the sixties built 
the study of Lyapunov characteristic exponents into a very active research field in its own right,
with a vast array of interactions with other areas of Mathematics and Physics. 
We refer the reader to the monographs by Duarte, Klein and Viana \cite{DK, VianaB} and references therein
for a some historical remarks and description of the state of the art.

\smallskip
Lyapunov exponents have been studied first in the context of random products of matrices \cite{FK, Furstenberg}. Here 
consider linear cocycles where the iteration of the matrices is determined by typical orbits of an invariant probability measure for a deterministic map. 
Let us recall these notions.
Let $(M,\mu)$ be a probability space, $f \colon M \to M$ a measure-preserving automorphism and $A \colon M \to \mathrm{SL}(d,\mathbb{R})$ a measurable matrix-valued map.
The pair $(A,f)$ is called a \emph{linear cocycle} and, for each $n\geqslant 1$ set
\begin{equation}\label{e.product}
A^{(n)}(x) := A(f^{n-1}(x)) \cdots A(f(x)) A(x)
\quad\text{and}\quad
A^{(-n)}(x)=(A^{(n)}(f^{-n}(x)))^{-1}.
\end{equation}
The dynamics of the linear cocycle can be described by the skew-product
\begin{equation}\label{eq:skewF}
\begin{array}{cccc}
F_A \colon  & M\;\times\;\mathbb R^{d} & \longrightarrow  & M\;\times\;\mathbb R^{d} \\
& (x,v) & \mapsto  & (f(x), A(x)\cdot v),
\end{array}
\end{equation}
where the joint base and fiber dynamics is given by $F^n_A(x,v)=(f^n(x),A^{(n)}(x)v)$ for all $(x,v)\in M\times\mathbb R^{d}$
and every $n\geqslant 1$.
Under the integrability assumption $\log \|A^{\pm1}\| \in L^1(\mu)$, the Oseledets' theorem~\cite{O}  ensures that 
for $\mu$-a.e. $x\in M$ there exist $1\leqslant k(x) \leqslant d$, a splitting 
$
\{x\} \times \mathbb R^d = E^1_x \oplus E^2_x\oplus \dots \oplus E_{x}^{k(x)} \;
$ 
(called the \emph{Oseledets splitting}) varying measurably with $x$ and so that $A(x)E^i_x=E^i_{f(x)}$ for every $1\leqslant i \leqslant k(x)$,
and real numbers $\lambda_1(A,f,x) > \lambda_2(A,f,x)  > \dots > \lambda_{k(x)}(A,f,x)$ (called the \emph{Lyapunov exponents}
associated to $f$, $A$ and $x$) 
such that
\begin{equation}\label{e.top_LE_i}
\lambda_i(A,f,x)= \lim_{n\to\pm \infty} \frac1n \log \|A^{(n)}(x) v_i\|,
	\quad \forall v_i \in E^i_x\setminus \{\vec0\}.
\end{equation}
If, in addition, $\mu$ is ergodic then both the number and the value of 
the Lyapunov exponents are almost everywhere constant, and the Lyapunov exponents are denoted simply by  $\lambda_i(A,f,\mu)$.

Moreover, the {largest and smallest Lyapunov exponents} of the cocycle with respect to 
an ergodic probability $\mu$ can be expressed, using Kingman's subadditive ergodic theorem, 
by

$$
\lambda_+(A,\mu) := \lim_{n \to \infty} \frac{1}{n} \log \big\| A^{(n)}(x) \big\|
	\quad\text{and}\quad
	\lambda_-(A,\mu) := \lim_{n \to \infty} \frac{1}{n} \log \big\| A^{(n)}(x)^{-1} \big\|^{-1},
$$
respectively (see also the previous work by Furstenberg and Kesten \cite{FK} on the products of random matrices).
 
The projectivization $P_A$ of the skew-product $F_A$
is the map
\begin{equation}\label{eq:skewP}
\begin{array}{cccc}
P_A \colon  & M\;\times\; \mathbf P \mathbb R^d & \longrightarrow  & M\;\times\;  \mathbf P \mathbb R^d \\
& (x,[v])\;\; & \mapsto  & \Big(f(x), \big[\frac{A(x)\cdot v}{\|A(x)\cdot v\|}\big]\Big)
\end{array}
\end{equation}
acting on the compact metric space $M\;\times\; \mathbf P \mathbb R^d$. 
We denote by $\pi: M\times \mathbf P \mathbb R^d \to M$ the projection on the first coordinate. Clearly
the fibers $\pi^{-1}(\{x\})= \mathbf P \mathbb R^d$ are compact for every $x\in M$. 
The random products of projectivized matrices can be traced to the work of Furstenberg~\cite{Furstenberg} on random
products of matrices. In other words, Furstenberg considered the case when $f$ coincides with the full shift $\sigma: \Sigma_\kappa \to \Sigma_\kappa$
on $\kappa$ symbols ($\kappa\geqslant 2$) and $\mu=\nu^{\mathbb Z}$ is a Bernoulli probability measure on $\Sigma_\kappa$. 
  
  \medskip

In view of Oseledets theorem, the set of points for which the Lyapunov exponents are well defined, often referred as the set of \emph{Lyapunov typical points}, 
form a total probability set in the ambient space. Moreover, 
if $x\in M$ is a Lyapunov typical point for the linear cocycle $A$ it is clear from the invariance of the Oseledets splitting together with the existence of the limits
in ~\eqref{e.top_LE_i}, that for every $v \in \mathbb R^d\setminus\{\vec 0\}$ the limit
$$
\lim_{n\to+ \infty} \frac1n \log \|A^{(n)}(x) v\|
$$
is well defined:  it is equal to $\lambda_{k(x)}(A,f,x)$ whenever $v\in E^{k(x)}_x$ and, otherwise, it coincides with 
$\lambda_{i-1}(A,f,x)$ where $2\leqslant i \leqslant k(x)$ is the 
largest integer so that $v\in \mathbb R^d \setminus (E^{k(x)}_x\oplus E^{k(x)-1}_x\oplus \dots \oplus E^{i}_x)$.
In the late eighties Ma\~n\'e suggested that the set of points for which the Lyapunov exponents are not well defined 
should to be prevalent in the Baire topological sense.
This idea should be tested in the following two distinct contexts, eventually with completely different techniques:
\begin{itemize}
\item[$\circ$] $d=1$: the cocycle $A: M \to \mathbb R$ takes values on the abelian group $(\mathbb R,+)$;
\item[$\circ$] $d\geqslant 2$: the cocycle $A: M \to \mathrm{SL}(d,\mathbb{R})$ takes values on the non-abelian group $(\mathrm{SL}(d,\mathbb{R}),\circ)$.
\end{itemize}

\medskip
In the first context, of real valued observables,  the Lyapunov non-typical points (also known as irregular points) 
of a smooth map $f: \mathbf S^1 \to \mathbf S^1$ are exactly those points which are non-typical for the 
Birkhoff averages of $A(x)=\log |f'(x)|$ ($x\in M$) provided that $f$ is smooth.  
Ma\~n\'e's perception could be formalized in rather strong ways. First, in the context of $C^{1+\alpha}$-uniformly expanding circle maps,
Barreira and Schmelling~\cite{Barreira} proved that although irregular points have zero measure for every invariant measure, these may have full topological entropy and even full Hausdorff dimension.
Here, the key mechanism to construct points with non-typical behavior is the notion of specification introduced by Bowen in ~\cite{Bo71}.
This turned out to be a useful tool to develop a multifractal analysis of Lyapunov exponents and to prove that irregular sets are either empty
or  Baire generic, have full Hausdorff dimension, topological entropy or mean dimension (cf. \cite{AP19,Barreira1,BV14,LV,ZC13} and references therein).     
Actually, while part of these results can be extended for topological dynamical
systems having the specification property shadowing or the gluing orbit properties (see e.g. \cite{DOT,LV} and references therein),
these properties fail for large classes of partially hyperbolic diffeomorphisms 
\cite{BTV,SVY}. 
Nevertheless, the set points with non-convergent Birkhoff averages form a Baire residual subset of the ambient space
under the mild assumption that there are dense subsets of points with distinct largest possible Birkhoff averages (see \cite{CV19} for the precise statement).
This is a very mild assumption which, in particular, seems not enough to determine whether the set of non-typical points carry large entropy. 

\medskip
In the second context, of linear cocycles taking values on non-abelian groups, the situation is subtler and fewe results are
known both on the level sets of Lyapunov exponents and the set of Lyapunov non-typical points. 
Inspired by Herman~\cite{Herman}, Furman~\cite{Fu} proved
that, even though irrational rotations are minimal and uniquely ergodic, some cocycles over irrational rotations have a 
residual subset of Lyapunov irregular points. More recently, Tian~\cite{Tian, Tian2} proved that H\"older cocycles over homeomorphisms with an exponential specification property either have the same top Lyapunov exponent or the set of Lyapunov irregular points is Baire generic and carries full topological entropy. In the aforementioned results, the use of  
the exponential specification property - which holds for uniformly expanding dynamics - is used to obtain bounded distortion properties as in the work of Kalinin ~\cite{Kalinin}, and the author asks whether the specification 
property could be enough to prove the theorem. In the complementary direction, 
the level sets of Lyapunov exponents of certain $\mathrm{SL}(2,\mathbb R)$-valued cocycles have been described in terms of 
the Legendre-Fenchel transform of a certain pressure function (cf ~\cite{DGR}).

\medskip
Here we consider cocycles satisfying a strong projective accessibility condition (see Definition~\ref{def:access}),
a condition which can be expressed more generally in terms of the semigroup generated by the projective linear maps
(cf. Definition~\ref{def:transition} and Remark~\ref{rmk:def.hitting}). 
We will not require any fiber bunching condition on the cocycle  (see e.g. \cite{Va20} for the definition), 
hence the projectivized cocycle $P_A$ need not be partially hyperbolic. Moreover, even if this was the case, 
a typical fiber-bunched cocycle $A$ generates a partially hyperbolic skew-product
$P_A$ which fails to satisfy the specification, the gluing orbit property and their nonuniform versions  (see \cite{BTV2,SVY} and Example~\ref{MS}). 
Inspired by \cite{Furstenberg}, in Theorem~\ref{thm:abstract} we prove that if the group generated by a finite number of invertible matrices 
is non-compact and is strongly projectively accessible then there exist a Baire generic set of points in the shift space for which 
Lyapunov exponents are not well defined on a dense set of directions in $\mathbf P\mathbb R^d$. 
We use this fact to conclude that, in the special context of projective actions for $SL(3,\mathbb R)$-valued hyperbolic cocycles, the presence of non-typical behavior is prevalent (see Theorem~\ref{thm:rigidity} for the precise statement).

\subsection{Semigroup actions} 

Group, semigroup and pseudo-group actions have been of increasing interest in dynamical systems. Apart from its genuine interest, 
in which some aspects of the group action are determined by the algebraic structure of the underlying group (e.g. existence of invariant measures),
such objects  appear as well 
in the description of classical dynamical systems. For instance, these are used to provide a notion of entropy for invariant foliations and
to model the central dynamics for certain partially hyperbolic diffeomorphisms  
(see e.g. \cite{DER,GLW, HN} and references therein).

\medskip
In the last three decades, there has been an intense study about pointwise ergodic theorems for actions of amenable, Lie or free groups, and we refer the reader to 
 \cite{BN,Buf,Grig,Lindenstrauss,Nevo,Templeman}, just to mention a few contributions. Again, the situation is much subtler than the case of 
 $\mathbb Z$-actions. In order to illustrate that, we note that, in the case of free groups, the spherical averages of $L^p$-observables $(p>1)$ converge almost everywhere and in $L^1$ ~\cite{Buf,NevoStein} but this is no longer true for general $L^1$-observable \cite{Tao}. In the case of measure preserving 
 Markov semigroups an extra Ces\`aro averaging is enough to guarantee the convergence of spherical averages (see \cite{BKK}).
It seem to exist almost no results concerning the set of points presenting irregular behavior for such pointwise ergodic theorems, an exception made to the case of 
amenable group actions satisfying a suitable specification property (cf. \cite{RTZ}). 
However, as the specification property turns is rare even among $\mathbb Z$ or $\mathbb R$-actions (cf. \cite{SVY,SVY2}) it is natural to look for other 
mechanisms which can be used as a tool to construct points with non-typical behavior.

\medskip
Here we focus on the case of semigroup actions acting by bi-Lipschitz homeomorphisms on a compact metric space $X$. Let $G_1=\{id,f_1, f_2, \dots, f_\kappa\}$
denote the generators of the semigroup $G$.
 For that reason, 
inspired by the context of random products of matrices, for each continuous map $\psi: X\to \mathbb R$ we consider alternatively the problem of describing 
both the set of infinite paths on the semigroup $G$ (determined by an infinite word $\om\in\{1,2,\dots, \kappa\}^{\mathbb N}$ 
and the homeomorphisms $g_{\om_n} \circ \dots \circ g_{\om_2}\circ g_{\om_1}\in G$)
and points $x\in X$ for which the Ces\`aro averages
\begin{equation}\label{eq:groupaverage}
\frac1n \sum_{j=0}^{n-1} \psi(\,g_{\om_j} \circ \dots \circ g_{\om_2}\circ g_{\om_1}(x)\,)
\end{equation}
diverge as $n\to\infty$. 
Under a mild assumption on the semigroup action, concerning a quantitative estimate on the hitting times to balls  
of fixed size - which we refer to as frequent hitting times (cf. Definition~\ref{def:transition}) - 
we have the following goals: given a continuous observable $\psi: X\to \mathbb R$
\begin{itemize}
\item[$\circ$] describe the set of points $x\in X$ for which there exist $\om\in\{1,2,\dots, \kappa\}^{\mathbb N}$ which determines a concatenation 
	of elements in the semigroup along which \eqref{eq:groupaverage} diverges as $n\to\infty$;
\item[$\circ$] construct a 'relevant' set of infinite paths on the semigroup $G$ (parameterized by infinite words $\om\in\{1,2,\dots, \kappa\}^{\mathbb N}$) 
	so that the set $I_\om(\psi)\subset X$ of points   
	so that the Ces\`aro averages \eqref{eq:groupaverage} diverge is a Baire generic subset of $X$ and carries large entropy.
\end{itemize}
We refer the reader to Theorems~\ref{thm:IFS} and \ref{thm:Bb} 
for positive results on this direction,
using the notions of entropy for the semigroup introduced by Ghys, Langevin and Walczak~\cite{GLW} and by Bufetov \cite{Bufetov}, and the notion
of entropy for non-autonomous dynamical systems introduced by Kolyada and Snoha \cite{KS}.
As the notion of entropy of a free semigroup is related to the the notion of entropy for a certain skew-product \cite{Bufetov}, part of the
argument in Theorem~\ref{thm:Bb} is a consequence of a result for non-typical points on skew-products (Theorem~\ref{thm:B}). 
It is worth mentioning that, in opposition to the action by projective linear maps on projective spaces, the generators in $G_1$
may have positive topological entropy, hence one can specifically address the set of paths which are not only non-typical but which
have large entropy (see Theorem~\ref{thm:Bb-2}).

\section{Statement of the main results}\label{sec:statements}

This section is devoted to the statement of the main results. 
The main results concerning 
matrix valued cocycles are stated in Subsection~\ref{sec:Lyap-irregular}, while the ones concerning real valued cocycles associated to semigroup actions are stated in Subsection~\ref{sec:IFS}.

In Subsection~\ref{sec:overview} we discuss the methods in the proofs and previous works. 
In Subsection~\ref{se:organization} we describe the organization of
the arguments.

\subsection{Lyapunov non-typical behavior}\label{sec:Lyap-irregular}

Let $\sigma~:~\Sigma_\kappa \to\Sigma_\kappa$ be a the full shift on $\kappa\geqslant 1$ symbols, where 
$\Sigma_\kappa\subset \{1,2,\dots, \kappa\}^{\mathbb Z}$ and consider the distance
\begin{equation*}\label{def:metric-sigma}
d_{\Sigma_\kappa}(\om, \om')=e^{-N(\om, \om')},
	\quad\text{where} \; N(\om, \om')=\inf\{k\geqslant 1\colon \omega_k\neq \omega_k'\}. 
\end{equation*}

We denote the 1-cylinder sets by $[j]:=\{(\omega_i)_{i\in\mathbb Z}\in \Sigma_\kappa \colon \omega_0=j\}$, 
for $1\leqslant j \leqslant \kappa$ and, for any integers $m,n\in \mathbb Z$ with $m\leqslant n$ we will use the following notation 
$$
\om_{[m,n]}:=
\big\{
\tilde \om \in\Sigma_\kappa\colon \omega_j=\tilde \omega_j, \; \forall m\leqslant j \leqslant n
\big\}
$$
for the cylinder set in the shift determined by the entries in the interval $[m,n]$.

\smallskip 

Denote by $C^0(\Sigma_\kappa,\mathscr G)$ the Banach space of continuous linear cocycles 
$A: \Sigma_\kappa\to \mathscr G$ taking values on a closed subgroup $\mathscr G\subset \mathrm{SL}(d,\mathbb R)$, 
endowed with the $\|A\|_\infty$ norm. 
Let $C_{\text{loc}}^0(\Sigma_\kappa,\mathscr G)\subset C^0(\Sigma_\kappa,\mathscr G)$ denote the closed set of locally constant cocycles
on cylinders of size 1, which is isomorphic to ${\mathscr G}^\kappa$.
As mentioned before, the joint base and fiber dynamics is given by $F^n_A(x,v)=(\sigma^n(x),A^{(n)}(x)v)$, where the matrix $A^{(n)}(x)\in \mathscr G$ is defined by ~\eqref{e.product}.
We say that a linear cocycle $A$ is locally constant if is is constant on cylinder sets of $\Sigma_\kappa$. Up to a slight change of
the alphabet these can be assumed to be step skew-products, meaning that 
the cocycle $A$ is constant on the

We start by stating the following consequence of the pioneering work of Furstenberg on the products of random variables 
taking values on semisimple Lie groups, which serves as a motivation for Theorem~\ref{thm:abstract} below.

\begin{theorem}(\cite[Theorem~8.6]{Furstenberg}) \label{thm:Furstenberg} 
Let $\mathscr G\subset \mathrm{SL}(d,\mathbb R)$ be a closed and non-compact subgroup, 
$A\in C_{\text{loc}}^0(\Sigma_\kappa,\mathscr G)$ and $\nu=\nu_0^{\mathbb Z}$ be a Bernoulli probability measure on $\Sigma_\kappa$. 
Assume that:
\begin{enumerate}
\item the semigroup generated by the matrices in the support of $\nu_0$ is not contained in a compact subgroup of $\mathscr G$,
\item the cocycle is strongly irreducible on $\supp\nu$ (ie, there is no finite union of proper subspaces of $\mathbb R^d$ invariant by $A(\om)$
	for every $\om\in\supp\nu$).
\end{enumerate}
Then $\lambda_+(A,\nu)>0$.
\end{theorem}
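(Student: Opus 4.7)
The plan is to pass to the induced Markov chain on the projective space $\mathbf P \mathbb R^d$ and combine a Furstenberg-type integral formula with a martingale/invariance argument to rule out vanishing of the top exponent. First I would let $P$ denote the Markov operator on $C(\mathbf P \mathbb R^d)$ defined by $(P\varphi)([v]) = \int \varphi([Bv])\, d\nu_0(B)$, and produce a $P$-stationary Borel probability measure $\mu$ on $\mathbf P \mathbb R^d$ via Krylov--Bogolyubov, using compactness of $\mathbf P \mathbb R^d$ and the continuity of $P$. Setting
$$
\lambda := \int_{\mathbf P \mathbb R^d} \int_{\mathscr G} \log \frac{\|Bv\|}{\|v\|}\, d\nu_0(B)\, d\mu([v]),
$$
Birkhoff's ergodic theorem applied to the projective skew-product $(\omega,[v])\mapsto (\sigma\omega, [A(\omega)v])$ with the invariant measure $\nu\otimes\mu$ yields $\frac{1}{n}\log\|A^{(n)}(\omega)v\|/\|v\| \to \lambda$ for a.e.\ $(\omega,[v])$. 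Since $\|A^{(n)}(\omega)\|\ge \|A^{(n)}(\omega)v\|/\|v\|$ for every nonzero $v$, this already gives $\lambda_+(A,\nu)\ge \lambda$, and the task reduces to proving $\lambda>0$.

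I would establish positivity by contradiction. Assuming $\lambda_+(A,\nu)=0$, the constraint $A^{(n)}\in\mathrm{SL}(d,\mathbb R)$ (zero-sum of Lyapunov exponents) forces every Oseledets exponent to vanish. Following Furstenberg's strategy, one examines the centered random sequence associated to $\log\|A^{(n)}(\omega)v\|$: vanishing of all exponents implies this sequence is a recurrent martingale (it does not drift to $\pm\infty$), and martingale convergence combined with tightness of the forward chain on $\mathbf P \mathbb R^d$ permits the disintegration of $\mu$ over the tail $\sigma$-algebra to yield a probability measure $\tilde{\mu}$ on $\mathbf P \mathbb R^d$ that is \emph{invariant} under every $B\in\supp\nu_0$ individually, not just stationary for $\nu_0$.

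With such a common invariant measure $\tilde{\mu}$ in hand, strong irreducibility prevents it from being concentrated on any finite union of proper projective subspaces. Invoking Furstenberg's lemma on irreducible common invariant measures on $\mathbf P \mathbb R^d$, one constructs a positive-definite quadratic form on $\mathbb R^d$ preserved by every $B\in\supp\nu_0$; consequently the semigroup generated by $\supp\nu_0$ lies in the stabilizer of this form, a compact subgroup of $\mathrm{SL}(d,\mathbb R)$, contradicting hypothesis~(1). The main obstacle is the middle step: extracting a genuinely $\nu_0$-invariant (not merely stationary) measure from the martingale structure, which requires careful use of strong irreducibility to exclude degenerate scenarios in which the projective chain is trapped inside an invariant union of subspaces. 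This is the heart of Furstenberg's original argument and the place where the two hypotheses interact in a nonobvious way.
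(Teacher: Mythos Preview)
The paper does not prove this theorem: it is stated with attribution to \cite[Theorem~8.6]{Furstenberg} and used only as motivation for Theorem~\ref{thm:abstract}, so there is no proof in the paper to compare your proposal against.

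That said, your sketch follows the classical Furstenberg architecture and is broadly correct. A few points deserve sharpening. First, the Birkhoff step gives $\lambda_+(A,\nu)\geqslant\lambda$ without needing ergodicity of $\nu\otimes\mu$, so that inequality is fine as written. Second, the passage from $\lambda_+=0$ to a measure on $\mathbf P\mathbb R^d$ invariant under every $B\in\supp\nu_0$ individually is the genuinely delicate step, and your description of it is vague: the mechanism is not ``recurrent martingale'' behavior of $\log\|A^{(n)}(\omega)v\|$ but rather the a.s.\ weak-$*$ convergence of the measure-valued martingale $(A^{(n)}(\omega))_*\mu$, together with an argument (using that all exponents vanish) that the limit is deterministic and hence equal to $\mu$ itself, forcing $B_*\mu=\mu$ for $\nu_0$-a.e.\ $B$. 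Your phrase ``disintegration over the tail $\sigma$-algebra'' points at the right objects but does not name this argument. Third, once a common invariant measure exists, Furstenberg's lemma combined with strong irreducibility does yield a preserved inner product and hence compactness, exactly as you say. So the outline is sound; only the middle step would need to be fleshed out substantially to become a proof.
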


The previous theorem provides geometric conditions under which the top Lyapunov exponent, almost everywhere defined, is positive.
 Other refinements, still in the i.i.d.\ setting, have been obtained by Guivarc'h and Raugi \cite{GR}, Gol'dsheid and Margulis \cite{GM}, among others.
\smallskip

Our first goal here is to understand the complementary problem of determining which directions, in the projective space $\mathbf P \mathbb R^d$,
one can observe Lyapunov non-typical behavior (meaning that there exists some point in the shift such that the Lyapunov exponent is not well defined along that direction, under forward iteration). While the set of Lyapunov non-typical directions may be meager in the projective space (see e.g. Example~\ref{ex:meager}), 
we will introduce a geometric property on a finite family of matrices under which the generated cocycle has plenty Lyapunov non-typical directions.
\begin{definition}\label{def:access}
\emph{
Let $\mathscr G\subset \mathrm{SL}(d,\mathbb R)$ be a closed and non-compact subgroup. A cocycle 
$A\in C_{\text{loc}}^0(\Sigma_\kappa,\mathscr G)$ is \emph{strongly projectively accessible} if for any $\varepsilon>0$ there exists an integer $K(\varepsilon)\geqslant 1$ so that for any ball $B_1 \subset \mathbf P \mathbb R^d$ of radius $\vep$ and a ball $B_2 \subset \mathbf P \mathbb R^d$ of radius 
$0<\delta\leqslant \vep$ there exists $0\leqslant p \leqslant K(\vep)$ and ${\underline{\omega}} \in \Sigma_\kappa$ so that 
the image of $B_1$ by the projectivization of $A^{(p)}(\om)$ contains a ball $B_2'\subset B_2$ of radius $r\geqslant \min\{\vep/4,\delta/2\}$.
}
\end{definition}

 The previous condition is natural among cocycles which have elements in the orthogonal Lie group $SO(d,\mathbb R)$ 
(also known as rotation group) but can also be used to depict the projective action inside two-dimensional hyperbolic subbundles  
(see Section~\ref{sec:linear-cocycles} for more details). 
 Our first main result says that, under the previous condition, the set of Lyapunov non-typical directions is Baire generic and 
 has large entropy.

\begin{maintheorem}\label{thm:abstract}
Assume that $\mathscr G\subset \mathrm{SL}(d,\mathbb R)$ is a closed subgroup and that
$A\in C^0_{loc}(\Sigma_\kappa, \mathscr G)$. is a locally constant cocycle, and write $A_i=A\mid_{[i]}$ for each $1\leqslant i\leqslant \kappa$. 
Assume that
\begin{enumerate}
\item the subgroup generated by the matrices $\{A_1,A_2, \dots, A_\kappa\}$ is non-compact, 
\item $A$ is strongly projectively accessible.
\end{enumerate}
Then for each $v\in \mathbf P\mathbb R^d$ there exists a Baire residual subset $\mathscr R_v\subset \Sigma_\kappa$ of entropy at least $h_*(\varphi_A)$ (defined by ~\eqref{eq:T})  so that, for every 
	$\om \in \mathscr R_v$, 
		\begin{equation}\label{eq:divergence-cocycle}\tag{$\star$}
		\liminf_{n\to\infty} \frac1n \log \|A^n(\om) v\| < \limsup_{n\to\infty} \frac1n \log \|A^n(\om) v\|
		\end{equation} 
In particular, there exists a Baire residual subset $\mathscr R\subset \Sigma_\kappa$  
and a dense subset $\mathscr D\subset \mathbf P\mathbb R^d$ so that  
$(\star)$
holds for every 
		$\om\in \mathscr R$ and every $v\in \mathscr D$.
\end{maintheorem}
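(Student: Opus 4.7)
The plan is to prove the theorem first for a single direction $v\in\mathbf P\mathbb R^d$, producing a Baire residual set $\mathscr R_v\subset\Sigma_\kappa$ of $\om$'s satisfying $(\star)$ together with the entropy bound; the final assertion is then immediate upon choosing a countable dense $\mathscr D\subset\mathbf P\mathbb R^d$ and setting $\mathscr R:=\bigcap_{v\in\mathscr D}\mathscr R_v$, which is still Baire residual. With $v$ fixed, it suffices to exhibit a pair of rationals $p<q$ for which both
\[
\mathscr A_p=\Big\{\om\in\Sigma_\kappa:\liminf_n\tfrac1n\log\|A^{(n)}(\om)v\|<p\Big\},\qquad
\mathscr B_q=\Big\{\om\in\Sigma_\kappa:\limsup_n\tfrac1n\log\|A^{(n)}(\om)v\|>q\Big\}
\]
are dense $G_\delta$ subsets of $\Sigma_\kappa$, for then $\mathscr A_p\cap\mathscr B_q\subset\mathscr R_v$ is already dense $G_\delta$. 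The $G_\delta$ character is automatic from $\mathscr A_p=\bigcap_N\bigcup_{n\geqslant N}\{\om:\frac1n\log\|A^{(n)}(\om)v\|<p\}$ and its analogue for $\mathscr B_q$, so the real content is density.

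The expanding and contracting finite words needed for the density argument are extracted from the non-compactness hypothesis. Since the semigroup generated by $A_1,\ldots,A_\kappa$ is not contained in a compact subgroup, there exists $M>0$ such that for arbitrarily large $N$ one finds finite words $w^\#_N\in\{1,\ldots,\kappa\}^N$ with top singular value $\sigma_1(A(w^\#_N))\geqslant e^{MN}$; the determinant constraint $\det A(w^\#_N)=1$ then forces $\sigma_d(A(w^\#_N))\leqslant e^{-MN/(d-1)}$. Writing $e_1^\#, e_d^\#\in\mathbf P\mathbb R^d$ for the singular directions associated with $\sigma_1$ and $\sigma_d$, the standard cone estimate for linear maps guarantees that unit vectors in an $\vep$-ball around $e_1^\#$ (resp.\ $e_d^\#$) are stretched by at least $(1-C\vep)\sigma_1$ (resp.\ have image of norm at most $(1+C\vep)\sigma_d+C\vep$) under $A(w^\#_N)$.

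Density is then obtained by combining these words with strong projective accessibility. Fix any cylinder $U\subset\Sigma_\kappa$ of depth $L$ determined by a prescribed prefix, and let $v_L:=[A^{(L)}(\om)v]$ be the resulting direction. Strong projective accessibility with parameter $\vep$ produces a word $b$ of length at most $K(\vep)$ steering a small ball around $v_L$ into the $\vep$-ball around $e_1^\#$; appending $b$ followed by $w^\#_N$ (and any continuation) yields $\om\in U$ with $\|A^{(L+|b|+N)}(\om)v\|\geqslant c(\vep)\,e^{MN}\,\|v\|/C^{L+K(\vep)}$, where $C=\max_i\|A_i^{\pm1}\|$. For $N$ large enough the corresponding Birkhoff-type ratio exceeds any $q<M$, placing an element of $\mathscr B_q$ in $U$; steering towards $e_d^\#$ with a possibly different expanding word similarly places an element of $\mathscr A_p$ in $U$ for any $p>-M/(d-1)$. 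The key bookkeeping point is that the bounded-length steering word $b$ changes the log-norm by only $O(K(\vep))$, which is absorbed by the linear growth provided by $w^\#_N$. Density of both $G_\delta$'s, and hence Baire residuality of $\mathscr R_v$, follows.

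For the entropy lower bound I would enrich the construction by interleaving, between successive ``steer $+$ $w^\#_N$'' blocks, long free blocks of length $\ell_k\to\infty$ chosen inside a subsystem of $\{1,\ldots,\kappa\}^{\mathbb N}$ realising topological entropy close to $h_*(\varphi_A)$. Provided $\ell_k$ dominates the total length of the deterministic blocks already placed by step $k$, the Cesàro averages produced by the construction still oscillate between values near $M\cdot\rho$ and $-M\rho/(d-1)$, for an explicit density $\rho\in(0,1)$ of the deterministic blocks, so the construction stays inside $\mathscr R_v$. A standard Misiurewicz/Bowen entropy-distribution argument, in the spirit of \cite{Tian} and of the results this paper will prove for semigroup actions, then produces a Cantor-like subset of $\mathscr R_v$ of topological entropy at least $h_*(\varphi_A)$. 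I expect the main technical obstacle to be the synchronisation of the free blocks with the steering phases: at the start of each steering phase the direction $v_L$ depends on the entire finite word written so far, including the randomly chosen free block, so one must use the fact that the accessibility parameter $K(\vep)$ in Definition~\ref{def:access} is uniform over \emph{all} $\vep$-balls, not just balls around a prescribed point. This uniformity is precisely the content of the strong projective accessibility hypothesis and it is what makes the whole scheme Baire-stable.
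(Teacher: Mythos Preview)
Your $G_\delta$ decomposition into $\mathscr A_p$ and $\mathscr B_q$ is a clean alternative to the paper's explicit oscillation construction, and the expanding half ($\mathscr B_q$) works. But the contracting half has a genuine gap: the cone estimate you quote for vectors near $e_d^\#$ is wrong. Writing $u=\sum_i c_iv_i$ in the singular basis of $B=A(w^\#_N)$ with $\|u-v_d\|<\vep$, one has $\|Bu\|^2=\sum c_i^2\sigma_i^2\leqslant\sigma_d^2+\vep^2\sigma_1^2$, so the correct bound is $\sigma_d+O(\vep\,\sigma_1)$, not $\sigma_d+O(\vep)$. With $\sigma_1\geqslant e^{MN}$ and $\vep$ fixed this gives no contraction at all. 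To get genuine contraction you would have to steer into a ball of radius $\lesssim e^{-MN}$, and then the steering time $K(e^{-MN})$—which the strong accessibility definition allows to be arbitrarily large—can dwarf $N$, so the Birkhoff ratio at time $L+K(e^{-MN})+N$ need not be $<p$. A secondary issue: non-compactness of the generated group gives unboundedness of norms but not a positive exponential rate $M$; a single parabolic generator already shows the joint spectral radius can equal $1$.

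The paper's route sidesteps the first difficulty by using a single generator $A_\kappa$ with $\|A_\kappa\|>1$ and working with its \emph{invariant} stable subspace rather than a singular direction. Since $v_-\in\mathbf P E^s_{\underline\kappa}$ is a fixed point of the projective map $f_\kappa$, one steers into a ball $B(v_-,L^{-2n_k}\vep)$ and then iterates $A_\kappa$ for $n_k$ steps while remaining in the stable cone by Lipschitz control; contraction by $\theta^{n_k}$ then follows from Lemma~\ref{le:UH-cones}. The possibly enormous steering time $p_k\leqslant K(L^{-2n_k}\vep)$ is absorbed not within step $k$ but by choosing the \emph{next} block length $n_{k+1}$ to dominate $\sum_{j\leqslant k}(n_j+p_j)$; this ``transition times may be exponential but depend only on the previous block'' mechanism is exactly the frequent-hitting-times machinery developed in Sections~\ref{teoC}--\ref{top-skew}, and the paper refers back to that construction for both the Baire-residuality and the entropy bound rather than redoing the argument. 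Your entropy sketch via interleaved free blocks is in the same spirit as that Moran-set construction, but it inherits the same problem: once the deterministic contracting blocks require steering times that grow with the block length, the ``$\ell_k$ dominates'' bookkeeping must be redone along the lines of Lemma~\ref{le:K-entropy} and Lemma~\ref{le:selection}.
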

Note that the assumptions of Theorem~\ref{thm:abstract} are related to the ones of Theorem~\ref{thm:Furstenberg}, 
the difference being that the strong irreducibility condition is replaced by the condition  
that the cocycle is strongly projectively accessible which, nonetheless, have the same flavour of indecomposability of the the action on the projective space
$\mathbf P \mathbb R^d$.  However, their conclusions are not, as there exist cocycles so that all invariant measures have a positive Lyapunov
exponent 
while the conclusion of Theorem~\ref{thm:abstract} fails (cf. Example~\ref{ex:different-types}).
The proof of Theorem~\ref{thm:abstract} is given in Section~\ref{sec:linear-cocycles}.

\begin{remark}
Projective linear maps on the projective space have a simple  Morse-Smale type of structure. In particular, these maps (and their non-autonomous 
compositions) have zero topological entropy (see Subsection~\ref{subsec:projectivelinear} for more details).
\end{remark}

\begin{remark}
First results on irregular points for the top Lyapunov exponent appeared first in
~\cite{Fu,Herman, Tian, Tian2}. 
The results in ~\cite{Tian,Tian2}
can be summarized as follows:
if $A: \Sigma_\kappa \to \mathscr G$ is a H\"older continuous cocycle then either $\lambda_+(A,\nu_1)=\lambda_+(A,\nu_2)$ for every $\sigma$-invariant probability measures $\nu_1,\nu_2$, or there exists a Baire residual and large topological entropy subset of the shift $\Sigma_\kappa$ formed by 
points for which the top Lyapunov exponent is not well defined. 
\end{remark}

The projective accessibility can be also useful to describe the behavior of a linear cocycle restricted to some invariant subbundle. We will be particularly 
interested in cocycles displaying dominated splittings, whose notion we now recall. Given a cocycle $A\in C^0(\Sigma_\kappa,\mathscr G)$, we say that
a splitting $\Sigma_\kappa\times \mathbb R^d=E^1\oplus E^2$ is $A$-invariant if $A(\om) E^i_\om=E^i_{\sigma(\om)}$ for every $\om\in \Sigma_\kappa$.
An $A$-invariant splitting $\Sigma_\kappa\times \mathbb R^d=E^1\oplus E^2$ is \emph{dominated} if
there exists $C>0$ and $\lambda \in (0,1)$ so that 
$$
\| A^{(n)}(\om) \mid_{E_\om^2}\| . \| (A^{(n)}(\om)\mid_{E^1_\om})^{-1}\| \leqslant C \lambda^n
\text{ for every $n\geqslant 1$ and $\om\in \Sigma_\kappa$.}
$$
It is well known  
that the existence of a dominated splitting  is $C^0$-open on the space of continuous linear cocycles over a compact space and
that the subbundles vary continuously with the point $\om$. In particular, as the shift $\sigma$ is transitive, the dimensions of the subbundles is constant. 
A simple example of a dominated splitting occurs whenever
$
\| A^{(n)}(\om)\mid_{E^2_\om} \| \leqslant C \lambda^n 
$ and 
$
	\| (A^{(n)}(\om)\mid_{E^1_\om})^{-1} \| \leqslant C \lambda^n 
$
for every $\om\in \Sigma_\kappa$ and $n\geqslant 1$, in which case we say that the cocycle is \emph{uniformly hyperbolic}, and we refer to $\Sigma_\kappa\times \mathbb R^d=E^1\oplus E^2$ as its hyperbolic splitting.
We refer the reader to \cite{Yoc} for other characterizations of uniform hyperbolicity in the context of $SL(2,\mathbb R)$-cocycles.
We say that $A$  has a \emph{one-dimensional finest dominated splitting} if it admits a continuous $A$-invariant splitting 
$\Sigma_\kappa\times \mathbb R^d = E^1_\om \oplus E^2_\om\oplus \dots \oplus E_{\om}^{d}$  in one-dimensional subspaces at every $\om\in \Sigma_\kappa$ such that the splitting $E^i\oplus E^{i+1}$ is dominated for each $1\leqslant i \leqslant d -1$.
The next result, and the discussion following it, ensures that the irregular behavior prevails.

\begin{maintheorem}\label{thm:rigidity}
Assume that $A\in C^0_{loc} (\Sigma_\kappa, SL(3,\mathbb R))$ is uniformly hyperbolic and its hyperbolic spliting 
$\Sigma_\kappa\times \mathbb R^3=E^s\oplus E^u$ 
satisfies $\dim E^s=2$. There exists a $C^0$-open neighborhood 
$\mathscr U \subset C^0_{\text{loc}}(\Sigma_\kappa,SL(3,\mathbb R))$ and a $C^0$-Baire residual and full Haar measure subset $\mathscr R\subset \mathscr U$ so that for every $B\in \mathscr R$ either:
\begin{enumerate}
\item $B$ admits a finest one-dimensional dominated splitting $\mathbb R^3=E_{B,\om}^u\oplus E_{B,\om}^{s,1} \oplus 	
	E_{B,\om}^{s,2}$, or 
\item there exists a Baire residual subset $\mathscr R\subset \Sigma_\kappa$ and for each $\om\in \mathscr R$ there exists a
	dense subset $\mathscr D_\om\subset \mathbf P E^s_\om$
	so that 
	$$
		\liminf_{n\to\infty} \frac1n \log \|B^n(\om) v\| < \limsup_{n\to\infty} \frac1n \log \|B^n(\om) v\|
			\quad\text{for every $v\in \mathscr D_\om$.}
	$$
\end{enumerate} 
\end{maintheorem}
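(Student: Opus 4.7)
The plan is to reduce Theorem~\ref{thm:rigidity} to Theorem~\ref{thm:abstract} applied to the restriction of $B$ to its two-dimensional stable bundle $E^s$. First I would take $\mathscr{U}$ to be a sufficiently small $C^0$-open neighborhood of $A$ so that every $B \in \mathscr{U}$ remains uniformly hyperbolic with $\dim E^s = 2$ and $\dim E^u = 1$; this is standard, using the $C^0$-openness of uniform hyperbolicity together with the continuous dependence of the splitting $E^s \oplus E^u$ on $(\omega,B)$.

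Given $B \in \mathscr{U}$, the matrix $B(\omega)$ maps $E^s_\omega$ onto $E^s_{\sigma(\omega)}$, so it induces a linear cocycle $B^s$ on the continuous two-dimensional bundle $E^s$. After a suitable fibrewise normalization (dividing by $(\det B^s(\omega))^{1/2}$), $B^s$ can be viewed as an $SL(2,\mathbb{R})$-valued cocycle, locally constant on $1$-cylinders up to the choice of frame in $E^s_\omega$. I would then establish the following dichotomy: either (i) $B^s$ admits a dominated splitting inside $E^s$, in which case combined with the splitting $E^s \oplus E^u$ one obtains the finest one-dimensional dominated splitting of alternative (1); or (ii) $B^s$ admits no invariant sub-line-bundle, and then I would show that $B^s$ is strongly projectively accessible in the bundle sense, with the two balls in Definition~\ref{def:access} taken inside $\mathbf{P} E^s_\omega$. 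In scenario (ii), Theorem~\ref{thm:abstract} applied to $B^s$ yields exactly alternative (2), since the non-compactness hypothesis of Theorem~\ref{thm:abstract} on the generated subgroup is automatic once strong projective accessibility coexists with the uniformly contracting behavior of $B$ along $E^s$.

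To construct $\mathscr{R}$, observe that the subset $\mathscr{V}_1 \subset \mathscr{U}$ of cocycles admitting a dominated splitting inside $E^s$ is $C^0$-open, so its closure is closed in $\mathscr{U}$. On the complement $\mathscr{U} \setminus \overline{\mathscr{V}_1}$ one must prove that strong projective accessibility holds on $E^s$ for a $C^0$-Baire residual and full Haar measure set. The key observation is that, for a locally constant cocycle, $B^s$ is driven by the finitely many matrices $B^s_1, \ldots, B^s_\kappa$ acting on the two-dimensional fibers, and the absence of an invariant sub-line-bundle forces the generated semigroup to have no common invariant direction in $\mathbf{P}\mathbb{R}^2$. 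Combining this with the uniform hyperbolicity of $B$ along $E^s$ (which supplies strongly contracting compositions) allows one to construct, for any pair of balls $B_1, B_2 \subset \mathbf{P} E^s_\omega$ of radii $\delta \leqslant \varepsilon$, a composition $(B^s)^{(p)}(\omega)$ of length $p \leqslant K(\varepsilon)$ sending $B_1$ to a ball of comparable radius inside $B_2$. The Baire residuality and full Haar measure statements then follow by removing an at most countable union of lower-dimensional real-algebraic loci in $SL(3,\mathbb{R})^\kappa$ on which the semigroup generated by $B^s_1,\dots,B^s_\kappa$ degenerates (e.g., preserves a finite union of lines, or lies in a compact subgroup after the normalization).

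The main obstacle I expect is the rigorous verification of strong projective accessibility on the varying two-dimensional bundle $E^s_\omega$: one needs a precise bundle version of Definition~\ref{def:access}, and must handle the continuous dependence of the trivialization on $\omega$, ensuring that the quantitative bound $K(\varepsilon)$ is uniform in $\omega$. A secondary difficulty is isolating the exact family of algebraic constraints in $SL(3,\mathbb{R})^\kappa$ that must be avoided, in order to simultaneously guarantee strong projective accessibility of $B^s$ and the full Haar/residual character of $\mathscr{R}$. Once these are in place, Theorem~\ref{thm:abstract} delivers a Baire residual subset of the shift and, for each $\omega$ in that set, a dense subset of Lyapunov-irregular directions in $\mathbf{P} E^s_\omega$, which is precisely alternative (2).
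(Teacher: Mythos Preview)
Your overall architecture is right and matches the paper: restrict $B$ to the two-dimensional stable bundle, trivialize it, and invoke Theorem~\ref{thm:abstract}. But the mechanism you propose for obtaining strong projective accessibility is where the argument breaks.

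First, your dichotomy ``dominated splitting inside $E^s$, or no invariant sub-line-bundle'' is not a true dichotomy: one can have an invariant line sub-bundle of $E^s$ that is not dominated. More seriously, ``no invariant sub-line-bundle'' together with the uniform contraction on $E^s$ does \emph{not} yield strong projective accessibility. After projectivizing (or normalizing to $SL(2,\mathbb R)$) the uniform contraction disappears entirely, so it gives you neither non-compactness of the generated subgroup nor any control on the inner radius of images of projective balls. If all the normalized generators happen to be hyperbolic (Morse--Smale on $\mathbf P\mathbb R^2$), projective balls shrink under iteration and the ``contains a ball of radius $\min\{\varepsilon/4,\delta/2\}$'' clause in Definition~\ref{def:access} will typically fail, even when the semigroup has no common invariant direction.

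The paper's route is more specific. It invokes \cite[Theorem~2.8]{BRV} to produce, on a $C^0$-open and dense subset $\mathscr V=\mathscr V_1\cup\mathscr V_2$ of $\mathscr U$, either a one-dimensional finest dominated splitting ($\mathscr V_1$), or two periodic points $\omega,\tilde\omega$ such that $B^{p}(\tilde\omega)\mid_{E^s}$ has two real eigenvalues and $B^{p}(\omega)\mid_{E^s}$ has a complex eigenvalue ($\mathscr V_2$). On $\mathscr V_2$ the rotation number $\theta_B$ of the projectivized $B^{p}(\omega)\mid_{E^s}$ is a submersion in $B$, so the locus $\{\theta_B\in\mathbb Q\}$ is meager and Haar-null; off this locus, the projectivization of $B^{p}(\omega)\mid_{E^s}$ is an irrational rotation of $\mathbf P E^s_\omega\simeq\mathbf S^1$, i.e.\ a minimal isometry. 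By Corollary~\ref{cor:rotations} this single element already furnishes the frequent hitting times (equivalently, strong projective accessibility) for the semigroup acting on $\mathbf P E^s$. The other periodic point $\tilde\omega$, whose normalized matrix is hyperbolic, supplies the non-compactness hypothesis of Theorem~\ref{thm:abstract}. What you should take away is that the source of accessibility is not irreducibility of the semigroup but the presence of an \emph{elliptic} element with generic (irrational) rotation number; this is the concrete algebraic condition you were looking for, and it is what makes the Baire-residual and full-Haar-measure claims straightforward.
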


\begin{remark}
The property of having a one-dimensional finest dominated splitting is a $C^0$-open condition in $C^0_{\text{loc}}(\Sigma_\kappa,SL(3,\mathbb R))$. 
Hence, in the context of Theorem~\ref{thm:rigidity}, if the $C^0$-open set $\mathscr U_1\subset \mathscr U$ of cocycles having a one-dimensional finest 
dominated splitting is non-empty and $B\in \mathscr U_1$ then it follows from 
 the description of irregular sets for Birkhoff averages for the shift and the real valued observables $\varphi_i$ 
 that either these functions are cohomologous to a constant or the space of points in $\Sigma_\kappa$ displaying irregular behavior is Baire generic and carries full topological entropy \cite{Barreira1,CKS}. 
Actually we prove that, in the context of Theorem~\ref{thm:rigidity}, a typical cocycle whose Lyapunov
irregular set is empty is cohomologous to a constant matrix cocycle (see Proposition~\ref{cor:rigidity} for the precise statement).
As the class of cocycles in $\mathscr U_1$ that are cohomologous to a constant cocycle form a closed set with empty interior,
 one may say that the irregular behavior prevails. 
\end{remark}

\begin{remark}
The dichotomy in Theorem~\ref{thm:rigidity} is related to the proof of \cite[Theorem~2.5 and Corollary~2.6]{BRV} which,
in this context of locally constant linear cocycles acting on $\mathbb R^3$, 
ensures that if the Lyapunov exponents at periodic points are all distinct
by small $C^0$-perturbations of the cocycle then there exists a finest one-dimensional dominated splitting, 
which coincides with the Oseledets splitting in a total probability set. 
\end{remark}

\subsection{Non-typical points for semigroup actions}\label{sec:IFS}

Given a semigroup action $\mathbb S$ generated by a collection $G_1=\{id,f_1, f_2, \dots, f_\kappa\}$  of continuous maps
on a compact metric space $X$ and a continuous observable $\psi: X\to\mathbb R$, we denote by
\begin{equation}\label{eq:def-I-IFS}
I_{\mathbb S}(\psi)
	=\Big\{ x\in X \colon 
	\exists\, {\underline{\omega}}\in \Sigma_\kappa   \;\text{s.t.}\;  \lim_{n\to\infty} \frac1n \sum_{j=0}^{n-1} \psi(f_{\underline{\omega}}^j(x)) \; \text{does not exist}
	\Big\}
\end{equation}
the set on \emph{Birkhoff non-typical points} for the semigroup action $\mathbb S$
(with respect to $\psi$)
where, for each ${\underline{\omega}} \in \Sigma_\kappa:=\{1,2,\dots, \kappa\}^\mathbb N$ and $n\geqslant 1$, we write 
$
f^n_{\underline{\omega}}:= f_{{\underline{\omega}}_{n-1}}\circ \dots \circ f_{{\underline{\omega}}_1}\circ f_{{\underline{\omega}}_0}
$
for the composition of maps in $G_1$.
It will be a standing assumption that the generators semigroup action are bi-Lipschitz homeomorphisms on $X$ and we set
\begin{equation*}\label{standing-assumption-IFS}
L:=\max\big\{1\,, \;\max_{1\leqslant i \leqslant \kappa} \text{Lip}(f_i^{\pm 1})\big\}.
\end{equation*}
The dynamics of the semigroup action generated by $G_1$, endowed with a random walk, is intimately 
related to the dynamics of the skew-product
\begin{equation}\label{def:skew-eq}
\begin{array}{cccc}
F \colon & \{1,2,  \dots, \kappa\}^{\mathbb Z} \times X & \longrightarrow  & \{1,2,  \dots, \kappa\}^{\mathbb Z} \times X \\
& (\underline{\omega},x) & \mapsto  & (\sigma(\underline{\omega}), f_{\omega_0}(x)),
\end{array}
\end{equation}
through the observation that $F^n(\om,x)=(\sigma^n(\om),g_\om^n(x))$ for every $(\underline{\omega},x)\in 
 \{1,2,  \dots, \kappa\}^{\mathbb Z} \times X$ and every $n\geqslant 1$.
Moreover, if $\varphi= \{1 , 2, \dots, \kappa\}^{\mathbb N} \times X \to \mathbb R$ is continuous
and one defines the set of Birkhoff irregular points for the skew-product $F$ as
\begin{equation}\label{eq:def-I-skew}
I_F(\varphi):=
\Big\{(\om,x)\in \{1,2,\dots,\kappa\}^{\Z}\times X: \lim_{n\to\infty} \frac1n \sum_{j=0}^{n-1} \varphi(F^j(\om,x)) \; \text{does not exist}\Big\},
\end{equation}
then one has that
\begin{equation}\label{eq:inclus-irr}
I_F(\varphi_\psi) 
	\subset
	\{1 , 2, \dots, \kappa\}^{\mathbb N} \times I_{\mathbb S}(\psi) 
	\subset \{1 , 2, \dots, \kappa\}^{\mathbb N} \times X,
\end{equation}
where $\varphi_\psi(\om,x):=\psi(x)$ for every $(\om,x) \in \{1 , 2, \dots, \kappa\}^{\mathbb N} \times X$. 

We aim to describe the topological complexity (Baire genericity and topological entropy) 
of these sets of Birkhoff irregular points observing them as follows:
\begin{itemize}
\item[$\circ$] $I_{\mathbb S}(\psi)\subset X$ measured through the geometric entropy introduced in Ghys, Langevin and {Walczak} for a finitely generated 
	semigroup action;
\item[$\circ$] $I_{\mathbb S}(\psi)\subset X$ measured through the entropy introduced by Bufetov for a free semigroup action; \smallskip
\item[$\circ$] $I_{\mathcal F_\om}(\psi)\subset X$ as a set of points which are Birkhoff irregular for the non-autonomous dynamical system 
	$\mathcal F_\om=(f_{\omega_{i_j}})_{j\geqslant 1}$ determined by a realization $\om\in \Sigma_\kappa$; \smallskip
\item[$\circ$] $I_F(\varphi_\psi)$ as an $F$-invariant subset of $\{1 , 2, \dots, \kappa\}^{\mathbb N} \times X$.\smallskip
\end{itemize}

The entropy in the second item above corresponds to the annealed topological entropy of the skew-product $F$ and, in some special cases, 
the third one corresponds to the notion of Pinsker relative entropy (cf. Subsection~\ref{fibentropies}). 
The Lipschitz assumption on the generators implies that both notions of entropy on the semigroup $\mathbb S$ and the non-autonomous dynamical
system are finite.
These sets have been successively described for both single dynamics and a semigroup actions satisfying a specification 
property (see e.g. \cite{Barreira,Barreira1,CKS,ZhuMa2021} and references therein).
However, most smooth dynamical systems beyond hyperbolicity fail to satisfy the specification property and this is also the case
for the skew-products which arise in the study of linear cocycles \cite{BTV2}. 
For that reason we were led us to define the following mild topological concept which measures recurrence between balls 
of controlled radius. We denote by $|J|$ the radius of a ball $J\subset X$.

\begin{definition}\label{def:transition}
\emph{
We say that the semigroup action $\mathbb S$ generated by the collection $G_1=\{id, f_1, f_2, \dots, f_\kappa\}$ of continuous maps
has \emph{frequent hitting times} if for any $\varepsilon>0$ there exists $K(\varepsilon) \geqslant 1$ 
so that for any ball $B_1$ of radius $|B_1|=\vep$ and any ball $B_2$ of radius $0<|B_2| \leqslant \frac\vep2$ there exists $0\leqslant p \leqslant K(\vep)$
and an element ${\underline{\omega}} \in \Sigma_\kappa:=\{1,2,\dots, \kappa\}^\mathbb N$ 
so that $f^p_{\underline{\omega}}(B_1)$ contains a ball $B_2' \subset B_2$ of radius $|B_2|/2$.
}
\end{definition}

As defined above, the frequent hitting times property for a single map $f$ differs substantially from the specification or gluing orbit property.  It combines 
not only the control of the hitting time between balls of comparable radius and the size of their intersection. In particular, the latter implies that the set of hitting times $N(A,B)=\{n\in \mathbb N \colon  f^n(A)\cap B\neq \emptyset\}$ between two balls of the same radius is syndetic (cf. \cite{AAN} for the definition) and that the frequency of visits to balls of a definite radius is bounded away from zero by some constant which depends only on the radius of the balls
(cf. \cite{BTV}). 
We refer the reader to Section~\ref{sec:prelim} for an ample discussion on this property and examples.

\begin{maintheorem}\label{thm:IFS}
Let $X$ be a compact metric space and consider the semigroup action $\mathbb S$ generated by the collection of bi-Lipschitz homeomorphisms 
$G_1=\{id,f_1, f_2, \dots, f_\kappa\}$.
Assume that $\psi: X\to \mathbb R$ 
is a continuous observable and that: 
\begin{itemize}
\item[(a)] the semigroup action $\mathbb S$ has frequent hitting times,  
\item[(b)] there exists $1\leqslant i \leqslant \kappa$ so that $\psi$ is not a coboundary with respect to $f_i$. 
\end{itemize}
The set
$$
\mathscr R_\Sigma=\Big\{ \om \in \Sigma_\kappa\colon I_{\mathcal F_\om}(\psi) \; \text{is a Baire residual subset of}\, X \Big\}
$$ 
is a Baire residual subset of $\Sigma_\kappa$. 
In particular, $I_{\mathbb S}(\psi)$ is a Baire residual subset of $X$.
\end{maintheorem}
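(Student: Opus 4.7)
I plan to deduce the theorem from an auxiliary residuality statement on the skew-product $F$ on $\Sigma_\kappa\times X$, combined with the Kuratowski--Ulam theorem. The key claim is that $I_F(\varphi_\psi)$, as defined in \eqref{eq:def-I-skew} with $\varphi_\psi(\om,x):=\psi(x)$, is Baire residual in $\Sigma_\kappa\times X$. Because $I_F(\varphi_\psi)$ is Borel and its vertical section over $\om$ equals $I_{\mathcal F_\om}(\psi)$, Kuratowski--Ulam then produces a residual set of $\om\in\Sigma_\kappa$ along which $I_{\mathcal F_\om}(\psi)$ is residual in $X$, which is exactly the required residuality of $\mathscr R_\Sigma$. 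The residuality of $I_{\mathbb S}(\psi)\subset X$ follows because $I_{\mathcal F_\om}(\psi)\subset I_{\mathbb S}(\psi)$ for every $\om$.

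To prove the residuality of $I_F(\varphi_\psi)$, I would use the representation
\[
I_F(\varphi_\psi) \supseteq \bigcup_{k\geqslant 1} \bigcap_{N\geqslant 1} O_{N,k},\qquad O_{N,k}:=\bigcup_{n,m\geqslant N}\Bigl\{(\om,x)\colon \bigl|S_n\varphi_\psi(\om,x)-S_m\varphi_\psi(\om,x)\bigr|>\tfrac{1}{k}\Bigr\}.
\]
Since $S_n\varphi_\psi(\om,x)=\sum_{j=0}^{n-1}\psi(f_\om^j x)$ depends continuously on $x$ and only on the first $n$ entries of $\om$, each $O_{N,k}$ is open, and the fixed-$k$ intersection $\bigcap_N O_{N,k}$ is $G_\delta$. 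It therefore suffices to show that, for some $k_0\geqslant 1$, the open set $O_{N,k_0}$ is dense in $\Sigma_\kappa\times X$ for every $N$. Concretely, given an arbitrary cylinder $[a_0\cdots a_{L-1}]$, a ball $B\subset X$, and an integer $N$, I must exhibit $(\om',x')$ in $[a_0\cdots a_{L-1}]\times B$ together with $n,m>N$ for which the Birkhoff-sum gap exceeds $1/k_0$.

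My construction alternates two kinds of blocks. A \emph{navigation block}, whose existence is guaranteed by the frequent-hitting-times property (Definition~\ref{def:transition}), is a word of bounded length $\leqslant K(\varepsilon)$ whose composition carries a prescribed ball of radius $\varepsilon$ into a prescribed target ball of radius $\leqslant\varepsilon/2$; its contribution to $S_n\varphi_\psi$ is uniformly bounded. An \emph{accumulation block} is a long run of $f_i$. Using hypothesis (b), I extract two small balls $V_+, V_-\subset X$ and a constant $c>0$ with the property that, for arbitrarily large $T$, the $f_i$-Birkhoff sums of $\psi$ over segments of length $T$ starting in $V_+$ and in $V_-$ differ by at least $cT$; this is immediate from Birkhoff's theorem when hypothesis (b) is witnessed by two $f_i$-invariant measures with distinct $\psi$-integrals, and is obtained from a quantified version of the unboundedness of $S_n^{f_i}\psi$ (Gottschalk--Hedlund) in the remaining subcase. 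I then build $\om'$ by prepending the prescribed word $a_0\cdots a_{L-1}$ and alternating navigation blocks (steering into $V_+$ or $V_-$) with accumulation blocks of rapidly growing lengths $T_\ell$, chosen so that the navigation contributions are negligible compared to $T_\ell$. The running Birkhoff average then oscillates between two plateaus separated by at least $c/2$, producing the required $n,m>N$ with gap exceeding $1/k_0:=c/4$; $x'$ remains in $B$ by construction, since the navigation blocks act on the already-evolved position $f^L_{a_{L-1}\cdots a_0}(x')\in f^L_{a_{L-1}\cdots a_0}(B)$.

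The main obstacle I anticipate is the quantitative extraction of the accumulation blocks purely from the non-coboundary hypothesis, especially in the subcase where every $f_i$-invariant measure integrates $\psi$ to a common constant while the $f_i$-partial sums are still unbounded. In that regime the gap between the two $f_i$-segments is only known to diverge with $T$ and not necessarily linearly, so the construction must align the block lengths $T_\ell$ and the insertion times of navigation blocks carefully to preserve a fixed-size oscillation of the running average at infinitely many times. This is the place where the interplay between hypotheses (a) and (b) is genuinely invoked, and it plays the role that the specification or gluing-orbit properties played in the earlier works \cite{Barreira,CKS,Tian2,ZhuMa2021}.
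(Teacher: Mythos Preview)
Your overall architecture is exactly the paper's: show that $I_F(\varphi_\psi)$ is Baire residual in $\Sigma_\kappa\times X$ (this is Proposition~\ref{mainpropC}), then apply Kuratowski--Ulam to pass to fibers (Subsection~\ref{subsec3}). Two points, however, deserve correction.

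First, a minor one. Under the paper's definition of ``not a coboundary'' (Subsection~\ref{subsec:Livsic}, equation~\eqref{eq:ncohom}), hypothesis~(b) means precisely that there exist $f_i$-invariant ergodic probabilities $\mu_1,\mu_2$ with $\int\psi\,d\mu_1<\int\psi\,d\mu_2$. Your ``remaining subcase'' (all invariant measures integrate $\psi$ to the same constant while partial sums are unbounded) therefore does not arise, and the Gottschalk--Hedlund detour is unnecessary. Also, in your display defining $O_{N,k}$ the quantities $S_n$ must be Ces\`aro averages rather than sums; otherwise the condition is trivially satisfied.

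Second, and more substantively, your description of the navigation blocks as having length ``$\leqslant K(\varepsilon)$'' for a single fixed $\varepsilon$, steering into \emph{fixed} balls $V_\pm$ whose points all have controlled $f_i$-Birkhoff sums, is where the genuine difficulty hides. No fixed ball $V_+$ has the property that $\tfrac1T S_T^{f_i}\psi(x)\approx\int\psi\,d\mu_1$ for every $x\in V_+$ and arbitrarily large $T$; to force a point to shadow a $\mu_1$-generic orbit for $T$ steps you must place it in the \emph{dynamic} ball $B_{f_i}(z_+,T,\delta)$, which by Lemma~\ref{le:est-balls} contains only an ordinary ball of radius $L^{-T}\delta$. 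Consequently, after an accumulation block of length $T_\ell$ the source ball for the next navigation has shrunk to radius of order $L^{-2T_\ell}\delta$, and the frequent-hitting-times bound on the next transition is $K(L^{-2T_\ell}\delta)$, which can be \emph{much larger} than $T_\ell$ (Remark~\ref{rmk:rotation}: for an irrational rotation $K(\xi)\sim 1/\xi$, so transitions grow exponentially in $T_\ell$). The paper's key observation is the ``memory-loss'' feature of Definition~\ref{def:transition}: this transition time depends only on $T_\ell$ and not on $T_{\ell+1}$, so one may choose $T_{\ell+1}$ \emph{after} the transition, large enough to absorb it (this is exactly the recursion~\eqref{def:nk}). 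Once you replace the fixed $V_\pm$ by shrinking dynamic balls and insert this bookkeeping, your oscillation argument goes through; Subsections~\ref{subsec1}--\ref{subsec2} carry this out explicitly. The obstacle you should anticipate is therefore not the extraction of accumulation blocks from hypothesis~(b) but the control of the growing transition times, which your proposal has not addressed.
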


\color{black}

The previous result can be compared with the historic behavior obtained by Nakano~\cite{Nakano} in the context of random compositions of expanding maps in the circle. Indeed, exploring the uniformly expanding behavior of the maps, it is proven that for almost all $\omega\in \Sigma_\kappa$ (with respect to some reference probability $\mathbb P$) 
there exists a residual subset $\mathcal R_\omega\subset \mathbf S^1$ such that for each $x\in \mathcal R_\omega$
the random orbit $(f_\omega^n(x))_{n\geqslant 1}$ has historic behavior. Here we notice that no hyperbolic behavior is assumed on the fibers. 
Theorem ~\ref{thm:IFS} will be proven in Section~\ref{teoC}.

\medskip
The remaining results concern the entropy of the set of irregular points using two important concepts of entropy in the context of semigroup actions,
namely the entropy of free semigroup actions endowed with a random walk introduced by Bufetov~\cite{Bufetov} (see ~\cite{CRV,CRV2, JMW,ZhuMa2021} and references therein for further studies) and the geometric entropy introduced by Ghys, Langevin and Walczak \cite{GLW} (see also \cite{Bis2013} and references therein), which we recall in Subsection~\ref{sec:entropy-semigroup-annealed}.

\smallskip
The first notion is related, but does not coincide, with the entropy of the skew-product $F$ defined in ~\eqref{def:skew-eq}. Hence, given a continuous 
observable  $\psi: X\to \mathbb R$ it is natural to consider the quantity
\begin{align}
h_*(\varphi_\psi):=\sup\Big\{ 
c\geqslant 0 \colon 
& \text{there exist} \, \mu_1,\mu_2 \in \mathcal M_{erg}(F) \nonumber \\ 
& \,\text{so that}\, h_{\mu_i}(F)\geqslant c  \;\text{and}\, \int\varphi_\psi\, d\mu_1< \int\varphi_\psi\, d\mu_2
\Big\} \label{hestrela}
\end{align}
(where $\varphi_\psi$ is defined as before),  
which is the largest value of entropy almost attainable by ergodic probabilities which evaluate the observable 
$\varphi_\psi$ differently.   
The following result estimates the topological entropy of the set of Birkhoff irregular points for the skew-product $F$ with respect to the 
potential $\varphi_\psi$. 

\begin{maintheorem}\label{thm:B}
Let $X$ be a compact metric space and consider the semigroup action generated by the collection 
$G_1=\{id,f_1, f_2, \dots, f_\kappa\}$ of bi-Lipschitz homeomorphisms, and denote by $F: \{1,2,\dots,\kappa\}^{\Z}\times X \to \{1,2,\dots,\kappa\}^{\Z}\times X$ 
the skew-product $F(\om,x)=(\sigma(\om),f_{\omega_0}(x))$ where 
$\sigma$ is the shift.
Assume that $\varphi:\{1,2,\dots,\kappa\}^{\Z}\times X \to \R $ is a continuous observable. 
If
\begin{itemize}
	\item[(a)] the semigroup $\mathbb S$ generated by $G_1$ has frequent hitting times, and  
	\item[(b)] $\varphi$ is not a coboundary with respect to $F$
\end{itemize}
then $\htop(F, I_F(\varphi))\geqslant h_*(\varphi)$.
\end{maintheorem}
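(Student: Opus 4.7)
\textbf{Proof proposal for Theorem~\ref{thm:B}.}

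The plan is to adapt the Barreira--Schmeling construction of Cantor-like sets of points with oscillating Birkhoff averages, replacing the specification or gluing-orbit property used in earlier multifractal results by the frequent hitting times assumption. Fix $0<c<h_*(\varphi)$; by \eqref{hestrela} there exist two distinct ergodic $F$-invariant probabilities $\mu_1,\mu_2$ with $h_{\mu_i}(F)>c$ and $\beta_1:=\int\varphi\,d\mu_1<\int\varphi\,d\mu_2=:\beta_2$. For a small $\vep>0$, Katok's entropy formula applied to $F$ together with the ergodic theorem yields, for each sufficiently large $n$, an $(n,\vep)$-separated subset $E_n^{(i)}\subset\Sigma_\kappa\times X$ of cardinality at least $e^{nc}$ whose elements $y$ satisfy
\[
\Big|\tfrac1n\sum_{j=0}^{n-1}\varphi(F^j(y))-\beta_i\Big|<\eta_n, \qquad \eta_n\downarrow 0.
\]

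I would then fix a rapidly increasing sequence $(n_k)_{k\geqslant 1}$, alternate labels $i_k\in\{1,2\}$, and for each choice sequence $(\om^{(k)},x^{(k)})\in E_{n_k}^{(i_k)}$ build a point $(\hat\om,\hat x)\in\Sigma_\kappa\times X$ whose $F$-orbit $\vep$-shadows that of $(\om^{(k)},x^{(k)})$ during a window of length $n_k$, with consecutive windows separated by a bridge of bounded length $p_k\leqslant K(\vep)$. The base coordinate $\hat\om$ is obtained directly by concatenating the first $n_k$ symbols of $\om^{(k)}$ with bridging words $\tau_k\in\{1,\ldots,\kappa\}^{p_k}$ supplied by the frequent hitting times property, applied to the ball of radius $\vep$ around the fibre endpoint $f^{n_k}_{\om^{(k)}}(x^{(k)})$ of block $k$ and to the ball of radius $\vep/2$ around $x^{(k+1)}$. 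The fibre coordinate $\hat x$ is produced by a nested-preimage argument: writing $M_k=\sum_{j\leqslant k}(n_j+p_j)$, one requires $f^{M_{k-1}}_{\hat\om}(\hat x)$ to lie in the fibre Bowen ball $B_{n_k}(x^{(k)},\vep)$ along the block word, and, since the generators are bi-Lipschitz homeomorphisms of the compact space $X$, these constraints form a decreasing family of non-empty closed sets whose intersection is non-empty.

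With the $n_k$ chosen so that $M_{k-1}/n_k\to 0$, the partial Birkhoff averages $\frac1{M_k}\sum_{j=0}^{M_k-1}\varphi\circ F^j(\hat\om,\hat x)$ stay within $(\beta_2-\beta_1)/4$ of $\beta_{i_k}$ for every $k$, hence oscillate, so $(\hat\om,\hat x)\in I_F(\varphi)$. Two distinct choice sequences differing first at block $k$ give $F$-orbits that shadow $(n_k,\vep)$-separated points of $E_{n_k}^{(i_k)}$ inside the block window; the bridges, of length at most $K(\vep)$, introduce a uniform multiplicative loss of at most $L^{K(\vep)}$ in separation, which is absorbed by taking $\vep$ slightly smaller from the start. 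Since there are at least $\prod_{j\leqslant k}e^{n_j c}$ distinct branches at time $M_k$ and $\sum_{j\leqslant k} n_j/M_k\to 1$, the Pesin--Pitskel characterization of topological entropy on non-compact invariant sets yields $\htop(F,I_F(\varphi))\geqslant c$. Letting $c\nearrow h_*(\varphi)$ concludes the proof.

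The main obstacle is the fibre-bridging step. Unlike specification, the frequent hitting times property only guarantees that \emph{some} point of the starting ball has its orbit reaching the target ball, so the trajectory cannot be prescribed pointwise in advance. The resolution is the backwards nested-preimage construction outlined above, which works because $K(\vep)$ depends only on $\vep$, so that the geometric corrections across all bridges are uniform, and because the bi-Lipschitzness of the generators prevents degeneracy of the preimage sets at each stage.
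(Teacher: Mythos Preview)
The central claim that the bridges have length $p_k\leqslant K(\vep)$ is wrong, and this is exactly the difficulty the paper is built around. After you have $\vep$-shadowed the orbit of $(\om^{(k)},x^{(k)})$ for $n_k$ steps, the set of admissible fibre endpoints is not the full ball $B(f^{n_k}_{\om^{(k)}}(x^{(k)}),\vep)$ but only the image $f^{n_k}_{\om^{(k)}}(B_{\om^{(k)}}(x^{(k)},n_k,\vep))$ of the Bowen ball, and by Lemma~\ref{le:est-balls} this set is only guaranteed to contain a ball of radius $L^{-2n_k}\vep$. If you apply the frequent hitting times property to the full $\vep$-ball you obtain a word $\tau_k$ that may send into the target only points lying \emph{outside} this small image, and then your nested-preimage intersection is empty. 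The correct application of the hitting property is to the ball of radius $L^{-2n_k}\vep$, which costs a transition time bounded by $K(L^{-2n_k}\vep)$ rather than $K(\vep)$; this quantity may grow exponentially with $n_k$ (see Remark~\ref{rmk:irrational-rot} and Subsection~\ref{sec:vs-gluing}). The cure in the paper is to choose $n_{k+1}$ much larger than $K(L^{-2n_k}\vep)$ (Lemma~\ref{le:K-entropy}), so that the bridge at step $k$ is negligible relative to the \emph{next} block, not the previous ones.

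A second gap follows from the first: even once the bridges are allowed to be long, the transition time $p_k$ and the bridge word $\tau_k$ depend on \emph{both} $x^{(k)}$ and $x^{(k+1)}$. Hence two choice sequences that first differ at block $k$ may already have different bridge words $\tau_{k-1}$ and different starting positions $M_{k-1}$ for the $k$-th window, so the $(n_k,\vep)$-separation of $E_{n_k}^{(i_k)}$ does not translate into $(M_k,\vep)$-separation of the constructed points. The paper fixes this with a pigeonhole step (Lemma~\ref{le:selection}) that extracts, from the full tree of choices, a subfamily on which all transition times $p_i$ take a common value $t_i$; the cardinality loss $\prod_i K(L^{-2n_i}\vep)$ is again absorbed by the choice of $n_{i+1}$. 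Without this selection your separation and counting argument does not go through.
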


Some comments are in order. First, we observe that if the skew-product $F$ satisfies the specification property and the entropy map $\mu\mapsto h_\mu(F)$  
is upper-semicontinuous then $h_*(\varphi)=\htop(F)$ for every continuous map $\varphi$ which is not cohomologous to a constant (see \cite[Theorem~B]{EKW}).
However, the skew-product in ~\eqref{def:skew-eq} may even fail to satisfy a non-uniform specification property (cf.~Example~\ref{MS}) making it unclear if ergodic probabilities are dense or entropy dense in the simplex of $F$-invariant probabilities. Second,
the entropy $\htop(F, I_F(\varphi))$ appearing in the theorem is the entropy of the $F$-invariant subset $I_F(\varphi)$ of $\{1,2,\dots,\kappa\}^{\Z}\times X$,
as defined in Subsection~\ref{sec:single+sequential}.
The proof of the previous theorem will occupy Section~\ref{top-skew}.

\medskip
The notion of geometric entropy $h^{GLW}(\mathbb S)$ for the semigroup $\mathbb S$ depends on the generating set $G_1$ but does not depend on any intrinsic structure  
of the semigroup. Indeed, its definition encloses the exponential behavior of all possible concatenations of maps with the same number of 
generators (cf. Subsection~\ref{sec:entropy-semigroup-annealed} for the definition) and, consequently, it is larger or equal 
than the entropy $\htop(\mathcal F_\om)$ 
of any sequential (or non-autonomous) dynamical systems $\mathcal F_\om$ obtained with generating set $G_1$ (cf. Subsection~\ref{sec:single+sequential}
and equation~\eqref{comp:GLW.KS}). For that reason it is natural to consider the following quantity:
\begin{align}
	H^{KS}(\psi):=\sup\big\{ \htop(\mathcal F_\om) \colon \om \in\Sigma_\kappa
	\big\}. \label{Hestrela0}
\end{align}
It could occur that the elements $\om\in\Sigma_\kappa$ leading to a large entropy $\htop(\mathcal F_\om)$ above do not belong to the ergodic basin of attraction of some $\sigma$-invariant probability, in which case no information is known about the time averages of $\psi$ along the non-autonomous dynamical system 
$\mathcal F_\om$.
For that reason we introduce the following alternative quantities:
\begin{align}
	H^{\text{Pinsker}}(\psi):=\sup\Big\{ 
	c\geqslant 0 \colon & \text{for every}\, \vep>0\, \text{there exist} \, \mu_1,\mu_2 \in \mathcal M_{erg}(F) 
	\,\text{so that} \nonumber \\ 
	& h_{\mu_i}(F\mid\sigma) > c-\vep  \;\text{and}\, \int\varphi_\psi\, d\mu_1< \int\varphi_\psi\, d\mu_2
	\Big\}. \label{Hestrela2}
\end{align}
and
\begin{align}
	H^{\sigma}(\psi):=\sup\Big\{ 
	c\geqslant 0 \colon & \text{for every}\, \vep>0\, \text{there are} \, \mu_1,\mu_2 \in \mathcal M_{erg}(F) 
	\,\text{so that}  
	\nonumber \\ 
	& h_{\pi_*\mu_i}(\sigma) > c-\vep  \;\text{and}\, \int\varphi_\psi\, d\mu_1< \int\varphi_\psi\, d\mu_2
	\Big\}, \label{Hestrela3}
\end{align}
where we keep denoting by $\pi: \{1 , 2, \dots, \kappa\}^{\mathbb Z} \times X \to \{1 , 2, \dots, \kappa\}^{\mathbb Z} $ the projection on the first coordinate.
The latter quantities ~\eqref{Hestrela2} and ~\eqref{Hestrela3} measure the largest Pinsker entropy and entropy of the marginal on the shift, respectively, 
among the space of probability measures invariant by the skew-product which 
distinguish the observable $\varphi_\psi$. Once more, it can occur that the measures with larger entropy on the shift can have small fibered entropy. 
For that reason we consider take $H^{\text{Pinsker}}_\sigma(\psi):=\lim_{\zeta \to 0}H^{\text{Pinsker}}_\sigma(\psi,\zeta)$, where
\begin{align}
	H^{\text{Pinsker}}_\sigma(\psi,\zeta):=\sup\Big\{ 
	c\geqslant 0 \colon & \text{there are} \, \mu_1,\mu_2 \in \mathcal M_{erg}(F) 
	\,\text{so that} \;  h_{\mu_i}(F\mid\sigma)\geqslant H^{\text{Pinsker}}(\psi) -\zeta, 
	\nonumber \\ 
	& h_{\pi_*\mu_i}(\sigma)\geqslant  c  \;\text{and}\, \int\varphi_\psi\, d\mu_1< \int\varphi_\psi\, d\mu_2
	\Big\}. \label{Hestrela2-sigma}
\end{align}
We work these quantities and compare them in concrete examples of skew products associated to semigroup actions by continuous interval maps 
(see Examples~\ref{ex:comparison1} and \ref{ex:comparison2}).
We denote by $h^B(\mathbb S,I_{\mathbb S}(\psi))$ the geometric topological entropy of the set $I_{\mathbb S}(\psi)\subset X$ under the action of the semigroup 
$\mathbb S$, and by $h^{GLW}(\mathbb S,I_{\mathbb S}(\psi))$ when we observe it as a free semigroup action (cf. Subsections~\ref{sec:entropy-semigroup-annealed} and ~\ref{fibentropies} for the definitions of these entropies and relations 
between them). 
 Our next result shows that the geometric entropy of irregular sets for the semigroup action is bounded below by 
 these entropies.

\begin{maintheorem}\label{thm:Bb}
Let $X$ be a compact metric space and consider the semigroup action generated by a collection of bi-Lipschitz homeomorphisms 
$G_1=\{id,f_1, f_2, \dots, f_\kappa\}$.
Assume that $\psi: X\to \mathbb R$ 
is a continuous observable and that
\begin{itemize}
\item[(a)] the semigroup generated by $G_1$ has frequent hitting times, and 
\item[(b)] $\varphi_\psi$ is not a coboundary with respect to $F$.
\end{itemize}
Then:
\begin{enumerate}
\item  $h^{GLW}(\mathbb S,I_{\mathbb S}(\psi)) \geqslant H^{\text{Pinsker}}(\psi)$; 
\item 
$h^B(\mathbb S,I_{\mathbb S}(\psi)) \geqslant h_*(\varphi_\psi)$.
\end{enumerate}
\end{maintheorem}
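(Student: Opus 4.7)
The plan is to derive both inequalities from Theorem~\ref{thm:B} together with the natural comparisons (discussed in Subsections~\ref{sec:entropy-semigroup-annealed} and \ref{fibentropies}) between the various notions of entropy associated with the semigroup action $\mathbb S$ and the skew-product $F$.

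For part (2), Bufetov's entropy of the free semigroup action restricted to a subset $Y\subset X$ is, by its very definition, bounded below by the topological entropy $\htop(F,\{1,\dots,\kappa\}^{\mathbb Z}\times Y)$ of the corresponding $F$-invariant set for the skew-product: separated sets for $F$ inside $\{1,\dots,\kappa\}^{\mathbb Z}\times Y$ project to $Y$ as $(n,\vep)$-separated sets in the Bufetov sense. Applying this comparison to $Y=I_{\mathbb S}(\psi)$, combining it with the inclusion $I_F(\varphi_\psi)\subset \{1,\dots,\kappa\}^{\mathbb N}\times I_{\mathbb S}(\psi)$ from~\eqref{eq:inclus-irr}, and invoking Theorem~\ref{thm:B} (whose hypotheses (a)-(b) are exactly the present ones) yields
$$
h^B(\mathbb S, I_{\mathbb S}(\psi)) \;\geqslant\; \htop(F, I_F(\varphi_\psi)) \;\geqslant\; h_*(\varphi_\psi),
$$
which is (2).

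For part (1), I will produce, for any $c<H^{\text{Pinsker}}(\psi)$ and any small $\vep>0$, an explicit realization $\om^*\in\Sigma_\kappa$ together with a subset $Y_{\om^*}\subset I_{\mathbb S}(\psi)$ of Kolyada-Snoha topological entropy $\htop(\mathcal F_{\om^*}, Y_{\om^*})\geqslant c-O(\vep)$. Since $h^{GLW}(\mathbb S,\cdot)\geqslant \htop(\mathcal F_\om,\cdot)$ for every $\om$ by~\eqref{comp:GLW.KS}, letting $\vep\to 0$ and $c\to H^{\text{Pinsker}}(\psi)$ will then give (1). The definition of $H^{\text{Pinsker}}(\psi)$ supplies ergodic $\mu_1,\mu_2\in \mathcal M_{erg}(F)$ with $h_{\mu_i}(F\mid\sigma)>c-\vep$ and $\int\varphi_\psi\, d\mu_1<\int\varphi_\psi\, d\mu_2$; a Katok-type estimate for the fibered entropy then furnishes, at typical base points for $\pi_*\mu_i$, fiber-wise $(n,\vep)$-separated subsets of $X$ of cardinality at least $e^{n(c-2\vep)}$ for all sufficiently large $n$. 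The word $\om^*$ will be constructed as a concatenation of long symbolic blocks alternately generic for $\pi_*\mu_1$ and $\pi_*\mu_2$, bridged by the bounded-length connecting words supplied by the frequent hitting times hypothesis (a).

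The main obstacle will be the Cantor-type construction of $Y_{\om^*}$: at each stage one must simultaneously preserve the fibered separation---so that $Y_{\om^*}$ carries Kolyada-Snoha entropy at least $c-O(\vep)$ for $\mathcal F_{\om^*}$---and enforce that the Birkhoff averages $\tfrac1n\sum_{j=0}^{n-1}\psi(f^j_{\om^*}(x))$ oscillate between the two prescribed integrals, so that $Y_{\om^*}\subset I_{\mathbb S}(\psi)$. This is a multiscale construction in the spirit of Barreira-Schmeling and of the proof of Theorem~\ref{thm:B}, in which the frequent hitting times hypothesis (a) plays the role of a specification property in gluing successive blocks at bounded cost, while hypothesis (b) together with the strict inequality $\int\varphi_\psi\, d\mu_1<\int\varphi_\psi\, d\mu_2$ forces the divergence of Birkhoff averages along the constructed orbits.
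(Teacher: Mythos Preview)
Your strategy for part~(1) has a genuine gap: under hypothesis~(a) alone you cannot, in general, produce a \emph{single} realization $\om^*\in\Sigma_\kappa$ along which the Moran construction delivers Kolyada--Snoha entropy close to $H^{\text{Pinsker}}(\psi)$. The paper states this explicitly at the opening of Subsection~\ref{top-fibered-2}: ``Our mild assumptions seem not to allow to construct a single sequence $\tilde\om\in\Sigma_\kappa$ whose number of irregular points\ldots grows at least at the exponential rate of $H^{\text{Pinsker}}(\psi)-2\gamma$.'' The obstruction is that the transition word $\om^{(i(i+1))}$ furnished by the frequent hitting times property depends on \emph{both} the source ball (the image of the previous dynamic ball) and the target ball $B_{\om_{i+1}}(x_{i+1},n_{i+1},\vep)$. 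Pigeonholing over transition words (as in Lemma~\ref{le:K-entropy-fiber-GLW}) makes the word depend only on the source, but the source in turn depends on the entire history $(x_1,\dots,x_i)$. Hence different branches of the Moran tree are built along genuinely different finite words, and there is no common $\om^*$. The paper therefore abandons the single-path approach and instead exploits the flexibility in the definition of $h^{GLW}$: different pairs of points in the constructed set $E_k$ are separated by \emph{different} elements of $G_{m_k}$ (Lemma~\ref{le:GLW-separated}). Your single-$\om^*$ strategy is precisely what Theorem~\ref{thm:Bb-2} carries out, but that theorem requires the strictly stronger hypothesis that some generator $f_i$ is minimal, which forces the transition words to be powers of a single map and hence independent of the target.

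For part~(2), your claimed inequality $h^B(\mathbb S,Y)\geqslant \htop(F,\Sigma_\kappa\times Y)$ is in the wrong direction. The relation the paper invokes (from \cite{JMW}) is $\overline{Ch}_{\Sigma_\kappa^+\times Z}(F)=\log\kappa+\overline{Ch_{\text{top}}}(\mathbb S,Z,\mathbb P_{sym})$, so the skew-product capacity exceeds the Bufetov capacity by $\log\kappa$, not the reverse. Your heuristic ``separated sets for $F$ project to separated sets in the Bufetov sense'' fails because distinct points in $\Sigma_\kappa\times Y$ may differ only in the $\om$-coordinate and project to the same point of $Y$. The paper's route is to use this exact formula together with the inclusion~\eqref{eq:inclus-irr} and Theorem~\ref{thm:B}; the $\log\kappa$ correction is what makes the bookkeeping work.
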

This theorem will be proven in Section~\ref{top-semigroups}, where  item (2) above will be a consequence of Theorem~\ref{thm:B}
and a relation between the entropy of a free semigroup action with the entropy of the skew-product $F$. Finally, it is worth noticing 
that one can replace assumption (a) above by the weaker assumption that the semigroup $\mathbb S$ is \emph{strongly transitive}: for every open set
$U\subset X$ there exists $N=N(U)\geqslant 1$ so that $\bigcup_{g\in G_N} g(U)=X$. We refer the reader to the proof of Theorem~\ref{thm:Bb-2}
where this is made precise in the special case that a single generator in $G_1$ is strongly transitive.

\begin{remark}
The previous semigroup actions $\mathbb S$ and skew-product $F$ satisfy a very mild condition, and in general fail to satisfy any kind of specification. 
In the context of free semigroup actions satisfying the specification property, it is known that either $I_{\mathbb S}(\psi)=\emptyset$ or
$h^B(\mathbb S,I_{\mathbb S}(\psi)) =h^B(\mathbb S)$ (cf.  \cite[Theorem~1.2]{ZhuMa2021}).
\end{remark}

The information carried by Theorem~\ref{thm:Bb} is of global nature, enclosing information of the whole semigroup action $\mathbb S$. In particular it does not address any kind of information about the sequential dynamics that we now describe. 
For each $\om\in  \{1,2, \dots, \kappa\}^{\mathbb Z}$ consider the set
\begin{equation}\label{def-i-fibras}
I_\om(\psi):=X_\om \cap I_F(\varphi_\psi),
\end{equation} 
which consists of the set of points in $X_\om$ which have irregular behavior along the orbits of $\mathcal F_\om=(f_\omega^j)_{j\geqslant 1}$, and set
\begin{equation}\label{def:entrop-fibrada-irregular}
h^{path}_I(\mathbb S,\psi):= \sup_{\om\in \{1,2,  \dots, \kappa\}^{\mathbb Z}} \, h_{I_\om(\psi)}(\mathcal F_\om),
\end{equation} 
 where $h_{I_\om(\psi)}(\mathcal F_\om)$ is the relative entropy defined by the end of Subsection~\ref{fibentropies}. 
 In other words, $h_I^{fiber}(\mathbb S,\psi)$ represents the largest possible complexity of irregular sets obtained along an infinite path
 on the semigroup $G$.
The assumptions in Theorem~\ref{thm:Bb} allow one to produce Moran sets with large entropy, but whose intermediate transition times  functions 
(cf. Subsection~\ref{top-fibered-2}) 
depend on the points involved in the construction of such Cantor set and, consequently, these are not enough to estimate
the entropy of irregular sets along infinite paths in the semigroup. This is possible if one assumes instead a strong transitivity condition on one of the generators
of the semigroup. More precisely:

\begin{maintheorem}\label{thm:Bb-2}
Let $X$ be a compact metric space and consider the semigroup action generated by a collection of bi-Lipschitz homeomorphisms 
$G_1=\{id,f_1, f_2, \dots, f_\kappa\}$. Assume that $\psi: X\to \mathbb R$ is a continuous observable and that the following hold:
\begin{itemize}
\item[(a)]  there exists $1\leqslant i\leqslant \kappa$ so that $f_i$ is strongly transitive (i.e. for every open set $U\subset X$ there exists $N=N(U)\geqslant 1$ so 
	that $\bigcup_{n=1}^N f_i^n(U)=X$)
\item[(b)] $\varphi_\psi$ is not a coboundary with respect to $F$.
\end{itemize}
Then
\begin{enumerate}
\item  $h^{path}_I(\mathbb S,\psi) \geqslant H^{\text{Pinsker}}(\psi)$; 
\item the topological entropy of the set $\Sigma:=\{\om\in \Sigma_\kappa \colon h_{I_\om(\psi)}(\mathcal F_\om) \geqslant H^{\text{Pinsker}}(\psi)\}$ 
is larger or equal than $H^{\text{Pinsker}}_\sigma(\psi)$.
\end{enumerate}
\end{maintheorem}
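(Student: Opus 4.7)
The plan is to adapt the Moran-set construction from Theorems~\ref{thm:B} and~\ref{thm:Bb} to the non-autonomous setting, where the transitions between typical orbit segments must be realised along a single prescribed infinite word $\om$ rather than along arbitrary elements of the semigroup. The strong transitivity of $f_i$ takes the place of the frequent hitting-times property: suitable powers of the symbol $i$ embedded in $\om$ will serve as transition bridges between consecutive blocks. Fix small $\zeta,\delta>0$ and, using~\eqref{Hestrela2-sigma}, pick $\mu_1,\mu_2\in \mathcal M_{erg}(F)$ with $h_{\mu_j}(F\mid\sigma)\geqslant H^{\text{Pinsker}}(\psi)-\zeta$, $h_{\pi_*\mu_j}(\sigma)\geqslant H^{\text{Pinsker}}_\sigma(\psi,\zeta)-\delta$, and $a_1:=\int\varphi_\psi\, d\mu_1<\int\varphi_\psi\, d\mu_2=:a_2$. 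Choose $\vep\in(0,a_2-a_1)$ small. Following Theorem~\ref{thm:B}, the fibered Katok entropy formula yields, for $\pi_*\mu_j$-typical $\tau\in \Sigma_\kappa$ and each large $n$, a dynamically $(n,\vep)$-separated family $E_n^{(j)}(\tau)\subset X_\tau$ of $\mu_j$-generic points of cardinality $\geqslant\exp(n(H^{\text{Pinsker}}(\psi)-\zeta-\delta))$ along which the $\mathcal F_\tau$-averages of $\psi$ are within $\delta$ of $a_j$. The classical Katok formula applied to $(\sigma,\pi_*\mu_j)$ yields an $(n,e^{-1})$-separated family $\mathcal S_n^{(j)}\subset\Sigma_\kappa$ of $\pi_*\mu_j$-typical words with cardinality $\geqslant\exp(n(H^{\text{Pinsker}}_\sigma(\psi,\zeta)-2\delta))$. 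Finally, strong transitivity of $f_i$ provides $N=N(\vep)$ such that for every pair of balls $B,B'\subset X$ of radius $\vep/2$ there exist $p\in\{1,\dots,N\}$ and a subball $B''\subset B$ of radius $\vep/(2L^N)$ with $f_i^p(B'')\subset B'$.

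For item (1), take an increasing sequence $n_k$ with $(n_1+\dots+n_{k-1}+kN)/n_k\to 0$, alternate $j_k\in\{1,2\}$, and fix segments $\tau^{(k)}\in\mathcal S_{n_k}^{(j_k)}$, forming the word
\[
\om \;=\; \tau^{(1)}\,i^{p_1}\,\tau^{(2)}\,i^{p_2}\,\tau^{(3)}\,i^{p_3}\,\cdots,
\]
where each $p_k\leqslant N$ is determined inductively. Build the Moran set $K_\om\subset X_\om$ by nested intersection: at stage $k$, for each already-chosen orbit prefix ending near a point $y$, use strong transitivity to select $p_k$ and a subfamily of $E_{n_{k+1}}^{(j_{k+1})}(\tau^{(k+1)})$ whose initial points lie in the $f_i^{p_k}$-preimage of the $\vep$-ball around $y$. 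Since $p_k\leqslant N$ and $f_i$ is bi-Lipschitz, the loss of cardinality per transition is a uniform multiplicative constant, so the resulting Cantor set is $(n,\vep/(4L^N))$-separated under $\mathcal F_\om$ at exponential rate $\geqslant H^{\text{Pinsker}}(\psi)-\zeta-2\delta$, and each of its points has $\psi$-averages along $\mathcal F_\om$ oscillating between values within $2\delta$ of $a_1$ and $a_2$, hence $K_\om\subset I_\om(\psi)$. To remove the deficit $\zeta$, iterate the construction with measures realising a sequence $\zeta_k\searrow 0$ and $n_k$ growing so fast that $(n_1+\dots+n_{k-1})/n_k\to 0$; the $\limsup$ separation rate then equals $H^{\text{Pinsker}}(\psi)$, proving (1).

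For item (2), let each $\tau^{(k)}$ range independently over $\mathcal S_{n_k}^{(j_k)}$, producing a Cantor-like family $\Sigma'\subset\Sigma_\kappa$ of admissible $\om$. After $\ell$ blocks the total length is $m=\sum_{k=1}^{\ell}(n_k+p_k)=(1+o(1))\sum_{k=1}^{\ell}n_k$, so the number of distinct prefixes of length $m$ is at least $\prod_{k=1}^{\ell}\#\mathcal S_{n_k}^{(j_k)}\geqslant\exp(m(H^{\text{Pinsker}}_\sigma(\psi,\zeta)-3\delta))$, whence $h_{\topp}(\Sigma')\geqslant H^{\text{Pinsker}}_\sigma(\psi,\zeta)-3\delta$. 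By (1) every $\om\in\Sigma'$ satisfies the fiber-entropy bound $H^{\text{Pinsker}}(\psi)-\zeta-2\delta$; a diagonal exhaustion $\zeta,\delta\to 0$ combined with the additivity of topological entropy over countable unions yields a subset of $\Sigma$ with entropy $\geqslant H^{\text{Pinsker}}_\sigma(\psi)$.

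The main obstacle is the coupled accounting of fiber- and shift-side entropies needed for (2): measures maximising the fibered Pinsker entropy need not simultaneously attain the largest shift entropy, which is precisely why the auxiliary parameter $\zeta$ appears in~\eqref{Hestrela2-sigma} and why a limiting procedure is required. A second, more technical point is the control of dynamical separation across the bridges; because $f_i$ need not be expanding, Bowen balls may be stretched or compressed by up to a factor $L^N$ at each transition, forcing a rescaling of the separation parameter by $L^{-N}$ per step. Since $N$ is fixed while $n_k\to\infty$, this only costs a uniform multiplicative constant and does not affect the exponential rate.
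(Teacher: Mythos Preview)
Your proposal has a genuine gap in the use of strong transitivity. You claim that a single constant $N=N(\vep)$ bounds all transition times $p_k$, because strong transitivity lets you move any ball of radius $\vep/2$ to any other in at most $N$ steps. The trouble is that after the $k$-th block you are no longer working with a ball of radius $\vep/2$: by Lemma~\ref{le:est-balls}, the image $f_{\tau^{(k)}}^{n_k}(B_{\tau^{(k)}}(x,n_k,\vep))$ is only guaranteed to contain a ball of radius $L^{-2n_k}\vep$, and it is from this exponentially shrinking ball that you must launch the next transition. Strong transitivity then yields a bound $p_k\leqslant N_{L^{-2n_k}\vep}$ that depends on the scale, and this can grow without bound (cf.\ Remark~\ref{rmk:rotation}). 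So neither the claim ``$p_k\leqslant N$'' nor ``the loss of cardinality per transition is a uniform multiplicative constant'' is justified; as written, the Moran set need not exist and the separation/cardinality estimates collapse.

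The paper faces exactly this difficulty and resolves it in two moves. First, it quantifies strong transitivity via the Lebesgue covering number $\mathfrak l_\delta$ (see~\eqref{eq:top-exact-conseq}): from a ball of radius $\delta$ one can cover any target ball of radius $<\mathfrak l_\delta$ in at most $N_\delta$ steps. Second, it chooses $(n_\ell)$ so fast-growing that both $n_\ell\gg N_{L^{-2n_{\ell-1}}\vep}$ and $L^{-n_\ell}\vep<\mathfrak l_{L^{-2n_{\ell-1}}\vep}$ (condition~\eqref{eq:select-nell} in Lemma~\ref{le:K-entropy-fiber}), so the variable transition costs are eventually dominated by the next block length. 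Because the transition time $p_k$ still depends on the specific points being connected, the paper then applies a pigeonhole argument (Lemma~\ref{le:selection-fiber-omega}) to extract a large subfamily on which all $p_k$ equal fixed values $t_k$; only then does the whole Moran family sit over a single word $\om$, and the fibered entropy distribution principle (Proposition~\ref{pro. princ dist. entropy fiber}) yields the lower bound. For item~(2) an additional pigeonhole on the shift side (Proposition~\ref{prop:skew-Cantor}) is needed to synchronise the cylinder lengths, since different choices of $\tau^{(k)}$ produce different $t_k$. Your outline omits both the scale-dependent accounting and these selection steps; once you incorporate them, the argument essentially becomes the paper's.
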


Theorem~\ref{thm:Bb-2} is proven in Section~\ref{top-fibered}.
Notice that assumption (a) above is equivalent to the minimality of the homeomorphism $f_i$ in the generating set $G_1$. 
We opted to state the condition as a strong transitivity condition as this equivalent form may turn out to be useful for an extension
of this result to semigroups generated by non-invertible maps (see Subsection~\ref{sec:overview} for more details). Since 
$g_i$ is not assumed to be an isometry, the semigroup action is not required to have the frequent hitting times property. 
Item (2) in the previous theorem ensures that the set of points in $\Sigma_\kappa$ which generate sequential dynamical systems $\mathcal F_\omega$ 
with large irregular sets has large entropy, bounded below given by the largest possible entropy (on the shift) of the $F$-invariant probabilities   
whose fibered entropies can be used to compute $H^{\text{Pinsker}}(\psi)$.
Let us derive the following trivial consequence of Theorem~\ref{thm:Bb-2} whenever all generators coincide.

\begin{maincorollary}
Let $X$ be a compact metric space and $f: X\ \to X$ be a minimal bi-Lipschitz homeomorphism.
If $f$ is not uniquely ergodic and $\psi: X\to \mathbb R$ is a continuous observable which is not a coboundary then
$h_{I_f(\psi)}(f) \geqslant H^{\text{Pinsker}}(\psi)$.
\end{maincorollary}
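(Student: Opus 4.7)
The plan is to derive this corollary as a direct specialization of Theorem~\ref{thm:Bb-2}, after collapsing the semigroup action to a single homeomorphism. I take $\kappa = 1$ (or, equivalently, $\kappa$ arbitrary with $f_1 = f_2 = \dots = f_\kappa = f$), so that $G_1 = \{\text{id}, f\}$ generates the cyclic semigroup $\mathbb S = \{f^n : n \geqslant 0\}$. With this choice, for every $\om \in \Sigma_\kappa$ the non-autonomous system $\mathcal F_\om = (f_{\omega_j})_{j\geqslant 1}$ coincides with the iteration of $f$, so $\mathcal F_\om^n = f^n$. Consequently, $I_\om(\psi) = I_f(\psi)$ and $h_{I_\om(\psi)}(\mathcal F_\om) = h_{I_f(\psi)}(f)$ for all $\om$, whence the path-entropy reduces to $h^{path}_I(\mathbb S,\psi) = h_{I_f(\psi)}(f)$.

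Next I verify the two hypotheses of Theorem~\ref{thm:Bb-2}. For hypothesis (a), I claim that a minimal homeomorphism $f$ of the compact metric space $X$ is strongly transitive. Since $f$ is a homeomorphism, $f^{-1}$ is also minimal. Given any nonempty open $U \subset X$, by minimality of $f^{-1}$ the sets $\{f^n(U) : n \geqslant 1\}$ form an open cover of $X$ (as every point has its backward $f^{-1}$-orbit visiting $U$), and compactness yields a finite subcover, producing $N = N(U)$ with $\bigcup_{n=1}^N f^n(U) = X$. For hypothesis (b), since the skew-product $F$ collapses (up to isomorphism) to $f$ in this setting and $\varphi_\psi(\om,x) = \psi(x)$, a coboundary relation $\varphi_\psi = u \circ F - u$ with $u$ continuous would project to a coboundary relation for $\psi$ under $f$ (after averaging $u$ over the $\sigma$-fiber, or by passing to a point-evaluation in $\om$), contradicting the hypothesis that $\psi$ is not a coboundary for $f$.

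It remains to match the quantity $H^{\text{Pinsker}}(\psi)$ of Theorem~\ref{thm:Bb-2}, computed relative to $F$, with its intrinsic counterpart for $f$. Since $F = \sigma \times f$ is a genuine direct product when all generators coincide, given any two ergodic $f$-invariant probabilities $\mu_{0,1}, \mu_{0,2}$ distinguishing $\psi$, I fix any ergodic $\sigma$-invariant Bernoulli probability $\nu$ on $\Sigma_\kappa$ (arbitrarily entropic if $\kappa \geqslant 2$; or the trivial probability if $\kappa=1$) and form the product measures $\mu_i = \nu \times \mu_{0,i}$. These are $F$-ergodic (classical for a product of ergodic systems with at least one Bernoulli factor), have $\int \varphi_\psi \, d\mu_i = \int \psi\,d\mu_{0,i}$, and by the Abramov--Rokhlin formula satisfy $h_{\mu_i}(F\mid \sigma) = h_{\mu_{0,i}}(f)$. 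Hence the supremum~\eqref{Hestrela2} computed for $F$ is at least the analogous supremum computed for $f$ --- which is precisely how $H^{\text{Pinsker}}(\psi)$ must be read in the statement of the corollary (since there is no semigroup to speak of).

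Combining the previous steps, Theorem~\ref{thm:Bb-2}(1) yields
\[
h_{I_f(\psi)}(f) \;=\; h^{path}_I(\mathbb S,\psi) \;\geqslant\; H^{\text{Pinsker}}(\psi),
\]
which is the desired inequality. I anticipate no serious obstacle in this argument; the only point requiring minor care is the bookkeeping that reduces the fibered quantities on the skew-product $F$ to their single-map analogues for $f$, and the verification that minimality of a homeomorphism upgrades to strong transitivity, both of which are standard.
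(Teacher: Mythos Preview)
Your proposal is correct and follows precisely the paper's intended route: the paper states the corollary as a ``trivial consequence of Theorem~\ref{thm:Bb-2} whenever all generators coincide'' and gives no further argument, so your detailed verification of hypotheses (a) and (b) and the reduction of the fibered quantities to the single-map setting is exactly the expected elaboration.

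One minor terminological correction: in this paper, ``not a coboundary'' is \emph{defined} (see \eqref{eq:ncohom} in Subsection~\ref{subsec:Livsic}) to mean $\inf_{\mu\in\cM_1(f)}\int\psi\,d\mu < \sup_{\mu\in\cM_1(f)}\int\psi\,d\mu$, rather than the absence of a cohomology equation $\psi = u\circ f - u$. Your verification of hypothesis (b) should therefore read: since $\psi$ is not a coboundary for $f$, there exist $f$-invariant probabilities $\mu_{0,1},\mu_{0,2}$ with $\int\psi\,d\mu_{0,1}\neq\int\psi\,d\mu_{0,2}$; lifting via $\mu_i=\nu\times\mu_{0,i}$ for any $\sigma$-invariant $\nu$ gives $F$-invariant probabilities with $\int\varphi_\psi\,d\mu_1\neq\int\varphi_\psi\,d\mu_2$, so $\varphi_\psi$ is not a coboundary for $F$. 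This is actually simpler than the projection argument you sketched, and aligns with how the paper uses the term throughout.
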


\medskip
\subsection{Comparison with previous methods and approaches}\label{sec:overview}
 
Most well known methods to describe irregular sets make use of topological methods,  as some weak forms of specification or shadowing. 
In rough terms, in the additive context these are used in the following two step argument: for each continuous map $f: X\to X$ on the compact metric space $X$
satisfying the specification property and every continuous observable $\varphi: X\to \mathbb R$ one has
\begin{enumerate}
\item[(I)] if $\mu_1,\mu_2$ are ergodic probabilities so that $\int \varphi \, d\mu_1<\int \varphi \, d\mu_2$ there exist points which shadow arbitrarily long pieces of 
orbits whose time averages are close to $\int \varphi \, d\mu_1$ and $\int \varphi \, d\mu_2$, alternatively, and whose time between each shadowing process 
is a constant independing on the size of the shadowing orbits;
\item[(II)] for each convex combination $\mu$ of ergodic probabilities $\mu_1, \mu_2, \dots, \mu_s$ there exist points 
which shadow finite pieces of orbits determined by typical points for $\mu_i$, alternatively, and whose time averages converge to $\int \varphi\, d\mu$.
\end{enumerate} 
The first step guarantees that the irregular set is non-empty and can be used in a crucial way to show that this set is actually Baire generic (see e.g.
\cite{Barreira1} and references therein). The second step, combined with the fact that the times between shadowing processes are constant, 
allows to construct a Moran set formed by points with oscillatory behavior and carrying full entropy (cf. \cite{Barreira,CKS}).  
Specification can actually be used to study the saturated sets as obtained by Pfister and Sullivan ~\cite{PS} and the same holds when time lags between
shadowing processes are bounded by some constant or have sub-linear growth in comparison to the size of orbits (cf. ~\cite{LV,TV}). 
The skew-products 
\begin{equation*}
\begin{array}{cccc}
F \colon & \{1,2,  \dots, \kappa\}^{\mathbb Z} \times X & \longrightarrow  & \{1,2,  \dots, \kappa\}^{\mathbb Z} \times X \\
& (\underline{\omega},x) & \mapsto  & (\sigma(\underline{\omega}), f_{\omega_0}(x)),
\end{array}
\end{equation*}
associated to the semigroup action $\mathbb S$ with generating set $G_1=\{id, f_1,f_2, \dots, f_\kappa\}$
does not satisfy any of these properties as a consequence of \cite{BTV2} and Example~\ref{MS}.
In \cite{Tian,Tian2}, this is overcome by requiring an extra H\"older continuous regularity on the cocycle and making use of 
the exponential specification property for the base dynamical system. 

\smallskip
Here, inspired by the projective dynamics of linear cocycles, we introduce an alternative sufficient condition (satisfied whenever there exists a minimal 
sub-action by isometries, see Subsection~\ref{suff}) to study irregular points. Such condition is a quantitative estimate for hitting times between balls 
of controlled size and their intersection. 
The frequent hitting times property is unrelated to specification and its weaker versions (see Subsection~\ref{sec:vs-gluing} for a discussion), and 
following diagram presents some relations between the latter and related concepts:
\[
\begin{array}{cccc}
\text{minimal action by isometries}  &  \Rightarrow & \text{frequent hitting times}  & (\text{cf. Subsection~\ref{sec:vs-gluing}}) \\
& & \not\Uparrow \; \not\Downarrow \\
\text{ $C^0$-generic homeomorphisms}  &  \Rightarrow & \text{gluing orbit property  }  &\text{\cite{BeTV}} \\
& & \Uparrow \; \not\Downarrow  \\
\text{ uniform hyperbolicity }  &  \Rightarrow & \text{specification property  }   &\text{\cite{Bo71}} \\
& & \Downarrow  \; \not\Uparrow  \\
\text{ non-uniform hyperbolicity }  &  \Rightarrow & \text{non-uniform specification property  }  &\text{\cite{OliTian,Paulo}} \\
& &   \\
\text{partial hyperbolicity }  &  \Rightarrow &  \text{'typically' {\bf no} specification }  & \text{\cite{SVY}} \\
 &   &  \text{and {\bf no}  gluing orbit property  }  &\text{\cite{BTV2}} \\
\end{array}
\]

 \smallskip
 The assumption that the homeomorphisms $f_i$ are bi-Lipschitz is used to estimate the inner diameter of the images
of dynamic balls (cf. Lemma~\ref{le:est-balls}).  This information, combined with the frequent hitting times assumption, provides quantitative 
estimates on the amount of time needed to perform the shadowing of the finite pieces of orbits. Notably, albeit transition times are not constant and can be
much larger than the pieces of orbits that have been shadowed (these actually grow exponentially fast), these satisfy a Markov property, meaning that depend exclusively on the size of the last piece of orbit. This provides a new argument for step (I) above and, under much weaker assumptions, still to prove that the irregular sets are non-empty and Baire generic (we refer the reader to Figure~\ref{fig01} for a representation of the control over the oscillations under 
the different previous circumstances). 
 
 \smallskip
 The situation differs substantially in step (II), which is the most relevant to the construction of Moran sets with arbitrarily large entropy. 
First, while in the case of maps with specification one can make successive approximations of points which shadow very large pieces of orbits, this strategy
 falls shortly in our context due to the much larger transitions times associated. We introduce a new argument, which consists of controlling the forward images of dynamic balls for all iterates, guaranteeing that a proportion of points remains possible to iterate at each level of the construction (see $2^{nd}$Step at Subsection~\ref{MLEntropy}). 
 The construction is made by choosing points which, alternatively, shadow pieces of orbits which are typical for ergodic probabilities $\mu_1,\mu_2$ having 
 different space averages with respect to $\varphi$. A subtle but key difference is that since one cannot replicate small orbits by its repetition, the size of 
$n_k$ the finite pieces of orbits to be shadowed at level $k$ are chosen depending on the maximal transition time $K$ involving the pieces of orbits
of size $n_{k-1}$ used at the level $k-1$. Selecting points which have the same transition times at each level of approximation guarantees that 
one can construct distinguishable orbits. These choices are performed in Lemma~\ref{le:selection}.
Second, the failure of step (II), imposed by the very long transition times, impart that one can only use typical points associated to ergodic probabilities
as in step (I) for the construction of a Moran set of irregular points. This explains the lower bounds on the entropy of irregular sets provided in  
Theorems~\ref{thm:Bb} and ~\ref{thm:B}, and the fact that every point is irregular for some path in the semigroup 
(for instance, recall the first statement in Theorem~\ref{thm:abstract}).

 \smallskip
Finally, notice that although the main results have been stated for semigroups generated by homeomorphisms, 
the argument just makes use of the semigroup structure.  
The bi-Lipschitz continuity of the generators guarantees not only that the semigroup action (and the skew-product) have finite topological entropy but is also is crucial to control the inner diameter of dynamic balls and their images  (cf. Lemma~\ref{le:est-balls}).
Nevertheless, the Ces\`aro averages  and the proofs of the main results take into consideration just the forward iterates of the mappings. 
A simple modification of the argument is likely to show that the main results hold as well for local homeomorphisms which are Lipschitz continuous
and whose inverse branches are Lipschitz continuous as well, but we do not pursue this here.

\subsection{Organization of the text}\label{se:organization} 

The remainder of the  paper is organized as follows. 
\begin{itemize}
\item[$\diamond$] In Section~\ref{sec:prelim-hyp-coboundaries}  we recall basic definitions concerning uniform and partial hyperbolicity, dominated splittings and
	coboundaries.
\item[$\diamond$] In Section~\ref{subsec:entropies} we include a broad discussion about the different notions of topological and measure theoretical entropies 
	for a single map, non-autonomous dynamical systems  and semigroup actions that will be used here. We first recall the notion of fibered entropy for 
	extensions and the entropy of non-autonomous 
	dynamical systems, and prove a fibered entropy distribution principle.  Then we describe two notions of entropy of a finitely generated 
	semigroup action due to Ghys, Langevin and Walczak (sometimes known as geometric entropy) and another due to Bufetov, which 
	exploits a coding by means of a free semigroup action and makes use of a skew-product map over a full shift on finitely many symbols.
\item[$\diamond$] Section~\ref{sec:prelim} is devoted to the study of the frequent hitting times condition. 
	 We provide sufficient conditions for it to hold, and give some examples on how this is useful in the context of
	projective maps arising from linear cocycles which have some projective accessibility. 
	Furthermore, we observe that this property is much weaker and unrelated to other gluing orbit properties
	and need not occur for uniformly hyperbolic maps.
\item[$\diamond$] 	
	In Section~\ref{teoC} we start a series of results on the set of irregular points with respect to a finitely generated semigroup action by continuous maps
	on a compact metric space $X$. Here we prove Theorem~\ref{thm:IFS}, stating that under the mild frequent hitting times condition, Birkhoff irregular sets for semigroup actions are either empty or Baire generic  
	on the phase space $X$.
\item[$\diamond$] In Section~\ref{top-skew}, motivated by the study of free semigroup actions,  we consider step skew-products  
	with the frequent hitting times property, and prove that its set of irregular points in $\Sigma_\kappa \times X$ carries at least as much entropy as the 
	one of any pair of ergodic probabilities with 	
	different space averages (recall Theorem~\ref{thm:B}). This lower bound reflects the fact that under this mild 
	assumptions one should not expect the entropy denseness of the space of ergodic probability measures. 
\item[$\diamond$]  In Section~\ref{top-semigroups} we obtain lower bounds for the topological entropy of irregular points for the semigroup action, considering
	both the Ghys-Langevin-Walczak and Bufetov entropies. While Bufetov entropy measures the average number of points with irregular behavior for 
	the non-autonomous dynamical systems associated to the different concatenations of the generators of the group and can be studied in relation to the skew-product map, the lower bound
	for the Ghys-Langevin-Walczak entropy demands a careful choice of a set of points which can be separated by some element in the group.
\item[$\diamond$]  In Section~\ref{top-fibered} we consider the topological entropy of fibered irregular sets, that is, the largest possible entropy of irregular sets
	associated to all possible non-autonomous dynamical systems. There are two main differences in comparison to the results of Sections~\ref{top-semigroups} 
	and ~\ref{top-fibered}. First, in order to construct Moran subsets formed by points with irregular behavior for certain (actually a large entropy set of) 
	non-autonomous dynamical systems we require some generator in $G_1$ to be strongly transitive. Second, the strong transitivity is weaker than the 
	frequent hitting times condition, hence this furnishes an alternative mechanism for homeomorphisms that do not arise as projective linear maps.
\item[$\diamond$]  In Section~\ref{sec:linear-cocycles} we apply the theory developed to general finitely generated semigroup actions to the context for 
	projective linear cocycles. 
	In Subsection~\ref{subsec:projectivelinear} we present some preliminary results on the of projective linear maps and invariant measures for the 
	projectivized skew-product. 
	In Subsection~\ref{sec:p-abstract} 
	the non-compactness of the semigroup generated by the $SL(d,\mathbb R)$ matrices allows to relate hyperbolic behavior of the 
	concatenations of matrices along specific vectors with its recurrence to certain regions of the projective space, and use this fact to prove
	Theorem~\ref{thm:abstract}.
	Actually in the special case of $SL(3,\mathbb R)$-cocycles the absence of irregular behavior leads to a rigidity that the cocycle is cohomologous to a constant 
	matrix cocycle
	(cf. Theorem~\ref{thm:rigidity} and Subsection~\ref{sec:p-rigidity}).
	We finish this section with few examples. 
\end{itemize}

\section{Uniform hyperbolicity and specification type properties}\label{sec:prelim-hyp-coboundaries}

In this brief section, included for making the paper self-contained, 
we recall some basic concepts concerning uniform, partial and dominated splittings for linear and derivative cocycles, 
{the concept of specification and some of its weakenings},
and 
recall some characterizations of coboundaries due to Liv\v{s}ic. 

\subsection{Hyperbolicity, partial hyperbolicity and dominated splittings}\label{subsec:hyp}

Given a continuous linear cocycle $A: {\Sigma_\kappa} \to \mathrm{SL}(d,\mathbb R)$ we say that an $A$-invariant splitting
$\Sigma_\kappa \times \mathbb R^d=E \oplus F$ is \emph{dominated} if there exists an integer $k\in\mathbb{N}$ such that
$$\frac{\|A^k(\om)v\|}{\|A^k(\om)w\|}<\frac{1}{2},$$
for every $\om\in {\Sigma_\kappa}$ and every pair of unit vectors $v\in E_\om$
and $w\in F_\om$.
More generally, an $A$-invariant splitting $\Sigma_\kappa \times \mathbb R^d=E_1\oplus\cdots\oplus E_k$ is \emph{dominated} if
$(E_1\oplus\cdots\oplus E_l)\oplus(E_{l+1}\oplus\cdots\oplus E_k)$ is dominated for any $1\le l\le k-1$.
The $A$-invariant sub-bundle $E$ is \emph{uniformly contracting} (resp. \emph{expanding}) if
there are $C>0$ and $0<\lambda<1$ such that for every $n>0$ one has
$\|A^n(\om)v\|\leqslant C\lambda^n\| v\|$ (resp. $\| (A^n(\om))^{-1}(v)\|\le
C\lambda^n\| v\|$) for all $\om\in \Sigma_\kappa$
and $v\in E_\om$.
Finally, we say that an $A$-invariant splitting $TM=E^s\oplus E^c\oplus E^u$ is: (i) \emph{partially hyperbolic} (resp. \emph{strongly partially hyperbolic})
if $E^s$ and $E^u$ are uniformly contracting and uniformly expanding respectively
and at least one of them is (resp. both of them are) not trivial; and (ii) \emph{hyperbolic} 
if it is strongly partially hyperbolic and $E^c=\Sigma_\kappa\times \{0\}$.
\smallskip
These notions have a natural counterpart for $C^1$-diffeomorphisms acting on a compact Riemannian manifold $M$. Indeed, such a $C^1$-diffeomorphism
$f: M\to M$ is \emph{uniformly hyperbolic} (or an Anosov diffeomorphism) (resp. \emph{partially hyperbolic}) if the derivative cocycle $Df:TM\to TM$ on the tangent bundle $TM$ 
is uniformly hyperbolic (resp. {partially hyperbolic}).

\subsection{Specification and gluing orbit properties}\label{subsec:spec}

First we recall the concept of specification,  introduced by Bowen \cite{Bo71} in the context of hyperbolic maps, which in rough terms 
describes the ability to shadow an arbitrary number of finite pieces of orbits of arbitrary length having a prefixed number of iterates in between.
Let us be more precise. 

\smallskip
Given a continuous map $f\colon X \to X$ on a  compact metric space $X$,
we say that $f$ satisfies the \emph{gluing orbit property} if for any $\vep>0$ there exists a positive integer
$N(\vep)\geqslant 1$ such that for any points 
$x_1,\dots, x_k\in X$ and any
integers
$n_1, \dots, n_k \geqslant 0$ and $N_1, N_2, \dots, N_{k-1}\geqslant N(\vep)$
there exists a point $z \in X$ so that
$d(f^n(z),f^n(x_1)) < \vep$ for every $n=0 \ldots n_1$ 
and $d(f^{n+\sum_{1 \leq i <j}(N_i+n_i)}(z),f^n(x_j))<\vep$
 for every $2 \leqslant j \leqslant k$ and $0 \leqslant n \leqslant n_j$. 
It is clear that the latter implies that $f$ is topological mixing.

A somewhat weaker notion has been used ever since,
especially in the symbolic context, which some authors sometimes referred to as the specification property. 
In the context of topological dynamical systems this was formalized in \cite{BV} as follows.
We say that a continuous map $f\colon X \to X$ on a  compact metric space $X$,
satisfies the \emph{gluing orbit property} if: given $\vep>0$ there exists a positive integer
$N(\vep)\geqslant 1$ such that for any points 
$x_1,\dots, x_k\in X$
and times 
$n_1, \dots, n_k \geqslant 0$
there exist gluing times 
$0 \leqslant p_1, \dots, p_{k-1}  \leqslant N(\vep)$
and a point $z \in X$ so that
$d(f^n(z),f^n(x_1)) < \vep$ for every $n=0 \ldots n_1$ 
and $d(f^{n+\sum_{1 \leq i <j}(p_i+n_i)}(z),f^n(x_j))<\vep$
 for every $2 \leqslant j \leqslant k$ and $0 \leqslant n \leqslant n_j$. 

\smallskip
Even though the previous two notions can seem similar at first sight, the gluing orbit property is substantially weaker than the notion of specification, and it holds for non-mixing hyperbolic basic sets, 
minimal rotations and some partially hyperbolic maps with neutral direction.  
Hence, while the specification property implies on topological mixing and positive topological entropy, the same does not hold
for all continuous maps with the gluing orbit property. A characterization of zero entropy maps with the gluing orbit property appeared in \cite{Sun}.
We refer the reader to  \cite{BTV} and references therein for a more embracing discussion on these topological invariants.

\smallskip
Finally, we shall refer to a measure theoretical non-uniform  specification property, 
that proved to hold for invariant measures of local diffeomorphisms with no zero Lyapunov exponents (cf.~\cite{OliTian,Paulo}).
Indeed, if $f$ is a $C^1$-local diffeomorphism and  $\mu$ is an $f$-invariant probability measure,
we say that $(f,\mu)$ satisfies the \emph{non-uniform specification property}
if there exists $\de>0$ and for $\mu$-almost every $x$ and $0<\vep<\de$ there exists an integer $p(x,n,\vep)\geqslant 1$
satisfying
\begin{equation}\label{def:nuspec}
\lim_{\vep\to 0}\limsup_{n \to \infty} \frac{1}{n} p(x,n,\vep)=0
\end{equation}
and such that the following property holds:
given $x_1, \dots, x_k$ in a full $\mu$-measure set
any integers $n_1, \dots, n_k\geqslant 1$ and $p_i \geq p(x_i,n_i,\vep)$
then there exists $z \in B(x_1,n_1,\vep)$ such that
$
 f^{\sum_{j=1}^{i-1} (n_j+p_j)}(z) \in B(x_i,n_i,\vep)$
for every $2\leqslant i\leqslant k.$
In contrast to their topological counterparts, in this context the time lag between any two consecutive pieces of
orbits may depend on the sizes of the orbits, but it is required to have sub-linear grow in respect to these (cf. ~\eqref{def:nuspec}).

\subsection{Coboundaries and Liv\v{s}ic-like characterization of coboundaries}\label{subsec:Livsic}

Given a continuous map $f:X\to X$ on a compact metric space $X$, we say that a continuous observable $\varphi : X \to \mathbb R$ is 
\emph{not a coboundary} (with respect to $f$) if 
\begin{equation}\label{eq:ncohom}
\inf_{\mu\in \cM_1(f)}  \int \varphi \,d\mu < \sup_{\mu\in \cM_1(f)}  \int \varphi \,d\mu
\end{equation}
where $\cM_1(f)$ denotes the space of $f$-invariant probability measures.
In the special case that $f$ satisfies the specification property the latter is equivalent to say that $\varphi$ satisfies  \eqref{eq:ncohom}
if and only if $\varphi$ is not $C^0$-approximated by a sequence of functions $(\varphi_n)_n$ of the form
$\varphi_n = u_n - u_n\circ f +c_n$ for some $c_n\in \mathbb R$ and some continuous $u_n: X\to \mathbb R$. Any such function $\varphi_n$ is called
coholomogous to a constant and, if it is H\"older continuous then it is characterized through Birkhoff averages at periodic points by Liv\v{s}ic  theorem \cite{Livsic}:

\begin{theorem}
Let $G$ be a metric group, $f: M \to M$ be an Anosov diffeomorphism and let $A: M \to G$ be H\"older continuous cocycle. If either $G=\mathbb R$
or $G$ is an abelian group and $A,B: M \to G$ are so that $A^n(p)=B^n(p)$ whenever $f^n(p)=p$ then there exists a continuous cocycle $C: M\to G$ so that $A(x)=C(f(x)) B(x) C(x)^{-1}$
\end{theorem}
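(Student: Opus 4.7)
The plan is to follow the classical strategy originated by Liv\v{s}ic and to reduce the statement to a single cohomological equation. Working in the abelian case (additive notation to fix ideas), set $\varphi := A - B$; this is a H\"older continuous cocycle whose periodic integrals $\varphi^n(p) = A^n(p) - B^n(p)$ vanish for every periodic point $p$. It suffices to build a continuous $C: M \to G$ with $\varphi(x) = C(f(x)) - C(x)$, since then $A$ and $B$ are cohomologous via $C$ in the required multiplicative form (the abelianness of $G$ is essential to write $A(x) = C(f(x)) B(x) C(x)^{-1}$ as a consequence of $A - B = C \circ f - C$).

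First I would invoke topological transitivity of the Anosov diffeomorphism $f$ on $M$ to pick a point $x_0 \in M$ with dense forward orbit. Along the orbit $\{f^n(x_0) : n \geqslant 0\}$ I would define $C$ by the forced prescription
\begin{equation*}
C(x_0) := 0, \qquad C(f^n(x_0)) := \sum_{j=0}^{n-1} \varphi(f^j(x_0)) \quad (n \geqslant 1),
\end{equation*}
which is consistent with the desired coboundary equation. The crucial step is to establish that $C$ is uniformly continuous (in fact H\"older) on this dense orbit, after which one extends $C$ uniquely to a continuous map on $M$, and the identity $\varphi = C \circ f - C$ propagates from the dense orbit to all of $M$ by continuity.

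The heart of the proof, and the main obstacle, is the uniform continuity estimate. Given two orbit points $f^m(x_0)$ and $f^n(x_0)$ (say $m < n$) with $d(f^m(x_0), f^n(x_0)) < \delta$ small, I would apply the Anosov closing lemma to produce a periodic point $p$ of period $n - m$ whose orbit shadows $\{f^{m+k}(x_0)\}_{k=0}^{n-m}$ exponentially fast, with exponential contraction towards the endpoints. Using $\varphi^{n-m}(p) = 0$ I would rewrite
\begin{equation*}
C(f^n(x_0)) - C(f^m(x_0)) = \sum_{k=0}^{n-m-1} \bigl[\varphi(f^{m+k}(x_0)) - \varphi(f^k(p))\bigr],
\end{equation*}
and bound each summand via the H\"older norm of $\varphi$ and the exponential shadowing, obtaining a geometric sum dominated by a constant times $\delta^{\alpha}$ independently of $n - m$. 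This is the telescoping-shadowing computation that makes Liv\v{s}ic's theorem work, and abelianness of $G$ is what lets the telescoped sum collapse without commutator defects. Once this estimate is in hand, the extension and verification steps are routine, and one obtains a continuous (in fact H\"older) solution $C$ to the coboundary equation, which translates into the required conjugacy identity between $A$ and $B$.
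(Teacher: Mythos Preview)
The paper does not prove this theorem; it is stated as a classical result with a citation to Liv\v{s}ic's original work \cite{Livsic}, and is included only for completeness in the preliminaries on coboundaries. Your proposal outlines the standard Liv\v{s}ic argument (reduction to a single H\"older cocycle with vanishing periodic sums, definition of the transfer function along a dense orbit, and the closing-lemma estimate to obtain H\"older continuity), which is correct and is precisely the proof one would find in the cited reference.
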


In the context of an Anosov diffeomorphism $f$, as the space of periodic probability measures is dense in the space of invariant probability measures, 
it is not hard to use the previous theorem to ensure that, if the observable $\varphi$ is H\"older continuous then condition \eqref{eq:ncohom} fails if and only if 
there exists a continuous function $\psi: M \to \mathbb R$ and $c\in \mathbb R$ so that $\varphi=\psi\circ f-\psi + c$.
The set of irregular points associated to a continuous observable $\varphi$ satisfying  ~\eqref{eq:ncohom}  and a continuous map with the specification property
is a Baire residual subset of $X$ and it has full topological entropy (cf. \cite{Barreira,CKS,Tho2012}).

\medskip
In the context of finitely generated semigroup actions common invariant measures seldom exist, hence 
the concept of coboundary is extremely rigid.  For instance, it is known that a finitely generated group action admits a common invariant probability 
if and only if the group is amenable (see e.g. \cite{OW}). For that reason, we will only refer to coboundaries with respect to individual dynamical systems 
or the skew-product determined by the semigroup action. Furthermore, even though the concepts of orbital specification for semigroup actions and 
semigroup action introduced in \cite{JMP} can probably be used as a tool to characterize irregular points for certain classes of semigroup actions 
(e.g. generated by expanding maps), the class of semigroup actions treated here do not satisfy any of the traditional gluing orbit properties 
(see Example~\ref{MS} for more details).

\section{Entropy and upper capacities}\label{subsec:entropies}

In the core of this section we relate several notions of entropy for semigroup actions and non-autonomous dynamical systems, 
and prove entropy distribution principle for fibered maps.

\subsection{Topological entropy of continuous maps and non-autonomous dynamical systems}\label{sec:single+sequential}
Let $(X,d)$ be a compact metric space and $f: X\to X$ be a continuous map. Given $\vep > 0$~and $n \in \mathbb N$, we say that $E\subset X$ is 
$(n, \vep)$-separated if for every $x \neq y\in E$ it holds
that $d^f_n (x, y)> \vep$, where $d^f_n (x, y) = \max\{d(f^j (x), f^j (y)) \colon 0\leqslant j \leqslant  n - 1\}$ denotes Bowen's distance. The sets $B_{f}(x, n,\vep) =\{ y\in X: d_n^{f}(x, y) < \vep\}$ are called Bowen dynamic balls. 
If $s(n,\vep)$ denotes the maximal cardinality of a $(n, \vep)$-separated subset of $X$ then the \emph{topological entropy} of $f$ is 
\begin{eqnarray*}
\displaystyle h_{top}(f) = \lim_{\vep \to 0} \limsup_{n \to \infty} \frac{1}{n} \log s(n,\vep).
\end{eqnarray*}
The classical variational principle for entropy ensures that 
$
\htop(f) =\sup_{\mu\in \mathcal M_{inv}(f)} \{ h_\mu(f) \}.
$

\smallskip
Similarly, Kolyada and Snoha \cite{KS} introduced the concept of topological entropy of non-autonomous dynamical systems $\mathcal F=(f_n)_{n\geqslant 1}$,
described by the iterates $F_n:=f_n \circ f_{n-1}\circ \dots \circ f_1$ for each $n\geqslant 1$, as
\begin{eqnarray*}
\displaystyle h_{top}(\mathcal F) = \lim_{\vep \to 0} \limsup_{n \to \infty} \frac{1}{n} \log s(\mathcal F,n,\vep),
\end{eqnarray*}
where $s(\mathcal F,n,\vep)$ denotes the maximal cardinality of a subset $E\subset X$  so that 
$$
\max\{d(F_j (x), F_j (y)) \colon 1\leqslant j \leqslant  n - 1\}>\vep
	\quad\text{for any distinct $x,y\in E$.}
$$
To each $\om\in \Sigma_\kappa$ we associate the (one-sided) non-autonomous dynamical system
 $\mathcal F_\om=(f_{\omega_j})_{j\geqslant 1}$.

\smallskip
The topological entropy of invariant and not necessarily compact subsets $Z\subset X$ is defined using a Charath\'eodory structure. 
Given $\vep>0$, $\alpha \in \R$ and $N\geq 1$, define 
\begin{equation*}
m_\al(f,Z,\vep,N) = \inf_{\mathcal G} \Big\{
\sum_{B(x,n,\vep) \in \mathcal G}
  e^{-\alpha n} \Big\}
\end{equation*}
where the infimum is taken over all finite or countable families
$\mathcal{G}$ of dynamic balls of length $n\geqslant N$ 
so that $\bigcup_{B(x,n,\vep) \in \mathcal G} B(x,n,\vep)$ covers $Z$. 
Let
$
m_\al(f,Z, \vep)
  = \lim_{N \to \infty} m_\al(f,Z,\vep,N)
$
and
$$
\htop(f,Z,\vep) 
= \inf{\{\alpha:
m_\al(f,Z,\vep)=0\}}
= \sup{\{\alpha:
m_\al(f,Z,\vep)=+\infty\}}.
$$
Then, \emph{topological entropy (also known as topological capacity) of $Z \subset X$} is given
by $\htop(f,Z)= \lim_{\vep \to 0} \htop(f,Z,\vep)$ and satisfies the partial variational principle
$\htop(f,Z)\geqslant \sup\left\{ h_\mu(f) \right\}$ where the supremum is over all $f$-invariant probability
measures $\mu$ such that $\mu(Z)=1$.
Moreover, in the special case that $Z\subset X$ is $f$-invariant and compact 
we have that $\htop(f,Z)=\htop(f\mid_Z)$.
We refer the reader to \cite{Pe97} for the proofs and further details in the case of a single map.
\smallskip

\medskip
In order to estimate the topological entropy of non-compact sets, we will make use of the following extremely useful 
entropy distribution principle, due to Takens and Verbitski (see \cite[Theorem~3.6]{TV03}):

\begin{theorem}\label{thm:EDP}
Let $X$ be a compact metric space, $f: X \to X$ be a continuous map and $Z\subset X$. Suppose that there exists $s\geqslant 0$ so that for any  sufficiently small $\vep>0$ one can find a Borel probability measure $\mu=\mu_\vep$ and $C(\vep)>0$ satisfying: (1) $\mu_\vep(Z)>0$, and (2) $\mu_\vep(B_f(x,n,\vep)) \leqslant C(\vep) e^{-s \,n}$ whenever
$B_f(x,n,\vep) \cap Z\neq\emptyset$. Then $\htop(f,Z) \geqslant s$.
\end{theorem}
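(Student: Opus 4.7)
The plan is to exploit the Carath\'eodory definition of topological entropy directly. Recall that $\htop(f,Z,\vep) = \inf\{\alpha : m_\alpha(f,Z,\vep)=0\}$, so to conclude $\htop(f,Z,\vep)\geqslant s$ it is enough to show that $m_s(f,Z,\vep)>0$ for all sufficiently small $\vep>0$; letting $\vep\to 0$ will then yield $\htop(f,Z)\geqslant s$.

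Fix $\vep>0$ small enough that the measure $\mu=\mu_\vep$ and the constant $C(\vep)$ provided by the hypothesis exist, and fix $N\geqslant 1$. Let $\mathcal{G}$ be any finite or countable cover of $Z$ by dynamic balls of length at least $N$. Without loss of generality every ball $B_f(x,n,\vep)\in\mathcal{G}$ satisfies $B_f(x,n,\vep)\cap Z\neq\emptyset$, since balls disjoint from $Z$ can be discarded from $\mathcal{G}$ without affecting coverage or enlarging the sum $\sum e^{-sn}$. By hypothesis (2), each such ball satisfies $\mu(B_f(x,n,\vep))\leqslant C(\vep) e^{-s n}$. Using the countable subadditivity of $\mu$ together with hypothesis (1),
\begin{equation*}
0 < \mu(Z) \;\leqslant\; \sum_{B_f(x,n,\vep)\in\mathcal{G}} \mu(B_f(x,n,\vep)) \;\leqslant\; C(\vep)\sum_{B_f(x,n,\vep)\in\mathcal{G}} e^{-s n},
\end{equation*}
so $\sum_{\mathcal{G}} e^{-s n}\geqslant \mu(Z)/C(\vep)$. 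Taking the infimum over all admissible covers $\mathcal{G}$ gives $m_s(f,Z,\vep,N)\geqslant \mu(Z)/C(\vep)>0$ for every $N\geqslant 1$.

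Letting $N\to\infty$ we obtain $m_s(f,Z,\vep)\geqslant \mu(Z)/C(\vep) > 0$, which by definition of the Carath\'eodory critical exponent means $\htop(f,Z,\vep)\geqslant s$. Since this holds for every sufficiently small $\vep>0$, passing to the limit $\vep\to 0^+$ yields $\htop(f,Z)\geqslant s$, as desired.

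There is essentially no serious obstacle: the argument is a standard mass distribution principle for the Carath\'eodory structure associated to topological entropy, and the only bookkeeping point is the reduction to covers consisting of balls intersecting $Z$, which is what lets hypothesis (2) be applied to every term of the sum. The hypothesis on $C(\vep)$ being allowed to depend on $\vep$ poses no difficulty because the bound $m_s(f,Z,\vep)\geqslant \mu_\vep(Z)/C(\vep)$ is used only for fixed $\vep$ before the limit is taken.
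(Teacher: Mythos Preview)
Your argument is correct: it is precisely the standard mass-distribution principle applied to the Carath\'eodory structure defining $\htop(f,Z)$. The paper does not give its own proof of this statement---it is quoted from Takens--Verbitski \cite{TV03}---but your approach coincides with the one the paper uses when proving the fibered analogue (Proposition~\ref{pro. princ dist. entropy fiber}): discard balls not meeting $Z$, apply the measure bound termwise, and use subadditivity to bound $m_s(f,Z,\vep,N)$ below by $\mu_\vep(Z)/C(\vep)$ uniformly in $N$.
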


\medskip
Finally, we recall a characterization of the Kolmogorov-Sinai metric entropy due to Katok.  
As a consequence of \cite[Theorem~I.I]{Katok}, if $\mu$ is an $f$-invariant and ergodic probability measure then for any $\rho>0$ it holds that  
$h_{\mu}(f)=\lim_{\vep \to0} \bar h_{\mu}(f,\vep,\rho)=\lim_{\vep \to0} \underline{h}_{\mu}(f,\vep,\rho)$ where
\begin{eqnarray}\label{eq entropy-Katok}
	\bar h_{\mu}(f,\vep,\rho)=\limsup_{n\rightarrow +\infty}\frac{1}{n}\log N^{\mu}_n(\vep,\rho),
	\qquad 
	\underline{h}_{\mu}(f,\vep,\rho)=\liminf_{n\rightarrow +\infty}\frac{1}{n}\log N^{\mu}_n(\vep,\rho),
\end{eqnarray}
and $N^{\mu}_n(\vep,\rho)$ denotes the minimal cardinality of an $(n,\vep)$-generating set for some set $Z\subset X$  
with measure at least $1-\rho$. It will be useful to use that a minimal cardinality $(n,\vep)$-generating set is an
$(n,\frac\vep2)$-separated set, not necessarily of maximal cardinality.

\subsection{Topological entropies of finitely generated semigroup actions}\label{sec:entropy-semigroup-annealed}

Given a compact metric space $X$ and the finite collection $G_1=\{id,f_1, f_2, \dots, f_\kappa\}$ of continuous self maps on $X$, 
consider the finitely generated semigroup $(G,\,\circ)$ generated by $G_1$, where the semigroup operation is the composition of maps. 
Setting $G_n:=\big\{ g_{n}\circ \dots g_2 \circ g_1 \circ g_0 \colon \; g_i\in G_1 \big\}$ it is clear that 
$G_n\subset G_{n+1}$ for every $n\geqslant 1$.

\subsubsection{Geometric entropy}
We first recall the \emph{geometric entropy} of a semigroup, introduced by Ghys, Langevin and Walczak ~\cite{GLW}.
Given $n \in \mathbb{N}$ and $\varepsilon > 0$, the $(n,\vep)$-Bowen ball generated by the semigroup action and centered at $x$ is defined by
$B^G_n(x,\varepsilon) = \big\{y \in X \colon \, d(g(x), g(y)) < \varepsilon, \; \forall \, g\in G_n\big\}$.
One says that two points $x, \,y \in X$ are $(n, \varepsilon)$-separated by $G$ if there exists $g \in G_n$ such that $d(g(x), \, g(y)) \geqslant \varepsilon$.
A subset $E$ of $X$ is $(n, \varepsilon)$-separated if any two distinct points of $E$ are $(n, \varepsilon)$-separated by $G$. Having fixed $n\geqslant 1$ and $\varepsilon>0$, consider
$$s(G, G_1,n, \varepsilon) = \max\,\big\{|E| : \,\, E \subset X \text{ is $(n, \varepsilon)$-separated}\big\}.$$
Since $X$ is compact, $s(G, G_1,n, \varepsilon)$ is finite for every $n \in \mathbb{N}$ and $\varepsilon > 0$.
The \emph{geometric entropy} of the free semigroup $G$ generated by $G_1$ is defined by
\begin{equation}\label{def:GLW}
h^{GLW}(\mathbb S) = \lim_{\varepsilon \, \to \, 0} \,\limsup_{n\, \to\, +\infty}\,\frac{1}{n}\,\log \,s(G, G_1,n, \varepsilon)
\end{equation}
(the limit as $\vep$ tends to zero exists by monotonicity), where we omit the dependence on $(G,G_1)$ for notational simplicity. 
It is clear from the definition that $s(G, G_1,n, \varepsilon)\geqslant 
s(\mathcal F_\om,n,\vep)$ for every $n\geqslant 1$, $\vep>0$ and $\om\in \Sigma_\kappa$ and, consequently, 
\begin{equation}\label{comp:GLW.KS}
h^{GLW}(\mathbb S)  \geqslant \htop(\mathcal F_\om), 
	\qquad \forall \om\in \Sigma_\kappa.
\end{equation}
Similarly, one can define the \emph{Ghys-Langevin-Walczak topological capacity of a set $Y\subset X$} by
\begin{equation}\label{def:GLW-Y}
h^{GLW}(\mathbb S,Y) = \lim_{\varepsilon \, \to \, 0} \,\limsup_{n\, \to\, +\infty}\,\frac{1}{n}\,\log \,s(G, G_1, Y,n,\varepsilon)
\end{equation}
where $s(G, G_1, Y,n,\varepsilon)$ denotes the largest cardinality of a set $E\subset Y$ that is $(n, \varepsilon)$-separated by $G$.
For the sake of completeness, let us mention that this notion of entropy was rediscovered independently by Hofmann and Stoyanov~\cite{HS}
in the context of continuous actions generated locally compact semigroups.

\subsubsection{Free semigroup actions}
A different approach, inspired by the context of random walks on groups, is to code the semigroup using the free semigroup on $\kappa$ generators.
In this way one consider paths on the semigroup determined by finite words in $\{1,2,  \dots, \kappa\}^{\mathbb Z}$.
More precisely, the semigroup action $\mathbb S$ can be coded by the skew-product
\begin{equation*}\label{de.Skew product}
\begin{array}{rccc}
F : & \{1,2,  \dots, \kappa\}^{\mathbb Z}  \times X & \to & \{1,2,  \dots, \kappa\}^{\mathbb Z}  \times X \\
	&\quad \quad (\om,x) & \mapsto & (\sigma(\om), f_{\omega_0}(x))
\end{array}
\end{equation*}
where the shift codes the possible paths in the free semigroup with $\kappa$ generators. The topological entropy of $F$-invariant subsets  
$Z\subset \{1,2,  \dots, \kappa\}^{\mathbb Z}  \times X$ is defined as before. However, this notion is not fitted specifically to deal with semigroup actions
as there is an artificial creation of entropy due to the shift, a fact which suggests some alternative notions of entropy. 

The first one was introduced by Bufetov in \cite{Bufetov} in the case of symmetric random walks and studied by Carvalho, Rodrigues and the second named 
author in \cite{CRV} for asymmetric random walks.
Given $\underline g= g_{i_n} \dots g_{i_1} \in G_n$, with $g_{i_j}\in G_1$, we say that a set $K \subset X$ is \emph{$(\underline g, n, \varepsilon)-$separated} if 
$
 \max_{0\,\leqslant\, j \,\leqslant \,n-1 } d(\underline{g}_{\,j}(x),\underline{g}_{\,j}(y))> \varepsilon
$ 
 for any distinct $x,y \in K$ where, for every $1 \leqslant j \leqslant n-1$, $\underline g_{\,j}$ stands for the concatenation $g_{i_{j}} \dots g_{i_2} \, g_{i_1}\in G_j$, and $\underline g_0=id$. 
The {topological entropy of the semigroup action $\mathbb{S}$} with respect to 
a random walk $\mathbb{P}$ is defined as
$$h_{\text{top}}(\mathbb{S}, \mathbb{P}) :=\lim_{\vep\to 0}\limsup_{n\to\infty}\frac1n\log
	\int_{\Sigma_\kappa^+} \, s(g_{\omega_n} \dots g_{\omega_1}, n,\vep) \, d\mathbb{P}(\omega),
$$
where
$s(g_{\omega_n} \dots g_{\omega_1}, n,\vep)$ is the maximum cardinality of $(\underline g,n,\varepsilon)-$separated sets,
and it coincides with a certain annealed topological pressure for the skew-product $F$ (see \cite[Proposition~4.1]{CRV2}).
In the special case of the symmetric random walk  $\mathbb P_{sym}$ on $G$, which corresponds to the Bernoulli probability $(\frac1{\kappa}, \frac1{\kappa}, \dots, \frac1{\kappa})^{\mathbb Z}$ on the shift, this is written as   
\begin{equation}\label{def:entropysym}
h_{\text{top}}(\mathbb{S}, \mathbb{P}_{sym}) :=\lim_{\vep\to 0}\limsup_{n\to\infty}\frac1n\log 
	\Big( \frac1{\kappa^n} \sum_{\underline g \in G_n} s(\underline g, n,\vep) \Big).
\end{equation}
In this paper we will always consider symmetric random walks.

\medskip
In analogy to the case of a single map, the complexity of non-compact subsets $Z\subset X$ can be defined using a topological capacity or a Carath\'eodory structure.
The \emph{(upper) topological capacity of the semigroup $\mathbb S$} on a subset $Z\subset X$ is defined by
\begin{equation}\label{def:uppercapacityS}
\overline{Ch_{\text{top}}}(\mathbb{S}, Z, \mathbb{P}_{sym}) :=\lim_{\vep\to 0}\limsup_{n\to\infty}\frac1n\log 
	\Big( \frac1{\kappa^n} \sum_{\underline g \in G_n} s(\underline g, Z, n,\vep) \Big),
\end{equation}
where $s(\underline g, Z, n,\vep)$ denotes the largest cardinality of 
a $(\underline g,n,\varepsilon)$-separated subset of $Z$.
In the particular case that $X$ is compact, this coincides with the definition of entropy in ~\eqref{def:entropysym} (see \cite{JMW} for more details).
For notational simplicity, we will denote the previous expression simply by $\htop(\mathbb{S}, Z,\mathbb{P}_{sym})$. 

\medskip
The \emph{annealed topological entropy of the semigroup} of a subset $Z\subset X$ was introduced 
in \cite{JMW} as follows: given $\vep>0$, $\alpha \in \R$, $N\geq 1$ and $\underline g\in G_N$,  defining 
\begin{equation*}
m_\al(\underline g,Z,\vep,N) = \inf_{\mathcal G_{\underline g}} \Big\{
\sum_{B_{\tilde{\underline g}}(x,n,\vep) \in \mathcal G_{\underline g}}
  e^{-\alpha n} \Big\}
\end{equation*}
where the infimum is taken over all finite or countable families
$\mathcal G_{\underline g}$ of dynamic balls $B_{\tilde{\underline g}}(x,n,\vep)$ determined by elements
$\tilde{\underline g}\in G_n$ with $n\geqslant N$ so that $\bigcup_{B_{\tilde{\underline g}}(x,n,\vep) \in \mathcal G} B_{\tilde{\underline g}}(x,n,\vep)$ 
covers $Z$, 
and
\begin{equation*}
m_\al(\mathbb S, Z,\vep,N) = \frac1{\kappa^N} \sum_{\underline g\in G_N} m_\al(\underline g,Z,\vep,N),
\end{equation*}
the limit 
$
m_\al(\mathbb S,Z, \vep)
  = \lim_{N \to \infty} m_\al(\mathbb S,Z,\vep,N)
$
does exist, and 
$$
h^B(\mathbb S,Z,\vep) 
= \inf{\{\alpha:
m_\al(\mathbb S,Z,\vep)=0\}}
= \sup{\{\alpha:
m_\al(\mathbb S,Z,\vep)=+\infty\}}.
$$
Then, \emph{topological entropy of $Z \subset X$} is given
by $h^B(\mathbb S,Z)= \lim_{\vep \to 0} h^B(\mathbb S,Z,\vep)$. 
It holds that 
$$
h^B(\mathbb S,Z) \leqslant \overline{Ch_{\text{top}}}(\mathbb{S}, Z, \mathbb{P}_{sym}) \leqslant h_{\text{top}}(\mathbb{S}, \mathbb{P}_{sym})
$$
for every $Z\subset X$, that these three notions coincide when $Z=X$ and that the first two notions coincide 
for any compact and $G$-invariant subset $Z$ (cf. \cite[Theorem 3.9]{JMW})
Moreover, 
although the original notion 
has been defined using open coverings, a standard argument 
shows this Carath\'eodory structure can be defined using dynamic balls (see e.g. \cite{Pe97}), an object that suits our purposes better.

\subsection{Fibered topological entropies}\label{fibentropies}

Given $\vep>0$ and $\om\in \{1,2, \dots, \kappa\}^{\mathbb Z}$ consider the metric in $X_\om$ given by
$
d_{\om}^n(x,y)=\max_{0 \leqslant j \leqslant n-1}d(g_\om^j(x),g_\om^j(y))
$
for $x,y \in X_\om$ and $n\geqslant 1$. The dynamic ball determined by $x\in X_\omega$, $n\geqslant 1$ and $\vep>0$ is the set
$$
B_{\om}(x,n,\vep)=\Big\{y\in X_\om: d_{\om}^n(x,y)<\vep\;\;\mbox{for all}\;\; 0\leqslant j<n.\Big\}
$$
A subset $K \subset X_{\om}$ is called \emph{$(\om,n,\vep)$-separated} if $d_{\om}^n(x,y) > \vep$
for any distinct $x,y \in K$.

We define the \emph{fibered topological entropy for the semigroup} on the compact metric space $X$  
as
\begin{equation}\label{eq:def-fibered-ent}
h^{path}(\mathbb S) 
=
\sup_{\om\in \{1,2,  \dots, \kappa\}^{\mathbb Z}} \, \htop(\mathcal F_\om),  F_\om) 
\end{equation}
where 
$
\htop(\mathcal F_\om) =\lim_{\vep\to 0} \limsup_{n\to\infty} \frac1n \log s(g_\om, n,\vep)
$
denotes the topological pressure of sequential dynamical system $\mathcal F_\om=(g_{\om}^j)_{j\geqslant 1}$
introduced in \cite{KS}.
Related measure theoretical notions of fibered entropy appeared in early studies by Pinsker~\cite{Pi}, Abramov and Rokhlin \cite{AR,Ro},
Ledrappier and Walters~\cite{LW}
 on the entropy of skew-products and later associated to the context of random dynamical systems (see e.g.~\cite{Ki86}).
Indeed, it is known that 
\begin{equation}\label{eq:AB}
h_\mu(F)=h_{\pi_*\mu}(\sigma)+ h_\mu(F \!\mid\! \sigma)
\end{equation}
for each $F$-invariant probability measure $\mu$, where 
the relative entropy is defined by the supremum $h_\mu(F \!\mid\! \sigma)=\sup_\xi h_\mu(F \!\mid\! \sigma,\xi)$ over finite partitions $\xi$ of 
$\{1 , 2, \dots, \kappa\}^{\mathbb Z} \times X$,  where
$$
h_\mu(F \!\mid\! \sigma,\xi)=\lim_{n\to\infty} \frac1n H_\mu\Big(\bigvee_{j=0}^{n-1} F^{-j} \xi \mid \pi^{-1}(\mathcal E)\Big)
$$
and $\mathcal E$ denotes the partition of $\{1 , 2, \dots, \kappa\}^{\mathbb Z}$ into points.

\begin{remark}
The expression ~\eqref{eq:AB} is known as the Abramov-Rokhlin formula, although it is
sometimes referred as Pinsker formula in the special case that $\sigma$ is invertible (cf. \cite{Pi,Ro}).
It was proven for skew-products with non-invertible fiber maps by Bogensch\"utz and Crauel~\cite{BC}.
\end{remark}

\begin{remark}\label{rmk:random}
Whenever $\mathbb P$ is a $\sigma$-invariant probability and $\mu$ is an $F$-invariant probability whose marginal $\pi_*\mu$ in $\Sigma_\kappa$
coincides with $\mathbb P$, the value $\int \htop(\mathcal F_\om)\, d\mathbb P$ is the (quenched) topological entropy of $\mu$
for the random dynamical system driven by $\mathbb P$ (see \cite{Ki86}). Moreover, $h_\mu(F \!\mid\! \sigma)=h_\mu(\mathcal F)$ 
where $h_\mu(\mathcal F)=\sup_{\xi} h_{\mu}(\mathcal F,\xi)$,  
the supremum is taken over all finite Borel partitions $\xi$ of $X$, 
\begin{equation*}\label{e.entropy-def-conditional2fibered}
h_{\mu}(\mathcal F,\xi)=\lim_{n\to +\infty}\frac{1}{n}\int H_{\mu_\om} \Big(\bigvee_{k=0}^{n-1}(f^k_\om)^{-1}\xi \Big) d\mathbb P(\om),
\end{equation*}
and $\mu=(\mu_\om)_\om$ is the almost everywhere defined Rokhlin disintegration of $\mu$ on its sample measures
(see e.g. ~\cite{Bo92,BC}). The variational principle for entropy states that
$$\int \htop(\mathcal F_\om)\, d\mathbb P(\om) =\sup\Big\{ h_{\mu}(\mathcal F) \colon \mu\in \mathcal M_{inv}(F), \pi_*\mu=\mathbb P \Big\}$$
(cf. \cite[Theorem~3.1]{Li01}). 
We will consider the previous equality when $\mathbb P=\pi_*\mu$ in order to compare the fibered entropies of
$F$-invariant probabilities $\mu$.
In particular, when the $F$-invariant measure $\mu$ changes then the entropies $h_\mu(F\mid\sigma)$ are not related to any fixed 
random dynamical system. 
Nevertheless, 
$
\int \htop(\mathcal F_\om) \, d\pi_{*}\mu \geqslant h_\mu(F \!\mid\! \sigma)
\;\text{for every}\, \mu\in \mathcal M_{inv}(F).
$
This, together with the ergodic decomposition theorem, ensures that 
\begin{equation}\label{eq:def-htop-conditional}
h^{path}(\mathbb S) 
	\geqslant  \sup_{\mu\in \mathcal M_{inv}(F)} \, h_\mu(F\mid\sigma)
	= \sup_{\mu\in \mathcal M_{erg}(F)} \, h_\mu(F\mid\sigma)=: \htop(F\mid\sigma),
\end{equation}
an expression which relates the fibered entropy of the semigroup with that generated by $F$-invariant probabilities. 
\end{remark}

\medskip
Let us now recall another notion of measure theoretical entropy for non-autonomous dynamical systems, which is adaptable to general probabilities,  
but for which we will be consider with respect to the sample measures of $F$-invariant probabilities. Let us be more precise.
Assume that $\mu$ is an $F$-invariant and ergodic probability, that $0<\delta<1$, $\vep>0$ and $\om\in \{1,2, \dots, \kappa\}^{\mathbb Z}$.
By compactness of $X_\om$, for each $Y\subset X_\om$ the largest cardinality $s(\om,n, \vep, Y)$ 
of an $(\om,n,\vep)$-separated subset of $Y$ is finite. Now, take $s(\om,n, \vep,\delta):=\min\{s(\om,n, \vep, Y): \mu(Y)\geqslant 1-\delta\}$ and define 
\begin{equation}\label{eq:def-d-Katok}
h_{\mu}^{\delta}(\ud\omega):= \lim_{\vep \rightarrow 0}\liminf_{n\rightarrow \infty} \frac{1}{n}\log s(\om,n, \vep, \delta),
\end{equation} 
which is clearly bounded above by $\htop(\mathcal F_\om)$.
A related notion of entropy, using a Shannon-McMillan-Breiman type formula appeared in \cite{Bis2018}.
The \emph{Katok $\delta$-entropy} of $\mu\in \mathcal M_{inv}(F)$
is defined by
$$
h_{\mu}^{\delta}({F})=\int h_{\mu}^{\delta}(\ud\omega) \, d(\pi_*\mu)(\om).
$$ 
In the special case that $\mu$ is ergodic one has that 
\begin{equation}\label{eq:comparison-entropies}
h_{\mu}^{\delta}({F})=h_{\mu}^{\delta}(\ud\omega) = h_{\mu}(\mathcal F_\om) = h_\mu(F\!\mid \!\sigma) \quad \text{for $(\pi_*\mu)$-a.e. $\om$}
\end{equation} 
coincides with the quenched topological entropy of the random dynamical system
driven by $\mathbb P=\pi_*\mu$ (cf. \cite[Theorem~A]{LiTang}). In particular we conclude that
\begin{equation}\label{eq:Pinsker-Katok}
H^{\text{Pinsker}}(\psi)=H^{Katok}(\psi),
\end{equation} 
 where the left hand-side was defined in ~\eqref{Hestrela2} and the second term is defined by
\begin{align}
	H^{Katok}(\psi):=\sup\Big\{ 
	c\geqslant 0 \colon & \forall \vep>0 \,\text{there exist} \, \mu_1,\mu_2 \in \mathcal M_{erg}(F) 
	\,\text{so that}\, \nonumber \\ 
	& h_{\mu_i}^{\delta}(F)\geqslant c-\vep  \;\text{and}\, \int\varphi_\psi\, d\mu_1< \int\varphi_\psi\, d\mu_2
	\Big\}. \label{Hestrela}
\end{align}
\medskip
In the case of non-compact subsets $Y\subset X_\om$, 
Stadlbauer, Suzuki and the second named author \cite[Appendix~A]{SSV} introduced a notion of relative entropy using Carath\'eodory structures in the realm of random dynamical systems.
Let us recall this notion.
Fix $\vep>0$ and $\om\in \Omega$ and consider the sets
$\cI_\om^n= X_\om\times\{n\}$ and $\cI_\om =X_\om \times \N$. For
every $\alpha \in \R$ and $N\geqslant 1$, define
\begin{equation}\label{eq.alpham}
m_\al(\om,f,\phi,\Lambda,\vep,N) 
	=  \inf_{\mathcal{G}_\om} \Big\{ 
	\sum_{{(x,n)} \in \mathcal{G}_\om}
  e^{-\alpha n(x)+ S_{n(x)} \phi_\om({\mathrm B_\om(x,n(x),\vep)})} \Big\}, 
\end{equation}
where the infimum is taken over all finite or countable families
$\cG$ of the form $\mathcal{G}_\om\subset \cup_{n \geqslant N}\cI_\om^n$ 
such that the collection
of sets $\{B_\om(x,n,\vep): (x,n)\in \mathcal G_\om\}$ cover $\Lambda_\om$
and such that \eqref{eq.alpham} is measurable in $\omega$.
As before, 
$
S_{n} \phi_\om({\mathrm B_\om(x,n,\vep)})
	=\sup\{ \sum_{j=0}^{n-1}\phi_{\theta^j(\om)} (g_\om^j(y)): y \in B_\om(x,n,\vep)\}.
$
Using that the previous sequence is increasing on $N$, the limit
\begin{equation}\label{eq.press1}
m_\alpha(\om, f,\phi,\Lambda, \vep)
  = \lim_{N\to+\infty}m_\al(\om,f,\phi,\Lambda,\vep,N) 
\end{equation}
does exist. If $\alpha$ is such that \eqref{eq.press1} is finite and $\beta>\al$ then   
$m_\beta(f,\phi,\Lambda,\vep,N)  \leqslant e^{-(\beta-\al)N} m_\al(f,\phi,\Lambda,\vep,N) $ tends to zero as $N\to\infty$, hence
$m_\beta(f,\phi,\Lambda, \vep)=0$. For that reason we define
$$
\pi_\phi(\om,f,\Lambda,\vep) 
	= \inf{\{\alpha: m_\al(\om,f,\phi,\Lambda, \vep)=0\}},
$$
which is decreasing on $\vep$. 
The fibered \emph{relative pressure} of the set $\La$ with respect to 
$(f,\phi)$ is defined by
\begin{equation}\label{eq.random.pressure}
 \pi_\phi(\om,f,\Lambda)
 	= \lim_{\vep \to 0} \pi_\phi(\om,f,\Lambda,\vep).
\end{equation}
The previous limit exists as $\vep \mapsto \pi_\phi(\om,f,\Lambda,\vep)$ is non-increasing.
 In the special case that $\phi\equiv 0$ we write simply 
$$
h_{\Lambda_{\om}}(\mathcal F_\omega) :=\pi_0(\om,f,\Lambda_\om).
$$
It is not hard to check that $h_{\Lambda_{\om}}(\mathcal F_\omega) \leqslant \htop(\mathcal F_\om) $ for every $\om \in \{1 , 2, \dots, \kappa\}^{\mathbb Z}$.

\medskip
In general, upper bounds are much easier to obtain as these rely on choices of certain coverings of $\Lambda$ using dynamic balls at a certain scale. 
The next result provides an entropy distribution principle for fibered sets, which is a simple modification of  \cite[Theorem 3.6]{TV03} 
and \cite[Proposition 2.6]{Tho2010}  for sequential dynamical systems and serves as a criterion to provide lower bounds on these fibered entropies.

\begin{proposition}[Fibered entropy distribution principle]\label{pro. princ dist. entropy fiber}
 Let  $\om\in \{1,2, \dots, \kappa\}^{\mathbb Z}$ and $\Lambda_\om\subset X_\om$ be an arbitrary Borel set.
 Suppose there exist $\vep>0$, $\alpha \geqslant 0$, $K_{\ud\omega}(\vep)>0$ and a sequence of probability measures $(\mu_{\om,k})_{k\geqslant 1}$ on $X_\om$ such that 
\begin{equation}\label{eq:EDP-N}
	\limsup_{k\rightarrow \infty}\mu_{\ud\omega, k}(B_{\ud\omega}(x,n,\vep))\leqslant K_{\ud\omega}(\vep)\, e^{-\alpha n}
\end{equation}
	for sufficiently large $n$ and every ball $B_{\ud\omega}(x,n,\vep)$ which has non-empty intersection with $\Lambda_{\om}$. 
	If there exists a weak$^*$ accumulation point $\nu_{\ud\omega}$ of the probability measures $(\mu_{\ud\omega,k})_{k\geqslant 1}$ so that 
	$\nu_{\ud\omega}(\Lambda_{\ud\omega})>0$ then 
	$h_{\Lambda_{\om}}(\mathcal F_\omega)\geqslant \alpha$.
\end{proposition}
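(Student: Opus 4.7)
The plan is to adapt the classical entropy distribution principle of Takens--Verbitski~\cite{TV03} to the sequential (fibered) setting. Since the Carath\'eodory structure $h_{\Lambda_\om}(\mathcal F_\om)=\lim_{\vep'\to 0}\pi_0(\om,f,\Lambda_\om,\vep')$ is monotone non-increasing in $\vep'$, it suffices to show that $\pi_0(\om,f,\Lambda_\om,\vep)\geqslant \alpha$ for the specific $\vep>0$ given by the hypothesis; this will force the same lower bound for all smaller scales, and hence $h_{\Lambda_\om}(\mathcal F_\om)\geqslant \alpha$. By the definition of $\pi_0$, it is in turn enough to prove that $m_\alpha(\om,f,0,\Lambda_\om,\vep)>0$.

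To obtain this strict positivity, fix $N\geqslant 1$ large enough that the estimate \eqref{eq:EDP-N} is valid for every $n\geqslant N$, and consider any admissible family $\mathcal G_\om\subset \bigcup_{n\geqslant N} \mathcal I_\om^n$ such that the dynamic balls $\{B_\om(x_i,n_i,\vep):(x_i,n_i)\in \mathcal G_\om\}$ cover $\Lambda_\om$. Each $B_\om(x_i,n_i,\vep)$ is an open subset of $X_\om$ (by the strict inequality in the definition of the Bowen distance), so the Portmanteau theorem applied to the weak$^{*}$ accumulation point $\nu_\om$ of $(\mu_{\om,k})_k$ gives
\begin{equation*}
\nu_\om(B_\om(x_i,n_i,\vep))\;\leqslant\;\liminf_{k\to\infty}\mu_{\om,k}(B_\om(x_i,n_i,\vep))\;\leqslant\;\limsup_{k\to\infty}\mu_{\om,k}(B_\om(x_i,n_i,\vep))\;\leqslant\;K_\om(\vep)\,e^{-\alpha n_i},
\end{equation*}
where the last inequality uses the standing hypothesis \eqref{eq:EDP-N}, valid because $B_\om(x_i,n_i,\vep)\cap \Lambda_\om\neq \emptyset$ (otherwise we could remove the ball from the covering).

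Summing over the family $\mathcal G_\om$ and using countable subadditivity together with $\nu_\om(\Lambda_\om)>0$,
\begin{equation*}
0\;<\;\nu_\om(\Lambda_\om)\;\leqslant\;\sum_{(x_i,n_i)\in \mathcal G_\om}\nu_\om(B_\om(x_i,n_i,\vep))\;\leqslant\;K_\om(\vep)\sum_{(x_i,n_i)\in \mathcal G_\om}e^{-\alpha n_i}.
\end{equation*}
Taking the infimum over admissible covers yields $m_\alpha(\om,f,0,\Lambda_\om,\vep,N)\geqslant \nu_\om(\Lambda_\om)/K_\om(\vep)>0$ for every $N\geqslant 1$, hence $m_\alpha(\om,f,0,\Lambda_\om,\vep)>0$ in the limit $N\to\infty$, which forces $\pi_0(\om,f,\Lambda_\om,\vep)\geqslant \alpha$ and concludes the proof.

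The only delicate point is the passage from the pointwise $\limsup$-bound in the hypothesis to a bound for $\nu_\om$: since dynamic balls are open, Portmanteau only gives $\nu_\om(B)\leqslant \liminf_k \mu_{\om,k}(B)$, but the trivial inequality $\liminf\leqslant\limsup$ makes this harmless. Everything else is a bookkeeping exercise in the Carath\'eodory construction, directly parallel to the autonomous case treated in \cite{TV03,Tho2010}; no new ingredient beyond weak$^{*}$ subadditivity and the uniform exponential decay on dynamic balls is required.
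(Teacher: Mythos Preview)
Your proof is correct and follows essentially the same route as the paper's own argument: reduce to showing $m_\alpha(\om,f,0,\Lambda_\om,\vep)>0$ at the fixed scale $\vep$, take an arbitrary cover by dynamic balls of depth $\geqslant N$, discard balls not meeting $\Lambda_\om$, apply Portmanteau on each open dynamic ball to pass from $\nu_\om$ to $\limsup_k \mu_{\om,k}$, and then sum. The only cosmetic difference is that the paper explicitly passes to a convergent subsequence $(\mu_{\om,k_j})_j$ before invoking Portmanteau, whereas you phrase things as if the full sequence converges; this is harmless since $\limsup$ along a subsequence is dominated by the $\limsup$ along the full sequence appearing in the hypothesis.
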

 
\begin{proof}
	Let $\vep>0$, $\alpha \geqslant 0$, $K_{\ud\omega}(\vep)>0$ be as above, and assume that $(\mu_{\ud\omega,k_j})_{j\geqslant 1}$ is weak$^*$ convergent 
	to $\nu_{\om}$. Assume also that  $\nu_{\ud\omega}(\Lambda_{\ud\omega})>0$. Fix $N_\om\geqslant 1$ large so that ~\eqref{eq:EDP-N} holds for all $n\geqslant N_\om$.
	Consider an arbitrary countable family $\mathcal{G}_\om\subset \cup_{n \geqslant N_\om}\cI_\om^n$ such that the collection
$\{B_\om(x,n(x),\vep): (x,n(x))\in \mathcal G_\om\}$ covers $\Lambda_\om$.	
Extracting a subcollection of sets, if necessary, one may assume without loss of generality that 
$B_{\om}(x,n(x),\vep)\cap \Lambda_{\om}\neq \emptyset$ for every $(x,n(x))\in \mathcal G_\om$.
Therefore, by the Portmanteau theorem on the characterization of weak$^*$ convergence one concludes that 
\begin{align*}	
\sum_{{(x,n)} \in \mathcal{G}_\om} e^{-\alpha n(x)}	
	& \geqslant  K_{\om}(\vep)^{-1} \sum_i \limsup_{k\rightarrow \infty}\mu_{\om,k}(B_{\om}(x, n(x),\vep)) \\
		& \geqslant   K_{\omega}(\vep)^{-1} \sum_i \nu_{\om}(B_{\om} (x, n(x),\vep) )\\
		&\geqslant K_{\om}(\vep)^{-1} \nu(\Lambda_{\om})>0		
\end{align*}
As $\mathcal G_\om$ was chosen arbitrary this implies that $m_\al(\om,f,0,\Lambda_\om,\vep,N) \geqslant K_{\om}(\vep)^{-1} \nu(\Lambda_{\om})>0$
for every $N\geqslant N_\om$ and, consequently,
$h_{\Lambda_{\om}}(\mathcal F_\omega)\geqslant \alpha$.
\end{proof}

\begin{remark}\label{rmk:EDP-RDS}
We observe that 
Proposition~\ref{pro. princ dist. entropy fiber} is of independent interest in the special case of random dynamical systems, modeled by the skew-product $F$. Indeed, 
it allows to estimate the entropy of equivariant sets contained in fibers $X_\om$ associated to a $\mathbb P$-generic point $\om$ 
provided the existence of $F$-invariant probabilities $(\mu_k)_{k\geqslant 1}$ whose disintegrations $(\mu_{\om,k})_{k\geqslant 1}$, defined almost everywhere 
by Rokhlin theorem as a measurable family $\om \mapsto \mu_{\om,k}$ of probabilities so that $\supp \mu_{\om,k}\subset X_\om$ and 
$\mu=\int \mu_{\om,k} \, d\mathbb P(\om)$, satisfy ~\eqref{eq:EDP-N} at a certain scale $\vep$ and every large $n$.
\end{remark}

\subsection{Some examples}

In the remainder of this section we compare the quantities $h_*(\varphi_\psi)$, $H^{\text{KS}}(\psi)$ $H^{\text{Pinsker}}(\psi)$ and 	$H^{\sigma}(\psi)$ defined by 
~\eqref{hestrela}-\eqref{Hestrela3}, using a couple of examples. For the sake of simplicity we shall consider semigroups generated by certain continuous maps on the interval $[0,1]$. 
\begin{example}\label{ex:comparison1}
Let $G$ be the semigroup action generated by two continuous and Markov interval maps $f_1, f_2 : [0,1] \to[0,1]$ as described in the following figure.
Let $\psi: [0,1] \to\mathbb R$ be a continuous observable.
\begin{figure}[htb]
	 \includegraphics[scale=.5]{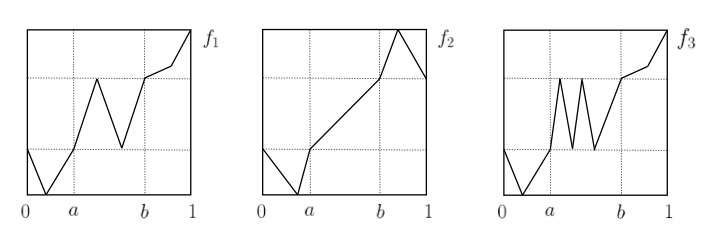}
 \caption{Generators $G_1=\{f_1,f_2\}$}
\end{figure}
Assume further that $\psi$ is not cohomologous to a constant neither with respect to $f_1$ or $f_2$ 
(in particular $\varphi_\psi$ is not cohomologous to a constant with respect to the step skew-product $F$ on $\Sigma_2^+\times [0,1]$ determined by $f_1$ and $f_2$. Assume, additionally, that $\psi\mid_{[a,1]}\equiv 1$. 

Firstly we compute the entropy of the skew-product $F$. 
The map $F\mid_{\Sigma_2^+ \times [0,a]}$ is a distance expanding map and is semiconjugate to a full shift with $4$ symbols, hence 
$\htop(F\mid_{\Sigma_2^+ \times [0,a]})=\log 4$. 
Proceeding with a similar description of the three transitive components and Markov property of the skew-product it is easy to check that
$\htop(F)=\log 4$.  
As $F\mid_{\Sigma_2^+ \times [0,a]}$ satisfies the specification property then the space of ergodic probability measures is entropy dense, hence
the assumptions on $\psi$ guarantee that there exist $F$-invariant and ergodic probability measures 
$\mu_1,\mu_2$ with entropy arbitrarily close to $\log 4$ so that $\int \varphi_\psi d\mu_1 \neq \int \varphi_\psi d\mu_2$, hence 
$$
\htop(F)=\log 4=h_*(\varphi_\psi).
$$

Secondly, let us observe now the non-autonomous dynamical systems generated by concatenations of maps $f_1$ and $f_2$. The restriction of each of
these maps to any of its invariant intervals $\{[0,a], [a,b], [b,1] \}$ defines a continuous, piecewise affine distance expanding and Markov map with degree at most three. Hence,  
$$
\htop(\mathcal F_\om) \leqslant \log 3  \quad \text{for every } \om\in \Sigma_2^+.
$$
Actually,
$
H^{KS}(\psi) = \log 3,
$
where the maximum is attained for any $\om$ in the pre-orbit of the fixed point $(2,2,2, \dots)\in \Sigma_2^+$.
\medskip

Finally, let us now draw our attention to the Pinsker entropy and the contribution on the shift of measures with large fibered entropy. Let $\mu_1, \mu_2$ be two
$F$-invariant and ergodic probabilities so that $\int\varphi_\psi\, d\mu_1< \int\varphi_\psi\, d\mu_2$. 
As $\psi$ is constant on the interval $[a,b]$, both probabilities cannot be simultaneously supported on the invariant domain $\Sigma_2^+\times [a,b]$ and, using ~\eqref{eq:comparison-entropies}, we deduce that 
$h_{\mu_i}(F\mid\sigma) \leqslant \log 2$ for either $i=1$ or $i=2$. We claim that $H^{\text{Pinsker}}(\psi)=\log 2$. Indeed, this follows as a standard consequence of the specification property for $F\mid_{\Sigma_2^+\times [0,a]}$ (which implies that ergodic measures are entropy dense) combined with the fact that the unique maximal entropy $\mu^1_{max}$ measure for $F\mid_{\Sigma_2^+\times [0,a]}$ projects 
over the Bernoulli measure $(\frac12,\frac12)^{\mathbb N}$ on $\Sigma_2^+$ and, consequently,
$
h_{\mu^1_{max}}(F\mid\sigma)=h_{\mu^1_{max}}(F) - h_{\pi_*\mu^1_{max}}(\sigma)=\log4 -\log 2=\log 2.
$
Altogether,
$$
H^{\text{Pinsker}}(\psi)= \log 2  <  \log 3 = H^{KS}(\psi).
$$
Furthermore, assume that the unique maximal entropy $\mu^2_{max}$ measure for $F\mid_{\Sigma_2^+\times [a,b]}$ is such that  
$\int\varphi_\psi\, d\mu^1_{max}< \int\varphi_\psi\, d\mu^2_{max}$ (otherwise use entropy denseness of ergodic measures to produce 
an ergodic measure arbitrarily close to $\mu^2_{max}$ with fibered entropy arbitrary close to $\log 3$). As this measure projects over
the Bernoulli measure $(\frac12,\frac12)^{\mathbb N}$ as well we conclude that 
$$
H^{\text{Pinsker}}_\sigma(\psi)=\log 2=\htop(\sigma).
$$
\end{example}

\medskip
\begin{example}\label{ex:comparison2}
Let $G$ be the semigroup generated by the continuous and Markov interval maps $f_2,f_3: [0,1] \to [0,1]$ described in the previous figure, and let 
$\hat F:\Sigma_2^+\times [0,1] \to \Sigma_2^+\times [0,1]$ denote the step skew-product determined by them. 
Let $\psi: [0,1] \to\mathbb R$ be a continuous observable so that  $\psi\mid_{[a,b]}\equiv 1$ and that
is not cohomologous to a constant neither with respect to $f_2$ or $f_3$ (in particular $\varphi_\psi$ is not cohomologous to a constant with respect to $\hat F$. 
It is easy to check that 
$\htop(\hat F)=\log 6$. Moreover, as $\psi$ is constant over the interval $[a,b]$, given two $\hat F$-invariant and ergodic probabilities $\mu_1, \mu_2$ so that 
$\int \varphi_\psi d\mu_1 \neq \int \varphi_\psi d\mu_2$ at least one of them is supported either on $\Sigma_2^+\times [0,a]$ or $\Sigma_2^+\times [b,1]$.
Using the specification property for the restriction of $\hat F$ to any of these invariant domains, one gets that
$$
\htop(\hat F)=\log 6 > \log 4 \geqslant h_*(\varphi_\psi).
$$
and $H^{\text{Pinsker}}_\sigma(\psi)=\log 2=\htop(\sigma).$
\end{example}

\medskip
Finally, it is worthwhile to point out that the entropy estimates in Examples~\ref{ex:comparison1} and ~\ref{ex:comparison2} involve counting elements in a 
Markov partition and the fact that all compositions are also Markov. Hence, it is likely that similar examples can be produced 
involving higher dimensional piecewise expanding maps or even Axiom A diffeomorphisms on a compact Riemannian manifold.

\color{black}

\section{The frequent hitting times property}\label{sec:prelim}
In this section we not only provide sufficient conditions to ensure that
single dynamics, semigroup actions and skew-products satisfy the notion of frequent hitting times,
as we discuss the relation between it, some weak versions of specification and hyperbolicity.

\subsection{Sufficient condition for the frequent hitting times property}\label{suff}

Our aim here is to provide a checkable condition which implies on the frequent hitting times property, and to provide 
several examples that illustrate its applicability.
Assume that the semigroup action generated by a collection $G_1=\{id,f_1, f_2, \dots, f_\kappa\}$ of continuous maps on a compact metric space $X$ satisfying the following 
strong transitivity 
property: for every $\vep>0$ there exists $K(\vep)\geqslant 1$ such that: 
\begin{equation}\label{eq:sufficient-condition}\tag{C}
\forall x\in X\, \exists \, {\underline{\omega}}=\om_x \in \Sigma_\kappa 
	\qquad \bigcup_{j=0}^{K(\vep)} \, f_{\underline{\omega}}^j(B(x,\vep))=X.
\end{equation}
In the special case that the continuous maps $f_i$ are actually homeomorphisms 
it will be often more convenient to express the previous condition (C) using their inverses:
for every $\vep>0$ there exists $K(\vep)\geqslant 1$, and for each $x\in X$ there exists ${\underline{\omega}}\in \Sigma_\kappa$
such that 
\begin{equation}\label{eq:sufficient-condition-pre-images}\tag{C'}
\forall x\in X\, \exists\, {\underline{\omega}}=\om_x\, \in \Sigma_\kappa 
	\qquad \bigcup_{j=0}^{K(\vep)} \, f_{{\underline{\omega}}_0}^{-1}\circ  \dots \circ  f_{{\underline{\omega}}_1}^{-1} \circ f_{{\underline{\omega}}_{K(\vep)}}^{-1}(B(x,\vep))=X.
\end{equation} 
Conditions (C) and (C') are clearly equivalent for semigroup actions generated by isometries. Actually, we have the following:

\begin{lemma}\label{prop:sufficiency}
Let $X$ be a compact metric space and consider the semigroup action $\mathbb S$ generated by a collection of 
homeomorphisms $G_1=\{id,f_1, f_2, \dots, f_\kappa\}$ satisfying property (C). 
If the smallest sub-semigroup action satisfying property (C) is 
generated by a collection $\widetilde{ G_1}\subset G_1$ of isometries 
then the semigroup action $\mathbb S$ 
has {frequent hitting times}.
\end{lemma}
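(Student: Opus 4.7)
The plan is to derive the frequent hitting times property directly from condition (C) for the isometric sub-semigroup, invoked at a carefully chosen scale. The key observation is that any concatenation $f^p_\om$ built from generators in $\widetilde{G_1}\subset G_1$ is itself an isometry of $X$ and belongs to $\mathbb S$, so the map required by the frequent hitting times definition can be produced entirely within the isometric part of the system.

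Given $\vep>0$, I would set $K(\vep):=K_{\widetilde{G_1}}(\vep/2)$, the integer provided by (C) applied to $\widetilde{G_1}$ at scale $\vep/2$. For any two balls $B_1=B(x_1,\vep)$ and $B_2=B(x_2,\delta)$ with $0<\delta\leqslant \vep/2$, condition (C) at the point $x_1$ and scale $\vep/2$ yields some $\om\in\Sigma_\kappa$ (whose letters index maps in $\widetilde{G_1}$) and some $0\leqslant p\leqslant K(\vep)$ with $x_2\in f^p_\om(B(x_1,\vep/2))$. Pick $y\in B(x_1,\vep/2)$ with $f^p_\om(y)=x_2$; since $d(y,x_1)<\vep/2$ and $\delta\leqslant \vep/2$, the triangle inequality places $B(y,\delta)$ inside $B_1$. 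Because $f^p_\om$ is an isometry one has $f^p_\om(B(y,\delta))=B(x_2,\delta)=B_2$, so $f^p_\om(B_1)\supset B_2\supset B(x_2,\delta/2)$ and the required ball of radius $|B_2|/2$ contained in $B_2$ is produced.

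The argument is essentially direct; the only point that requires care is the choice of the scale $\vep/2$ (rather than $\vep$ or $\delta$) when invoking (C). This single choice simultaneously guarantees that the preimage point $y$ lies inside $B_1$ via the triangle inequality and that the transition time $p$ is bounded by a constant $K(\vep)$ depending only on $\vep$, as demanded by Definition~\ref{def:transition}. I do not expect any genuine obstacle beyond this bookkeeping, since the use of isometries converts the covering statement (C) into an exact matching of ball radii on both sides of the map $f^p_\om$.
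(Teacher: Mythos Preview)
Your proof is correct and follows essentially the same approach as the paper: invoke property (C) for the isometric sub-semigroup at a reduced scale, then use that the resulting map is an isometry to control images of balls. Your choice of scale $\vep/2$ is in fact slightly sharper than the paper's $\vep/3$; by targeting the center $x_2$ of $B_2$ rather than merely intersecting $B_2$, you obtain $f^p_\om(B_1)\supset B_2$ directly and avoid the paper's case split on whether $|B_2|<\vep/3$.
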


\begin{proof}
Assume there exists a collection $\widetilde{ G_1}\subset G_1$ of isometries generating a sub-semigroup action
satisfying (C), and for any $\varepsilon>0$ let $K(\varepsilon)\geqslant 1$ be given according to this property.
 We claim that the semigroup action generated by $G_1$ has {frequent hitting times}.
Fix arbitrary balls $(B_1,B_2)$ with $B_1=B(x_1,\vep)$ and $B_2$ of radius $0<r_2<\vep/2$.
If $\hat B_1 \subset B_1$ is the ball of radius $\vep/3$ centered at $x_1$
then there exists $0\leqslant p \leqslant K(\vep/3)$, a finite word ${\underline{\omega}}\in \{1,2, \dots,\kappa\}^p$ and elements $f_{\omega_i}\in \widetilde{G_1}$ for every
$0\leqslant i \leqslant p$ so that $f_{\underline{\omega}}^p(\hat B_1) \cap B_2 \neq\emptyset$. 
By the hypothesis, the map
$f_{\underline{\omega}}^p$ is an isometry. Therefore, if
$r_2<\vep/3$ then $f_{\underline{\omega}}^n(B_1) \supset B_2$. Otherwise, $\vep/3\leqslant r_2 <\vep/2$ and $f_{\underline{\omega}}^p(B_1) \cap B_2$ contains a ball of radius 
$\frac\vep3  = \frac\vep2 \frac23  > r_2 \frac23 > \frac{r_2}2$. 
Altogether, this shows that $G_1$ satisfies the frequent hitting time property with transition time 
$K(\vep/3)$, and proves the lemma.
\end{proof}

In other words, the previous lemma ensures that the frequent hitting times property 
hold for semigroup actions which contain some minimal action by isometries.
Recall that a homeomorphism $f: X\to X$ is \emph{minimal} if all points have a dense orbit, and  
a semigroup action generated by a collection of homeomorphisms $G_1$ is \emph{minimal} if for every $x\in X$ there
exists ${\underline{\omega}}\in \Sigma_\kappa$ so that $X\,=\, \overline{\{ f_{\underline{\omega}}^n(x) \colon n\geqslant 0 \}}$
(cf. \cite{HN}).
The previous lemma can be reformulated to provide some classes of examples, defined in terms of minimality, which satisfy the frequent hitting times property.

\begin{corollary}\label{cor:rotations}
Let $X$ be a compact metric space. The semigroup action generated by the collection $G_1$ of continuous maps 
has frequent hitting times in the following cases:
\begin{enumerate}
\item $G_1=\{f\}$, where $f: X\to X$ is a minimal isometry;
\item $G_1=\{f\}$, where $f: \mathbf S^1 \to \mathbf S^1$ is an irrational rotation; 
\item $G_1=\{id,f_1, f_2, \dots, f_\kappa\}$ and there exists a sub-collection 
	$\widetilde{G_1}\subset G_1$ formed by isometries which generate a minimal semigroup action. .
\end{enumerate}
\end{corollary}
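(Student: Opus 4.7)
The plan is to reduce all three items to Lemma~\ref{prop:sufficiency}, by checking in each case that property~(C) holds with the smallest witnessing sub-semigroup generated by a collection of isometries. Since item~(2) is a special case of~(1) (every irrational rotation is a minimal isometry of $\mathbf S^1$), and~(1) is itself a particular instance of~(3) with $\kappa=1$ and $\widetilde{G_1}=G_1=\{f\}$, it suffices to establish~(3). So from here on I assume $\widetilde{G_1}\subset G_1$ is a collection of isometries generating a minimal semigroup action on $X$, and I aim to prove that~(C) holds for $\widetilde{G_1}$.

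The central observation is that, because every element of $\widetilde{G_1}$ is an isometry, any composition $f_{\om_{p-1}}\circ\cdots\circ f_{\om_0}$ with $\om_i$'s in the index set of $\widetilde{G_1}$ is again an isometry; in particular $d(f_{\om}^{j}(x),f_{\om}^{j}(y))=d(x,y)$ for every $j\geqslant 0$ and every such word $\om$. This will let me transfer orbit density estimates between nearby points without losing any error. Fix $\vep>0$. By compactness cover $X$ by finitely many balls $B(x_1,\vep/2),\dots,B(x_N,\vep/2)$. For each $1\leqslant i\leqslant N$, minimality of $\widetilde{G_1}$ supplies some word $\om^{(i)}\in\Sigma_\kappa$ (with letters in the index set of $\widetilde{G_1}$) such that the forward orbit $\{f_{\om^{(i)}}^{\,j}(x_i):j\geqslant 0\}$ is dense in $X$. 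Since $X$ is compact, there exists $K_i\geqslant 1$ with the property that the finite piece $\{f_{\om^{(i)}}^{\,j}(x_i):0\leqslant j\leqslant K_i\}$ is $\vep/2$-dense in $X$. Set $K(\vep):=\max_{1\leqslant i\leqslant N}K_i$.

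Now, given any $x\in X$, pick an index $i$ with $d(x,x_i)<\vep/2$ and take $\om:=\om^{(i)}$. By the isometry property, $d(f_{\om}^{\,j}(x),f_{\om}^{\,j}(x_i))<\vep/2$ for every $j\geqslant 0$, so the set $\{f_{\om}^{\,j}(x):0\leqslant j\leqslant K(\vep)\}$ is $\vep$-dense in $X$. Consequently
\[
\bigcup_{j=0}^{K(\vep)}f_{\om}^{\,j}(B(x,\vep))\;\supset\;\bigcup_{j=0}^{K(\vep)}B(f_{\om}^{\,j}(x),\vep)\;=\;X,
\]
where the first inclusion uses that $f_{\om}^{\,j}$ is an isometry and hence sends $B(x,\vep)$ to the ball $B(f_{\om}^{\,j}(x),\vep)$. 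This is exactly property~(C) for the sub-semigroup generated by $\widetilde{G_1}$, and Lemma~\ref{prop:sufficiency} then concludes that the full semigroup $\mathbb S$ has frequent hitting times, proving~(3).

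I do not expect any serious obstacle here; the only mildly delicate point is the uniformity of $K(\vep)$ in the base point $x$, which is handled precisely by the equicontinuity built into the isometric hypothesis together with the finite cover by $\vep/2$-balls. Cases~(1) and~(2) then follow as noted at the outset.
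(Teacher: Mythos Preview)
Your proof is correct and follows essentially the same route as the paper: reduce items~(1) and~(2) to item~(3), verify property~(C) for the isometric sub-semigroup via minimality plus compactness, and then invoke Lemma~\ref{prop:sufficiency}. The only cosmetic difference is that the paper passes through the equivalent condition~(C') and uses uniform continuity of the generators to propagate the covering-time bound to nearby points, whereas you exploit the isometry property directly to transfer the $\vep/2$-density from the finitely many centers $x_i$ to an arbitrary $x$; your argument is slightly more streamlined but the underlying idea is the same.
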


 \begin{proof} 
 Items (1) and (2) are immediate consequences of item (3), hence we are left to prove the latter.
Moreover, using Lemma~\ref{prop:sufficiency} it is enough to show that
the collection $\widetilde{G_1}$ generates a sub-semigroup satisfying property (C).
The argument is identical to the one used in the proof of \cite[Lemma~5.1]{Sun0} adapted to the semigroup actions context, which we include
for completeness. Indeed, by minimality of the sub-semigroup action, 
for any $x\in X$ there exists ${\underline{\omega}}\in \Sigma_\kappa$ so that $f_{\omega_i}\in \widetilde{G_1}$ for every $i\geqslant 0$
and 
$
X\,=\, \overline{\{ f_{\underline{\omega}}^n(x) \colon n\geqslant 0 \}}.
$
Thus, for every $\vep>0$ we obtain that 
$
\bigcup_{n\geqslant 0} (f_{\underline{\omega}}^n)^{-1} (B(x,\vep)) \,=\, X.
$
Moreover, by compactness of $X$, 
there exists $K_x(\vep)\geqslant 1$ so that
$
\bigcup_{n=0}^{K_x(\vep)} (f_{\underline{\omega}}^n)^{-1} (B(x,\vep)) \,=\, X.
$
Now, using the uniform continuity of the finite collection of maps $f_{\omega_i}$, there exists $\delta_x>0$ so that 
$$
\bigcup_{n=0}^{K_x(\vep)} (f_{\underline{\omega}}^n)^{-1} (B(y,\vep)) \,=\, X
	\quad\text{for every $y\in B(x,\delta_x)$}.
$$
Extracting a finite sub cover $\{B(x_i,\delta_i): 1\leqslant i \leqslant s\}\subset \{B(x,\delta_x):x\in X\}$, we conclude that
(C') holds with
$K(\vep)=\max_{1\leqslant i \leqslant s} K_{x_i}(\vep)$.
Furthermore, 
as each elements $f_{\omega_i}$ is an isometry, we conclude that property (C) holds for the sub-semigroup action generated by $\widetilde{G_1}$,
which completes the proof of the corollary.
 \end{proof}
 
\begin{remark}\label{rmk:irrational-rot}
In the special case that $f: \mathbf S^1 \to \mathbf S^1$ is an irrational rotation  
it is easy to check from the proof of Lemma~\ref{prop:sufficiency} and simple estimates on the minimal covering time  
(cf. \cite{BTV}) that the frequent hitting times property holds with 
$K(\vep)=[\frac3\vep]+1$ .
In particular, the transition time may be exponentially large on balls with exponentially small radius.
\end{remark}

\begin{remark}\label{rmk:equicontinuous}
In the case of a single homeomorphism, one can replace the assumption that the dynamics $f$ is an isometry 
by equicontinuity of the family $\{f^n\}_{n\in \mathbb Z}$, as in this context $d_f(x,y):=\sup_{n\in \mathbb Z} d(f^n(x),f^n(y))$ defines a metric
on $X$ with respect to which $f$ becomes an isometry.
\end{remark}

\begin{remark}\label{rmk:def.hitting}
The frequent hitting times property can be rephrased as follows: for any $\varepsilon>0$ there exists $K(\varepsilon)\geqslant 1$ 
so that for any balls $(B_1,B_2)$ such that $|B_1|=\vep$ there exists $0\leqslant p \leqslant K(\vep)$, $\om\in \{1,2,\dots, \kappa\}^p$ and a ball $B_2'\subset B_2$ of radius 
$r\geqslant \min\{|B_1|/4,|B_2|/2\}$ so that $g_\om^p(B_1) \supset B_2'$. Indeed, if $|B_2|>\vep/2$ and one takes $\hat B_2\subset B_2$ a ball of radius $|B_1|/2$
then there exists $B_2'\subset \hat B_2$ of radius 
$|\hat B_2|/2=|B_1|/4$ so that $g_\om^p(B_1) \supset B_2'$.
\end{remark}

Let us provide a simple example which fits in the context of item (3) in the corollary.

\begin{example}\label{ex:T3}
Consider the torus $\mathbb T^3=\mathbb R^3/\mathbb Z^3$, let $v_1,v_2,v_3\in \mathbb R^3$ be a base of $\mathbb R^3$ such that the components of each vector $v_i$ are rationally independent. For each $i=1,2,3$ consider the translation
$f_i(x)=x+v_i (mod \, \mathbb Z^3)$, $x\in \mathbb T^3$, which is clearly an isometry. 
Even though each individual translation may fail to be transitive, the choice of the vectors $v_i$ guarantees that  the semigroup action generated by $\{f_1,f_2, f_3\}$ is minimal. According to Corollary~\ref{cor:rotations},
the semigroup action generated by $G_1=\{ f_1,f_2,f_3\} $ has frequent hitting times. 
\end{example}

\subsection{Projective linear maps}\label{ex:projection}

In opposition to the case of the torus, where a single irrational translation have dense orbits, minimal semigroup action in arbitrary
manifolds often require some minimum number of generators (see \cite{HN} and references therein for a discussion on this topic). 
This is precisely the case for continuous maps on spheres, which appear naturally in applications to linear cocycles as these are 
double covering of real projective Euclidean spaces.

\begin{example}\label{ex:S2}
Let ${\mathbf S}^2 \subset \mathbb R^3$ be the unit sphere an
d, given $\alpha, \beta\in \mathbb R\setminus \mathbb Q$ consider the semigroup action on ${\mathbf S}^2$ generated by the collection $G_1=\{f_0,f_1\}$ 
of elements in $SO(3)$ defined as
$$
f_1= 
\left(
\begin{array}{ccc}
\cos \alpha & -\sin \alpha & 0 \\
\sin \alpha & \cos \alpha  & 0 \\
0 & 0 & 1
\end{array}
\right)
\quad\text{and}\quad
f_2= 
\left(
\begin{array}{ccc}
1 & 0 & 0 \\
0 & \cos \beta & -\sin \beta \\
0 & \sin \beta & \cos \beta   
\end{array}
\right)
$$
The maps $f_1$ and $f_2$ are irrational rotations around the $z$-axis and the $x$-axis, respectively. 
It is clear from the use of polar coordinates 
and the fact that the rotations are irrational that the semigroup action generated by the collection $G_1=\{f_1,f_2\}$ of isometries is minimal. 
Corollary~\ref{cor:rotations} implies that such semigroup action has frequent hitting times. 
\end{example}

\begin{example}\label{ex:symplectic}
Let $Sp(2\ell,\mathbb R)$ denote the Lie group of $2\ell \times 2\ell$-symplectic matrices. 
Assume that $A\in sp(4,\mathbb{R})$ is a $4\times 4$ symplectic matrix corresponding to a {generic center}, i.e. 
$A$ has two pairs of non-real, complex conjugate eigenvalues with norm one.
By continuity of the isolated eigenvalues, 
there exists an open neighborhood $\mathscr U\subset sp(4,\mathbb{R})$ of the matrix $A$ 
formed by symplectic matrices having distinct unitary complex eigenvalues and, up to conjugacy by symplectic matrices, each  $B\in \mathscr U$
is determined by their eigenvalues
in the canonical symplectic basis 
where the $2$-form can be written as $e_1\wedge \hat{e}_1+e_2\wedge \hat{e}_2$, 
one can write
$$
B
	= \begin{pmatrix}a&0&-b&0\\0&c&0&-d\\b&0&a&0\\0&d&0&c\end{pmatrix}
$$
where $a^2+b^2=1$, $c^2+d^2=1$ and $b,d\neq 0$. Up to conjugacy by a symplectic change of basis matrix, the previous matrices are in one-to-one correspondance with the previous parameterization of its eigenvalues $a\pm {\bf i}\, b$ and $c\pm {\bf i}\, d$. The projectivization of $B$ is
an isometry. Moreover, there exists a Baire generic and full Haar measure subset $\mathcal R\subset \mathscr U$
so that the eigenvalues of each $B\in \mathcal R$ do not satisfy any of the equations 
$
(a+bi)^m \, (c+di)^n=1,
$
with $m,n\in \mathbb Z$ and, consequently,
the projectivization of $B$ is a minimal isometry on $\mathbf P\mathbb R^4$.
The previous reasoning can also be action on the central direction of to partially hyperbolic $sl(2\ell,\mathbb R)$ matrices ($\ell>2$)
having a generic center in its 4-dimensional center subbundle. 
\end{example}

\subsection{Relation with other gluing orbit properties and hyperbolicity}\label{sec:vs-gluing}

Here we provide some examples to illustrate that the frequent hitting times property is significantly different from
the gluing orbit and specification properties. Indeed, The specification property is frequently associated to hyperbolic dynamics and
the gluing orbit property can arise in both hyperbolic, partially hyperbolic and non-uniformly hyperbolic dynamics 
(we refer the reader \cite{BV,BTV2} for definitions and precise statements). However, as observed in Subsection~\ref{suff}, the frequent hitting times property
is guaranteed for minimal semigroup actions by isometries.
The next examples illustrate that it may occur even if all others notions fail.

We say that the semigroup action generated by $G_1=\{id,f_1,...,f_\kappa\}$ satisfies the \emph{gluing orbit property} if for any 
$\varepsilon>0$ there exists $K(\vep)\geqslant 1$ so that for any points $(x_1,{\underline{\omega}}_1), \dots (x_r,{\underline{\omega}}_r)$ in $X\times \Sigma_\kappa$ 
there exist $x\in X$, $0\leqslant p_i\leqslant K(\vep)$  and $\theta_i \in \{1, 2, \dots, p\}^{p_i}$ ($i=1\dots r-1$) so that
$d(f^j_{{\underline{\omega}}_1}(x) ,f^j_{{\underline{\omega}}_1}(x_1)) < \varepsilon$ for every $j=1,...,n_1$ and
$$
d( f^j_{{\underline{\omega}}_\ell} \dots f_{\theta_2}^{p_2} {f}^{n_2}_{{\underline{\omega}}_2}(x) f_{\theta_1}^{p_1} {f}^{n_1}_{{\underline{\omega}}_1}(x),\, f^j_{{\underline{\omega}}_\ell}(x_j))<\varepsilon
$$
for every $j=2,...,n_\ell$ and $\ell=1,...,k$.
The first example  
shows that the frequent hitting times property 
may fail among hyperbolic dynamics.

\begin{example}\label{ex:Anosov}
Consider the linear Anosov automorphism $f_A$ on $\mathbb T^2=\mathbb R^2/\mathbb Z^2$ induced by $$A=\left(\begin{array}{cc}
2& 1 \\ 1 & 1\end{array}\right)$$
and let $q: \mathbb R^2 \to \mathbb T^2$ be the quotient map. In particular, $f_A$ satisfies the gluing orbit property  \cite{Bo71}.
Note that $p=(0,0)$ is a hyperbolic fixed point for $f$, and that the $A$-invariant hyperbolic splitting $\mathbb R^2=E^u\oplus E^s$ 
is such that the invariant manifolds at $p$ are simply $W^*(p)=q(E^*)$ where $*\in \{s,u\}$. Let $\lambda>1$ be the leading eigenvalue for
$A$ (and, since $A$ is volume preserving, $\lambda^{-1}$ is the contracting eigenvalue for $A$). 
Given $\vep>0$, the set $f_A^n(B(p,\vep))$ is the projection by $q$ of an ellipsis in $\mathbb R^2$ whose larger and smaller axis have length
$\lambda^{n}\vep$ and $\lambda^{-n}\vep$, respectively, wrapped in the torus. 
In consequence, the inner diameter of the set $f_A^n(B(p,\vep))$ decreases exponentially fast with $n$, 
hence the frequent hitting times property does not hold.
\end{example}

\begin{example}\label{MS}
Consider the semigroup action $\mathbb S$ generated by the homeomorphisms $G_1=\{id, f_1,f_2\}$ on the circle $\mathbf S^1=\mathbb R/\mathbb Z$ where $f_0$  is a north-pole south-pole diffeomorphism and $f_1$ a irrational rotation (see Figure~5.1 below). 
By \cite{BTV} and Corollary~\ref{cor:rotations}, the map $f_1$ satisfies both the gluing orbit and the frequent hitting times properties. In particular the semigroup action $\mathbb S$ has frequent hitting times. 
\begin{figure}[h]
\begin{center}
        \includegraphics[scale = 0.35]{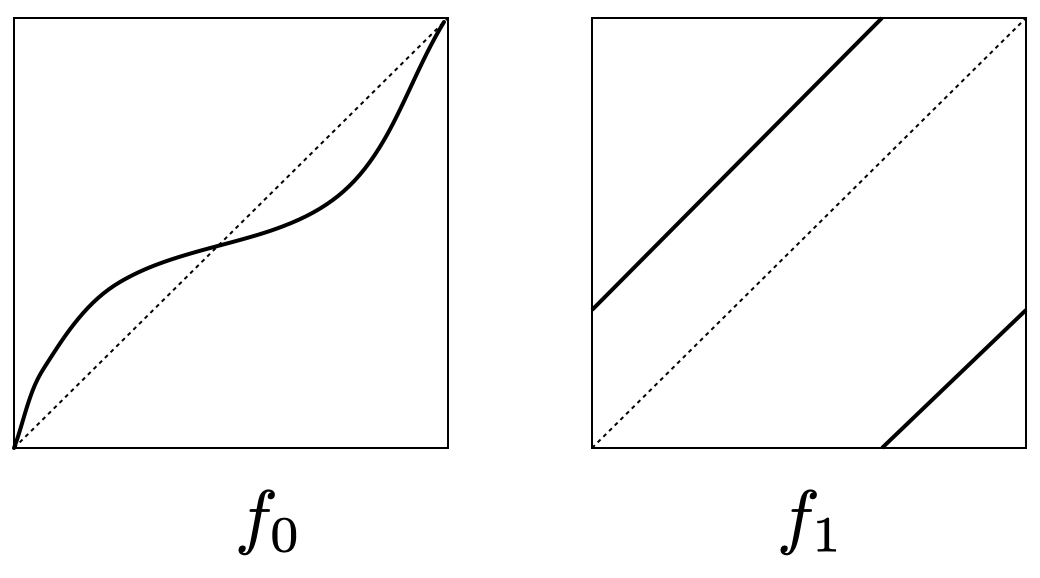}
	\vspace{-.2cm}
        \caption{Iterated function system with two generators $f_0,f_1$ on the circle $\mathbf S^1$: a north-pole south-pole homeomorphism and an irrational rotation $f_1$}
\end{center}
\end{figure}
We proceed to show that such semigroup action does not satisfy a gluing orbit property nor any non-uniform specification property.
First, since $f_1$ is an irrational rotation then the orbit of $\frac12$ cannot
contain $0$ (otherwise a composition of these rotations would be a rotation of rational angle $1/2$). 
For any $\delta>0$ let $k(\delta)\geqslant 1$ be the largest integer so that $d(f_1^j(\frac12),1)>2\delta$ for every 
$0\leqslant j \leqslant k(\delta)$. Clearly $k(\delta) \to +\infty$ as $\delta\to 0$.
Fix $\vep>0$, $x_1=\frac12$ and $x_2=0$. By the uniformly contracting (resp. expanding) behavior in a neighborhood of $x_1$ (resp. $x_2$), 
there exists $\lambda\in (0,1)$ so that
$f_0^n(B_{f_0}(x_1,n,\vep)) \subset B(x_1, \lambda^{n}\vep)$ and $B_{f_0}(x_2,n,\vep) \subset B(x_2, \lambda^{n}\vep)$ for every small $\vep>0$ and every $n\geqslant 1$.  Observe that the contracting behavior in a neighborhood of $x_1$ guarantees that $f_0^{(j+1)+n}(B_{f_0}(x_1,n,\vep))\subset f_0^{j+n}(B_{f_0}(x_1,n,\vep))$ and $f_0^{j+n}(B_{f_0}(x_1,n,\vep)) \cap B_{f_0}(x_2,n,\vep)=\emptyset$, for every $j\geqslant 0$.
Hence, any possible orbit between the sets $B_{f_0}(x_1,n,\vep)$ and $B_{f_0}(x_2,n,\vep)$ need to arise from a concatenation of the map $f_1$
for a certain number of times. By construction,
$$
f_1^{j} f_0^{n}(B_{f_0}(x_1,n,\vep)) \cap B_{f_0}(x_2,n,\vep) =\emptyset
	\quad\text{for every $0\leqslant j \leqslant k(\lambda^n \vep)$}
$$
and, consequently, not only any such transition time depends on the size $n$ of the orbit of $x_1$ as it grows exponentially fast with $n$. Indeed, 
by \cite{BTV}
the minimal integer $\lfloor \frac1\delta\rfloor -1 \leqslant K_0(\delta)\leqslant \lfloor \frac1\delta\rfloor +1$ 
so that $\bigcup_{j=0}^{K_0(\delta)} f_1^j(B(z,\delta))=\mathbf S^1$ for every $z\in \mathbf S^1$
is bounded above by $2 K_1(\delta)$ where $K_1(\delta)\geqslant 1$ is the smallest integer so that $f_1^j(B(z,\delta)) \cap B(z,\delta)=\emptyset$
for every $z\in \mathbf S^1$ (as $f_1$ is a minimal isometry the integers $K_0(\delta), K_1(\delta)$ are independent of the point $z$).
As $0$ and $\frac12$ are in diametrically opposite positions one has that $k(\delta)\geqslant \lfloor \frac12 K_1(\frac\delta2)\rfloor \geqslant 
\lfloor \frac1{\delta}\rfloor -1$ for every small $\delta>0$, which ensures that 
$\liminf_{n\to\infty} \frac1n \log k(\lambda^n \vep)\geqslant -\log \lambda >0$. In particular the semigroup does not satisfy a non-uniform specification property, 
ie, the transition times cannot be taken assumed to grow in a sub-linear way in comparison to the size of shadowed orbits.
\end{example}

\begin{remark}
The previous example has similar implications for the corresponding skew-product maps. Indeed, assume that 
$F: \mathbb T^2\times \mathbf S^1 \to \mathbb T^2\times \mathbf S^1$ is a 
$C^1$ partially hyperbolic  diffeomorphism of the form $F(x,y)=(A(x),g(x,y))$ where $A: \mathbb T^2 \to 
\mathbb T^2$ is  an Anosov diffeomorphism and that there exists $p_j \in Per(A)$ of period $\pi(p_j)\geqslant 1$
such that $F^{\pi(p_j)}(p_j,\cdot):\mathbf S^1 \to \mathbf S^1$ coincides with the map $f_j$ ($j=0,1$). 
Since $F$ has periodic points of different index then it does not satisfy the gluing orbit property \cite{BTV2}. 
While non-uniform specification holds for typical points of hyperbolic measures \cite{OliTian}, the
previous example suggests that a global non-uniform specification property fails dramatically.
\end{remark}

In the remainder of this subsection we discuss the strength of the frequent hitting times in the process of shadowing finite pieces of orbits, with the specification, gluing orbit property and non-uniform specification. In fact, instead of being bounded, or even having sublinear growth with respect to the size of the finite pieces of orbits, transition times may be much larger than the size of the finite pieces of orbits: given a finite collection $(x_i,n_i)_{1\leqslant i \leqslant \ell}$, the frequent hitting times property guarantees that that there exist $p_i\geqslant 1$, depending on $\vep$, ($1\leqslant i \leqslant \ell-1$) and $x\in X$ so that 
\begin{equation}
0 < n_1 \leqslant n_1 + p_1 \leqslant n_2 + (n_1 + p_1) \leqslant \dots \leqslant n_\ell + \sum_{i=0}^{\ell-1} (n_i+p_i)
\end{equation}
where 
\begin{enumerate}
\item during the shadowing intervals $[\sum_{i=0}^{j-1} (n_i+p_i), n_j+\sum_{i=0}^{j-1} (n_i+p_i)]$ the orbit of $x$ remains
	$\vep$-close to the orbit of the point $x_j$
\item the transition interval $[n_j+\sum_{i=0}^{j-1} (n_i+p_i), n_j+p_j+\sum_{i=0}^{j-1} (n_i+p_i)]$ has size $p_j$
\item $p_j$ may have exponential growth with respect to $n_j$.
\end{enumerate}
\begin{figure}[htb]\label{fig0}
\begin{center}
  \includegraphics[scale=.72]{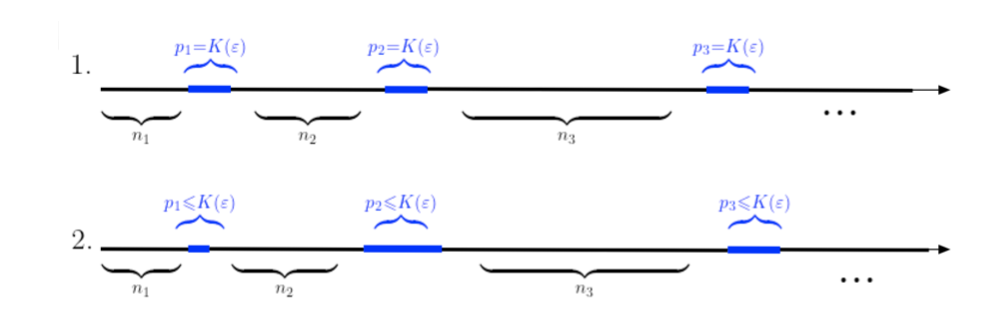}
    \includegraphics[scale=.72]{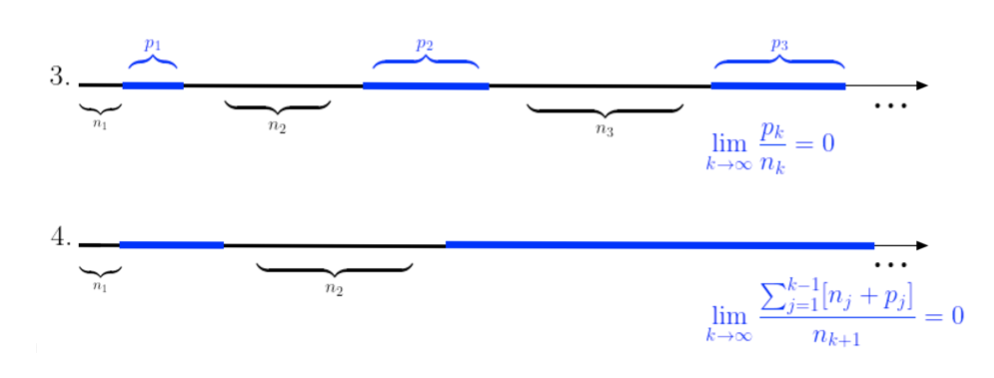}
\caption{Time lags and oscillating behavior between shadowing finite pieces of orbits under: 
(1) specification, (2) gluing orbit property, (3) non-uniform specification, and (4) frequent hitting time property.}
\label{figure}
\end{center}
\end{figure}
Yet, while $p_i$ may be much larger than $n_i$ the crucial fact is that it has memory loss,
meaning it depends exlusively on it and not on the size of future pieces of orbits. 
This makes possible to choose an adequate size 
$n_{j+1}$ of an orbit and bookkeeping the previous transition times. 
The effect obtained in the construction of points with oscillating time averages is illustrated in Figures 5.2 and 5.3.
We refer the reader to Section~\ref{teoC} and expressions \eqref{def:nk} for more details.

\begin{figure}[hbt]\label{fig01}
\begin{center}
  \includegraphics[scale=.25]{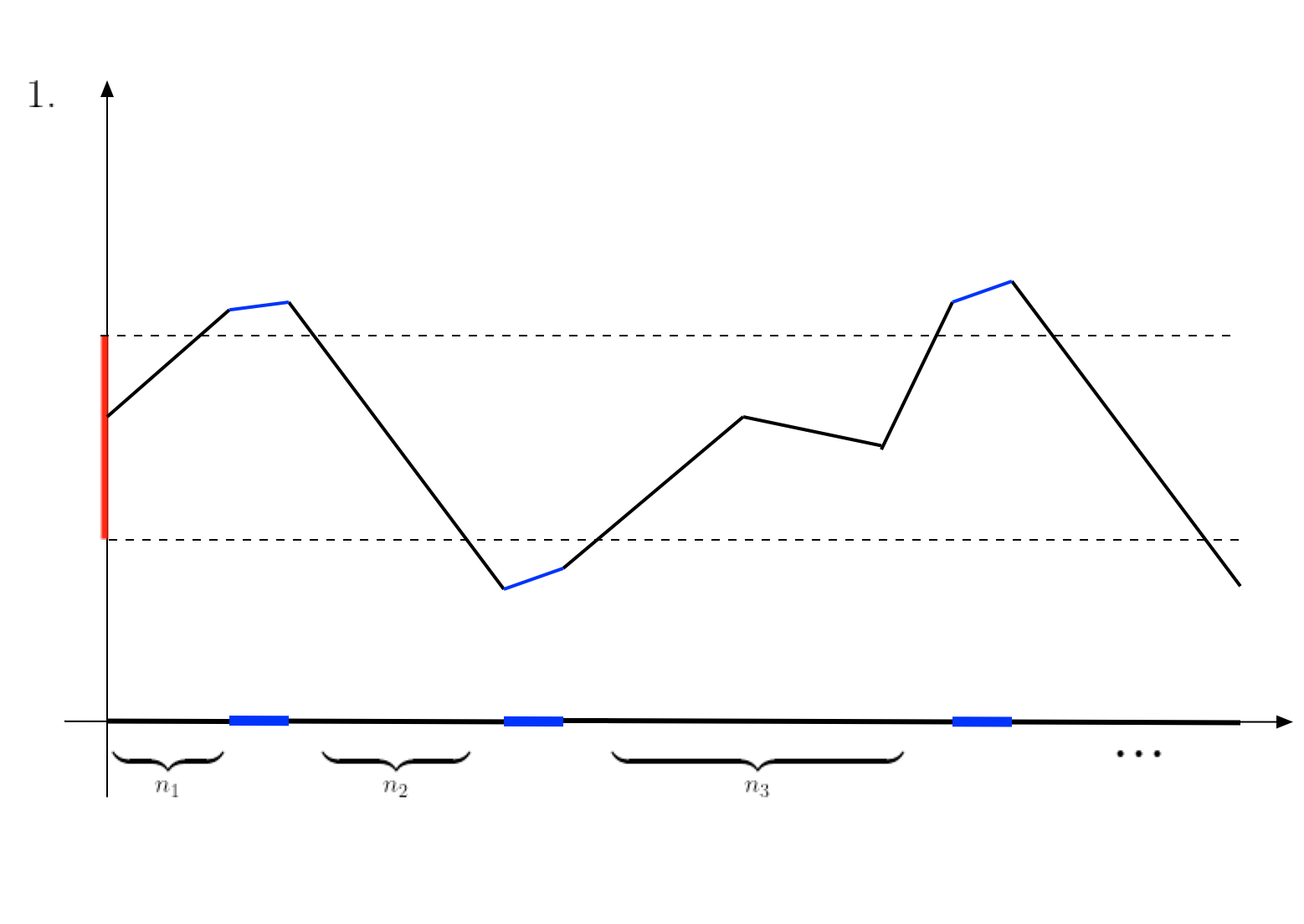}
    \includegraphics[scale=.25]{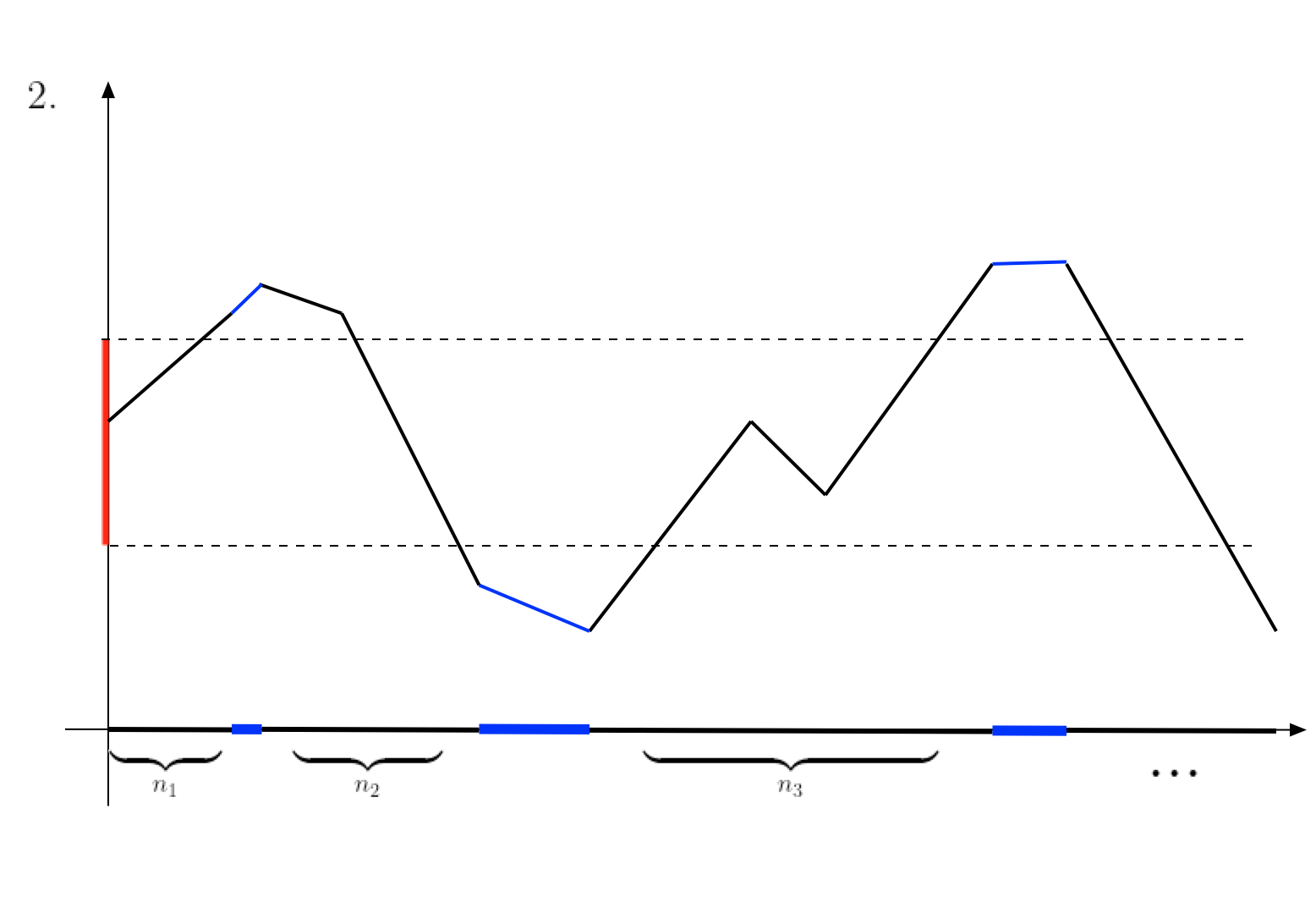} 
    \\
      \includegraphics[scale=.25]{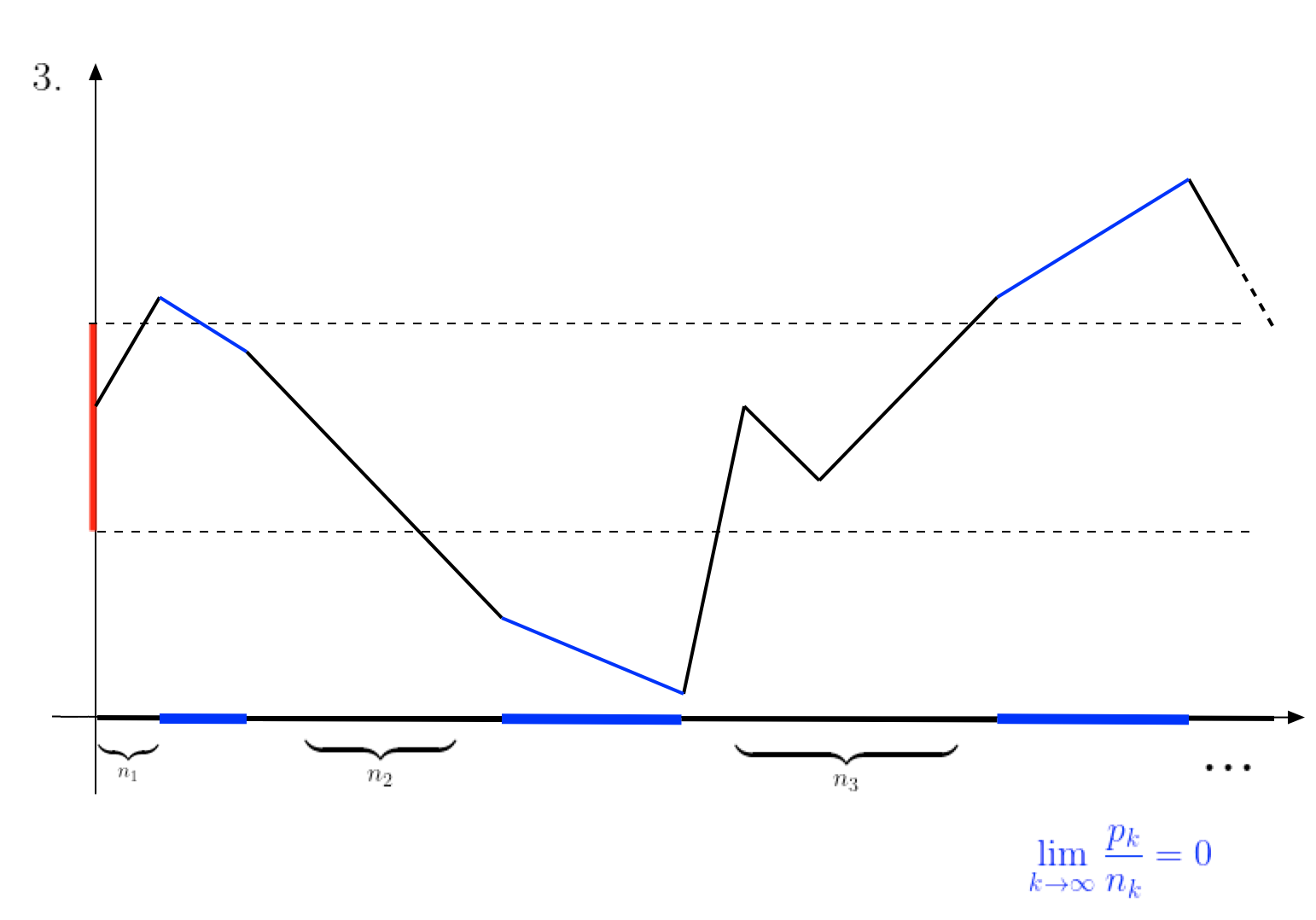}
        \includegraphics[scale=.25]{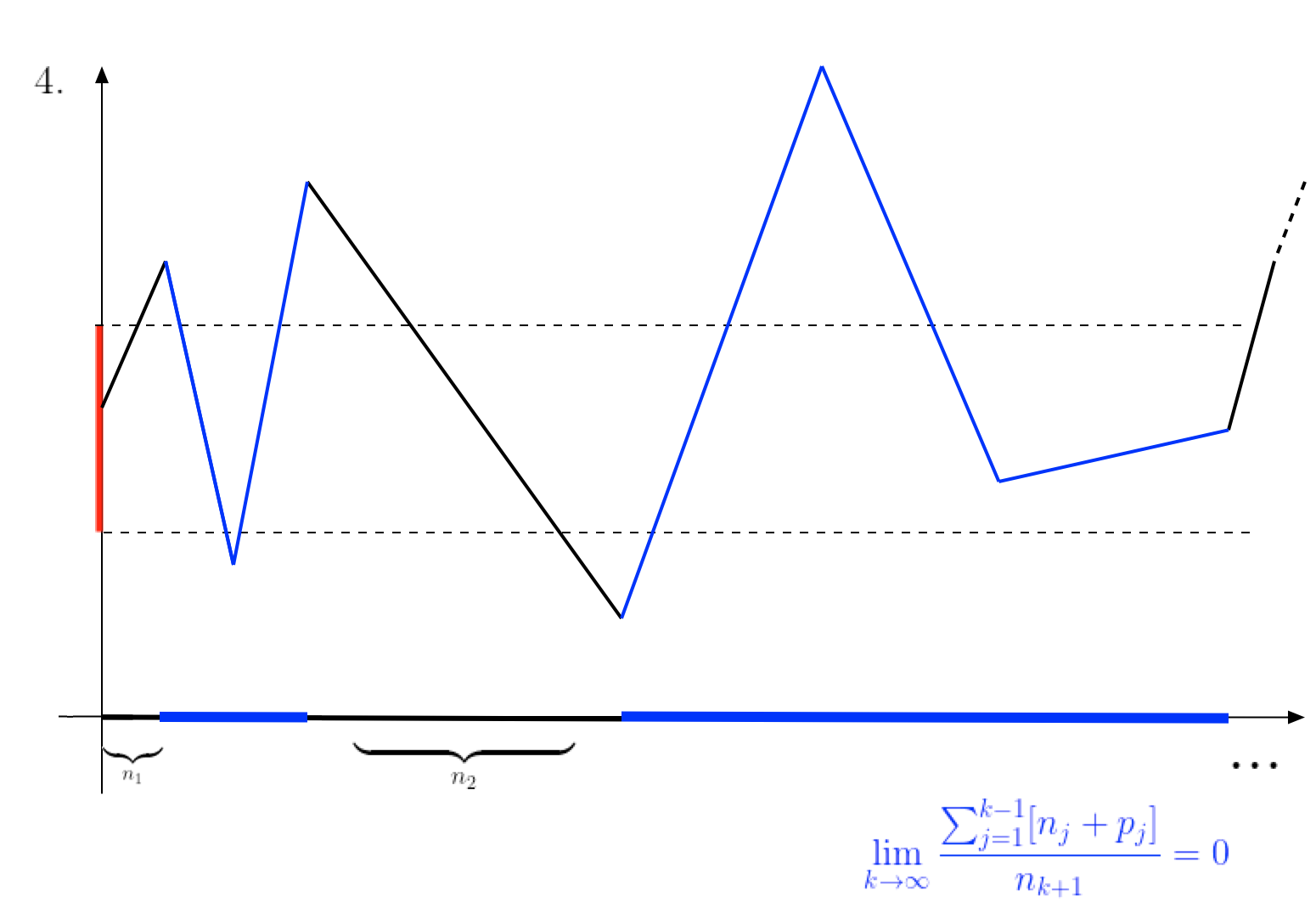}
\caption{Illustration of the oscillating behavior of time averages along the shadowing of finite pieces of orbits for maps satisfying: 
(1) specification, (2) gluing orbit property, (3) non-uniform specification, and (4) frequent hitting time property.}
\label{figure}
\end{center}
\end{figure}

\section{The set of irregular points for semigroup actions is Baire generic }\label{teoC}

This section is devoted to the proof of Theorem~\ref{thm:IFS}. In order to ease the presentation, we first explain how the assumptions in the 
theorem allow one to construct points in $X$ and an infinite path on the semigroup along which such point has divergent Birkhoff averages for the continuous
observable $\psi$ (Subsection~\ref{subsec1}). Exploring these ideas, we then prove that the set of non-typical points for the skew-product $F$ and the 
observable $\varphi_\psi$ forms a  Baire generic subset of $\Sigma_\kappa\times X$ (Subsection~\ref{subsec2}).
The proof of the theorem is completed in Subsection~\ref{subsec3}. 
\color{black}

\subsection{The set of irregular points is non-empty}\label{subsec1}

Assume that $G_1=\{id,f_1, f_2, \dots, f_\kappa\}$ is a collection of bi-Lipschitz homeomorphisms on a compact metric
space $X$ and that $\psi: X\to \mathbb R$ is a continuous observable which is not a coboundary with respect to some
of the elements in $G_1$. 
Assume, without loss of generality, that 
$$
\inf_{\mu\in \cM_1(f_\kappa)}  \int \psi \,d\mu < \sup_{\mu\in \cM_1(f_\kappa)}  \int \psi \,d\mu.
$$
Let $\mu_1,\mu_2$ be $f_\kappa$-invariant and ergodic probability measures such that  $\int \psi \,d\mu_1 
<  \int \psi \,d\mu_2$
and take $x_i \in B(\mu_i)$ be points in the ergodic basin of attraction with respect to $f_\kappa$, for $i=1,2$.  In particular, there are integers $1\leqslant n_1< n_2$ so that
\begin{equation}\label{eq:choice-vep001}
\Big| \frac1n \sum_{j=0}^{n-1} \psi(f_\kappa^j(x_i)) - \int \psi\, d\mu_i\Big| < \frac{\int \psi \,d\mu_2-\int \psi \,d\mu_1}{8}
\quad \text{for every}\, n\geqslant n_i
\end{equation}
for $i=1,2$. 
By uniform continuity of $\psi$, there one may choose $\vep_0>0$ such that 
\begin{equation}\label{eq:choice-vep}
|\psi(x)-\psi(y)|< \frac{\int \psi \,d\mu_2-\int \psi \,d\mu_1}{8}
	\quad\text{for every $x,y\in X$ so that $d(x,y)<\vep_0$.}
\end{equation}
To ease the notation, for each $x\in X$, $\vep>0$ and $n\geqslant 1$ denote by $B_{f_\kappa}(x,n,\vep)$ the dynamic ball taken with respect to $f_\kappa$, that is,
$$
B_{f_\kappa}(x,n,\vep):=
	\big\{y\in X \colon d(f_\kappa^j(y),f_\kappa^j(x)) \leqslant \vep \; \text{for every}\;  0\leqslant j \leqslant n-1\big\}.
$$
Since the semigroup action satisfies the frequent hitting time property, by Remark~\ref{rmk:def.hitting},
for every $\xi>0$ there exists $K(\xi)\geqslant 1$ so that for any balls $B_1,B_2$ in $X$ such that $|B_1|=\xi$ there exists 
$0\leqslant p \leqslant K(\xi)$, $\om\in \Sigma_\kappa$ and a ball $B_2'\subset B_2$ of radius 
$r\geqslant \min\{|B_1|/4,|B_2|/2\}$ so that $g_\om^p(B_1) \supset B_2'$. 

\medskip
The idea is to use the frequent hitting time property to create points with oscillating time averages by making them
shadow pieces of orbits associated with points with distinct time averages. 
The following auxiliary lemma provides a control on the size of dynamic balls and images of dynamic balls that 
will be instrumental in the argument. 

\begin{lemma}\label{le:est-balls}
If $f$ is a bi-Lipschitz homeomorphism and $L>1$ is a Lipschitz constant for both $f$ and $f^{-1}$ then
\begin{equation}\label{le:dynballs-vs-balls}
B_f(x,n,\vep) \supset B(x, L^{-n} \vep) 
	\quad\text{and}\quad
f^n(B_f(x,n,\vep)) \supset  B(f^n(x), L^{-2n} \vep) 
\end{equation}
for any $\vep>0$ and $n\geqslant 1$.
\end{lemma}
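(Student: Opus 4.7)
The plan is to establish both inclusions by a straightforward Lipschitz bookkeeping: the first one is a direct forward estimate on iterates, while the second one will be reduced to the first via a preimage argument exploiting that $f$ is a homeomorphism.

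For the first inclusion, I would pick an arbitrary $y\in B(x,L^{-n}\vep)$ and, for each $0\leqslant j \leqslant n-1$, iterate the $L$-Lipschitz bound on $f$ to obtain
\[
d(f^j(x),f^j(y))\leqslant L^j\, d(x,y) < L^{j-n}\vep \leqslant L^{-1}\vep <\vep,
\]
which places $y$ inside $B_f(x,n,\vep)$ by definition of the Bowen ball.

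For the second inclusion, the idea is to reduce to the first. Given $z\in B(f^n(x),L^{-2n}\vep)$, I set $y:=f^{-n}(z)$, which is well-defined since $f$ is a homeomorphism. Using that $f^{-1}$ is also $L$-Lipschitz (hence $f^{-n}$ is $L^n$-Lipschitz), one estimates
\[
d(x,y)=d(f^{-n}(f^n(x)),f^{-n}(z))\leqslant L^n\,d(f^n(x),z)<L^n\cdot L^{-2n}\vep=L^{-n}\vep.
\]
The first inclusion then places $y$ in $B_f(x,n,\vep)$, so $z=f^n(y)\in f^n(B_f(x,n,\vep))$, as required.

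There is essentially no obstacle here; the only mildly delicate point is that the factor $L^{-2n}$ in the second statement is precisely what is needed to absorb, in the exponent, the $L^n$ loss incurred when pulling $z$ back by $f^{-n}$ on top of the $L^{-n}\vep$ radius already required by the first inclusion. Both bounds are tight up to one extra power of $L$, reflecting that one only controls iterates $0\leqslant j\leqslant n-1$ rather than $0\leqslant j\leqslant n$.
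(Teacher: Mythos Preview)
Your proof is correct and essentially identical to the paper's: the first inclusion is the same forward Lipschitz iteration, and for the second inclusion the paper also pulls back a point of $B(f^n(x),L^{-2n}\vep)$ by $f^{-n}$ into $B(x,L^{-n}\vep)$ and then invokes the first inclusion to conclude.
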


\begin{proof} If $x,y\in X$ and $d(x,y)<L^{-n}\vep$ then 
$
d(f^j(x), f^j(y)) \leqslant L^j d(x,y) \leqslant L^{j-n} \vep <\vep
$
for every $0\leqslant j \leqslant n-1$. This proves that $B_f(x,n,\vep) \supset B(x, L^{-n} \vep)$. Analogously, if 
$y\in B(f^n(x), L^{-2n} \vep)$ then $d(f^{-j}(y), f^{n-j}(x)) \leqslant L^j d(y, f^n(x)) \leqslant L^{j-2n} \vep< L^{-n}\vep$ for every 
$0\leqslant j \leqslant n-1$. 
This ensures that 
$$
B(f^n(x), L^{-2n} \vep) \subset f^n(B(x, L^{-n}\vep)) \subset f^n(B_f(x,n,\vep)),
$$
which finishes the proof of the lemma.
\end{proof}

\begin{remark}\label{rmk:localh}
The previous lemma holds more generally of $X$ is a compact and connected metric space and $f$ is 
a Lipschitz continuous local homeomorphism whose inverse branches are Lipschitz continuous. Indeed, in this context
there exists $\delta>0$ so that for every $x\in X$ there are well defined inverse branches 
$f_i^{-1}: B(x,\delta) \to f_i^{-1}(B(x,\delta))$,
and the conclusion of Lemma~\ref{le:est-balls} holds for every small $0<\vep<\delta$.
\end{remark}

In view of the estimates ~\eqref{le:dynballs-vs-balls}, one can see the frequent hitting times property will
be enough to guarantee that the transition times may grow exponentially fast with the size of the finite pieces 
of orbits to be shadowed. This can be observed in the following schematic Figure~\ref{fig-images} below.
\begin{figure}[htb]\label{fig-images}
\begin{center}
        \includegraphics[scale = 0.4]{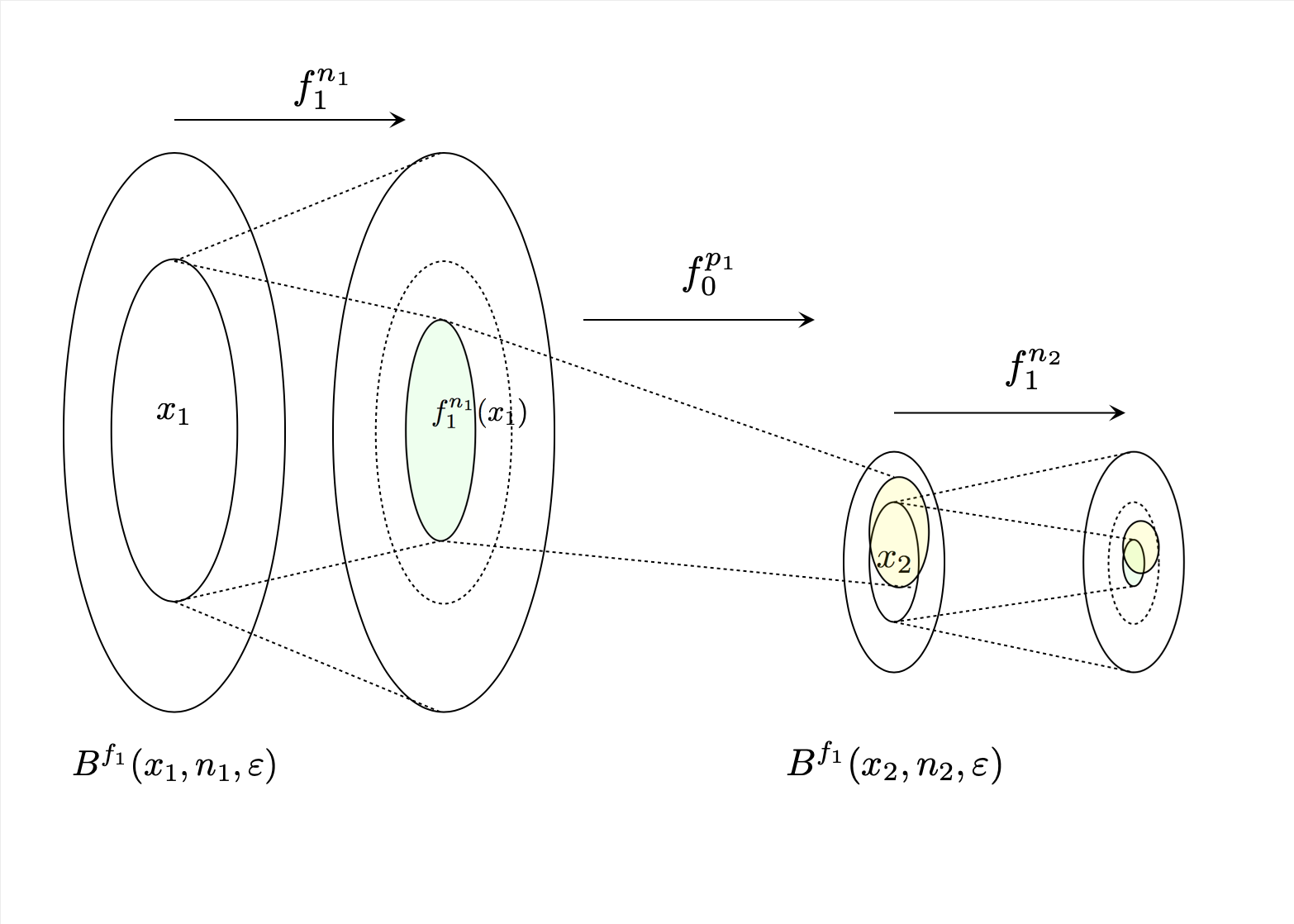}
        \caption{On the left: the image by $f_\kappa^{n_1}$ of the ball $B(x_1, L^{-n_1}\vep)$ contains a ball of radius $L^{-2n_1}\vep$ in green color. On the center: the frequent hitting times property ensures that the image of the green region by a suitable iterate of $f_0$ covers a ball of
        radius $L^{-n_2}\vep/2$}
\end{center}
\end{figure}

The difficulty now reduces to guarantee that the previous property is still enough to ensure the divergence of the time averages.
For that purpose we will make the following choice of constants.
Let us consider a strictly increasing sequence of positive integers $(n_k)_{k\geqslant 3}$ such that 
\begin{equation}\label{def:nk}
n_{j+1} > 2 n_j + \frac{2\log 2}{\log L} \;\text{for all $j\geqslant 2$}
\quad \text{and}\quad 
\lim_{k\to\infty} \frac{\sum_{j=0}^{k} \big[n_j + K(\frac{L^{-2j}\vep}2 )\big]}{n_{k+1}}=0,
\end{equation}
and for each $k\geqslant 1$ set 
$$
z_k=
\begin{cases}
x_1, \text{\; if $k$ odd} \\
x_2, \text{\; if $k$ even.}
\end{cases}
$$
The following proposition guarantees that there exist points which shadows the finite size orbits of the prescribed points $x_1$ and $x_2$, alternately, and that these have divergent time-averages. More precisely:

\begin{proposition}\label{prop:divergent.exist}
There exists $x\in X$ and ${\underline{\omega}}\in \Sigma_\kappa$ so that:
\begin{itemize}
\item[(a)] $x\in B_{f_\kappa}(x_1,n_1,\vep)$
\item[(b)] for every $k\geqslant 1$ there exists $0\leqslant p_k\leqslant K(\frac{L^{-2k}\vep}2 )$ and a finite
	word $\om^{(k)}:= \omega^{(k)}_{i_0} \omega^{(k)}_{i_1}  \dots \omega^{(k)}_{i_{p_k}}  \in\{1,2, \dots, \kappa\}^{p_k}$ such that
	\begin{equation}\label{eq:control-balls}
	f_{\underline{\omega}}^{\sum_{i=1}^k (n_i+p_i)}(x):=f_{\om^{(k)}}^{p_k} f_\kappa^{n_k} f_{\om^{(k-1)}}^{p_{k-1}} \dots 
		f_\kappa^{n_2} f_{\om^{(1)}}^{p_1}  f_\kappa^{n_1} (x) \in  B_{f_\kappa}( z_k,n_k,\vep) \quad \forall k\geqslant 2,
	\end{equation}
	where 
	\begin{equation}\label{eq:sequence}
	{\underline{\omega}} = ( \underbrace{\kappa, \kappa, \dots \kappa}_{n_1 }, 
					\omega^{(1)}_{i_0} \omega^{(1)}_{i_1}  \dots \omega^{(1)}_{i_{p_1}}, 
					\underbrace{\kappa, \kappa, \dots \kappa}_{n_2 }, 
					\omega^{(2)}_{i_0} \omega^{(2)}_{i_1}  \dots \omega^{(2)}_{i_{p_2}}, 
					\underbrace{\kappa, \kappa, \dots \kappa}_{n_3 }, \ldots).
	\end{equation}
	\end{itemize} 
Moreover,  
\begin{equation}
\label{eq:seq-nontypical}
\liminf_{n\to\infty} \frac1n\sum_{j=0}^{n-1} \psi(f_{\underline{\omega}}^j(x)
	< \limsup_{n\to\infty} \frac1n\sum_{j=0}^{n-1} \psi(f_{\underline{\omega}}^j(x).
\end{equation}
\end{proposition}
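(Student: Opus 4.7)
My plan is to construct $x$ and ${\underline{\omega}}$ inductively by a nested shadowing argument, with ${\underline{\omega}}$ assembled block-by-block as in~\eqref{eq:sequence}, and then to verify~\eqref{eq:seq-nontypical} by evaluating the Birkhoff averages at the end of each shadowing block, where the most recent shadowing dominates all past contributions.

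For the inductive construction, I will build compact sets $E_0 \supset E_1 \supset \cdots$ so that every $y \in E_k$ satisfies~\eqref{eq:control-balls} up to level $k$. Start with $E_0 := \overline{B_{f_\kappa}(x_1, n_1, \vep)}$, which already yields~(a). Inductively, assume $E_{k-1}$ built so that every $y \in E_{k-1}$ reaches $\overline{B_{f_\kappa}(z_k, n_k, \vep)}$ after the composition constructed so far. Apply Lemma~\ref{le:est-balls} twice: once to deduce that $f_\kappa^{n_k}(\overline{B_{f_\kappa}(z_k, n_k, \vep)})$ contains $B(f_\kappa^{n_k}(z_k), L^{-2n_k}\vep)$, and once to deduce that $B_{f_\kappa}(z_{k+1}, n_{k+1}, \vep)$ contains $B(z_{k+1}, L^{-n_{k+1}}\vep)$. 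The first inequality in~\eqref{def:nk} forces the second radius to be at most half the first. I then invoke the frequent hitting times property (in the form of Remark~\ref{rmk:def.hitting}) with these two balls to produce $p_k$, a word $\omega^{(k)}$ and a ball $B_2' \subset B_{f_\kappa}(z_{k+1}, n_{k+1}, \vep)$ with $f_{\omega^{(k)}}^{p_k}(B(f_\kappa^{n_k}(z_k), L^{-2n_k}\vep)) \supset B_2'$. Define $E_k \subset E_{k-1}$ as the pullback of $B_2'$ under the composition assembled so far; it is nonempty and compact, so $\bigcap_k E_k \neq \emptyset$. Pick $x \in \bigcap_k E_k$ and let ${\underline{\omega}}$ be given by~\eqref{eq:sequence}, yielding both~(a) and~(b).

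For divergence, let $T_k := \sum_{i=1}^{k-1}(n_i+p_i) + n_k$ mark the end of the $k$-th shadowing block and split
\[
\sum_{j=0}^{T_k-1}\psi(f_{\underline{\omega}}^j(x)) = S_k^{\mathrm{cur}} + S_k^{\mathrm{past}},
\]
where $S_k^{\mathrm{cur}}$ collects the $n_k$ iterates in the current shadowing block. By~\eqref{eq:control-balls}, each such iterate is $\vep$-close to the corresponding $f_\kappa^j(z_k)$, so~\eqref{eq:choice-vep} combined with~\eqref{eq:choice-vep001} gives $\bigl|\tfrac{1}{n_k}S_k^{\mathrm{cur}} - \int\psi\,d\mu_{z_k}\bigr| < \tfrac{\int\psi\,d\mu_2 - \int\psi\,d\mu_1}{4}$. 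On the other hand, $|S_k^{\mathrm{past}}| \leq \|\psi\|_\infty \sum_{i<k}(n_i+p_i)$, and the second condition in~\eqref{def:nk} makes this quantity $o(n_k) = o(T_k)$. Hence $\tfrac{1}{T_k}\sum_{j=0}^{T_k-1}\psi(f_{\underline{\omega}}^j(x))$ clusters along odd $k$ within $\tfrac{\int\psi\,d\mu_2 - \int\psi\,d\mu_1}{4}$ of $\int\psi\,d\mu_1$ and along even $k$ within the same distance of $\int\psi\,d\mu_2$; the two subsequential limits are then separated by at least $\tfrac{\int\psi\,d\mu_2 - \int\psi\,d\mu_1}{2}$, which gives~\eqref{eq:seq-nontypical}.

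The main obstacle to be kept in mind is the potentially exponential growth of the transition times $p_k$ with respect to $n_k$ (cf. Remark~\ref{rmk:irrational-rot}): unlike in the classical specification setting, these transitions are too large to be absorbed locally into the surrounding shadowing block, which rules out a naive pasting scheme. This is exactly what the fast-growth condition on $(n_k)$ imposed in~\eqref{def:nk} is designed to neutralize: by arranging $n_{k+1}$ to dominate the accumulated cost $\sum_{j\leq k}[n_j + K(L^{-2j}\vep/2)]$ of all the past shadowings and transitions, the freshly completed shadowing block dominates the Birkhoff sum at time $T_{k+1}$, and the oscillation survives the large transitions.
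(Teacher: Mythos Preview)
Your overall strategy matches the paper's, but the inductive step has a genuine gap. Your inductive hypothesis on $E_{k-1}$ only records that its image under the composition assembled so far lands \emph{inside} $\overline{B_{f_\kappa}(z_k,n_k,\vep)}$; it does not record that this image \emph{contains} any ball of controlled radius. When you then apply Lemma~\ref{le:est-balls} to the full dynamic ball $\overline{B_{f_\kappa}(z_k,n_k,\vep)}$ and invoke the frequent hitting times property with source ball $B(f_\kappa^{n_k}(z_k),L^{-2n_k}\vep)$, there is no reason this source ball lies in $f_\kappa^{n_k}(\text{image of }E_{k-1})$. Consequently the resulting $B_2'$ need not lie in the forward image of $E_{k-1}$, and the ``pullback'' $E_k$ may well be empty (or fail to be contained in $E_{k-1}$). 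The induction does not close as written.

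The fix, which is precisely what the paper does, is to carry a stronger inductive hypothesis: keep track of an explicit ball $B_k$ of radius at least $\tfrac12 L^{-n_k}\vep$ that is \emph{contained} in the forward image of $E_{k-1}$ (and in $B_{f_\kappa}(z_k,n_k,\vep)$). Then, since $f_\kappa^{-1}$ is $L$-Lipschitz, $f_\kappa^{n_k}(B_k)$ contains a ball of radius at least $\tfrac12 L^{-2n_k}\vep$, and you apply the frequent hitting times property to \emph{that} ball (not to the one centered at $f_\kappa^{n_k}(z_k)$). The first inequality in~\eqref{def:nk} then guarantees $L^{-n_{k+1}}\vep$ is small enough relative to $\tfrac12 L^{-2n_k}\vep$ for the hitting to produce a new $B_{k+1}$ of radius at least $\tfrac12 L^{-n_{k+1}}\vep$ inside $B_{f_\kappa}(z_{k+1},n_{k+1},\vep)$, and the induction propagates. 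Your divergence argument in the second half is fine and agrees with the paper's.
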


\begin{proof}
Set $B_1:=B_{f_\kappa}(x_1,n_1,\vep)$. By Lemma~\ref{le:est-balls} one has that
$f_\kappa^{n_1}(B_1)$ contains the ball of radius $L^{-2n_1}\vep$ around $f_\kappa^{n_1}(x_1)$.
Now, since the semigroup action has frequent hitting times and  $L^{-n_2}\vep < \frac{L^{-2n_1}\vep}4$ (recall the first inequality in ~\eqref{def:nk})
there are integers $K_1=K(\frac{L^{-2n_1}\vep}2)\geqslant 1$ and $0\leqslant p_1 \leqslant K_1$, a finite word 
$\om^{(1)}:=\omega^{(1)}_{i_0} \omega^{(1)}_{i_1}  \dots \omega^{(1)}_{i_{p_1}} \in\{1,2, \dots, \kappa\}^{p_1}$ 
and a ball $B_2$ of radius $\frac{L^{-n_2}\vep}2$ so that 
$$
B_2 \subset f_{\om^{(1)}}^{p_1}\big(B(f_\kappa^{n_1}(x_1),L^{-2 n_1}\vep)\big) \cap B(x_2,L^{-n_2}\vep)
	\subset f_{\om^{(1)}}^{p_1}(B(f_\kappa^{n_1}(x_1),L^{-2 n_1}\vep)) \cap B_{f_\kappa}(x_2,n_2,\vep)
$$
(recall Figure~\ref{fig-images}). By Lemma~\ref{le:est-balls}, 
$f_\kappa^{n_2}(B_2)$ contains a ball of radius $\frac{L^{-2n_2}\vep}2$. Using that  $L^{-n_3}\vep < \frac{L^{-2n_2}\vep}4$, there exist
$K_2=K(\frac{L^{-2n_2}\vep}2)\geqslant 1$ and $0\leqslant p_2 \leqslant K_2$, 
a finite word 
$\om^{(2)}:=\omega^{(2)}_{i_0} \omega^{(2)}_{i_1}  \dots \omega^{(2)}_{i_{p_2}} \in\{1,2, \dots, \kappa\}^{p_2}$ 
and a ball $B_3$ 
of radius $\frac{L^{-n_3}\vep}2$ 
so that 
$$
B_3 \subset f_{\om^{(2)}}^{p_2}\big( f_\kappa^{n_2}(B_2)\big) \,\cap\, B(x_3,L^{-n_3}\vep)
	\subset f_{\om^{(2)}}^{p_2}( f_\kappa^{n_2}(B_2))   \cap B_{f_\kappa}(x_3,n_3,\vep).
$$
Proceeding recursively, we conclude that for every $s\geqslant 1$ there exists $0\leqslant p_{s-1} \leqslant K(\frac{L^{-2n_{s-1}}\vep}2)$, a finite word 
$\om^{(s)}:=\omega^{(s)}_{i_0} \omega^{(s)}_{i_1}  \dots \omega^{(s)}_{i_{p_s}} \in\{1,2, \dots, \kappa\}^{p_s}$  
and a ball $B_s$ of radius $\frac{L^{-n_s}\vep}2$ contained in 
$f_{\om^{(s)}}^{p_{s-1}} (f_\kappa^{n_{s-1}}(B_{s-1})) \cap B_{f_\kappa}(x_s,n_s,\vep)$.
Moreover, if ${\underline{\omega}}\in \Sigma_\kappa$ is defined by ~\eqref{eq:sequence},
we claim that any point
$$
x\in B_1 \cap \bigcap_{k> 1}  \big(f_{\underline{\omega}}^{\sum_{j=1}^{k} (n_j +p_j)}\big)^{-1} (B_k).
$$
is irregular in the sense of ~\eqref{eq:seq-nontypical}
(the intersection is non-empty because the right hand-side above consists of an intersection 
of compact and nested sets).
Indeed, according to ~\eqref{eq:control-balls}, for each $k\geqslant 1$ one has that 
$y_k:=f_{\underline{\omega}}^{\sum_{i=1}^{k} (n_i+p_i)}(x)\in B_{f_\kappa}(z_k,n_k,\vep)$.
In consequence, recalling ~\eqref{eq:choice-vep001}, \eqref{eq:choice-vep} and ~\eqref{def:nk}, if $k\geqslant 1$ is even then
\begin{align*}
\Big| \sum_{j=0}^{n_{k+1}+\sum_{i=1}^{k} (n_i+p_i)} \psi(f_{\underline{\omega}}^j(x)) 
	- n_{k+1} \, \int \psi \, d\mu_1 \Big| 
	& 	\leqslant \Big| \sum_{j=0}^{n_{k+1}+\sum_{i=1}^{k} (n_i+p_i)} \psi(f_{\underline{\omega}}^j(x))
	- \sum_{j=0}^{n_{k+1}} \psi(f_{\underline{\omega}}^j(y_k)) \Big| 
	  \\
	& + \Big| \sum_{j=0}^{n_{k+1}} \psi(f_{\underline{\omega}}^j(y_k)) 
	- n_{k+1} \, \int \psi \, d\mu_1 \Big|
	\\
	& \leqslant  \|\psi\|_\infty \sum_{i=1}^{k} (n_i+p_i) 
	+ n_{k+1} \frac{\int \psi \,d\mu_2-\int \psi \,d\mu_1}{4}. 
\end{align*}
In consequence, if $k\geqslant 1$ is even and large then 
\begin{align*}
\frac1{n_{k+1}+\sum_{i=1}^{k} (n_i+p_i)}\sum_{j=0}^{n_{k+1}+\sum_{i=1}^{k} (n_i+p_i)} \psi(f_{\underline{\omega}}^j(x))
	&  \leqslant \frac{n_{k+1}}{n_{k+1}+\sum_{i=1}^{k} (n_i+p_i)} \int \psi\, d\mu_1 \\
	& + \|\psi\|_\infty \, \frac{\sum_{j=1}^{k} \big[n_j + K(\frac{L^{-2j}\vep}2 )\big]}{n_{k+1}} 
	 + \frac{\int \psi \,d\mu_2-\int \psi \,d\mu_1}{4} \\
	& < \int \psi\, d\mu_1 + \frac{\int \psi \,d\mu_2-\int \psi \,d\mu_1}{3}. 
\end{align*}
Analogously, it is not hard to check that 
\begin{align*}
\frac1{n_{k+1}+\sum_{i=1}^{k} (n_i+p_i)}\sum_{j=1}^{n_{k+1}+\sum_{i=1}^{k} (n_i+p_i)} \psi(f_{\underline{\omega}}^j(x))
> \int \psi\, d\mu_2 - \frac{\int \psi \,d\mu_2-\int \psi \,d\mu_1}{3} 
\end{align*}
for every $k\geqslant 1$ odd and large. This implies that  the Birkhoff averages $\big(\frac1n\sum_{j=0}^{n-1} \psi(f_{\underline{\omega}}^j(x)\big)_{n\geqslant 1}$ 
do not converge and completes the proof of the proposition.
\end{proof}

\begin{remark}\label{rmk:rotation}
The crucial point in ~\eqref{def:nk} is that the constants $K(\cdot)$ involved in the proof 
may grow arbitrarily fast but are independent of the shadowing points.
For instance, in the special case that $f$ is an irrational rotation on the circle, Remark~\ref{rmk:irrational-rot} ensures that $K(L^{-2n}\vep/2)\simeq  3 L^{2n} [\frac1\vep]\gg n$ grows exponentially
fast with $n$. 
\end{remark}

\subsection{The set of irregular points is Baire generic}\label{subsec2}
	
The strategy used in the proof of Theorem~\ref{thm:IFS} 
is slightly different from the classical strategies (see e.g \cite{Barreira1,LV}), where the use of the specification or the gluing orbit property is used to make successive approximations of points with some preestablished behavior. 
The difference does not
rely only on the fact that in the process of shadowing a finite piece of orbit, time lags between the shadowing times 
to become larger as these depend on the proximity scale.
In fact, while specification allows to make successive approximations $(z_k)_k$ of a certain initial point $x_0$  
by points $z_k$ whose time averages diverge  and so that $\sum_{k\geqslant 1} d(z_k, x_0)$ is small, the strategy here is to construct nested sequences of sets with a irregular behavior by the analysis of the images of dynamic balls. Such a finer control, provided by the frequent hitting times property, is not imediate 
even if the dynamical system satisfies the 
specification property. This will become evident in the proof of the following proposition:

\begin{proposition}\label{mainpropC}
Under the assumptions of Theorem~\ref{thm:IFS}, the set
$I_F(\varphi_\psi)$ is a Baire residual subset of $\Sigma_\kappa\times X$.
\end{proposition}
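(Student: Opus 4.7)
The plan is to exhibit a dense $G_\delta$ subset $B_c\subset I_F(\varphi_\psi)$ of $\Sigma_\kappa\times X$ and then invoke Baire's theorem. Using hypothesis (b) and relabeling if necessary, one may assume $\psi$ is not a coboundary for $f_\kappa$, so $f_\kappa$-ergodic probabilities $\mu_1,\mu_2$ exist with $\alpha:=\int\psi\,d\mu_2-\int\psi\,d\mu_1>0$; set $c:=\alpha/3$. Writing $\Phi_n(\om,x):=\tfrac{1}{n}\sum_{j=0}^{n-1}\psi(f_\om^j(x))$ for the $n$-th Birkhoff average of $\varphi_\psi$ under $F$, which is jointly continuous on $\Sigma_\kappa\times X$, I would define
$$B_c\;:=\;\bigcap_{M\geqslant 1}\bigcup_{m,n\geqslant M}\big\{(\om,x)\in\Sigma_\kappa\times X:\Phi_n(\om,x)-\Phi_m(\om,x)>c\big\}.$$
Each inner union is open, so $B_c$ is $G_\delta$; every $(\om,x)\in B_c$ satisfies $\limsup_n\Phi_n-\liminf_n\Phi_n\geqslant c>0$, and hence $B_c\subset I_F(\varphi_\psi)$.

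\medskip
The remaining task is to prove $B_c$ dense. Given $(\om_0,x_0)$ and $\delta>0$, I would fix $N_0\geqslant 1$ with $e^{-N_0}<\delta$ and aim to produce $\om^*\in\Sigma_\kappa$ agreeing with $\om_0$ on a sufficiently long initial block (so that $d_{\Sigma_\kappa}(\om^*,\om_0)<\delta$) together with $x^*\in B(x_0,\delta)$ such that $(\om^*,x^*)\in B_c$. By Lemma~\ref{le:est-balls} applied to the finite composition $f_{\om_0}^{N_0}=f_{(\om_0)_{N_0-1}}\circ\cdots\circ f_{(\om_0)_0}$, the image of $B(x_0,\delta/2)$ under the prescribed prefix contains a ball of radius $\rho_0:=L^{-N_0}\delta/2$ around $f_{\om_0}^{N_0}(x_0)$. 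The plan is to start the recursive construction of Proposition~\ref{prop:divergent.exist} from this \emph{arbitrary} ball, rather than from the dynamic ball $B_{f_\kappa}(x_1,n_1,\vep)$.

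\medskip
Concretely, one chooses $n_1$ large enough that $L^{-n_1}\vep<\rho_0/4$; the frequent hitting times property (in the form of Remark~\ref{rmk:def.hitting}) then furnishes an integer $p_0\leqslant K(\rho_0/2)$ and a finite word $\om^{(0)}$ of length $p_0$ whose action maps a sub-ball of $B(f_{\om_0}^{N_0}(x_0),\rho_0)$ into a ball contained in $B_{f_\kappa}(x_1,n_1,\vep)$. From this point, I would replicate the recursive scheme of Proposition~\ref{prop:divergent.exist} verbatim: alternately shadow the $(n_k,\vep)$-dynamic ball of $x_1$ (for odd $k$) and of $x_2$ (for even $k$) under $f_\kappa$, with a sequence $(n_k)_{k\geqslant 1}$ satisfying~\eqref{def:nk} and intermediate transition words $\om^{(k)}$ of length $p_k\leqslant K(L^{-2n_k}\vep/2)$ provided by frequent hitting times. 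The resulting nested family of non-empty compact sets has non-empty intersection; any $x^*$ therein lies in $B(x_0,\delta/2)\subset B(x_0,\delta)$, while the concatenation of the prescribed prefix, the word $\om^{(0)}$, the shadowing blocks $\kappa^{n_k}$, and the transitions $\om^{(k)}$ defines $\om^*$.

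\medskip
The verification that $(\om^*,x^*)\in B_c$ will repeat the final estimate of Proposition~\ref{prop:divergent.exist}: the prescribed prefix of length $N_0+p_0$ contributes a bounded summand to $n\,\Phi_n$ and is asymptotically negligible, while the second condition in~\eqref{def:nk} forces cumulative transition lengths $\sum_{i\leqslant k}(n_i+p_i)$ to be negligible compared to $n_{k+1}$. Along the subsequence $\{n_{k+1}+\sum_{i\leqslant k}(n_i+p_i)\}_{k\geqslant 1}$, the averages $\Phi_n(\om^*,x^*)$ will alternate (for $k$ even versus odd) between values strictly less than $\int\psi\,d\mu_1+\alpha/3$ and strictly greater than $\int\psi\,d\mu_2-\alpha/3$, producing pairs $(m,n)$ with $\Phi_n-\Phi_m>\alpha/3=c$ beyond any prescribed $M$. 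The main subtlety I anticipate is the ``initialization transition'' $\om^{(0)}$, whose length $p_0$ depends on the starting scale $\rho_0=L^{-N_0}\delta/2$ and may exceed $N_0$ significantly; but since it occurs only once, it is absorbed in the asymptotics without affecting the oscillation bound.
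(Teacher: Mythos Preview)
Your argument is correct and is genuinely more elementary than the paper's proof. You use the standard oscillation decomposition $B_c=\bigcap_M\bigcup_{m,n\geqslant M}\{\Phi_n-\Phi_m>c\}$, which is automatically $G_\delta$ by continuity of the Birkhoff averages, and then reduce everything to a single density construction that recycles Proposition~\ref{prop:divergent.exist} with a short prescribed prefix. The paper instead constructs the residual set $\mathfrak F$ from scratch as a nested intersection $\bigcap_k\mathfrak F_k$: each level is a union, over a countable dense set $D$ and over $k$-tuples $(\ud x_1,\dots,\ud x_k)\in S_1^{N_1}\times\dots\times S_k^{N_k}$, of product sets $[\om]\times\overline{C(x_0,\ud x_1,\dots,\ud x_k)}$; proving that this is $G_\delta$ requires an auxiliary family $\mathfrak G_k$ and a separate argument that $\mathfrak F_{k+1}'\subset\mathfrak G_k\subset\mathfrak F_k'$.

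What the paper's heavier route buys is that the $N_k$-fold replication and the explicit Moran-type structure of $\mathfrak F$ are reused verbatim in Section~\ref{top-skew} for the entropy lower bound in Theorem~\ref{thm:B}, where one needs not just one irregular point per neighborhood but an exponentially large separated set of them. Your approach suffices for Proposition~\ref{mainpropC} itself but would have to be substantially expanded for the later entropy results. One minor point worth making explicit in your write-up: since $\Sigma_\kappa$ is two-sided, when you fix the prefix of $\om^*$ you should also fix its negative coordinates to agree with $\om_0$ on the relevant block (these do not enter the forward averages $\Phi_n$, so this costs nothing).
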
 
\medskip

The proof of Proposition~\ref{mainpropC}, which occupies the remainder of this subsection, is divided in a three-step construction.
The first two steps, whose content is closer to the arguments presented in Subsection~\ref{subsec1}, consider non-typical behavior along 
paths in the semigroup $G$. In the third step, one constructs nested sets with oscilating behavior and prescribed initial condition for the skew-product.

Consider a strictly decreasing sequence of positive numbers $(\delta_k)_{k\geqslant 1}$ tending to zero. Associated to these, 
pick a strictly increasing sequence of large positive integers $(n_k)_{k\geqslant 1}$ satisfying
\eqref{def:nk} and in such a way that for each $\ell\geqslant 1$ the sets
$$
\Gamma_{2\ell-1}	:=\Big\{x\in X: \big|\frac{1}{n_{2\ell-1}}\sum_{j=0}^{n_{2\ell-1}-1}\psi(f_\kappa^j(x))-\int \psi \,d\mu_1\big|< \delta_{2\ell-1}\Big\}
$$ 
and
$$
\Gamma_{2\ell}:=
\Big\{x\in X:\big|\frac{1}{n_{2\ell}}\sum_{i=0}^{n_{2\ell}-1}\psi(f_\kappa^j(x))-\int \psi \,d\mu_2\big|< \delta_{2\ell}\Big\} 
$$
are non-empty. This is possible by ergodicity of the measures $\mu_i$ with respect to $f_\kappa$ ($i=1,2$), and the ergodic theorem. 

\medskip
Let $D\subset \Sigma_\kappa\times X$ 
be a countable and dense set. Fix $(\om_0,x_0)\in D$ and set 
$S_0=\{(\om_0,x_0)\}$. Now, for each $\ell\geqslant 1$ let $S_{2\ell-1} \subset \Gamma_{2\ell-1}$ be a $(f_\kappa, n_{2\ell-1}, 8\varepsilon)$-separated set in $X$, i.e. a set so that 
$$\max_{0\leqslant j \leqslant n_{2\ell-1}} d(f_\kappa^j(x), f_\kappa^j(y)) > 8\vep$$
for every distinct $x, y \in S_{2\ell-1}$. Analogously, let $S_{2\ell}\subset \Gamma_{2\ell}$ be a $(f_\kappa, n_{2\ell},8\varepsilon)$-separated set. 
Now consider a sequence of positive integers $(N_k)_{k \geqslant 1}$ {(independent of $\om_0$ and $x_0$)} 
so that 
\begin{equation}\label{eq:N_k-def}
L^{-n_{k+1}}\varepsilon< \min\Big\{\frac{L^{-2^{N_k-1}n_k}\varepsilon}{2^{2N_k-1}}, L^{\lfloor \log (\vep^{-1})\rfloor}\vep/4\Big\}\\
\;\;\;\mbox{and}\;\;\;\\
\lim_{k\to\infty}\frac{\sum_{i=0}^{k}\left[N_i n_i +\sum_{j=1}^{N_i-1} K(\frac{L^{-2^j\cdot n_j}\vep}{2^j} )+K(\frac{L^{-2^{N_i-1}\cdot n_i}\vep}{2^{N_i-1}})\right]}{n_{k+1}}=0.
\end{equation}

\medskip	
We explain the construction dividing it in steps, which involve the frequent hitting times property. 
First we explain the construction of sets which replicate the behavior of the points in $S_k$ for a 
certain number $N_k$ of times. Then we repeat such argument to construct a nested sequence of sets 
points which combine the different behaviors. 

\medskip
\noindent\textbf{$1^{st}$ Step:} \emph{Replication of the behavior of points in each set $S_k$}
\smallskip

Fix $k\geqslant 1$.
For each collection $\ud x_k=(x_1^k, \cdots,  x_{N_k}^k)\in  S_k^{N_k}$ we will explore ideas similar the ones used in Proposition~\ref{prop:divergent.exist} to construct a set $C(\ud x_k)\subset X$ of points which shadow pieces of orbits of points in $S_k$. Let us be more precise.
First recall that $B_1^k:=B(x_1^k,L^{-n_k}\vep)\subset B_{f_\kappa}(x_1^k,n_k,\vep)$ and that Lemma~\ref{le:est-balls} ensures 
$f_\kappa^{n_k}(B(x_1^k,L^{-n_k}\vep))$ contains the ball of radius $L^{-2n_k}\vep$ around $f_\kappa^{n_k}(x_1^k)$.
As the semigroup action has frequent hitting times 
	there are constants $K_1=K(L^{-2n_k}\vep)\geqslant 1$ and $0\leqslant p^k_1 \leqslant K_1$, a finite word
	 $\om^{(1)}:=\omega^{(1)}_{i_0} \omega^{(1)}_{i_1}  \dots \omega^{(1)}_{i_{p^k_1}} \in\{1,2, \dots, \kappa\}^{p^k_1}$ 
	 and a ball $B_2^k$ of radius $\frac{L^{-2n_k}\vep}{2^2}$ 
	so that 
\begin{align}
	B_2^k & \subset f_{\om^{(1)}}^{p^k_1} \Big(B(f_\kappa^{n_k}(x_1^k),L^{-2 n_k}\vep)\Big) \cap B \Big(x_2^k,\frac{L^{-2 n_k}\vep}{2}\Big) \nonumber \\
	&
	\subset f_{\om^{(1)}}^{p^k_1} \Big(B(f_\kappa^{n_k}(x_1),L^{-2 n_k}\vep)\Big) \cap B_{f_\kappa}(x_2^k,n_k,\vep) \nonumber \\
		&
	\subset f_{\om^{(1)}}^{p^k_1} \Big(\,f_\kappa^{n_k} \big( B_{f_\kappa}(x_1^k,n_k,\vep) \big)\,\Big) \cap B_{f_\kappa}(x_2^k,n_k,\vep).
	\label{eq:B1}
\end{align}
In particular
\begin{align}\label{eq:B2}
B_{f_\kappa}(x_1^k,n_k,\vep) \; \cap \; \big(\,f_{\om^{(1)}}^{p^k_1}\circ f_\kappa^{n_k}\,\big)^{-1} \Big(\,B_{f_\kappa}(x_2^k,n_k,\vep) \,\Big) \neq\emptyset.
\end{align}
Using Lemma~\ref{le:est-balls} once more, we conclude that 
$$
f_\kappa^{n_k}(B_2^k) \; \text{contains a ball of radius}\;  
L^{-2n_k}\times  \frac{L^{-2n_k}\vep}{2^2}=\frac{L^{-2^2n_k}\vep}{2^2}.
$$ 
Again, using the frequent hitting times property and ~\eqref{eq:B1}, there exist $K_2=K(\frac{L^{-2^2n_k}\vep}{2^2}) \geqslant 1$, $0\leqslant p_2^k \leqslant K_2$, a finite word  $\om^{(2)}:=\omega^{(2)}_{i_0} \omega^{(2)}_{i_1}  \dots \omega^{(2)}_{i_{p_2^k}} \in\{ 0,1, \dots, \kappa\}
^{p^k_2}$ and a ball $B_3^k$ of radius $\frac{L^{-2^2n_k}\vep}{2^4}$ 
	so that 
\begin{align*}
	B_3^k & \subset f_{\om^{(2)}}^{p_2^k}( f_\kappa^{n_k}(B_2^k)) \cap B(x_3^k,\frac{L^{-2^2n_k}\vep}{2^3}) \\
	& \subset f_{\om^{(2)}}^{p_2^k}( f_\kappa^{n_k} 
	\Big( f_{\om^{(1)}}^{p^k_1} \Big(\,f_\kappa^{n_k} \big( B_{f_\kappa}(x_1^k,n_k,\vep) \big)\,\Big) \cap B_{f_\kappa}(x_2^k,n_k,\vep) \Big)  
	 \cap B_{f_\kappa}(x_3^k,n_k,\vep).
\end{align*}
and its image 
$f_\kappa^{n_k}(B_3^k)$ contains a ball of radius
$\frac{L^{-2^3n_k}\vep}{2^4}$ (recall the first choice of constants in ~\eqref{eq:N_k-def}).

\medskip
Proceeding recursively, for each $3\leqslant i\leqslant N_k-1$ there exists 
a ball $B_i^k$ of radius $\frac{L^{-2^{i-1}n_k}\vep}{2^{2i-3}}$ so that the set  $f_\kappa^{n_k}(B_i^k)$ 
contains a ball of radius 
$\frac{L^{-2^{i}n_k}\vep}{2^{2(i-1)}}$ and there are $0\leqslant p^k_{i} \leqslant K_{i}=K(\frac{L^{-2^{i}n_k}\vep}{2^{2(i-1)}})$ and 
a finite word $\om^{(i)} 
\in\{1,2, \dots, \kappa\}^{p_{i}^k}$ so that 
$f_{\om^{(i)}}^{p_{i}^k} (f_\kappa^{n_{k}}(B_{i}^{k})) \cap B_{f_\kappa}(x_i^k,n_k,\vep)$
contains a ball $B_{i+1}^k$ of radius $\frac{L^{-2^i n_k}\vep}{2^{2i-1}}$. 
For each $\ud x_k=(x_1^k\cdots x_{N_k}^k)\in S_k^{N_k}$ we collect the data 
\begin{align}\label{eq:datak}
\big(\,n_k\,;\, (\om^{(1)},p^k_1),\dots ,(\om^{(N_k-1)}, p^k_{N_k-1})\,;\, (B_1^k, \dots, B_{N_k}^k)\,\big)
\end{align}
given by the previous algorithm, where the times $p^k_i$ (depending both on the size of the orbits 
and the points to be shadowed) are bounded by $K_i$. 
Moreover, by construction, the set 
\begin{align}\label{eq:defCxk}
C(\ud x_k)
	:=B_1^k(x^k_1)\cap 
	\bigcap_{i=2}^{N_k-1} \left( f_{{\underline{\omega}}}^{\sum_{j=1}^{i-1} (n_k+\,p_j^k)}\right)^{-1}\big(B^k_{i}(x_{i}^k)\big)
\end{align}
is non-empty, where ${\underline{\omega}} = {\underline{\omega}}^{(k)}(\ud x_k)$ is defined by
\begin{eqnarray}\label{eq:B-words-k}
{\underline{\omega}}^{(k)}(\ud x_k):=
	 (\, \underbrace{\kappa, \kappa, \dots \kappa}_{n_k \,\text{times}}, 
	\underbrace{\omega^{(1)}_{i_0} \omega^{(1)}_{i_1}  \dots \omega^{(1)}_{i_{p_1}}}_{= \,\underline{\omega}^{(1)}}, 
	\underbrace{\kappa, \kappa,  \dots \kappa}_{n_k \,\text{times} }, 
	\ldots, \underbrace{\omega^{(k-1)}_{i_0} \omega^{(k-1)}_{i_1}\,  \dots \, \omega^{(k-1)}_{i_{p_{k-1}}}}_{_{= \,\underline{\omega}^{(N_{k}-1)}}}, 
	\underbrace{\kappa, \kappa, \dots \kappa}_{n_k \,\text{times}}\, ).
\end{eqnarray}
For future reference we denote by $B^k_{i}(x_{i}^k)$ the balls $B^k_{i}$ as before, to emphasize that the latter are contained 
in a neighborhood of the point $x_{i}^k$. Moreover, any point $x\in C(\ud x_k)$
satisfies 
\begin{align*}\label{eq:conclusion-Cxk}
\begin{cases}
x\in  B_{f_\kappa}(x_1^k,n_k,\vep) \medskip \\ 
f_{{\underline{\omega}}}^{\sum_{j=1}^{i-1} (n_k+\,p_j^k)}(x) \in B_{f_\kappa}(x_i^k,n_k,\vep), 
	\quad \forall\, 1 \leqslant i \leqslant N_k.
\end{cases}
\end{align*}

\medskip
\noindent\textbf{$2^{nd}$ Step:} 
\emph{Construction of nested sets with oscilating behavior}

\smallskip

In this step we explain the construction of nested sets which shadow points
in successive collections  $S_k^{N_k}$. We first make explicit such construction involving collections
of points in $S_1^{N_1}$ and $S_2^{N_2}$. 

Given $\ud x_1=(x_1^1, \cdots,  x_{N_1}^1)\in  S_1^{N_1}$, let $C(\ud x_1)$ be the set
defined by ~\eqref{eq:defCxk} and consider the ball
$B_{N_1}^1(x_{N_1}^1) \subset B_{f_\kappa}(x_{N_1}^1,n_1,\vep)$ be the ball
of radius $\frac{L^{-2^{N_1-1}n_1}\vep}{2^{2N_1-3}}$  
whose image  $f_\kappa^{n_1}(B_{N_1}^1(x_{N_1}^1))$ contains a ball of radius $\frac{L^{-2^{N_1-1}n_1}\vep}{2^{2(N_1-1)}}$.
By choice of the constants (recall \eqref{eq:N_k-def}) we have $L^{-n_2}\varepsilon < \frac{1}{2}\frac{L^{-2^{N_1-1}n_1}\vep}{2^{2(N_1-1)}}$. 
Hence, the frequent hitting times property implies that there exists $0\leqslant P_{1} \leqslant K(\frac{L^{-2^{N_1-1}n_1}\vep}{2^{2(N_1-1)}})$, a finite word $\om^{(12)}:= \omega^{(12)}_{i_0} \omega^{(12)}_{i_1}  \dots \omega^{(12)}_{i_{P_1}}  \in\{1,2, \dots, \kappa\}^{P_1}$ and ball $B_1^2$ of radius $\frac12 L^{-n_2}\varepsilon$ so that 
$$
B_1^2 
	\subset f_{\ud\omega^{(12)}}^{P_1}(f_\kappa^{n_{1}}(B_{N_1}^{1})) \cap B(x_2^1,L^{-n_2}\varepsilon)
	\subset f_{\ud\omega^{(12)}}^{P_1}(f_\kappa^{n_{1}}(B_{N_1}^{1})) \cap B_{f_\kappa}(x_2^1,n_2,\vep).
$$ 
Then, we can apply the argument used in the $1^{st}$ Step recursively
(noting that when $k=2$ the only difference is that the ball $B_1^2$ is half the radius of the corresponding 
object in the $1^{st}$ Step) and deduce that, in addition to the data in \eqref{eq:datak},
 for each $2\leqslant \ell\leqslant k$ there exists an integer 
$0\leqslant P_{\ell} \leqslant K(\frac{L^{-2^{N_i-1}n_i}\vep}{2^{2(N_i-1)}})$, a finite word $\om^{(\ell (\ell+1))}:= \omega^{(\ell (\ell+1))}_{i_0} \omega^{(\ell (\ell+1))}_{i_1}  \dots \omega^{(\ell (\ell+1))}_{i_{P_\ell}}  \in\{1,2, \dots, \kappa\}^{P_\ell}$ 
so that the set
\begin{align*} 
C(\ud x_1, \ud x_2, \dots, \ud x_k)
	:=
	& B_1^1(x^1_1)  
	\; \cap \;
	\bigcap_{i=2}^{N_1-1} \left( f_{{\underline{\omega}}}^{\sum_{j=1}^{i-1} (n_j+\,p_j^k)}\right)^{-1}\big(B^1_{i}(x_{i}^1)\big)
	\\ 
	&
	\; \cap \;
	\bigcap_{\ell=2}^{k} 
	\bigcap_{i=2}^{N_{\ell}-1} 
	\left( f_{{\underline{\omega}}}^{\sum_{s=1}^{\ell-1} P_s + \sum_{j=1}^{i-1} (n_j+\,p_j^\ell)}\right)^{-1}
	\big(B^\ell_{i}(x_{i}^\ell)\big)
\end{align*}
is non-empty, where 
\begin{equation*}\label{eq:concateSkk+1}
	{\underline{\omega}} = 
	(\,
	{\underline{\omega}}^{(1)}(\ud x_1) 
	,\, 
	\om^{(12)}, 
	\,
	{\underline{\omega}}^{(2)}(\ud x_2) 
		,\, 
	\om^{(23)}, 
	\,
	\dots,
	\, 
	{\underline{\omega}}^{(k)}(\ud x_k) 
	\,)
\end{equation*}
It is immediate from the construction that 
$
C(\ud x_1, \ud x_2, \dots, \ud x_k, \ud x_{k+1}) \subset C(\ud x_1, \ud x_2, \dots, \ud x_k)
$
for every $k\geqslant 1$ and any collections $\ud x_i \in S_i^{N_i}$.

\medskip
\noindent\textbf{$3^{rd}$ Step:} 
\emph{Construction of nested sets in $\Sigma_\kappa\times X$ with oscilating behavior and prescribed initial condition for the skew-product map}
\smallskip

We first explain how to construct points $(\om,x)\in \Sigma_\kappa\times X$ whose orbit by $F$ starts close to the point $(\om_0,x_0)\in S_0$ and, subsequently, 
its projection $\pi_X(F^j(\om,x))$ over $X$ shadows the finite pieces of orbits of any chosen collection
of points $\ud x_1=(x_1^1, \cdots,  x_{N_1}^1)\in  S_1^{N_1}$. Let us be more precise.

\smallskip

Considering the product metric on $\Sigma_\kappa\times X$, the ball $B((\om_0,x_0),\vep)$ 
of radius $\vep$ in $\Sigma_\kappa\times X$ coincides with the product of the cylinder $[\om_0]_{[\lfloor \log \vep\rfloor, \lfloor \log (\vep^{-1})\rfloor]}$
by the ball $B(x_0,\vep)$. For that reason, in order to consider non-typical points for the skew-product it is natural to proceed in the construction 
of consider the set $f_{\om_0}^{\lfloor \log (\vep^{-1})\rfloor}(B(x_0,\vep))\subset X$, which contains the ball of radius $L^{\lfloor \log (\vep^{-1})\rfloor}\vep$ around
$f_{\om_0}^{\lfloor \log (\vep^{-1})\rfloor}(x_0)$.
Fix a collection $\ud x_1=(x_1^1, \cdots,  x_{N_1}^1)\in  S_1^{N_1}$ and observe that $L^{-n_1}\vep < L^{\lfloor \log (\vep^{-1})\rfloor}\vep/4$ (recall ~\eqref{eq:N_k-def}). 
Then, the frequent hitting times property implies that there exists $0\leqslant P_0\leqslant K(\vep)$
and a finite word $\om^{(01)} \in\{1,2, \dots, \kappa\}^{P_{0}}$ so that the image
$f_{\om^{(01)}}^{P_0}(B(f_{\om_0}^{\lfloor \log (\vep^{-1})\rfloor}(x_0),L^{\lfloor \log (\vep^{-1})\rfloor}\vep))$ contains a  ball $B_1^1 \subset B_{f_\kappa}(x_1^1,n_1,\vep)$ of radius 
$L^{-n_1}\vep/2$. Proceeding \emph{ipsis literis} as in the $2^{nd}$ Step (with $B_1^1$ as above) we conclude that
the modified sets 
\begin{eqnarray}\label{def. connec. sets 0}
C( x_0,\,\ud x_1, \dots,\ud x_k) 
	= B(x_0,\vep)\cap \big(f_{\om^{01}}^{P_0}f_{\om_0}^{\lfloor \log (\vep^{-1})\rfloor}\big)^{-1} \Big( C(\ud x_1, \ud x_2, \dots, \ud x_k) \Big)
\end{eqnarray}
are non-empty and nested, as desired. Even though we omit the dependence of the sets $C(x_0,\,\ud x_1, \dots,\ud x_k)$ on the finite word 
\begin{eqnarray}\label{def.omegaxs}
	{\underline{\omega}} = 
	(\,
	[\om_0]_{[\lfloor \log \vep\rfloor, \lfloor \log (\vep^{-1})\rfloor]}
	,\, 
	\om^{(01)}, 
	{\underline{\omega}}^{(1)}(\ud x_1) 
	,\, 
	\om^{(12)}, 
	\,
	{\underline{\omega}}^{(2)}(\ud x_2) 
		,\, 
	\om^{(23)}, 
	\,
	\dots,
	\, 
	{\underline{\omega}}^{(k)}(\ud x_k) 
	\,)
\end{eqnarray}
depending on $(x_0,\,\ud x_1, \dots,\ud x_k)$, 
used to determine it, we need to bookkeep it. Here, by some abuse of notation we are denoting by $	[\om_0]_{[\lfloor \log \vep\rfloor, \lfloor \log (\vep^{-1})\rfloor]}$
the finite word that defines the cylinder set.

\color{black}
	
\medskip	
Now we are in a position to resume the proof of Proposition~\ref{mainpropC} and to construct a $G_\delta$-dense subset of points with non-typical 
behavior for the skew-product. Take the non-empty compact set 
$$\mathfrak F'_k(\varepsilon,(\om_0,x_0)) 
=\bigcup \;\; 
[\om]\times \overline{C( x_0,\,\ud x_1, \dots, \ud x_k)} 
$$ 
where the union is taken among all possibilities $(\ud x_1 \cdots \ud x_k)\in  S_1^{N_1} \times ... \times S_k^{N_k}$ and 
$\om=\om(x_0,\ud x_1 \cdots \ud x_k)$ is the finite word determined by ~\eqref{def.omegaxs}. 
Define
$$
\mathfrak  F'(\varepsilon,(\om_0,x_0)):=\bigcap_{k=1}^{\infty} \mathfrak F'_k(\varepsilon,(\om_0,x_0))
	\quad\text{and}\quad 
	\mathfrak F':= \bigcup_{j=\lfloor\frac1{\vep_0} \rfloor}^\infty \bigcup_{(\om_0,x_0)\in D} \mathfrak F'\big(\frac{1}{j},(\om_0,x_0)\big),
$$  
where $\vep_0$ was chosen in ~\eqref{eq:choice-vep}.
By construction the sets in ~\eqref{def. connec. sets 0} are nested as well as the cylinders of increasing size in $\Sigma_\kappa$, and so 
$\mathfrak F'_{k+1}(\varepsilon,(\om_0,x_0))\subseteq \mathfrak F'_k(\varepsilon,(\om_0,x_0))$ for every point 
$(\om_0,x_0)\in D$, $\vep>0$ and $k\geqslant 1$. 
We consider, alternatively, the family 
$$
\mathfrak F_k(\varepsilon)
=\bigcup \;\;  
[\om]\times \overline{C( x_0,\,\ud x_1, \dots, \ud x_k)} 
$$ 
where the union is taken among all possible elements $(\om_0,x_0)\in D$ and $(\ud x_1, \dots, \ud x_k)\in  S_1^{N_1} \times ... \times S_k^{N_k}$,
and the finite words $\om$ are once more determined by ~\eqref{def.omegaxs}.
In this case, define
$$
\mathfrak F_k=\bigcup_{j=\lfloor\frac1{\vep_0} \rfloor}^{\infty}\mathfrak F_k\big(\frac{1}{j}\big)
		\quad\text{and}\quad 
	\mathfrak  F=\bigcap_{k=1}^{\infty}\mathfrak F_k
$$ 
By construction, {$\mathfrak  F_{k+1}\subset \mathfrak  F_k$ for every $k\geqslant 1$} and $\mathfrak  F' \subset \mathfrak  F$.
The proof of the proposition is now reduced to the proof of the following two lemmas.

\begin{lemma}
$\mathfrak F$ is a $G_\delta$-dense subset of $\Sigma_\kappa\times X$.
\end{lemma}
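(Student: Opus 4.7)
The plan is to produce, for each $k\geqslant 1$, a dense open subset $U_k\subset \mathfrak F_k$, and then apply the Baire category theorem on the compact metric space $\Sigma_\kappa\times X$ to obtain $\bigcap_k U_k\subset \mathfrak F$ as a dense $G_\delta$, so that $\mathfrak F$ itself is $G_\delta$-dense.

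First, I would observe that each building block $[\underline\omega]\times C(x_0,\underline x_1,\dots,\underline x_k)$ entering the definition of $\mathfrak F_k(\varepsilon)$ is already open in $\Sigma_\kappa\times X$, so that the closure in that definition is superfluous for our purposes. Indeed, inspecting \eqref{def. connec. sets 0} and the $2^{nd}$ Step, the set $C(x_0,\underline x_1,\dots,\underline x_k)$ is the intersection of an open ball in $X$ with finitely many preimages of open balls under continuous compositions $f_{\underline\omega}^{m}$; hence it is open, and by the $3^{rd}$ Step non-empty. The corresponding cylinder in $\Sigma_\kappa$ given by the finite word \eqref{def.omegaxs} is a basic open set. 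Replacing $\overline{C(\cdot)}$ by $C(\cdot)$ in the union defining $\mathfrak F_k(\varepsilon)$ therefore produces an open set $U_k(\varepsilon)\subset \mathfrak F_k(\varepsilon)$, and $U_k:=\bigcup_{j\geqslant \lfloor 1/\varepsilon_0\rfloor}U_k(1/j)$ is an open subset of $\mathfrak F_k$.

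Next, density of $U_k$ follows immediately from the density of $D$ in $\Sigma_\kappa\times X$. Given any non-empty open $W\subset \Sigma_\kappa\times X$, I would choose $(\omega_0,x_0)\in D\cap W$, take $j$ so large that $B((\omega_0,x_0),1/j)\subset W$, and select any admissible $(\underline x_1,\dots,\underline x_k)\in S_1^{N_1}\times\dots\times S_k^{N_k}$. By \eqref{def. connec. sets 0} the $X$-projection of the resulting block lies in $B(x_0,1/j)$, and by the initial prefix $[\omega_0]_{[\lfloor\log(1/j)\rfloor,\lfloor\log j\rfloor]}$ in \eqref{def.omegaxs}, its $\Sigma_\kappa$-projection lies inside the $(1/j)$-ball of $\omega_0$ with respect to the metric $d_{\Sigma_\kappa}$. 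Thus the block is contained in $W$, proving $U_k(1/j)\cap W\neq\emptyset$.

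Since the serious work has already been carried out in the three-step construction preceding the statement, the only point that truly needs care is the openness of the building blocks, which is immediate once one notices that each factor in the intersection defining $C(x_0,\underline x_1,\dots,\underline x_k)$ is open. The rest is routine bookkeeping of cylinder indices, and the lemma then follows from Baire's theorem.
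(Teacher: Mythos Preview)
Your approach is essentially the paper's: define the open versions $U_k$ of $\mathfrak F_k$ (the paper calls them $\mathfrak G_k$), observe they are open and dense, and intersect. There is, however, a small gap in your last step. From $\bigcap_k U_k\subset\mathfrak F$ you only get that $\mathfrak F$ \emph{contains} a dense $G_\delta$, i.e.\ is residual; this does not by itself make $\mathfrak F$ a $G_\delta$ set. The paper obtains the missing reverse inclusion $\mathfrak F\subset\bigcap_k U_k$ (equivalently $\mathfrak F_{k+1}\subset U_k$) by a nesting argument: if $(\tilde\omega,z)\in[\omega]\times\overline{C(x_0,\underline x_1,\dots,\underline x_{k+1})}$, then the $(k+1)$-th iterate lands in the closed ball $\overline{B^{k+1}_{N_{k+1}}}$, whose full preimage under the connecting map $f_{\tilde\omega}^{-(N_{k+1}n_{k+1}+\dots)}$ is contained in the \emph{open} ball $B^k_{N_k}$; hence $(\tilde\omega,z)$ already belongs to the open block at level $k$. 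This yields $\mathfrak F=\bigcap_k U_k$, a genuine $G_\delta$.

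For the application (Proposition~\ref{mainpropC} and the Kuratowski--Ulam argument in Subsection~\ref{subsec3}) residuality is all that is used, so your argument is materially sufficient; but to match the lemma as stated you should add the one-line nesting observation above.
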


\begin{proof}
We claim that
$\mathfrak F'$ is dense in $\Sigma_\kappa\times X$.
Indeed, for $r>0$ and $(\om,x)\in \Sigma_\kappa\times X$ there
exist $j\geqslant 1$ and $(\om_0,x_0)\in D$ such that  $d_{\Sigma_\kappa\times X}((\om,x),(\om_0,x_0))<1/j<r/2$. Any point 
$(\tilde\om,y)\in \mathfrak F'(1/j,(\om,x_0))$ satisfies 
$d_{\Sigma_\kappa\times X}((\tilde \om,y),(\om_0,x_0))<1/j$ and, consequently, $d_{\Sigma_\kappa\times X}((\tilde \om,y),(\om,x))<2/j<r$.
As $\mathfrak F' \subset \mathfrak F$, this shows that $\mathfrak F$ is dense in $\Sigma_\kappa\times X$ as well.

\medskip
Now, in order to prove that $\mathfrak F$ is a $G_{\delta}$-set,
we define the sets 
$$
\mathfrak  G_k(\vep,(\om_0,x_0)):=\bigcup \big\{\; [\om] \times C(x_0,\,\ud x_1, \dots, \ud x_k): (\om_0,x_0)\in D,\; 
(\,\ud x_1, \dots, \ud x_k)\in  S_1^{N_1} \times ... \times S_k^{N_k}\big\},
$$ 
with the finite words $\om$ given as in ~\eqref{def.omegaxs}.
Note that $\mathfrak  G_{k+1}(\vep,(\om_0,x_0))\subset \mathfrak G_k(\vep,(\om_0,x_0))$ and 
$\mathfrak G_k(\vep,x_0)\subset \mathfrak F'_k(\vep,(\om_0,x_0))$ for every $k\geqslant 1$, $\vep>0$ and $(\om_0,x_0)\in D$. 
We also observe that $\mathfrak F'_{k+1}(\vep,(\om_0,x_0))\subset \mathfrak G_k(\vep,(\om_0,x_0))$ for every $k\geqslant 1$.  In fact, given 
$(\tilde \om,z)\in \mathfrak F'_{k+1}(\vep,(\om_0,x_0))$, there exists a $k$-uple $(\,\ud x_1, \dots, \ud x_{k+1})\in  S_1^{N_1} \times ... \times S_{k+1}^{N_{k+1}}$
and  a finite word $\om=\om(x_0,\,\ud x_1, \dots, \ud x_{k+1}) \in \{1,2, \dots, \kappa\}^{\mathbb N}$ determined by ~\eqref{def.omegaxs} in such a way that
$$
(\tilde \om,z) \in [\om] \times \ov{C( x_0\,\ud x_1 \cdots \ud x_{k+1})}
	\quad\text{and}\quad
f_{{\underline{\omega}}}^{\sum_{i=1}^{j}(N_i n_i+\sum_{j=1}^{N_i-1}p_j^i+P_{i-1})}(z) \in 
		\ov{B_{N_{j}}^{j}},
	\; \forall 1\leqslant j \leqslant k+1,
$$
where $B_{N_{j}}^{j}$ is a ball contained in $B_{f_\kappa}(x^j_{N_j}, n_j, \vep)$ as described above (one can replace $\om$ by $\tilde\om$ in the second
expression above because the cylinder $[\om]$ determines ${\sum_{i=1}^{j}(N_i n_i+\sum_{j=1}^{N_i-1}p_j^i+P_{i-1})}$ positive coordinates of an element
in $\Sigma_\kappa$). Furthermore, it is immediate from the construction that 
$$f_{\tilde {\underline{\omega}}}^{-(N_{k+1}n_{k+1}+\sum_{j=1}^{N_{k+1}-1}p^k_j+P_k)}(\ov B_{N_{k+1}}^{k+1})\subset B_{N_{k}}^{k}.
$$ 
Thus
$
f_{\tilde{\underline{\omega}}}^{\sum_{i=1}^{k}(N_in_i+\sum_{j=1}^{N_i-1}p_j^i+P_{i-1})}(z) \in  B_{N_{k}}^{k},
$
which proves that $(\tilde \om,z)\in \mathfrak G_k$.
Altogether, we conclude that
$$
\mathfrak F
	=\bigcap_{k = 1}^\infty\; \Big[ \bigcup_{j=\lfloor\frac1{\vep_0} \rfloor}^{\infty} \bigcup_{(\om_0,x_0)\in D} \mathfrak G_k(\frac{1}{j},(\om_0,x_0))\Big]
$$  
is obtainable as countable intersection of open sets, hence it is a $G_{\delta}$-set. This finishes the proof of the lemma.
\end{proof}

Finally, in order to complete the proof of Proposition~\ref{mainpropC} it remains to prove that all points in $\mathfrak F$ are Birkhoff irregular
for the skew-product $F$ and the continuous observable $\varphi_\psi: \Sigma_\kappa\times X \to \mathbb R$. More precisely:

\begin{lemma}\label{lemma-point-nontypical}
For any $(\om,x)\in \mathfrak F$ it holds that 
$$
\liminf_{n\to\infty} \frac{1}{n}\sum_{j=0}^{n-1}\psi(f^j_{{\underline{\omega}}}(x))
	< \limsup_{n\to\infty} \frac{1}{n}\sum_{j=0}^{n-1}\psi(f^j_{{\underline{\omega}}}(x)).
$$ 

\end{lemma}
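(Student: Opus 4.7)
The plan is to extract two subsequences of Birkhoff averages along the orbit $(f^j_{\om}(x))_{j\geq 0}$, one accumulating near $\int\psi\,d\mu_1$ and the other near $\int\psi\,d\mu_2$; since by assumption $\int\psi\,d\mu_1<\int\psi\,d\mu_2$, this will yield the strict oscillation. First I would fix $(\om,x)\in\mathfrak F$, set $\beta:=\int\psi\,d\mu_2-\int\psi\,d\mu_1>0$, and observe that since $\om$ is a single element of $\Sigma_\kappa$ its prefixes in the approximations at different levels $k$ must agree; by passing through the dense subset $\mathfrak F'\subset\mathfrak F$ of consistent towers one may further assume that a single scale $\vep\leq\vep_0$, a single base datum $(\om_0,x_0)\in D$, and a single sequence of tuples $\ud x_\ell=(x_1^\ell,\dots,x_{N_\ell}^\ell)\in S_\ell^{N_\ell}$ witness the inclusion at every level simultaneously.

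Next I would introduce the cumulative shadowing times
\[
T_\ell := \lfloor\log(\vep^{-1})\rfloor + P_0 + \sum_{i=1}^{\ell}\Big(N_i\, n_i + \sum_{j=1}^{N_i-1} p_j^i + P_i\Big),
\]
together with the intermediate sub-times $T_\ell^{(m)} := T_\ell + m\,n_{\ell+1} + \sum_{j=1}^{m-1}p_j^{\ell+1}$ for $0\leq m\leq N_{\ell+1}$. By the nested construction of the sets $C(x_0,\ud x_1,\dots,\ud x_k)$ and continuity of $\psi$ (which passes to the closure), one has $f^{T_\ell^{(m-1)}}_{\om}(x) \in \overline{B_{f_\kappa}(x_m^{\ell+1},n_{\ell+1},\vep)}$ for every $1\leq m\leq N_{\ell+1}$. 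Combining the defining property of $\Gamma_{\ell+1}$ with the oscillation estimate \eqref{eq:choice-vep} then yields, whenever $\ell+1$ is odd,
\[
\Big|\frac{1}{n_{\ell+1}}\sum_{j=0}^{n_{\ell+1}-1}\psi\big(f^{T_\ell^{(m-1)}+j}_{\om}(x)\big)-\int\psi\,d\mu_1\Big| < \delta_{\ell+1} + \tfrac{\beta}{8},
\]
and the analogous estimate with $\mu_2$ when $\ell+1$ is even. Summing over $m=1,\dots,N_{\ell+1}$ and bounding the contribution of the transition intervals trivially by $\|\psi\|_\infty$ times their total length produces a uniform estimate for the Birkhoff average of $\psi$ across each block $[T_\ell,T_{\ell+1})$.

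The conclusion will then follow from the rapid-growth conditions \eqref{def:nk} and \eqref{eq:N_k-def}, which guarantee both $T_\ell/T_{\ell+1}\to 0$ and that the relative weight of the transitions inside $[T_\ell,T_{\ell+1})$ tends to zero. Consequently, along $\ell+1$ odd,
\[
\frac{1}{T_{\ell+1}}\sum_{j=0}^{T_{\ell+1}-1}\psi(f^j_{\om}(x)) \;\longrightarrow\; \int\psi\,d\mu_1,
\]
and analogously the even-$\ell+1$ subsequence converges to $\int\psi\,d\mu_2$; since $\delta_k\to 0$ and $\beta/8$ was chosen strictly smaller than $\beta$, the two subsequential limits are separated by at least $3\beta/4>0$, proving the lemma.

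The main obstacle I expect is the careful bookkeeping of the transition times $p_j^i$ and $P_i$. By Remark~\ref{rmk:irrational-rot} these can grow exponentially in $n_i$, so they are much longer than the pieces of orbit they connect. It is precisely the memory-loss property (each $p_j^i$ depends only on past and present scales, not on the size of future pieces $n_{\ell+1}$) that allows the a posteriori inductive choice of $n_{\ell+1}$ dictated by \eqref{def:nk} and \eqref{eq:N_k-def} to swallow all accumulated transitions and render their aggregate contribution to the Birkhoff sum asymptotically negligible.
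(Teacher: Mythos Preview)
Your overall strategy coincides with the paper's: locate a subsequence of times at which the Ces\`aro averages stay close to $\int\psi\,d\mu_1$, another close to $\int\psi\,d\mu_2$, and use the rapid-growth choices \eqref{def:nk} and \eqref{eq:N_k-def} to absorb all prior blocks and transition intervals. The arithmetic of your ``main obstacle'' paragraph is exactly what drives both arguments.

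There is, however, one genuine gap. The lemma is stated for \emph{every} $(\om,x)\in\mathfrak F$, and your reduction ``by passing through the dense subset $\mathfrak F'\subset\mathfrak F$'' is not legitimate: a general point of $\mathfrak F=\bigcap_k\mathfrak F_k$ lies, for each $k$, in some $[\om]\times\overline{C(x_0,\ud x_1,\dots,\ud x_k)}$ where the scale $1/j$, the base $(\om_0,x_0)$ and the tuples may \emph{a priori} change with $k$; the prefix-compatibility of $\om$ you invoke constrains the cylinders but not the $X$-data. Consequently your globally defined tower $(T_\ell)_{\ell\ge1}$ is not available for arbitrary $(\om,x)\in\mathfrak F$. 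The paper bypasses this by working level by level: for each $k$ it selects the data witnessing $(\om,x)\in\mathfrak F_{k+1}$, sets $y_k:=f_{\underline{\omega}}^{\sum_{i=1}^{k}(N_in_i+\sum_j p_j^i+P_{i-1})}(x)\in B_{f_\kappa}(x_1^{k+1},n_{k+1},1/j)$, and evaluates the Birkhoff average at the single time $n_{k+1}+\sum_{i=1}^{k}(\cdots)$. Your argument is repaired verbatim by this device.

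Two further remarks. First, the paper evaluates after only the \emph{first} shadowing segment of length $n_{k+1}$ in the $(k{+}1)$-th block, not after all $N_{k+1}$ repetitions as you do; this spares the sum over $m$ and the internal-transition bookkeeping. Second, your line ``$\to\int\psi\,d\mu_1$'' overstates: the residual error $\beta/8$ from \eqref{eq:choice-vep} does not vanish, so one only gets $\liminf\le\int\psi\,d\mu_1+\beta/3$ and $\limsup\ge\int\psi\,d\mu_2-\beta/3$; your closing sentence already accounts for this, so the conclusion stands.
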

	
\begin{proof}
By construction, for each $(\om,x)\in \mathfrak F$ and $k\geqslant 1$ there are $j\geqslant 1$,
$(\om_0,x_0)\in D$ and $(\,\ud x_1, \dots, \ud x_k)\in S_1^{N_1} \times ... \times S_k^{N_k}$
so that the first entries of the word ${\underline{\omega}}$  are of the form 
$(\,
	{\underline{\omega}}^{(1)}(\ud x_1) 
	,\, 
	\om^{(12)}, 
	\,
	{\underline{\omega}}^{(2)}(\ud x_2) 
	,\, 
	\om^{(23)}, 
	\,
	\dots,
	\, 
	{\underline{\omega}}^{(kk+1)}
	\,)$
and 
$$
f_{{\underline{\omega}}}^{(\ell n_i +\sum_{s=1}^{\ell-1} p_s^j+P_{i-1}) +\sum_{j=1}^{i-1} (N_j n_j+\sum_{s=1}^{N_j-1} p_s^j+P_{j-1})}(x) \in 
		\ov{B^i_{\ell}}
	\qquad \; \forall 1\leqslant \ell \leqslant N_i \quad
	\forall 1\leqslant i \leqslant k,
$$ 
where
	$B^{i}_{\ell}$ is a ball contained in the dynamic ball $B_{f_\kappa}(x^i_\ell, n_i, \frac{1}{j})$
	as described in the previous subsection.
In particular, for each $k\geqslant 1$ the iterate $y_k:=f_{\underline{\omega}}^{\sum_{i=1}^{k} (N_i n_i+\sum_{j=1}^{N_i-1}p_j^i+P_{i-1})}(x)$ belongs to {$B_{f_\kappa}(x_{1}^{k+1},n_{k+1},\frac1{j})$}.
By triangular inequality and the choice of constants in ~\eqref{eq:choice-vep} and ~\eqref{eq:N_k-def}, if $k\geqslant 1$ is even and large then
\begin{align*}
\Big| \sum_{j=0}^{n_{k+1}+\sum_{i=1}^{k} (N_i n_i+\sum_{j=1}^{N_i-1}p_j^i+P_{i-1})} & \psi(f_{\underline{\omega}}^j(x)) 
		 - n_{k+1} \, \int \psi \, d\mu_1 \Big| \\
		& \leqslant 
		\Big| \sum_{j=0}^{n_{k+1}+\sum_{i=1}^{k} (N_i n_i+\sum_{j=1}^{N_i-1}p_j^i+P_{i-1})} 
			\psi(f_{\underline{\omega}}^j(x)) - \sum_{j=0}^{n_{k+1}} \psi(f_\kappa^j(y_k)) \Big| \\
		& + \Big| \sum_{j=0}^{n_{k+1}} \psi(f_\kappa^j(y_k)) - n_{k+1} \, \int \psi \, d\mu_1 \Big|\\
		&\leqslant  \|\psi\|_\infty\sum_{i=1}^{k} (N_i n_i+\sum_{j=1}^{N_i-1}p_j^i+P_{i-1})  \\
		&{+ n_{k+1} \frac{\int \psi \,d\mu_2-\int \psi \,d\mu_1}{4} } \\
		& < 
		 \Big[\int \psi\, d\mu_1 + \frac{\int \psi \,d\mu_2-\int \psi \,d\mu_1}{3}\Big]
		 \; \Big(n_{k+1}+\sum_{i=1}^{k} (N_i n_i+\sum_{j=1}^{N_i-1}p_j^i+P_{i-1})\Big).
\end{align*}
Analogously, we get that
\begin{align*}
\frac1{n_{k}+\sum_{i=1}^{k} (N_i n_i+\sum_{j=1}^{N_i-1}p_j^i+P_{i-1})}\sum_{j=0}^{n_{k}+\sum_{i=1}^{k} (N_i n_i+\sum_{j=1}^{N_i-1}p_j^i+P_{i-1}) } \psi(f_{\underline{\omega}}^j(x))
			> \int \psi\, d\mu_2 - \frac{\int \psi \,d\mu_2-\int \psi \,d\mu_1}{3} 
\end{align*}
for every large and odd integer $k\geqslant 1$. This implies that  the Birkhoff averages $\big(\frac1n\sum_{j=0}^{n-1} \psi(f_{\underline{\omega}}^j(x)\big)_{n\geqslant 1}$ 
		do not converge, proving the lemma. 
	\end{proof}

\subsection{Proof of Theorem~\ref{thm:IFS}}\label{subsec3}
The proof of the theorem is now a direct consequence of the results presented in the previous sections. Indeed, 
one knows that $I_F(\varphi_\psi)$ is a Baire generic subset of $\{1,2,\dots, \kappa\}^{\mathbb Z} \times X$. Recalling that 
$\pi: \{1,2,\dots, \kappa\}^{\mathbb Z} \times X \to \{1,2,\dots, \kappa\}^{\mathbb Z}$ denotes the projection on the first coordinate, 
it follows from the Kuratowski-Ulam theorem (cf. \cite{KU}) that the set
$$
\Big\{ \om \in \Sigma_\kappa\colon I_{F}(\varphi_\psi) \cap \pi^{-1}(\om)\; \text{is a Baire residual subset of}\,\{\om\}\times X \Big\}
$$ 
is a Baire residual subset of $\Sigma_\kappa$. As $I_{F}(\varphi_\psi) \cap \pi^{-1}(\om)=\{\om\}\times I_{\mathcal F_\om}(\psi)$, this proves the 
first statement in the theorem.
The second one is a direct consequence of the inclusion \eqref{eq:inclus-irr} and the first statement. 
This completes the proof of the theorem.

\section{{Topological complexity on the skew-product}}\label{top-skew}

In this section we prove Theorem~\ref{thm:B}. 


\subsection{Entropy estimates obtained from ergodic measures}

Let $X$ be a compact metric space, $G_1=\{id,f_1, f_2, \dots, f_\kappa\}$ be a collection of bi-Lipschitz 
homeomorphisms on $X$, $\sigma: \{1,2,  \dots, \kappa\}^{\mathbb Z} \to \{1,2,  \dots, \kappa\}^{\mathbb Z}$ be the full shift and $F: \{1,2,\dots,\kappa\}^{\Z}\times X \to \{1,2,\dots,\kappa\}^{\Z}\times X$ be the skew product 
$$
F(\om,x)=(\sigma(\om),f_{\omega_0}(x)),
	\quad\text{for every} \; (\om, x) \in \{1,2,\dots,\kappa\}^{\Z}\times X.
$$ 
Assume that the semigroup action generated by $G_1$ has frequent hitting times. In rough terms, 
this ensures that $F$ has frequent hitting times along the fibers. Let 
$\varphi:\{1,2,\dots,\kappa\}^{\Z}\times X \to \R $ be a continuous observable. 
\medskip
{Assume that  $h_*(\varphi)$, given by ~\eqref{hestrela}, is strictly positive (otherwise there is nothing to prove).
Given an arbitrary and fixed $\gamma\in(0,1)$, let $\mu_1,\mu_2$ be $F$-invariant and ergodic probabilities
such that $\int \varphi \,d\mu_1< \int \varphi \,d\mu_2$ and $h_{\mu_i}(F) > h_*(\varphi)-\gamma$}.
Now, consider a strictly decreasing sequence of positive numbers $(\delta_k)_{k\geqslant 1}$ tending to zero so that 
$
\delta_1 < \frac14 (\int \varphi \,d\mu_2-\int \varphi \,d\mu_1)
$
and, associated to these, pick a strictly increasing sequence of large positive integers 
$(\mathfrak n_k)_{k\geqslant 1}$ so that,
for each $\ell\geqslant 1$, the set
$$
\Gamma_{2\ell-1}:=
\Big\{(\om,x)\in \{1,2,...,\kappa\}^{\Z} \times X: \Big|\frac{1}{n}\sum_{j=0}^{n-1}\varphi(F^j(\om,x))-\int \varphi \,d\mu_1\Big|< \delta_{2\ell-1},\;
\; \forall n\geqslant \mathfrak n_{2\ell-1} \Big\}
$$ 
has $\mu_1$-measure larger than $\frac12$
and
$$
\Gamma_{2\ell}:=
\Big\{
(\om,x)\in \{1,2,...,\kappa\}^{\Z} \times X:
	\Big| \frac{1}{n}\sum_{j=0}^{n-1}\varphi(F^j(\om,x)) -\int \varphi \,d\mu_2\Big|< \delta_{2\ell},\;
\; \forall n\geqslant \mathfrak n_{2\ell} \Big\}
$$
has $\mu_2$-measure larger than $\frac12$.
This is possible by ergodicity of the measures 
$\mu_1$ and $\mu_2$ together with the ergodic theorem and Lusin's theorem. 
By uniform continuity of $\varphi$ and Katok's entropy formula (recall Subsection~\ref{subsec:entropies}), 
there exists $\vep>0$ such that 
\begin{eqnarray}\label{eq-unifcont}
|\varphi(\omega,x)-\varphi(\omega',x')|< \frac{\int \varphi \,d\mu_2-\int \varphi \,d\mu_1}{4}
\end{eqnarray}
for every $(\omega,x),(\omega',x')\in \{1,2,...,\kappa\}^{\Z}\times X$ so that $d((\omega,x),(\omega',x'))<4\vep$
and, for each $i=1,2$,
\begin{eqnarray}\label{eq entropy-Katok-teoE}
\underline{h}_{\mu_i}(F,8\vep,\frac12)=\liminf_{n\rightarrow +\infty}\frac{1}{n}\log N^{\mu_i}_n(8\vep,\frac12) 
		> h_*(\varphi) - 2\gamma.
\end{eqnarray}
This constant $\vep>0$ will be fixed throughout the remainder of the argument.

\medskip

The next lemma allows one to select a special class of dynamically separated points inside each of the sets $\Gamma_\ell$.
More precisely:

\begin{lemma}\label{le:K-entropy}
There exists a sequence $(n_\ell)_{\ell\geqslant 1}$ of positive integers
such that: 
\begin{enumerate}
\item $n_\ell \geqslant \max\{\; \mathfrak n_\ell, \; (\ell+2) \cdot n_{\ell-1} + \Big[ \frac{2\log 2}{\log L} {+\log \vep^{-1}} \Big], \; \ell \cdot K(L^{-2n_{\ell-1}}\,L^{\log (\vep^{-1})^{-1}} \,2\vep)\;\}$ for every $\ell\geqslant 2$
	\medskip
\item there exists an $(n_\ell, 4\vep)$-separated set  $\mathcal S_\ell\subset \Gamma_\ell$ (with respect to $F$) 
of cardinality 
$$
\# \mathcal S_\ell \geqslant \exp(\,(h_*(\varphi) -3\gamma) n_\ell\,)
	\quad\text{for every $\ell\geqslant 1$.}
$$
\end{enumerate}
\end{lemma}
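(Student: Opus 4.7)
The plan is to choose the sequence $(n_\ell)_{\ell\geqslant 1}$ recursively, large enough for both the explicit growth constraints of item (1) and for Katok's entropy formula to supply the cardinality lower bound of item (2). For each $\ell\geqslant 1$ denote by $\mu_{i(\ell)}$ the measure $\mu_1$ when $\ell$ is odd and $\mu_2$ when $\ell$ is even, so that $\mu_{i(\ell)}(\Gamma_\ell)>\tfrac{1}{2}$ by construction. The crucial input is inequality \eqref{eq entropy-Katok-teoE}, which combined with the liminf definition of $\underline h_{\mu_i}(F,8\vep,\tfrac12)$ yields an integer $N^\star_\ell\geqslant 1$ such that
\begin{equation*}
N^{\mu_{i(\ell)}}_n(8\vep,\tfrac12)\;\geqslant\; \exp\big((h_*(\varphi)-3\gamma)\,n\big)\qquad\text{for every}\; n\geqslant N^\star_\ell.
\end{equation*}

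First I would build $(n_\ell)_\ell$ by induction on $\ell$: assuming $n_1,\dots,n_{\ell-1}$ are already chosen, pick $n_\ell$ to exceed all three of the explicit quantities displayed in item (1) (each of which is a finite function of $\ell$, $n_{\ell-1}$, $\mathfrak n_\ell$, $L$ and $\vep$) and also to exceed $N^\star_\ell$. No compatibility issue arises, since we only require $n_\ell$ to dominate finitely many prescribed constants.

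With such $n_\ell$ fixed, let $\mathcal S_\ell\subset \Gamma_\ell$ be a \emph{maximal} $(n_\ell,4\vep)$-separated subset of $\Gamma_\ell$ for the dynamics of $F$; such a set exists by a routine Zorn-type argument. Maximality forces $\mathcal S_\ell$ to be $(n_\ell,4\vep)$-spanning for $\Gamma_\ell$, for otherwise one could enlarge it. In particular, $\mathcal S_\ell$ is an $(n_\ell,8\vep)$-generating set for the Borel set $\Gamma_\ell$, whose $\mu_{i(\ell)}$-measure is at least $1-\tfrac12$. The very definition of $N^{\mu_{i(\ell)}}_{n_\ell}(8\vep,\tfrac12)$ as an infimum of minimal generating-set cardinalities over subsets of measure at least $\tfrac12$ then gives
\begin{equation*}
\#\mathcal S_\ell\;\geqslant\; N^{\mu_{i(\ell)}}_{n_\ell}(8\vep,\tfrac12)\;\geqslant\; \exp\big((h_*(\varphi)-3\gamma)\,n_\ell\big),
\end{equation*}
which is precisely the cardinality estimate required in item (2).

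The only delicate point is the scale buffer between separated and generating sets: Katok's formula provides entropy information at scale $8\vep$ via generating sets, whereas the lemma demands sets that are $(n_\ell,4\vep)$-separated. The passage between the two is the standard observation that a maximal $(n,4\vep)$-separated set is automatically $(n,4\vep)$-spanning (hence $(n,8\vep)$-generating), and this is precisely why the scale $8\vep$ was built into \eqref{eq entropy-Katok-teoE} while item (2) is stated at scale $4\vep$. I do not anticipate any genuine obstacle; the proof amounts to a simultaneous recursion that keeps track of the parity-dependent measures $\mu_{i(\ell)}$ and combines the separated-vs-spanning duality with Katok's lower bound on $N^{\mu_{i(\ell)}}_n(8\vep,\tfrac12)$.
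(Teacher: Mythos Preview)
Your proposal is correct and follows essentially the same approach as the paper: a recursive choice of $n_\ell$ large enough to meet the growth constraints of item (1) together with Katok's entropy formula \eqref{eq entropy-Katok-teoE} to produce the separated sets of item (2), exploiting the standard duality between separated and spanning/generating sets at scales $4\vep$ and $8\vep$. Your phrasing via a \emph{maximal} $(n_\ell,4\vep)$-separated subset of $\Gamma_\ell$ is in fact slightly cleaner, since it makes the containment $\mathcal S_\ell\subset\Gamma_\ell$ automatic.
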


\begin{proof}
The proof is by induction on $\ell$. Set $\ell=1$. Since $\mu_1(\Gamma_1)\geqslant \frac12$, 
inequality~\eqref{eq entropy-Katok-teoE} implies that the minimum cardinality of a $(n,8\vep)$-generating set
for $\Gamma_1$ is bounded below by $N^{\mu_i}_n(8\vep,\frac12)$. Pick $n_1\geqslant \mathfrak n_1$
so that 
$$
N^{\mu_1}_n(8\vep,\frac12) \geqslant \exp(\,(h_*(\varphi) -3\gamma) n\,),
\quad \forall n\geqslant n_1.
$$
Recall that any $(n,8\vep)$-generating set is a $(n,4\vep)$-separated set. Hence, we deduce that there exists an 
$(n_1,4\vep)$-separated set $\mathcal S_1\subset \Gamma_1$ with cardinality 
 $\#\mathcal S_1 \geqslant e^{\,(h_*(\varphi) -3\gamma) n_1\,}$.

\smallskip
Now, assume that the lemma is proven for every $1\leqslant \ell < k$. If $\ell=k$ then pick a large integer
$$
n_\ell \geqslant \max\{\; \mathfrak n_\ell, \; (\ell+2) \cdot n_{\ell-1} + \Big[ \frac{2\log 2}{\log L} {+\log \vep^{-1}} \Big], \; \ell \cdot K(L^{-2n_{\ell-1}}\,L^{\log (\vep^{-1})^{-1}} \,2\vep)\;\}
$$
so that 
$$
N^{\mu_{\ell ({\tiny\mbox{mod}} 1)}}_n(8\vep,\frac12) \geqslant \exp(\,(h_*(\varphi) -3\gamma) n\,),
\quad \forall n\geqslant n_\ell.
$$
Then, taking $n=n_\ell$ above we conclude that 
that there exists an $(n_\ell,4\vep)$-separated set $\mathcal S_\ell\subset \Gamma_\ell$ with cardinality 
 $\#\mathcal S_\ell \geqslant e^{\,(h_*(\varphi) -3\gamma) n_\ell\,}$, as claimed. This proves the induction assumption and completes the proof of the lemma.
\end{proof}

By construction, the sequence $(n_\ell)_{\ell\geqslant 1}$ is strictly increasing and it satisfies
$$
\lim_{\ell\to+\infty} \frac{n_{\ell-1}}{n_{\ell}}=0
	\quad\text{and}\quad
\lim_{\ell\to+\infty} \frac{K(L^{-2n_{\ell-1}}\,L^{\log (\vep^{-1})^{-1}} \,2\vep)}{n_{\ell}}=0.	
$$
In particular this ensures that
The second expression reflects that the size of the orbits in $\Gamma_{\ell}$ are much larger than the largest possible transition time
necessary for shadowing pieces of orbits in $\Gamma_{\ell-1}$ and $\Gamma_{\ell}$ consecutively. 

\subsection{A Moran set of irregular points in $\{1,2,...,\kappa\}^{\Z}\times X$ with large entropy}\label{MLEntropy}

The main goal of this subsection is the construction of a Moran set, contained in the set $I_{F}(\varphi)$
of irregular in $\Sigma_\kappa\times X$, having large upper topological entropy.

\medskip
\noindent\textbf{$1^{st}$ Step:} \emph{Understanding dynamic balls for points in $\mathcal S_\ell$}
\smallskip

Fix $\ell\geqslant 1$,
and let $B_F((\om,x), n_\ell,\vep)\subset \Sigma_\kappa\times X$ be the dynamic ball with respect to $F$, centered at $(\om,x)$, of radius $\vep$ and 
length $n_\ell$.
Given $\om\in \Sigma_\kappa$ and integers $m \leqslant n$ we denote by 
$$
\om_{[m,n]} 
	:=\big\{ \, 
	\tilde \om \in \Sigma_\kappa \colon \om_j=\widetilde{\om}_j, \, \forall j\in [m,n]
	\,\big\}
$$
the cylinder set determined by the infinite word $\om$ at the positions in the interval $[m,n]$.
We need the following auxiliary lemma.

\begin{lemma}\label{estimates-dyn-balls-F}
For any $\vep>0$, $(\om,x)\in \Sigma_\kappa\times X$ and $\ell\geqslant 1$ one has that
$$
B_F((\om,x), n_\ell,\vep) = B_\sigma(\om, n_\ell, \vep) \times B_{\om}(x,n_\ell,\vep)
$$
where
$B_\sigma(\om, n_\ell, \vep)$
is the cylinder set $\om_{\,[-\lfloor\log(\vep^{-1})\rfloor\,,\,n_\ell + \lfloor\log(\vep^{-1})\rfloor]}$
and 
$$
B_{\om}(x,n_\ell,\vep):=\big\{y\in X \colon d_X(g_\om^j(x), g_\om^j(y))<\vep\, \text{ for every }\, 0\leqslant j \leqslant n_\ell \big\}.
$$
In particular,
\begin{enumerate}
\item \label{eq:images0} $F^{n_\ell}\big(\, B_F((\om,x), n_\ell,\vep)\, \big)
	 \supseteq 
	 \sigma^{n_\ell} \big( \om_{\,[-n_\ell+\lfloor \log(\vep^{-1})\rfloor,\,\lfloor -\log(\vep^{-1})\rfloor]} \big)
	 \times B(f^{n_\ell}_\om(x), L^{-2n_\ell}\vep); 
$
\item \label{eq:images}
$
F^{n_\ell+\lfloor \log(\vep^{-1}) \rfloor}  \big(\, B_F((\om,x), n_\ell,\vep)\, \big) 
$
\item[] \qquad \quad 
$
 \supseteq 
 	 \sigma^{n_\ell+\lfloor \log(\vep^{-1}) \rfloor} \big( \om_{\,[-n_\ell+\lfloor \log(\vep^{-1})\rfloor,\,\lfloor -\log(\vep^{-1})\rfloor]} \big)
	 \times B(f^{n_\ell+\lfloor \log(\vep^{-1}) \rfloor}_\om(x), L^{-(2{n_\ell}+\lfloor \log(\vep^{-1}) \rfloor)}\vep)
$
\end{enumerate}
\end{lemma}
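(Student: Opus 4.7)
The plan is to establish two parts separately: first the product decomposition of the Bowen dynamic ball, then the image inclusions via a fiberwise application of Lemma~\ref{le:est-balls}.

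For the product decomposition I would unfold the definition of $B_F((\om,x),n_\ell,\vep)$ using the product metric on $\Sigma_\kappa\times X$. Since $F^j(\om,x)=(\sigma^j\om,f_\om^j(x))$, the condition $d(F^j(\om,x),F^j(\om',y))<\vep$ for every $0\leqslant j\leqslant n_\ell$ decouples into a shift part $d_\Sigma(\sigma^j\om,\sigma^j\om')<\vep$ and a fiber part $d_X(f_\om^j(x),f_{\om'}^j(y))<\vep$. The shift part translates into agreement of $\om$ and $\om'$ on a window of radius $\lfloor\log\vep^{-1}\rfloor$ around each coordinate $\sigma^j\om$; taking the union of these windows over $j=0,\dots,n_\ell$ yields exactly the cylinder $B_\sigma(\om,n_\ell,\vep)=\om_{[-\lfloor\log\vep^{-1}\rfloor,\, n_\ell+\lfloor\log\vep^{-1}\rfloor]}$. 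The key observation is that this agreement forces $\om_j=\om'_j$ for $0\leqslant j<n_\ell$, so $f_{\om'}^j=f_\om^j$ on the relevant range, and the fiber condition reduces to $y\in B_\om(x,n_\ell,\vep)$. This yields the claimed product structure, with both inclusions handled simultaneously.

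For the image inclusions I would apply $F^{n_\ell}$ to the product form, treating the two factors independently. On the shift factor, $\sigma^{n_\ell}$ maps the cylinder $\om_{[-\lfloor\log\vep^{-1}\rfloor,\,n_\ell+\lfloor\log\vep^{-1}\rfloor]}$ directly into the shifted cylinder appearing in (\ref{eq:images0}). On the fiber, I need a non-autonomous version of Lemma~\ref{le:est-balls}: since each generator $f_i^{\pm 1}$ is $L$-Lipschitz, the telescoping estimate $d(f_\om^j(x),f_\om^j(y))\leqslant L^j\, d(x,y)$ gives $B_\om(x,n_\ell,\vep)\supseteq B(x,L^{-n_\ell}\vep)$, and applying $f_\om^{n_\ell}$ to the latter (using Lipschitz continuity of the inverse branches factor by factor) yields $f_\om^{n_\ell}(B_\om(x,n_\ell,\vep))\supseteq B(f_\om^{n_\ell}(x),L^{-2n_\ell}\vep)$. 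Combining the two factors gives statement (1). For statement (2), I would iterate $\lfloor\log\vep^{-1}\rfloor$ further steps along $\om$; each additional step shrinks the fiber ball by a factor of $L^{-1}$ via Lipschitz continuity, producing the radius $L^{-(2n_\ell+\lfloor\log\vep^{-1}\rfloor)}\vep$, while $\sigma$ transports the cylinder accordingly.

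The main obstacle, if any, is purely bookkeeping: tracking how the windows that define the cylinder on $\Sigma_\kappa$ shift under $\sigma^{n_\ell}$ and how the Lipschitz exponents accumulate on the fiber under non-autonomous compositions. No genuinely new idea beyond the definition of the product metric, the Lipschitz estimate of Lemma~\ref{le:est-balls}, and the invertibility of $\sigma$ enters the argument, and in particular the frequent hitting times hypothesis plays no role here.
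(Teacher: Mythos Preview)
Your proposal is correct and follows essentially the same route as the paper: the product decomposition comes from the maximum metric together with the fact that the skew-product is locally constant (so $f_{\om'}^j=f_\om^j$ once the cylinder condition is met), and the image inclusions are obtained by applying the non-autonomous version of Lemma~\ref{le:est-balls} on the fiber factor and then iterating $\lfloor\log\vep^{-1}\rfloor$ additional steps. Your write-up is in fact more detailed than the paper's, which dispatches the lemma in three sentences invoking exactly these ingredients.
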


\begin{proof}
The first claim is an immediate consequence of the fact that the step skew-products are constant on cylinders of size one in the shift, and that we are taking the maximum metric.
Then, item ~\eqref{eq:images0} follows from the definition and  Lemma~\ref{le:est-balls}, which ensures that 
$f_\omega^n(B_{\om}(x,n,\vep))$ contains the ball of radius $L^{-2n}\vep$ around $f_\omega^n(x)$.
Finally, the inclusion in item \eqref{eq:images} is obtained from item \eqref{eq:images0} using the same argument as above and 
iteration by $F^{\lfloor \log(\vep^{-1}) \rfloor}$.
\end{proof}

Comparing the expressions appearing in items~\eqref{eq:images0} and ~\eqref{eq:images} in the lemma, 
we conclude that any point $(\tilde \om,\tilde x)$ which $\vep$-shadows the piece of orbit of a point $(\om,x)$ during $n_\ell$ 
iterates is such $\sigma^n(\tilde \om)$ belongs to a cylinder of deepth $\lfloor \log(\vep^{-1}) \rfloor$.  As a consequence, one needs to make $\lfloor \log(\vep^{-1}) \rfloor$ further iterations to drop this dependence (cf. item~\eqref{eq:images} 
in Lemma~\ref{estimates-dyn-balls-F} above).

\medskip
\noindent\textbf{$2^{nd}$ Step:} 
\emph{Construction of a Moran set of points with oscilating behavior}
\medskip

Given $(\om_1,x_1)\in \mathcal S_1$ and $(\om_2,x_2)\in \mathcal S_2$, Lemma~\ref{estimates-dyn-balls-F} \eqref{eq:images}  implies that
$F^{n_1+\lfloor \log(\vep^{-1}) \rfloor}  \big(\, B_F((\om_1,x), n_1,\vep)\, \big)$
contains the set
 $$
  \sigma^{n_1+\lfloor \log(\vep^{-1}) \rfloor} \big(\om_{\,[-\lfloor\log(\vep^{-1})\rfloor\,,\,n_1 + \lfloor\log(\vep^{-1})\rfloor]} \big)
	 \times B(f^{n_1+\lfloor \log(\vep^{-1}) \rfloor}_\om(x), L^{-(2{n_1}+\lfloor \log(\vep^{-1}) \rfloor)}\vep).
$$
On the other hand, $B_{\om_2}(x_2,n_2,\vep)$ contains the ball centered at $x_2$ with radius $L^{-n_2}\vep$.
Then, the frequent hitting times property along the fiber of $F$ ensures that there exists 
$0\leqslant p_{1} \leqslant K(L^{-(2{n_1}+\lfloor \log(\vep^{-1}) \rfloor)}\vep)$ and a finite word 
$\om^{(12)}\in \{1,2,\dots, \kappa\}^p_1$ 
so that 
$$
f^{p_1}_{\om^{(12)}} (B(f^{n_1+\lfloor \log(\vep^{-1}) \rfloor}_\om(x), L^{-(2{n_1}+\lfloor \log(\vep^{-1}) \rfloor)}\vep))
\cap B_{\om_2}(x_2,n_2,\vep)
$$
contains a ball $B_1^2$ of radius $\frac12 L^{-n_2}\varepsilon$.
Thus
\begin{align*}
C(\,(\om_1,x_1),(\om_2,x_2)\,)
	& :=
	{\om_1}_{\,[-\log(\vep^{-1})\,,\,n_1 + \log(\vep^{-1})]} \times B_{\om_1}(x_1,n_1,\vep)
	\\
	& \; \cap \;
	F^{-(p_1+\lfloor \log(\vep^{-1}) \rfloor+n_1)} \, \Big( {\om_2}_{\,[-\log(\vep^{-1})\,,\,n_2 + \log(\vep^{-1})]} \times B_1^2
	\Big)
\end{align*}
is a compact and non-empty subset of $\Sigma_\kappa\times X$.
Observe now that $f_{\om_2}^{n_2+\lfloor \log(\vep^{-1}) \rfloor}(B_1^2)$ contains a ball of radius 
$\frac12 L^{-(2n_2+\lfloor -\log(\vep^{-1}\rfloor))}\varepsilon \geqslant L^{-n_3}\vep$. 
Thus, if $(\om_3,x_3)\in \mathcal S_3$, repeating the argument, 
there exists $0\leqslant p_{2} \leqslant K(L^{-n_3}\varepsilon)$, 
a finite word $\om^{(23)} \in \{1,2,\dots, \kappa\}^{p_2}$ and a ball 
$B_2^3\subset B_{\om_3}(x_3,n_3,\vep)$ of radius $\frac12 L^{-n_3}\varepsilon$
such that 
\begin{align*}
C(\,(\om_1,x_1),(\om_2,x_2),(\om_3,x_3)\,)
	& :=
	{\om_1}_{\,[-\log(\vep^{-1})\,,\,n_1 + \log(\vep^{-1})]} \times B_{\om_1}(x_1,n_1,\vep)
	\\
	& \; \cap \;
	F^{-(p_1+\lfloor \log(\vep^{-1}) \rfloor+n_1)} \, \Big( {\om_2}_{\,[-\log(\vep^{-1})\,,\,n_2 + \log(\vep^{-1})]} \times B_1^2
	\Big)
	\\
	& \; \cap \;
	F^{-\sum_{i=1}^2 (p_i+\lfloor \log(\vep^{-1}) \rfloor+n_i)} \, \Big( {\om_3}_{\,[-\log(\vep^{-1})\,,\,n_3 + \log(\vep^{-1})]} \times B_2^3
	\Big)
\end{align*}
is non-empty subset of $C(\,(\om_1,x_1),(\om_2,x_2)\,)$.
More generally, given $k\geqslant 1$ and a $k$-uple $((\om_1,x_1), \dots, (\om_k,x_k)) \in \mathcal S_1 \times \dots \times \mathcal S_k$, for each $1\leqslant i\leqslant k$ there exist 
$0\leqslant p_{i} \leqslant K(L^{-n_i} \vep)$, a finite word $\om^{(i (i+1))} \in \{1,2,\dots, \kappa\}^{p_i}$ 
and a ball $B_i^{i+1} \subset B_{\om_{i+1}}(x_{i+1},n_{i+1},\vep)$ of radius $\frac12 L^{-n_{i+1}}\varepsilon$ so that 
\begin{align*}
C(\,(\om_1,x_1), \dots, (\om_k,x_k)\,)
	& :=
	{\om_1}_{\,[-\log(\vep^{-1})\,,\,n_1 + \log(\vep^{-1})]} \times B_{\om_1}(x_1,n_1,\vep)
	\\
	& \; \cap \;
	F^{-(p_1+\lfloor \log(\vep^{-1}) \rfloor+n_1)} \, \Big( {\om_2}_{\,[-\log(\vep^{-1})\,,\,n_2 + \log(\vep^{-1})]} \times B_1^2
	\Big)
	\\
	& \quad \dots 
	\\
	& \; \cap \;
	F^{-\sum_{i=1}^{k-1} (p_i+\lfloor \log(\vep^{-1}) \rfloor+n_i)} \, \Big( {\om_k}_{\,[-\log(\vep^{-1})\,,\,n_k + \log(\vep^{-1})]} \times B_{k-1}^k
	\Big)
\end{align*}
is non-empty. By construction, the previous family of cylinder sets satisfy
$$
C(\,(\om_1,x_1), \dots, (\om_k,x_k),  (\om_{k+1},x_{k+1})\,)
\subset 
C(\,(\om_1,x_1), \dots, (\om_k,x_k)\,)
$$
for every
$((\om_1,x_1), \dots, (\om_k,x_k),(\om_{k+1},x_{k+1})) \in \mathcal S_1 \times \dots \times \mathcal S_k 
\times \mathcal S_{k+1}$ and $k\geqslant 1$.

\medskip
For the sake of completeness, let us emphasize that the transition times $p_i$ are functions of the collections 
$(\,(\om_1,x_1), \dots, (\om_k,x_k))$. Moreover, since transition times are not constant, one cannot ensure that points
in different cylinder sets of 
$$
 \mathcal{\tilde C}_k:=
	\big\{ 
	C(\,(\om_1,x_1), \dots, (\om_k,x_k)\,) \colon ((\om_1,x_1), \dots, (\om_k,x_k)) \in \mathcal S_1 \times \dots \times \mathcal S_k
	\big\}
$$
are separated by some suitable iterate of $F$. We proceed to show this is the case, at least for a relevant proportion of cylinder sets.
Let us be more precise.
For each $1\leqslant i \leqslant  k-1$ consider the $i^{\text{th}}$-transition time function
$$
\begin{array}{rccc}
p_i : &  \mathcal{\tilde C}_{k} & \to & \{0,1,\dots, K(L^{-(2{n_{i}}+\lfloor \log(\vep^{-1}) \rfloor)}\vep)\}.
\end{array}
$$
By the pigeonhole principle, for each $1\leqslant i \leqslant  k-1$ there exists 
$0\leqslant t_i\leqslant K(L^{-(2{n_i}+\lfloor \log(\vep^{-1}) \rfloor)}\vep)$ and a collection $ \mathcal{C}_k \subset  \mathcal{\tilde C}_k$ such that 
\begin{equation}\label{eq:card-Ck}
\#  {\mathcal C_k} 
	\geqslant \frac{\# \mathcal S_k \, \dots\, \# \mathcal S_2\, \# \mathcal S_1}{\prod_{i=1}^{k-1} \, 
	K(L^{-(2{n_i}+\lfloor \log(\vep^{-1}) \rfloor)}\vep)}
\end{equation}
and $p_i(C)=t_i$ for every $C\in  {\mathcal C_k}$.

\begin{lemma}\label{le:selection}
For each $k\geqslant 1$ let the subcollection ${\mathcal C}_k\subset \tilde{\mathcal C}_k$ and the integers
$(t_i)_{1\leqslant i \leqslant  k-1}$ be given as above. There exists $K_0>0$ 
such that:
\begin{enumerate}
\item for any distinct $C_1, C_2 \in {\mathcal C}_k$,\, $(\om,x)\in C_1$ and $(\tilde \om,\tilde x)\in C_2$
it holds that $d_{\alpha_k}((\om, x),(\widetilde{\om},\tilde x))>\vep$,
\item $\#  {\mathcal C}_k \geqslant K_0 \, e^{  (h_*(\varphi) -4\gamma) \,\alpha_k }$
\end{enumerate}
where $\alpha_k=n_k + \sum_{i=1}^{k-1} (n_i+\lfloor \log(\vep^{-1}) \rfloor+t_i)$.
\end{lemma}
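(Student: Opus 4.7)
The plan is to first verify dynamical separation of points living in distinct cylinders of $\mathcal{C}_k$, and then use the counting bound~\eqref{eq:card-Ck} together with the growth condition on $(n_\ell)_\ell$ provided by Lemma~\ref{le:K-entropy}(1) to extract the required exponential lower bound on $\#\mathcal{C}_k$.

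\textbf{Step 1: separation.} Two distinct cylinders $C, \widetilde{C}\in\mathcal{C}_k$ are parameterised by $k$-tuples $((\om_1,x_1),\dots,(\om_k,x_k))$ and $((\widetilde{\om}_1,\widetilde{x}_1),\dots,(\widetilde{\om}_k,\widetilde{x}_k))$ in $\mathcal S_1\times\cdots\times\mathcal S_k$ which must differ in some coordinate. Crucially, since every element of $\mathcal{C}_k$ shares the same transition times $(t_i)_{1\leqslant i\leqslant k-1}$, for any $(\om,x)\in C$ and $(\widetilde{\om},\widetilde{x})\in\widetilde{C}$ the shift $q_{j-1}:=\sum_{i=1}^{j-1}(n_i+\lfloor\log(\vep^{-1})\rfloor+t_i)$ is the same, and by construction $F^{q_{j-1}}(\om,x)$ (respectively $F^{q_{j-1}}(\widetilde\om,\widetilde x)$) lies within $\vep$ of the $F$-orbit of $(\om_j,x_j)$ (respectively $(\widetilde\om_j,\widetilde x_j)$) for $n_j$ iterates. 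Letting $j$ be the smallest index with $(\om_j,x_j)\neq(\widetilde\om_j,\widetilde x_j)$, the $(n_j,4\vep)$-separation of $\mathcal S_j\subset \Gamma_j$ yields some $0\leqslant s<n_j$ with $d(F^{s}(\om_j,x_j),F^{s}(\widetilde{\om}_j,\widetilde{x}_j))>4\vep$, and the triangle inequality then gives
\[
d(F^{q_{j-1}+s}(\om,x),F^{q_{j-1}+s}(\widetilde{\om},\widetilde{x}))> 4\vep-2\vep=2\vep>\vep.
\]
Since $q_{j-1}+s<q_{j-1}+n_j\leqslant q_{k-1}+n_k=\alpha_k$, this proves $d_{\alpha_k}((\om,x),(\widetilde\om,\widetilde x))>\vep$.

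\textbf{Step 2: cardinality.} Write $K_i:=K(L^{-(2n_i+\lfloor\log(\vep^{-1})\rfloor)}\vep)+1$ for the cardinality of the codomain of the transition-time function. From~\eqref{eq:card-Ck} and Lemma~\ref{le:K-entropy}(2) we obtain
\[
\#\mathcal{C}_k\;\geqslant\;\frac{\prod_{i=1}^k \#\mathcal S_i}{\prod_{i=1}^{k-1}K_i}\;\geqslant\;\frac{\exp\bigl((h_*(\varphi)-3\gamma)\sum_{i=1}^k n_i\bigr)}{\prod_{i=1}^{k-1}K_i}.
\]
Since $\alpha_k=\sum_{i=1}^k n_i+R_k$ with $R_k:=\sum_{i=1}^{k-1}(\lfloor\log(\vep^{-1})\rfloor+t_i)$, it suffices to verify
\[
\gamma\sum_{i=1}^k n_i\;\geqslant\;(h_*(\varphi)-4\gamma)\,R_k+\sum_{i=1}^{k-1}\log K_i+\log(1/K_0)
\]
for an appropriate absolute constant $K_0>0$. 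Now the growth condition $n_\ell\geqslant \ell\cdot K_{\ell-1}$ from Lemma~\ref{le:K-entropy}(1) together with the super-geometric lower bound $n_{\ell}\geqslant(\ell+2)n_{\ell-1}$ imply on the one hand that $R_k\leqslant(k-1)\lfloor\log(\vep^{-1})\rfloor+\sum_{i=1}^{k-1}K_i\leqslant (k-1)\lfloor\log(\vep^{-1})\rfloor+\sum_{i=2}^k n_i/i=o\bigl(\sum_{i=1}^k n_i\bigr)$, and on the other hand that $\log K_{i}\leqslant \log n_{i+1}$, so that $\sum_{i=1}^{k-1}\log K_i=o\bigl(\sum_{i=1}^k n_i\bigr)$ as well. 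Hence the right-hand side is bounded by $\tfrac{\gamma}{2}\sum_{i=1}^k n_i$ for all sufficiently large $k$, and the inequality holds with $K_0=1$ eventually; adjusting $K_0$ to account for the finitely many initial indices concludes the proof.

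\textbf{Main obstacle.} The delicate point is Step~2: the transition times $t_i$ (and hence the bounds $K_i$) can grow arbitrarily fast with $n_i$, and $R_k$ is a priori comparable to $\sum n_i$. The entire argument hinges on the precise choice of $n_\ell$ made in Lemma~\ref{le:K-entropy}(1), which forces $K_{\ell-1}\leqslant n_\ell/\ell$ and $n_\ell\geqslant(\ell+2)n_{\ell-1}$ simultaneously, so that both $R_k$ and $\sum\log K_i$ are sub-linear in $\sum n_i$ and the small slack $\gamma$ in the entropy estimate can swallow them uniformly in $k$.
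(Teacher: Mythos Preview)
Your proof is correct and follows essentially the same approach as the paper's. Step~1 is identical to the paper's argument (pick the first index where the $k$-tuples differ, use that all elements of $\mathcal C_k$ share the same transition times, then apply the $(n_j,4\vep)$-separation of $\mathcal S_j$ and the triangle inequality). Step~2 is a mild repackaging of the paper's computation: the paper bounds each factor in the product $\prod_i \#\mathcal S_i / K_i$ term-by-term using the inequality $\frac{1}{n_{i+1}}e^{-(h_*(\varphi)-3\gamma)[\lfloor\log(\vep^{-1})\rfloor+K_i]}\geqslant e^{-\gamma[n_{i+1}+\lfloor\log(\vep^{-1})\rfloor+t_i]}$ for large $i$, whereas you phrase the same estimate globally as $R_k+\sum\log K_i=o(\sum n_i)$; both rely on exactly the same growth conditions from Lemma~\ref{le:K-entropy}(1). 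One trivial slip: in your displayed sufficient inequality the term should be $\log K_0$ rather than $\log(1/K_0)$, but your subsequent reasoning (eventually $K_0=1$ works, adjust for finitely many $k$) is correct regardless.
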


\begin{proof}
Fix $k\geqslant 1$. For any distinct $k$-uples  
$C_1= \tilde C(\,(\om_1,x_1), \dots, (\om_k,x_k)\,)$ 
and $C_2= \tilde C(\,(\om_1,\tilde x_1), \dots, (\tilde \om_k, \tilde x_k)\,)$
belonging to $\tilde {\mathcal C}_k$
there exists $1\leqslant j \leqslant k$ so that $(\om_j,x_j)\neq (\tilde \om_j,\tilde x_j)$.
Thus, if $(\om,x)\in C_1$ and $(\widetilde{\om},\tilde x)\in C_2$ then
\begin{align*}
d(F^{\sum_{i=1}^{j} (n_i+ \lfloor \log (\vep^{-1})^{-1}\rfloor +t_i) }(\om,x), 
	& F^{\sum_{i=1}^{j} (n_i+ \lfloor \log (\vep^{-1})^{-1}\rfloor +t_i )}(\tilde \om,\tilde x)) 
	\\
	& 
	\geqslant d((\om_j,x_j), (\tilde \om_j,\tilde x_j)) -
	d(F^{\sum_{i=1}^{j} (n_i+ \lfloor \log (\vep^{-1})^{-1}\rfloor +t_i )}(\om,x),  (\om_j,x_j))
	\\
	&- d(F^{\sum_{i=1}^{j} (n_i+ \lfloor \log (\vep^{-1})^{-1}\rfloor +t_i )}(\tilde \om,\tilde x)),   (\tilde \om_j, \tilde x_j))
	>\vep,
\end{align*}
thus proving the first item in the lemma.
The proof of the second item follows from quantitative information concerning ~\eqref{eq:card-Ck}. Indeed, using Lemma~\ref{le:K-entropy}(1) 
one has that  ${n_{i+1}} \geqslant {K(L^{-(2{n_i}+\lfloor \log(\vep^{-1}) \rfloor)}\vep)}$ for every $1\leqslant i\leqslant k-1$
and
\begin{align*} 
\frac1{n_{i+1}} \; 
	\; e^{ - (h_*(\varphi) -3\gamma) \, [\lfloor \log(\vep^{-1}) \rfloor + K(L^{-(2{n_i}+\lfloor \log(\vep^{-1}) \rfloor)}\vep)]}
	\geqslant e^{-\gamma [n_{i+1} + \lfloor \log(\vep^{-1}) \rfloor +t_i]}
\end{align*}
for every large $i\geqslant 1$.
Together with Lemma~\ref{le:K-entropy}(2), this ensures that there exists $K_0>0$ so that 
\begin{align*}
& \frac{\# \mathcal S_k \, \dots\, \# \mathcal S_2\, \# \mathcal S_1}{\prod_{i=1}^{k-1} \, K(L^{-(2{n_i}+\lfloor \log(\vep^{-1}) \rfloor)}\vep)} \\
	& \geqslant 
	e^{  (h_*(\varphi) -3\gamma) \, [n_k+ \dots + n_2 + n_1]}
	\prod_{i=1}^{k-1}  \frac1{n_{i+1}} \, \frac{n_{i+1}}{K(L^{-(2{n_i}+\lfloor \log(\vep^{-1}) \rfloor)}\vep)} \\
	& \geqslant 
	\prod_{i=0}^{k-1}  \frac1{n_{i+1}} \; e^{  (h_*(\varphi) -3\gamma) \, [n_{i+1} + \lfloor \log(\vep^{-1}) \rfloor +t_i] }
	\; e^{ - (h_*(\varphi) -3\gamma) \, [\lfloor \log(\vep^{-1}) \rfloor + K(L^{-(2{n_i}+\lfloor \log(\vep^{-1}) \rfloor)}\vep)]}\\
	& \geqslant K_0 \, e^{  (h_*(\varphi) -4\gamma) \,\alpha_k }
\end{align*}
for every $k\geqslant 1$,
as claimed.
\end{proof}

Now, consider the Moran set $\mathfrak F$, obtained as a countable intersection of
compact and nested subsets, defined by
$
\mathfrak F:=\bigcap_{k\geqslant 1} \mathfrak F_k
$
where
$$
\mathfrak F_k:= 
	\bigcup
	\Big\{\overline{C(\,(\om_1,x_1), \dots, (\om_k,x_k)\,)}: C(\,(\om_1,x_1), \dots, (\om_k,x_k)\,)\in  {\mathcal C}_k \Big\}.
$$
We now observe that the compact and non-empty set $\mathfrak F$ is contained in $I_F(\varphi)$.

\begin{lemma}\label{irregular-teoE}
For any $(\om,x) \in \mathfrak F$ we have
	$$
	\liminf_{n\to\infty} \frac{1}{n}\sum_{j=0}^{n-1}\varphi(F^j(\om,x))
	< \limsup_{n\to\infty} \frac{1}{n}\sum_{j=0}^{n-1}\varphi(F^j(\om,x)).
	$$ 
\end{lemma}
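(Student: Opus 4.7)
\medskip
\noindent\textbf{Proof plan.} The plan is to reproduce, in this skew-product setting, the argument of Lemma~\ref{lemma-point-nontypical}, but using the cleaner Moran data produced in the second step of Subsection~\ref{MLEntropy}. Set $T_i := n_i + \lfloor \log(\vep^{-1}) \rfloor + t_i$ for $1 \leqslant i \leqslant k-1$ and $\alpha_k := n_k + \sum_{i=1}^{k-1} T_i$, so that $\alpha_k$ is the total time consumed after the $k$-th shadowing window. Any $(\om,x) \in \mathfrak F$ belongs to some cylinder set $C((\om_1,x_1),\dots,(\om_k,x_k)) \in \mathcal C_k$ for every $k \geqslant 1$ with $(\om_j,x_j) \in \mathcal S_j \subset \Gamma_j$, and by the construction one has
$$
F^{\sum_{i=1}^{j-1} T_i}(\om,x) \in B_F((\om_j,x_j), n_j, \vep), \qquad 1\leqslant j \leqslant k.
$$

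Using the uniform continuity estimate \eqref{eq-unifcont}, the Birkhoff sum of $\varphi$ along the orbit of $(\om,x)$ over each shadowing window of length $n_j$ differs from the Birkhoff sum along the orbit of $(\om_j,x_j)$ by at most $n_j \cdot \frac{\int\varphi\,d\mu_2 - \int\varphi\,d\mu_1}{4}$. Since $(\om_j, x_j) \in \Gamma_j$ and $n_j \geqslant \mathfrak n_j$, the latter Birkhoff sum is within $\delta_j n_j$ of $n_j \int\varphi\,d\mu_1$ if $j$ is odd and of $n_j \int\varphi\,d\mu_2$ if $j$ is even. The transition windows contribute at most $\|\varphi\|_\infty \cdot (T_i - n_i) \leqslant \|\varphi\|_\infty \cdot (\lfloor \log(\vep^{-1}) \rfloor + K(L^{-2n_{i-1}} L^{\log(\vep^{-1})^{-1}} 2\vep))$, and by Lemma~\ref{le:K-entropy}(1) this grows sublinearly in $n_i$ and hence is negligible against $n_k$ when $k$ is large.

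Combining these three types of estimates yields, for all sufficiently large $k$,
$$
\frac{1}{\alpha_k} \sum_{j=0}^{\alpha_k-1} \varphi(F^j(\om,x))
\;<\; \int\varphi\,d\mu_1 + \frac{\int\varphi\,d\mu_2 - \int\varphi\,d\mu_1}{3}
\qquad \text{when $k$ is odd,}
$$
and symmetrically, with the inequality reversed and $\mu_2$ replacing $\mu_1$, when $k$ is even. This forces
$$
\liminf_{n\to\infty}\frac{1}{n}\sum_{j=0}^{n-1}\varphi(F^j(\om,x)) < \int\varphi\,d\mu_1 + \tfrac{\int\varphi\,d\mu_2 - \int\varphi\,d\mu_1}{3} < \int\varphi\,d\mu_2 - \tfrac{\int\varphi\,d\mu_2 - \int\varphi\,d\mu_1}{3} < \limsup_{n\to\infty}\frac{1}{n}\sum_{j=0}^{n-1}\varphi(F^j(\om,x)),
$$
as desired.

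The main technical point to verify carefully is that the transition errors and the cumulative error from earlier shadowing pieces are indeed negligible compared to $\alpha_k$. This is precisely what the growth conditions built into Lemma~\ref{le:K-entropy}(1) guarantee, via the ratios $n_{\ell-1}/n_\ell \to 0$ and $K(L^{-2n_{\ell-1}}L^{\log(\vep^{-1})^{-1}}2\vep)/n_\ell \to 0$. The rest is bookkeeping of the constants $\delta_k \to 0$ and the uniform continuity radius $\vep$ fixed at the start of the subsection; once these are tracked, the inequalities above become routine.
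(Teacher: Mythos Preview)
Your proposal is correct and follows exactly the approach the paper intends: the paper's own proof simply states that it is identical to the proof of Lemma~\ref{lemma-point-nontypical} and omits the details, whereas you have written those details out explicitly for the skew-product Moran set constructed in the second step of Subsection~\ref{MLEntropy}. The shadowing estimate, the use of \eqref{eq-unifcont}, the $\Gamma_j$ membership, and the negligibility of transition windows via the growth conditions in Lemma~\ref{le:K-entropy}(1) are precisely the ingredients of Lemma~\ref{lemma-point-nontypical} transported to this setting, so there is nothing to add.
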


\begin{proof}
This proof is identical to the proof of Lemma~\ref{lemma-point-nontypical}, and for that reason we shall omit it.  
\end{proof}

\medskip
\noindent\textbf{$3^{rd}$ Step:} 
\emph{Estimating the topological entropy of $\mathfrak{F}$}
\medskip

In order to estimate the topological entropy of the Moran set $\mathfrak{F}$ for the skew-product $F$ we will make use of the generalized 
entropy distribution principle stated as Theorem~\ref{thm:EDP}. For that, we define a special sequence of probability measures $\mu_k$ 
supported in points selected among the elements of sets in the family $\mathcal{C}_k$ as follows:
for each $C\in \mathcal{C}_k$ choose a point $z_C\in C$
and consider 
the probability measure
$$
\mu_k=\frac{1}{\#\mathcal{C}_k}
\sum_{ C\in \mathcal{C}_k}\delta_{z_C}.
$$

\begin{lemma}\label{le:acc}
If $\lim_{k\to\infty}\mu_k=\mu$ (in the weak$^*$-topology) then $\mu(\mathfrak{F})=1$.
\end{lemma}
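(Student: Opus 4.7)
The plan is to exploit the nested structure of the Moran construction together with the Portmanteau characterization of weak$^*$ convergence. The key observation is that for any $j\geqslant k$, the probability $\mu_j$ is supported entirely on $\mathfrak{F}_k$. Indeed, by the construction carried out in the $2^{nd}$ Step, every cylinder set $C'\in \tilde{\mathcal{C}}_{j}$ associated to a $j$-tuple $((\om_1,x_1),\dots,(\om_j,x_j))$ is contained in the cylinder set $C\in \tilde{\mathcal{C}}_{k}$ associated to the truncated $k$-tuple $((\om_1,x_1),\dots,(\om_k,x_k))$; since the restriction of $\mathcal{C}_j$ and $\mathcal{C}_k$ are obtained by fixing the transition times $t_1,\dots,t_{k-1}$ consistently across levels, any $C'\in \mathcal{C}_j$ is contained in a unique $C\in \mathcal{C}_k$. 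Consequently, each representative $z_{C'}$ with $C'\in \mathcal{C}_j$ belongs to $\overline{C}\subset \mathfrak{F}_k$.

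Next, I would observe that $\mathfrak{F}_k$ is a finite union of compact sets, hence closed, so the Portmanteau theorem applies: for every fixed $k\geqslant 1$,
\[
\mu(\mathfrak{F}_k)\;\geqslant\; \limsup_{j\to\infty}\mu_j(\mathfrak{F}_k)\;=\;1.
\]
Thus $\mu(\mathfrak{F}_k)=1$ for every $k$.

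Finally, using that $(\mathfrak{F}_k)_{k\geqslant 1}$ is a decreasing sequence of compact sets with $\mathfrak{F}=\bigcap_{k\geqslant 1}\mathfrak{F}_k$, continuity of the probability measure from above yields
\[
\mu(\mathfrak{F})\;=\;\lim_{k\to\infty}\mu(\mathfrak{F}_k)\;=\;1,
\]
as desired.

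The only delicate point, and the step I would verify most carefully, is the claim that $\mathcal{C}_j$ genuinely refines $\mathcal{C}_k$ for $j\geqslant k$. This is not entirely automatic from the pigeonhole selection since for each level one extracts a sub-collection by fixing transition times; one must therefore ensure these choices are made coherently across levels (which is precisely the bookkeeping implicit in the preceding construction), so that the cylinder $C(\,(\om_1,x_1),\dots,(\om_j,x_j)\,)$ selected at level $j$ sits inside the cylinder $C(\,(\om_1,x_1),\dots,(\om_k,x_k)\,)$ selected at level $k$. Once this consistency is in place, the remainder of the argument is an essentially formal application of weak$^*$ convergence.
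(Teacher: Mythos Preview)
Your proof is correct and follows essentially the same route as the paper: show $\mu_j(\mathfrak{F}_k)=1$ for all $j\geqslant k$, apply Portmanteau on the closed set $\mathfrak{F}_k$, then pass to the intersection by continuity from above. The paper compresses your first step into the single line ``$\mathfrak{F}_{\ell+i}\subset\mathfrak{F}_\ell$ hence $\mu_{\ell+i}(\mathfrak{F}_\ell)=1$'', whereas you unpack it via the cylinder inclusions; your caveat about the coherence of the pigeonhole selections across levels is a legitimate point that the paper leaves implicit in the construction.
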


\begin{proof}
	Suppose $\lim_{k\to\infty}\mu_k=\mu$ in the weak$^*$-topology. For each $\ell>0$ and $i>0$ we have  $\mu_{\ell+i}(\mathfrak{F}_{\ell+i})=1$ and as 
	$\mathfrak{F}_{\ell+i}\subset \mathfrak{F}_\ell$ then $\mu_{\ell+i}(\mathfrak{F}_{\ell})=1$. Since $\mathfrak{F}_\ell$ is a closed set, we have
	$\mu(\mathfrak{F}_\ell)\geqslant \limsup_{k\rightarrow \infty}\mu_k(\mathfrak{F}_\ell)=1$. Therefore, $\mu(\mathfrak{F})=\lim_{\ell\to\infty}\mu(\mathfrak{F}_\ell)=1$.
\end{proof}

In order to apply the generalized entropy distribution principle, one needs to estimate the measure of dynamic balls intersecting the Moran set. 
Let $\vep>0$ be small and $\mathbf{n}\geqslant 1$ be fixed and large, satisfying
\begin{equation}\label{nbig}
n_j \geqslant j \cdot K(L^{-2n_{j-1} + \lfloor \log(\vep^{-1}) \rfloor}\,2\vep) > \frac1{\gamma} \log K(L^{-2n_{j-1} + \lfloor \log(\vep^{-1}) \rfloor}\,2\vep)
\end{equation}
for every $j\geqslant \mathbf{n}$ (recall Lemma~\ref{le:K-entropy}). 
Pick also the unique $\ell=\ell(\mathbf{n})\geqslant 1$ so that 
$\alpha_{\ell-1} \leqslant \mathbf{n} <\alpha_{\ell}$, where the sequence $\alpha_\ell$ was defined in Lemma~\ref{le:selection}.
Consider an arbitrary dynamic ball $B_F((\om,x),\mathbf{n},\vep)\subset \Sigma_\kappa \times X$ 
which intersects the set $\mathfrak{F}$.

\begin{lemma}\label{lemma:estimative ball entropy}
 $\mu_{k+\ell}(B_F((\om,x),\mathbf{n},\vep))\leqslant e^{-\mathbf{n}(h_*(\varphi)-5\gamma)}$ for every large $k\geqslant 1$.
\end{lemma}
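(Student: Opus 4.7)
The plan is to reduce the bound on $\mu_{k+\ell}(B_F((\om,x),\mathbf n,\vep))$ to a combinatorial count. Since $\mu_{k+\ell}$ is the uniform probability measure on the $\#\mathcal C_{k+\ell}$-point set $\{z_C\}_{C\in\mathcal C_{k+\ell}}$, it suffices to bound the cardinality of $\{C\in\mathcal C_{k+\ell} : z_C\in B_F((\om,x),\mathbf n,\vep)\}$ and compare with $\#\mathcal C_{k+\ell}$. Because $\mathbf n\ge \alpha_{\ell-1}$, I would first exploit the separation property of Lemma~\ref{le:selection}(1) at level $\ell-1$. Re-examining its proof one sees that the separation constant is actually $2\vep$ (not just $\vep$), since $\mathcal S_j$ is $(n_j,4\vep)$-separated while each shadowing piece stays within $\vep$ of an element of $\mathcal S_j$. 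Therefore any two points of the ball of radius $\vep$ and length $\alpha_{\ell-1}$ must belong to the same element of $\mathcal C_{\ell-1}$, forcing every $z_C\in B_F((\om,x),\mathbf n,\vep)$ to come from a tuple sharing a fixed prefix $(\bar\omega_1,\bar x_1),\dots,(\bar\omega_{\ell-1},\bar x_{\ell-1})\in \mathcal S_1\times\cdots\times\mathcal S_{\ell-1}$.

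Second, I would count the completions $(\omega_\ell,x_\ell),\dots,(\omega_{k+\ell},x_{k+\ell})\in \mathcal S_\ell\times\cdots\times\mathcal S_{k+\ell}$ that give rise to elements of $\mathcal C_{k+\ell}$ extending that prescribed prefix. A trivial upper bound is $\#\mathcal S_\ell\cdots\#\mathcal S_{k+\ell}$, but when $\mathbf n>\alpha_{\ell-1}+\lfloor\log \vep^{-1}\rfloor+t^*_{\ell-1}$ one can sharpen the $\ell$-th factor by noting that the ball additionally constrains the first $m':=\mathbf n-\alpha_{\ell-1}-\lfloor\log\vep^{-1}\rfloor-t^*_{\ell-1}$ iterates of $(\omega_\ell,x_\ell)$ to shadow a fixed orbit within scale $2\vep$; this trims the admissible $(\omega_\ell,x_\ell)$ accordingly.

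Third, I would combine this with the lower bound $\#\mathcal C_{k+\ell}\ge K_0\,e^{(h_*(\varphi)-4\gamma)\alpha_{k+\ell}}$ from Lemma~\ref{le:selection}(2), written equivalently via pigeonhole as $\#\mathcal C_{k+\ell}\ge \prod_{j=1}^{k+\ell}\#\mathcal S_j\big/\prod_{i=1}^{k+\ell-1}(K_i+1)$. The ratio collapses to $\prod_{i=1}^{k+\ell-1}(K_i+1)\big/\bigl[\#\mathcal S_1\cdots\#\mathcal S_{\ell-1}\bigr]$, times the correction from the sharpening on the $\ell$-th piece. Using $\#\mathcal S_j\ge e^{(h_*(\varphi)-3\gamma)n_j}$ from Lemma~\ref{le:K-entropy}(2) and the growth condition \eqref{nbig}, which gives $K_i+1\le e^{\gamma n_{i+1}}$ for $i\ge \mathbf n$, a direct exponential comparison absorbing the $\gamma$-slack in several places yields the desired bound $\mu_{k+\ell}(B_F((\om,x),\mathbf n,\vep))\le e^{-\mathbf n(h_*(\varphi)-5\gamma)}$ for large $k$.

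The main obstacle is the combinatorial bookkeeping in the fiber-counting step: the transition-time constraints $p_i=t_i^*$ couple the different coordinates nontrivially, so naively multiplying cardinalities overcounts and fails to yield the correct exponent. The technical heart of the argument — where the sharpening on the $\ell$-th coordinate becomes essential — is to show that the excess factors $K_i+1$ appearing beyond index $\ell-1$ cancel against the free choices in $\prod_{j\ge\ell}\#\mathcal S_j$, leaving an effective exponent governed by $\mathbf n(h_*(\varphi)-5\gamma)$ rather than the weaker $\alpha_{\ell-1}(h_*(\varphi)-5\gamma)$ one would obtain from the separation step alone.
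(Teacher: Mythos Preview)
Your separation step, and the observation that the constant in Lemma~\ref{le:selection}(1) is really $2\vep$, are fine. The counting step, however, has a genuine gap that the proposed ``sharpening'' does not repair.

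You bound the number of $C\in\mathcal C_{k+\ell}$ with $z_C\in B_F((\om,x),\mathbf n,\vep)$ above by the number of tuples extending the fixed $(\ell-1)$-prefix, namely at most $\prod_{j\ge\ell}\#\mathcal S_j$, and you bound $\#\mathcal C_{k+\ell}$ below by $\prod_{j=1}^{k+\ell}\#\mathcal S_j\big/\prod_{i=1}^{k+\ell-1}(K_i+1)$. The resulting ratio is therefore at least $\prod_{i=1}^{k+\ell-1}(K_i+1)\big/\prod_{j\le\ell-1}\#\mathcal S_j$, and the numerator diverges as $k\to\infty$ because of the factors $(K_\ell+1),\dots,(K_{k+\ell-1}+1)$. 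Your sharpening touches only the single factor $\#\mathcal S_\ell$; it does nothing about these $k$ transition-time factors. The cancellation you invoke is illusory: the free choices $\prod_{j\ge\ell}\#\mathcal S_j$ have already cancelled between numerator and denominator, and what remains are precisely the $(K_i+1)$'s for $i\ge\ell-1$, which have no counterpart left to absorb them. So your bound does not even stay bounded, let alone decay like $e^{-\mathbf n(h_*(\varphi)-5\gamma)}$.

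The paper's route is structurally different. Rather than comparing an upper bound on one cell against a global lower bound on $\#\mathcal C_{k+\ell}$, it works inside the nested tree: it pins all $z_C$ hitting the ball into a single $C_*\in\mathcal C_\ell$ and then establishes a \emph{lower} bound of order $e^{(h_*(\varphi)-4\gamma)[n_{\ell+1}+\cdots+n_{k+\ell}]}$ on the number of $\mathcal C_{k+\ell}$-descendants of \emph{every} cell $D\in\mathcal C_\ell$, obtained by iterating the pigeonhole step level by level. Since the transition-time losses $(K_i+1)$ for $i\ge\ell$ enter the descendant count of $C_*$ and the total $\#\mathcal C_{k+\ell}$ in the same way, they do not contaminate the fraction $\mu_{k+\ell}(C_*)$; one obtains a bound that tends to $0$ as $k\to\infty$ and is hence eventually below $e^{-\mathbf n(h_*(\varphi)-5\gamma)}$. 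This per-cell lower-bound mechanism is the missing idea in your approach.
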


\begin{proof}
If $\mu_{k+\ell}(B_F((\om,x),\mathbf{n},\vep))=0$ there is nothing to prove. Otherwise $\mu_{k+\ell}(B_F((\om,x),\mathbf{n},\vep))>0$ and, using that
the probability $\mu_{k+\ell}$ is an atomic probability supported on points of $\mathfrak{F}_{k+\ell}$, we conclude 
that $\mathfrak{F}_{k+\ell}\cap B_F((\om,x),\mathbf{n},\vep)\neq \emptyset$.
We claim that all points in such intersection
belong to a single fixed cylinder  $C_*=C(\,(\om_1,x_1), \dots, (\om_{\ell},x_{\ell})\,)\in \mathcal{C}_{\ell}$. 
In fact, if $(\om', x')\in B_F((\om,x),\mathbf{n},\vep)\cap C'$,  $(\widetilde{\om},\tilde x)\in B_F((\om,x),\mathbf{n},\vep)\cap \tilde C$ with $C', \tilde C\in \mathcal{C}_{\ell}$ and $C'\neq C$ then item $(1)$ in Lemma~\ref{le:selection} guarantees that $d_{\alpha_{\ell}}((\om, x),(\widetilde{\om},\tilde x))>\vep$. As $\alpha_{\ell} > \mathbf{n}$ this leads to a contradiction with the fact that both points belong to the same dynamic ball of radius $\vep$ and length $\mathbf{n}$.

\smallskip
These estimates show that the number of cylinder sets constructed at each step (which involve the selection of those having the same transition times) are close to the exponential rate provided by entropy, a fact which imposes constraints of size on the transition times in comparison to the finite orbit size. Let us  be more precise.
By construction and inequality ~\eqref{nbig}, for each fixed $C_\ell\in \mathcal C_\ell$ one has
$$
\# \{C\in \mathcal{C}_{\ell+1} \colon C\cap C_\ell\neq\emptyset \} 
	\geqslant \frac{\# \mathcal S_{\ell+1}}{K(L^{-2n_{\ell} + \lfloor \log(\vep^{-1}) \rfloor}\,2\vep)}
	> e^{  (h_*(\varphi) -3\gamma) \, n_{\ell+1}} \, e^{-\gamma \, n_{\ell+1}}
$$
provided that $\ell$ is large (which means large $\mathbf{n}$). Using the same argument recursively we conclude that 
$
\# \{C\in \mathcal{C}_{k+\ell} \colon C\cap C_\ell\neq\emptyset \} 
	>  e^{  (h_*(\varphi)-4\gamma) \, [n_{k+\ell} +  \dots + n_{k+1}]} 
$
for every large $k$. Thus
\begin{align}\label{lbound}
\# \{C\in \mathcal{C}_{k+\ell} \colon C\cap C_*\neq\emptyset \} 
	& \leqslant  \frac{\#\mathcal{C}_{k+\ell}}{\min\limits_{C_\ell\in \mathcal C_\ell} \;   \# \{C\in \mathcal{C}_{k+\ell} \colon C\cap C_\ell\neq\emptyset \} } 
		\nonumber \\
	& < e^{ - (h_*(\varphi)-4\gamma) \, [n_{k+\ell} +  \dots + n_{k+1}]} \; \#\mathcal{C}_{k+\ell}.
\end{align}
Therefore, 
\begin{align*}
\mu_{k+\ell}(B_F((\om,x),\mathbf{n},\vep))
	&
	=
	\displaystyle\frac{1}{\#\mathcal{C}_{k+\ell}} \sum_{ \substack{C\in \mathcal{C}_{k+\ell} \\ C\cap C_*\neq\emptyset} }
				\delta_{z_C}\left(B_F((\om,x),\mathbf{n},\vep) \right) \\
	&\leqslant
	\displaystyle\frac{\# \{C\in \mathcal{C}_{k+\ell} \colon C\cap C_*\neq\emptyset \}  }{\#\mathcal{C}_{k+\ell}} 
 < e^{ - (h_*(\varphi)-4\gamma) \, [n_{k+\ell} +  \dots + n_{k+1}]}.
	\end{align*}
Notice that
\begin{align*}
(h_*(\varphi)-4\gamma) \cdot \sum_{i=k+1}^{k+\ell} n_i  
	& \geqslant (h_*(\varphi)-4\gamma)\Big[n_{k+\ell}
		+\sum_{i=1}^{k+\ell-1}(n_i+ \lfloor \log (\vep^{-1})^{-1}\rfloor +t_i) 
		- \sum_{i=1}^{k+\ell-1}(\lfloor \log (\vep^{-1})^{-1}\rfloor +t_i) \Big] \\
	& =(h_*(\varphi)-4\gamma) \Big[ \alpha_{k+\ell}
		- \frac{\sum_{i=1}^{k+\ell-1}(\lfloor \log (\vep^{-1})^{-1}\rfloor +t_i)}{\alpha_{k+\ell}} 
		\Big]
		 \\
	& \geqslant (h_*(\varphi)-5\gamma)\, \alpha_{k+\ell}
	\geqslant (h_*(\varphi)-5\gamma)\, \mathbf{n}
\end{align*}
for every large $k\geqslant 1$. Altogether this ultimately implies that 
$\mu_{k+\ell}(B_F((\om,x),\mathbf{n},\vep)) \leqslant e^{ -\mathbf{n} \, (h_*(\varphi)-5\gamma)\,}$, which completes the proof of the lemma.
\end{proof}

We are now in a position to prove the theorem. Indeed, on the one hand any accumulation point of $(\mu_k)_k$ is supported on $\mathfrak F$, by Lemma~\ref{le:acc}. On the other hand, Lemma~\ref{lemma:estimative ball entropy} (taking the $\limsup$ as $\ell$ tends to infinity) 
ensures that 
$
\limsup_{k\rightarrow \infty}\mu_k(B_F((\om,x),\mathbf{n},\vep))
	\leqslant  e^{-\mathbf{n}(h_*(\varphi)-5\gamma)}
$. 
Then, using the generalized entropy distribution principle we conclude that $h_{top}(F, I_F(\varphi))\geqslant  h_{top}(F,\mathfrak{F})\geqslant h_*(\varphi)-5\gamma$.
As $\gamma>0$ was chosen arbitrary, this shows that $h_{top}(F,I_F(\varphi)) \geqslant h_*(\varphi)$, and completes the proof of Theorem \ref{thm:B}.

\medskip


\section{Topological entropy of irregular points for the semigroup action}\label{top-semigroups}

This subsection is devoted to the proof of Theorem~\ref{thm:Bb}.
Let $X$ be a compact metric space, assume that the semigroup action generated by the collection $G_1=\{id,f_1, f_2, \dots, f_\kappa\}$ of bi-Lipschitz homeomorphisms has frequent hitting times, and let $\psi: X\to \mathbb R$ 
be a continuous observable which is not a coboundary with respect to $G_1$.

\subsection{Ghys-Langevin-Walczak entropy of the set of irregular points}\label{top-fibered-2}

Here we prove item (1) in Theorem~\ref{thm:Bb}. 
It is immediate that 
$$
s((G,G_1),n,\vep) \geqslant 
	s(\om,n, \vep, \delta)
$$
for every $n\geqslant 1$, $\vep>0$, $\om\in\Sigma_\kappa$ and $0<\delta<1$. Our mild assumptions seem not to allow to construct
a single sequence $\tilde \om \in\Sigma_\kappa$ whose number of irregular points for the observable $\psi$ and the 
sequential dynamical system $\mathcal F_{\tilde\om}$
grows at least at the exponential rate of $H^{\text{Pinsker}}(\psi) -2\gamma$. Alternatively, we will produce sets $E_n\subset X$ so that any
two points are $(n,\vep)$-separated by some element $\underline g\in G_n$ and whose Birkhoff averages of $\psi$ determined by such 
finite paths in the semigroup $G$ do not converge. Let us formalize this strategy. 

\smallskip
Assume that $H^{\text{Pinsker}}(\psi)>0$ (otherwise there is nothing to prove) 
and note that $H^{Katok}(\psi)=H^{\text{Pinsker}}(\psi)>0$
(recall ~\eqref{eq:Pinsker-Katok}).
Hence, for any fixed $0<\gamma,\delta<1$ 
there exist $\mu_1,\mu_2 \in \mathcal M_{erg}(F)$ so that $\int \varphi_\psi \,d\mu_1 <\int \varphi_\psi \,d\mu_2$ 
and $h_{\mu_i}^{\delta}(F)\geqslant H^{\text{Pinsker}}(\psi) -2\gamma$, for $i=1,2$. 
Using ~\eqref{eq:def-d-Katok}, one knows that
\begin{equation}\label{selection-GLW}
\lim_{\vep \rightarrow 0}\liminf_{n\rightarrow \infty} \frac{1}{n}\log s(\om,n, \vep, \delta)
	\geqslant H^{\text{Pinsker}}(\psi) -2\gamma
\end{equation} 
for $\mu_i$-almost every $\om$, $i=1,2$. 
One can choose for each $i=1,2$ a sequence $\om_i\in \Sigma_\kappa$ in the ergodic basin of attraction 
of the $\sigma$-invariant probability measure $\pi_*\mu_i$ satisfying \eqref{selection-GLW}.
The latter, together with the uniform continuity of $\psi$, allow us to select $\vep>0$ so that
$
|\psi(x)-\psi(x')|
	< \frac{\int \varphi_\psi \,d\mu_2-\int \varphi_\psi \,d\mu_1}{4}
$
for every $x,x'\in X$ so that $d(x,x')<4\vep$, and
$$
\liminf_{n\rightarrow \infty} \frac{1}{n}\log s(\om_i,n, 4\vep, \delta)
	\geqslant H^{\text{Pinsker}}(\psi) -3\gamma
$$
for every $i=1,2$. Write $\om_\ell=\om_1$ when
$\ell\geqslant 1$ is odd and $\om_\ell=\om_2$ for an even $\ell\geqslant 1$. 

\begin{lemma}\label{le:K-entropy-fiber-GLW}
	There exists a sequence $(n_\ell)_{\ell\geqslant 1}$ of positive integers 
	such that: 
	\begin{enumerate}
		\item $n_\ell \geqslant \max\{\; \mathfrak n_\ell, \; (\ell+2) \cdot n_{\ell-1} + \Big[ \frac{2\log 2}{\log L} \Big], \; 
		\ell \cdot \prod_{i=1}^{\ell-1} K(L^{-2{n_i}}2\vep) \, \exp( K(L^{-2{n_i}}2\vep) \log \kappa )
		\;\}$ 
		for every $\ell\geqslant 2$
		\medskip
		\item for each  $\ell\geqslant 1$ there exists a $(g_{\om_\ell},n_\ell, 4\vep)$-separated set  $\mathcal{S}_{\ell} \subset X$  
		with cardinality 
		$$
\#	\mathcal{S}_{\ell} 	 \geqslant \exp(\,(H^{\text{Pinsker}}(\psi) -3\gamma) n_\ell\,).
		$$
	\end{enumerate}
\end{lemma}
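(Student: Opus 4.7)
The plan is to mimic the inductive construction of Lemma~\ref{le:K-entropy}, replacing Katok's entropy formula for a single map by its fibered counterpart from Subsection~\ref{fibentropies}. Since $\om_i$ was chosen in the ergodic basin of attraction of $\pi_*\mu_i$ and realizes the liminf in the fibered Katok formula \eqref{eq:def-d-Katok}, the identity $h_{\mu_i}^\delta(\ud\om) = h_{\mu_i}(F\!\mid\!\sigma)$ coming from \eqref{eq:comparison-entropies}, together with the bound $h_{\mu_i}(F\!\mid\!\sigma)\geqslant H^{\text{Pinsker}}(\psi) - 2\gamma$, yields
$$\liminf_{n\to\infty}\frac{1}{n}\log s(\om_\ell,n,4\vep,\delta) \geqslant H^{\text{Pinsker}}(\psi) - 3\gamma$$
at the fixed scale $\vep$ already selected. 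Hence there exists $M_\ell\geqslant 1$ so that $s(\om_\ell,n,4\vep,\delta) \geqslant \exp\big((H^{\text{Pinsker}}(\psi) - 3\gamma) n\big)$ for every $n\geqslant M_\ell$, and since $X$ itself has sample-measure one, the defining minimum of $s(\om_\ell,n,4\vep,\delta)$ over sets of measure $\geqslant 1-\delta$ produces an actual $(g_{\om_\ell},n,4\vep)$-separated subset of $X$ of that cardinality.

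The induction is then routine. For the base case I would set $n_1:=\max\{\mathfrak n_1,M_1\}$ and extract $\mathcal S_1\subset X$ directly. For the inductive step, given $n_1,\dots,n_{\ell-1}$, the lower bound in item (1) is a specific (possibly very large) positive integer depending only on $\ell$, $L$, $\vep$ and the finitely many constants $K(L^{-2n_i}2\vep)$ with $i<\ell$; I would choose any $n_\ell$ exceeding both this bound and $M_\ell$, and extract $\mathcal S_\ell$ exactly as in the base case.

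The only delicate point is the preliminary one: the selection of $\om_1,\om_2\in\Sigma_\kappa$ realizing simultaneously the fiberwise Birkhoff ergodic theorem for the disintegrations of $\mu_i$ and the liminf in the Katok formula \eqref{eq:def-d-Katok}. Each of these conditions defines a $\pi_*\mu_i$-full measure subset of $\Sigma_\kappa$, so their intersection is non-empty and one picks $\om_i$ from it. Once this is done, the growth conditions on $n_\ell$ imposed by item (1) never conflict with the Katok lower bound, since increasing $n_\ell$ only strengthens the separated-set inequality; the only real cost is that $(n_\ell)_\ell$ must be taken to grow much faster than the iterated composition of the function $K(\cdot)$, which is harmless for the later entropy-distribution argument because what matters there is merely that $n_{\ell-1}/n_\ell\to 0$.
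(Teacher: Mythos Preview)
Your proposal is correct and follows essentially the same approach as the paper: the paper's own proof simply states that the argument is identical to that of Lemma~\ref{le:K-entropy}, with the Katok entropy formula replaced by its fibered version and the separated sets taken in $X$ rather than in the $\Gamma_\ell$. You have correctly identified both the inductive mechanism and the one genuinely nontrivial preliminary step (the selection of $\om_1,\om_2$ in the $\pi_*\mu_i$-full measure set where the fibered Katok liminf is attained), and your observation that $s(\om_\ell,n,4\vep,\delta)\leqslant s(\om_\ell,n,4\vep,X)$ because $X$ has full sample measure is exactly what justifies extracting $\mathcal S_\ell\subset X$.
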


\begin{proof}
The proof is identical to the one of Lemma~\ref{le:K-entropy}, the differences being that the sequence $(n_\ell)_\ell$ is required to grow exponentially
with respect to the terms $K(L^{-2n_{i-1}}\,\,2\vep)$, and the separated sets are subsets of $X$
instead of subsets of some fibers $X_\om$ (which have a natural identification). 
Moreover, since we need not consider iterations by the skew-product, we need not take the extra 
$\lfloor \log (\vep^{-1}) \rfloor$ iterates necessary to control the dynamical distance with respect to the shift. 
We shall omit the details.
\end{proof}

Now, note that the frequent hitting times property for the semigroup action ensures that 
for each pair $(x_1,x_2) \in \mathcal S_1 \times \mathcal S_2$ there exists 
$0\leqslant p_{1} \leqslant K(L^{-2{n_1}})$ and a finite word 
$\hat \om^{(12)}=\hat \om^{(12)}(x_1,x_2) \in \{1,2,\dots, \kappa\}^{p_1}$ (depending on $x_1$ and $x_2$)
so that 
$$
f^{p_1}_{\om^{(12)}} (B(f^{n_1}_{\om_1}(x), L^{-2{n_1}}\vep))
\cap B_{\om_2}(x_2,n_2,\vep)
$$
is a subset of $X$ which contains a ball $B_1^2$ of radius $\frac12 L^{-n_2}\varepsilon$.
Thus
\begin{align}\label{GLW-En0}
C_{(\om_1,\, \hat \om^{(12)}(x_1,x_2),\,\om_2)}(\,x_1,x_2\,)
	& := B_{\om_1}(x_1,n_1,\vep)
	 \; \cap \;
	 (\, f^{p_1}_{\om^{(12)}} \circ f_{\om_1}^{n_1}\,)^{-1} \big( B_1^2 \big)
\end{align}
is a compact and non-empty subset of $X$.
As the number of finite words of length at most  $K(L^{-2{n_1}})$ is bounded above by $K(L^{-2{n_1}}) \, \kappa^{K(L^{-2{n_1}})}$, 
by the pigeonhole principle, for each $x_1\in \mathcal S_1$, 
	there exists 
	$0\leqslant t_1\leqslant K(L^{-2{n_1}}\vep)$,
	 a finite word $\om^{(12)}\in \{1,2,\dots, \kappa\}^{t_1}$ (depending only on $x_1$), and 
	a collection $\mathcal{\tilde C}_2$ of elements of the form ~\eqref{GLW-En0}
	with $\hat \om^{(12)}(x_1,x_2)=\om^{(12)}$ and
	\begin{equation*}
		\#  {\mathcal C_2} 
		\geqslant 
		 \# \mathcal S_{1} \cdot \frac{ \# \mathcal S_{2}}
			{K(L^{-2{n_1}}) \, \exp( K(L^{-2{n_1}}) \log \kappa )}. 
	\end{equation*}
In other words, the family $\mathcal C_2$ is so that map making the transition between the finite piece of orbit of $x_1$ and $x_2$ is 
the same $f^{t_1}_{\om^{(12)}}$, where $\om^{(12)}$ depends exclusively on $x_1$.
Let 
$$
\mathcal I_2=\big\{(x_1,x_2) \in \mathcal S_1\times \mathcal S_2 \colon C_{(\om_1,\,  \om^{(12)},\,\om_2)}(\,x_1,x_2\,) \in \mathcal C_2\big\}.
$$
Proceeding inductively we obtain, for each $k\geqslant 2$, a collection $\mathcal I_k\subset \mathcal{S}_{1} \times \dots \times \mathcal{S}_{k}$
so that
\begin{enumerate}
\item for each $(x_1, \dots, x_k) \in \mathcal I_k$ and 
$1\leqslant i\leqslant k-1$ there are 
$0\leqslant t_{i} \leqslant K(L^{-2n_i} \vep)$ and a finite word $\om^{(i (i+1))} \in \{1,2,\dots, \kappa\}^{t_i}$ (depending only on $x_1, x_2, \dots, x_{i-1}$),
and a ball $B_i^{i+1} \subset B_{\om_{i+1}}(x_{i+1},n_{i+1},\vep)$ of radius $\frac12 L^{-n_{i+1}}\varepsilon$ so that 
\begin{align}
	& C_{(\om_1,  \om^{(12)},\om_2, \om^{(23)},...,\om_k)}(\,x_1, \dots, x_k\,)  \nonumber \\
	& \qquad  :=
	B_{\om_1}(x_1,n_1,\vep) \; \cap \; \bigcap_{i=1}^{k-1} 	
		(\, 
		f^{p_i}_{\om^{(i(i+1)}} \circ f_{\om_i}^{n_i}
		\circ \dots \circ 
		f^{p_1}_{\om^{(12)}} \circ f_{\om_1}^{n_1} 
		\,)^{-1} (B_i^{i+1})
		\label{GLW-En}
\end{align}
is a non-empty subset of $X$;
\item the collection $\mathcal{C}_k$ of elements of the form ~\eqref{GLW-En}, parameterized by $\mathcal I_k$, satisfies
	\begin{equation*}
		\#  {\mathcal C_k} 
		\geqslant 
		\frac{\# \mathcal S_{k} \, \dots\, \# \mathcal S_{2}\, \# \mathcal S_{1}}
		{\prod_{i=1}^{k-1} K(L^{-2{n_i}}2\vep) \, \exp( K(L^{-2{n_i}}2\vep) \log \kappa )}.
	\end{equation*}
\end{enumerate}
For each $k\geqslant 1$ let 
$$m_k:=n_k+\sum_{i=1}^{k-1} (n_i+t_i)$$ 
and notice that although the transition length $t_i$ $(1\leqslant i\leqslant k-1)$ may vary with the elements in $\mathcal I_k$, by construction this
 is constant for any to $k$-uples $(\,x_1, \dots, x_k\,), (\,y_1, \dots, y_k\,)\in \mathcal I_k$ so that $x_j=y_j$ for all $1\leqslant j \leqslant i-1$.

\begin{lemma}\label{le:GLW-separated}
Given $k\geqslant 1$,
if $E_k\subset X$ is a collection of points, each one selected among a single element of $\mathcal C_k$ then 
$E_k$ is $(m_k,\vep)$-separated by elements of $G$. 
\end{lemma}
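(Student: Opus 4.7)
The plan is to exploit the inductive pigeonhole construction that produced $\mathcal{C}_k$ in order to exhibit, for any two distinct points of $E_k$, a single element of $G_{m_k}$ that separates them by more than $\vep$ at some intermediate iterate.

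First I would fix distinct $z,z'\in E_k$ and let $C_{(\om_1,\om^{(12)},\om_2,\dots,\om_k)}(\vec x)$ and $C_{(\om_1,\widetilde\om^{(12)},\om_2,\dots,\om_k)}(\vec y)$ be the two (necessarily distinct) sets of $\mathcal C_k$ containing them, with $\vec x=(x_1,\dots,x_k)$ and $\vec y=(y_1,\dots,y_k)$ in $\mathcal I_k$. Let $j\in\{1,\dots,k\}$ be the smallest index for which $x_j\neq y_j$. The key bookkeeping point, which I would extract directly from the inductive definition of $\mathcal I_k$, is that because the transition word $\om^{(i(i+1))}$ of length $t_i$ depends \emph{only} on $x_1,\dots,x_{i-1}$, the prefixes of the two concatenated words in $G$ associated with $\vec x$ and $\vec y$ coincide up to (and including) the segment that reaches the $j$-th shadowed block. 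In particular the composition
\[
\underline h \,:=\, f^{\,t_{j-1}}_{\om^{((j-1)j)}}\circ f^{n_{j-1}}_{\om_{j-1}}\circ\cdots\circ f^{\,t_1}_{\om^{(12)}}\circ f^{n_1}_{\om_1}
\]
belongs to $G_{s_j}$ with $s_j=\sum_{i=1}^{j-1}(n_i+t_i)$, and is the same for $z$ and $z'$.

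Next I would use the defining inclusions of the sets $C(\vec x)$ and $C(\vec y)$ to conclude that $\underline h(z)\in B_{\om_j}(x_j,n_j,\vep)$ and $\underline h(z')\in B_{\om_j}(y_j,n_j,\vep)$. Since $\mathcal S_j$ was chosen $(g_{\om_j},n_j,4\vep)$-separated in Lemma~\ref{le:K-entropy-fiber-GLW} and $x_j\neq y_j$, there exists $0\leqslant \ell\leqslant n_j-1$ such that
\[
d\bigl(g_{\om_j}^{\ell}(x_j),\,g_{\om_j}^{\ell}(y_j)\bigr) \,>\, 4\vep.
\]
The triangle inequality, applied with the two $\vep$-close shadows, then yields
\[
d\bigl(g_{\om_j}^{\ell}(\underline h(z)),\,g_{\om_j}^{\ell}(\underline h(z'))\bigr) \,>\, 4\vep-\vep-\vep \,=\, 2\vep \,>\, \vep.
\]
This is exactly the statement that some element $\underline g\in G_{s_j+\ell+1}$ (the concatenation of $\underline h$ with $\ell+1$ further generators from $\om_j$) witnesses $(s_j+\ell+1,\vep)$-separation of $z$ and $z'$.

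Finally I would verify the length bound. Since $j\leqslant k$ and $\ell<n_j\leqslant n_k$, one has $s_j+\ell+1 \leqslant \sum_{i=1}^{k-1}(n_i+t_i)+n_k=m_k$, so $\underline g\in G_n$ for some $n\leqslant m_k$, which is precisely what is needed to conclude that the full set $E_k$ is $(m_k,\vep)$-separated by elements of $G$. The only subtle point in the whole argument, and the one I would take care to state cleanly, is the invariance of the prefix transition words under changes of $(x_j,\dots,x_k)$; everything else is a direct triangle inequality estimate on dynamic balls and a counting of iterates.
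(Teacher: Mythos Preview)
Your proposal is correct and follows essentially the same approach as the paper: locate the first index $j$ where the tuples differ, use the fact that the transition words $\om^{(i(i+1))}$ for $i<j$ depend only on $x_1,\dots,x_{i}$ so the prefix composition $\underline h$ is common to both points, and then separate inside the $j$-th block. Your version is in fact more explicit than the paper's, which simply asserts that a composition $g\in G_{m_i}$ gives $d(g(x),g(y))>\vep$ without spelling out the intermediate iterate $\ell$ and the triangle inequality that yields the $4\vep-\vep-\vep>2\vep$ bound.
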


\begin{proof}
Fix $k\geqslant 1$ and 
let $E_k\subset X$ be an arbitrary collection of points, each one selected among a single element of $\mathcal C_k$.
Recall that, for each $\ell\geqslant 1$, the set $\mathcal{S}_{\ell} \subset X$ is $(g_{\om_\ell},n_\ell, 4\vep)$-separated. In consequence, the dynamic balls
$\{ B_{\om_i}(x, n_i,\vep) \colon x\in \mathcal S_i\}$ are pairwise disjoint, for every $1\leqslant i \leqslant k$, 
and the collection ${\mathcal C_k}$ is formed by a disjoint union of compact sets of the form \eqref{GLW-En}.
Let $x,y$ belong to distinct elements in ${\mathcal C_k}$ and let $(\,x_1, \dots, x_k\,), (\,y_1, \dots, y_k\,)\in \mathcal I_k$ be so that
$$
x \in C_{(\om_1,  \om^{(12)},\om_2, \om^{(23)},...,\om_k)}(\,x_1, \dots, x_k\,)
	\; \text{and}\;
	y \in C_{(\om_1,  \hat \om^{(12)},\om_2, \hat \om^{(23)},...,\om_k)}(\,y_1, \dots, y_k\,).
$$
The previous construction ensures that if $1\leqslant i \leqslant k$ is the smallest integer so that $x_i\neq y_i$ then $\om^{j(j+1)}=\hat \om^{j(j+1)}$
for every $1\leqslant j \leqslant j-1$. In particular, $g=f^{n_i}_{\om_i} \circ \dots \circ f_{\om_2}^{n_2} \circ f_{\om^{(12)}}^{t_1} \circ f_{\om_1}^{n_1} \in G_{m_i}$
satisfies $d(g(x), g(y))>\vep$. As $G_{m_i} \subset G_{m_k}$ we conclude that 
$E_k$ is $(m_k,\vep)$-separated by elements of $G$, as claimed. 
\end{proof}

As $(\mathcal C_k)_{k\geqslant 1}$ is a nested collection of disjoint and compact sets, the Moran set
$$ 
\mathfrak{F} =\bigcap_{k\geqslant 1} \; \Big[\bigcup_{C\in \mathcal C_k} \, C\Big] 
$$
is a non-empty compact subset of $X$.
Furthermore, as $\lim_{k\to\infty} \frac{m_k}{n_k}=1$ (recall item (1) in Lemma~\ref{le:K-entropy-fiber-GLW}),
 using the information of Lemma~\ref{le:GLW-separated} in ~\eqref{def:GLW-Y} one concludes that
\begin{align*}
\htop^{GLW}(\mathbb S, \mathfrak{F}) 
	& \geqslant \limsup_{k\to\infty} \frac1{m_k} \log s(G, G_1, \mathfrak F,m_k,\varepsilon) 
	  \geqslant \limsup_{k\to\infty} \frac1{m_k} \log \#  {\mathcal C_k} \\
	&  \geqslant \limsup_{k\to\infty} \frac1{m_k} \log  
		\frac{\# \mathcal S_{k} \, \dots\, \# \mathcal S_{2}\, \# \mathcal S_{1}}
		{\prod_{i=1}^{k-1} K(L^{-2{n_i}}2\vep) \, \exp( K(L^{-2{n_i}}2\vep) \log \kappa )} \\
	&  \geqslant H^{\text{Pinsker}}(\psi) -3\gamma.
\end{align*}
 Thus, in order to complete the proof of item (1) in Theorem~\ref{thm:Bb} it remains to prove the following:

\begin{lemma}
$\mathfrak{F}$ is contained in the irregular set $I_{\mathbb S}(\psi)$, defined by \eqref{eq:def-I-IFS}.
\end{lemma}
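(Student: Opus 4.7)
The plan is to produce, for each point $x \in \mathfrak{F}$, a concrete infinite word $\underline{\omega} \in \Sigma_\kappa = \{1,\dots,\kappa\}^{\mathbb{N}}$ in the semigroup generators along which the Cesàro averages of $\psi$ oscillate between values close to $\int\psi\,d\mu_1$ and $\int\psi\,d\mu_2$. This is the exact analogue, in the language of semigroup paths, of what was done in Lemma~\ref{lemma-point-nontypical} and Lemma~\ref{irregular-teoE} for the skew-product.

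First I would use the nested structure of the family $(\mathcal{C}_k)_{k\geq 1}$ of disjoint compact sets of the form \eqref{GLW-En} to extract, for a given $x \in \mathfrak{F}$, a unique coherent sequence $(x_k)_{k\geq 1}$ with $x_k \in \mathcal{S}_k$ and associated transition words $(\omega^{(k(k+1))})$ such that $x \in C_{(\omega_1,\omega^{(12)},\omega_2,\dots,\omega_k)}(x_1,\dots,x_k)$ for every $k$. By concatenation this produces
$$\underline{\omega} \;=\; \bigl(\omega_1|_{[0,n_1-1]},\,\omega^{(12)},\,\omega_2|_{[0,n_2-1]},\,\omega^{(23)},\,\omega_1|_{[0,n_3-1]},\dots\bigr).$$
Setting $s_k := \sum_{i=1}^{k-1}(n_i+t_i)$ and $m_k := s_k+n_k$, the definition \eqref{GLW-En} immediately yields $f_{\underline{\omega}}^{s_k}(x) \in B_{\omega_k}(x_k,n_k,\vep)$, so that $x$ $\vep$-shadows the orbit of $x_k$ under $g_{\omega_k}$ for $n_k$ consecutive iterates.

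Next I would arrange that each $x_k \in \mathcal{S}_k$ has Birkhoff averages of $\psi$ along $g_{\omega_k}$ that approximate $\int\varphi_\psi\,d\mu_{(k\bmod 2)}$ up to an error $\delta_k$. This refinement of Lemma~\ref{le:K-entropy-fiber-GLW} proceeds exactly as in Lemma~\ref{le:K-entropy}: choose $\omega_i$ in the ergodic basin of the sample measures of $\mu_i$ (which is well-defined almost everywhere via the Rokhlin disintegration and has full $\pi_*\mu_i$-measure), and intersect the $(g_{\omega_k},n_k,4\vep)$-separated set produced by Katok's entropy formula with the shrinking set $\Gamma_k$ of points whose $n_k$-th Birkhoff average lies within $\delta_k$ of $\int\psi\,d\mu_{(k\bmod 2)}$; since the latter has measure at least $1/2$ for the corresponding sample measure once $n_k\geq \mathfrak n_k$, the cardinality bound survives.

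Combining the shadowing with the uniform continuity of $\psi$ at scale $4\vep$ (recall \eqref{eq-unifcont}) transfers the approximate average from $x_k$ to the orbit segment $\{f_{\underline{\omega}}^{s_k+j}(x)\}_{0\leq j<n_k}$. The growth condition in Lemma~\ref{le:K-entropy-fiber-GLW}(1), which ensures $s_k/n_k \to 0$, together with the boundedness of $\psi$, allows me to absorb the contribution of the transition pieces and of the earlier shadowing pieces into an $o(1)$ error, giving
$$\left|\frac{1}{m_k}\sum_{j=0}^{m_k-1}\psi(f_{\underline{\omega}}^j(x)) - \int\psi\,d\mu_{(k\bmod 2)}\right| < \tfrac{1}{3}\Bigl(\int\psi\,d\mu_2-\int\psi\,d\mu_1\Bigr)$$
for all sufficiently large $k$. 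Hence the subsequence $(m_k)$ of Cesàro averages splits into two clusters separated by a positive gap, so the full limit cannot exist and $x \in I_{\mathbb{S}}(\psi)$. The only delicate point — and the step I expect to be the main obstacle — is the refinement of Lemma~\ref{le:K-entropy-fiber-GLW} so that $\mathcal{S}_k$ simultaneously realizes the entropy lower bound and lies in the Birkhoff-typical set $\Gamma_k$; once that is in place, the remainder of the argument is a direct transcription of the oscillation scheme used in the earlier sections.
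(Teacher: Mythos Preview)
Your approach is correct and is essentially the same as the paper's: extract the nested sequence $(C_k)$ containing $x$, concatenate the finite words to build $\underline\omega$, and then run the oscillation estimate of Lemma~\ref{lemma-point-nontypical} along the subsequence $(m_k)$. The paper's own proof is a two-line cross-reference to exactly that lemma.

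Your flagged ``main obstacle'' --- that the separated sets $\mathcal S_k$ in Lemma~\ref{le:K-entropy-fiber-GLW} must additionally lie in Birkhoff-typical sets $\Gamma_k$ for the oscillation to go through --- is a legitimate observation: the statement of Lemma~\ref{le:K-entropy-fiber-GLW} as written only says $\mathcal S_\ell\subset X$, not $\mathcal S_\ell\subset\Gamma_\ell$. The paper treats this as implicit via the sentence ``the proof is identical to the one of Lemma~\ref{le:K-entropy}'' (where $\mathcal S_\ell\subset\Gamma_\ell$ is explicit), together with the choice of $\omega_i$ in the ergodic basin so that the fibered Katok count can be taken over a set of sample-measure at least $1/2$ on which Birkhoff averages are already $\delta_\ell$-close to $\int\varphi_\psi\,d\mu_i$. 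Your proposed fix --- choosing $\omega_i$ generic for the disintegration and intersecting with $\Gamma_k$ --- is precisely what is intended, and once it is in place the remainder is, as you say, a transcription of the earlier argument.
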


\begin{proof}
For every $x\in \mathfrak F$ there exists a nested sequence $(C_k)_{k\geqslant 1}$ 
of compact sets $$C_k=C_{(\om_1,  \om^{(12)},\om_2, \om^{(23)},...,\om_k)}(\,x_1, \dots, x_k\,) \in \mathcal C_k$$ 
so that $x\in \bigcap_{k\geqslant 1} C_k$.
The choice of constants in Lemma~\ref{le:K-entropy-fiber-GLW} guarantees, along an argument identical to the one in the proof of 
Lemma~\ref{lemma-point-nontypical},
that if $\om \in \bigcap_{k\geqslant 1} [\om_1\mid_{[0,n_1]} \om^{12} \om_2\mid_{[0,n_2]} \om^{23} \om_3\mid_{[0,n_3]} \dots \om_k\mid_{[0,n_k]}]$
then
$
\lim_{n\to\infty} \frac{1}{n}\sum_{j=0}^{n-1}\varphi(f^j_{{\underline{\omega}}}(x))
$ 
does not exist and, consequently, $x\in I_{\mathbb S}(\psi)$.
\end{proof}

\subsection{Bufetov entropy of the set of irregular points}\label{top-fibered2}

Here we prove item (2) in Theorem~\ref{thm:Bb}, namely that $h^B(\mathbb S,I_{\mathbb S}(\psi)) \geqslant h_*(\varphi_\psi)$. 
Indeed, using that $X$ is compact and \cite[Theorem~5.1]{JMW}  (see also Remark~2.4 in \cite{ZhuMa2021}), 
if $\Sigma_\kappa^+=\{1 , 2, \dots, \kappa\}^{\mathbb N}$ then 
$
\overline{Ch}_{\Sigma_\kappa^+ \times Z}(F) = \log \kappa + \overline{Ch_{\text{top}}}(\mathbb{S}, Z, \mathbb{P}_{sym}) 
$
for every $Z\subset X$ (the term in the left-hand side is the topological capacity of the skew-product from the aforementioned references 
and the right-hand side innvolves the topological capacity of the semigroup as in Subsection~\ref{sec:entropy-semigroup-annealed}). 
In particular
$$
\overline{Ch}_{\Sigma_\kappa^+ \times I_{\mathbb S}(\psi)}(F) = \log \kappa + \overline{Ch}_{I_{\mathbb S}(\varphi)}(\mathbb S).
$$
By compactness of $X$,
$$
h_{\text{top}}(F)=\overline{C\htop}(F,\Sigma_\kappa^+ \times X) = \log \kappa + \overline{Ch_{\text{top}}}(\mathbb{S}, X, \mathbb{P}_{sym}) 
	= \log \kappa +  h^B(\mathbb S, \mathbb{P}_{sym})
$$
Combining these two expressions with Theorem~\ref{thm:B} we conclude that 
$$
\htop(F) \geqslant  \overline{Ch}_{X}(F)
		=\overline{Ch}_{I_{\mathbb S}(\varphi)}(F)
		\geqslant h_*(\varphi),
$$
as desired. Since the two-sided shift $\Sigma_\kappa$ is the natural extension of $\Sigma_\kappa^+$, hence preserves the same topological entropy, 
the previous results remain unaltered when $\Sigma_\kappa^+$ is replaced by $\Sigma_\kappa$. 
This finishes the proof of Theorem~\ref{thm:Bb}.

\section{ Topological entropy of fibered irregular sets}\label{top-fibered}

This subsection is devoted to the proof of Theorem~\ref{thm:Bb-2}.
Let $X$ be a compact metric space, assume that the semigroup action $\mathbb S$ is generated by the collection $G_1=\{id,f_1, f_2, \dots, f_\kappa\}$ 
of bi-Lipschitz homeomorphisms and let $\psi: X\to \mathbb R$ 
be a continuous observable so that $\varphi_\psi$ is not a coboundary with respect to $F$.
Assume also that there exists a strongly transitive homeomorphism $f_i\in G_1$, for some $1\leqslant i \leqslant\kappa$. 
We will assume without loss of generality that $i=\kappa$. 
In particular, by compactness of $X$, for each $\delta>0$ there exists 
$N_\delta\geqslant 1$ so that
\begin{equation}\label{eq:top-exact}
\bigcup_{j=0}^{N_\delta} f_\kappa^j (B(x,\delta))=X \qquad 
\text{for every $x\in X$}.
\end{equation}
Let ${\mathfrak l}_x(\delta)>0$ denote the Lebesgue covering number of $\{f_\kappa^j (B(x,\delta))\}_{j=0}^{N_\delta}$. By compactness of $x$, 
there exists ${\mathfrak l}_\delta:=\min_{x\in X} {\mathfrak l}_x(\delta)>0$ so that: 
\begin{equation}\label{eq:top-exact-conseq}
\text{$\forall x,y\in X, \, \forall 0<\vep< {\mathfrak l}_\delta, \exists \,0 \leqslant p\leqslant N_\delta$
s.t.  $f_\kappa^p (B(x,\delta)) \supset B(y,\vep)$.}
\end{equation}

\begin{remark}
The latter implies that the semigroup $\mathbb S$ has the frequent hitting times
property where the transition maps can be obtained by a certain number of iterates of the map $f_\kappa$. 
The expression \eqref{eq:top-exact} can be compared to the quantitative estimates obtained in Remark~\ref{rmk:rotation}
in the special case that $f_\kappa$ is an isometry.
\end{remark}

\subsection{Fibered complexity of the set of irregular points}\label{top-fibered1}

Here we prove item (1), namely that $h^{path}_I(\mathbb S,\psi) \geqslant H^{\text{Pinsker}}(\psi)$.
Assume that $H^{\text{Pinsker}}(\psi)>0$ (otherwise there is nothing to prove).
By equality ~\eqref{eq:Pinsker-Katok} we know that $H^{Katok}(\psi)=H^{\text{Pinsker}}(\psi)>0$.
Hence, for any fixed $0<\gamma,\delta<1$ 
there exist $\mu_1,\mu_2 \in \mathcal M_{erg}(F)$ so that $\int \varphi_\psi \,d\mu_1< \int \varphi_\psi \,d\mu_2$ 
and $h_{\mu_i}^{\delta}(F)\geqslant H^{\text{Pinsker}}(\psi) -2\gamma$, for $i=1,2$.
Using the equality ~\eqref{eq:comparison-entropies} and the ergodic theorem, 
one concludes that there are $\om_i\in \Sigma_\kappa$ so that:
\begin{enumerate}
\item[(a)] $\lim_{n\to\infty} \frac1n \sum_{j=0}^{n-1}\delta_{\sigma^j(\om_i)} = \pi_*\mu_i$
\item[(b)] $\lim_{n\to\infty} \frac1n \sum_{j=0}^{n-1} \psi(f_{\om_i}^j(x))=\int \varphi_\psi \, d\mu_i$ for $\mu_{\om_i}$-a.e. $x\in X_{\om_i}$
\item[(c)] $h_{\mu}^{\delta}(\om_i) > H^{\text{Pinsker}}(\psi) -2\gamma$ 
\end{enumerate}
for $i=1,2$, where for some abuse of notation we denote by $\mu_{\om_i}$ denotes the sample measure of $\mu_i$ on $X_{\om_i}$
obtained by Rokhlin's disintegration theorem. 

\smallskip
Since $\gamma>0$ was chosen arbitrary, in order to prove  the claim 
it is sufficient to construct 
$\om\in \Sigma_\kappa$ and a fibered Moran subset of the fiber $X_\om$ with entropy larger than $H^{\text{Pinsker}}(\psi)-\gamma$.
Such a construction differs from the ones at Sections~\ref{top-skew} and ~\ref{top-semigroups} in the sense that, 
in order to capture information created by the relative entropies 
$h_{\mu_i}(F\!\mid\!\sigma)$, the point $\om\in \Sigma_\kappa$ is constructed inductively. In fact, we will provide a sequence of integers $(m_k)_{k\geqslant 1}$, 
a nested sequence of cylinders $[\omega_{1}, \omega_{2}, \dots, \omega_{m_k}]$, 
and for each $\tilde \om \in \bigcap_{k\geqslant 1} [\omega_{1}, \omega_{2}, \dots, \omega_{m_k}]$ a Moran set 
$
\mathfrak{F}_{\widetilde{\om}}\subset X_{\widetilde{\om}}
$
so that 
$$
h_{\mathfrak{F}_{\widetilde{\om}}}(\mathcal F_{\widetilde{\om}}) \geqslant H^{\text{Pinsker}}(\psi) -\gamma
$$
(here $h_{\mathfrak{F}_{\widetilde{\om}}}(\mathcal F_{\widetilde{\om}})$ denotes the fibered relative entropy of the set $\mathfrak{F}_{\widetilde{\om}}$, recall Subsection~\ref{fibentropies}). Albeit the construction of the Cantor sets $\mathfrak{F}_{\widetilde{\om}}$ has a different flavor from the construction of the Cantor
set in the previous section, we will use some of the technical tools developed in the latter sections. We will emphasize the differences in comparison to the 
previous construction and skim the details on the transition times.

\subsubsection{Fibered orbit growth estimates}\label{subsec:foge}

Consider a strictly decreasing sequence of positive numbers $(\delta_k)_{k\geqslant 1}$ tending to zero so that 
$
\delta_1 < \frac14 [\int \varphi_\psi \,d\mu_2-\int \varphi_\psi \,d\mu_1].
$
Associated to these, item (b) above together with Lusin's theorem allows one to pick a strictly increasing sequence of large positive integers 
$(\mathfrak n_k)_{k\geqslant 1}$ so that,
for each $\ell\geqslant 1$, the fibered sets
$$
\Gamma_{2\ell-1}:=
\Big\{ x\in X_{\om_1}: \Big|\frac{1}{n}\sum_{j=0}^{n-1}\psi(g_\om^j(x))-\int \varphi_\psi \,d\mu_1\Big|< \delta_{2\ell-1},\;
\; \forall n\geqslant \mathfrak n_{2\ell-1} \Big\}
$$ 
and
$$
\Gamma_{2\ell}:=
\Big\{ x \in X_{\om_2}: \Big|\frac{1}{n}\sum_{j=0}^{n-1}\psi(g_\om^j(x))-\int \varphi_\psi \,d\mu_2\Big|< \delta_{2\ell},\;
\; \forall n\geqslant \mathfrak n_{2\ell} \Big\}
$$ 
satisfy $\mu_{\om_1}(\Gamma_{2\ell-1}) > \frac12$ and $\mu_{\om_2}(\Gamma_{2\ell}) > \frac12$.
Moreover, using item (c) above and the definition of the fibered Katok $\delta$-entropy (taking $\delta=\frac12$ in ~\eqref{eq:def-d-Katok}) one can choose $\vep>0$ small so that
\begin{eqnarray}\label{eq entropy-Katok-fiber1}
	\liminf_{n\rightarrow +\infty}\frac{1}{n}\log s(\om_1,n, 8\vep,\Gamma_{2\ell-1})
	> H^{\text{Pinsker}}(\psi) -2\gamma
\end{eqnarray}
and
\begin{eqnarray}\label{eq entropy-Katok-fiber2}
	\liminf_{n\rightarrow +\infty}\frac{1}{n}\log s(\om_2,n, 8\vep,\Gamma_{2\ell})
	>  H^{\text{Pinsker}}(\psi) -2\gamma.
\end{eqnarray}
For each $i\geqslant 3$ we take $\om_i=\om_1$ if $i$ is odd and set $\om_i=\om_2$ otherwise.
By uniform continuity of $\psi$, reducing $\vep$ is necessary, we may also assume that
\begin{eqnarray}\label{eq-unifcont}
|\psi(x)-\psi(x')| 
	< \frac{\int \varphi \,d\mu_2-\int \varphi \,d\mu_1}{4}
\end{eqnarray}
for every $x,x'\in X$ so that $d(x,x')<4\vep$.
The next lemma provides a lower bound estimate on the cardinality of separated sets within the family of sets $(\Gamma_\ell)_{\ell\geqslant 1}$.

\begin{lemma}\label{le:K-entropy-fiber}
	There exists a sequence $(n_\ell)_{\ell\geqslant 1}$ of positive integers  
	satisfying
	\begin{equation}\label{eq:select-nell}
	n_\ell \geqslant 
			\max\{\; \mathfrak n_\ell, 
				\; (\ell+2) \cdot n_{\ell-1} + \Big[ \frac{2\log 2}{\log L}  \Big], 
				\; \ell \cdot (\log \vep^{-1} - \log  {\mathfrak l}_{L^{-2{n_{\ell-1}}}\vep}),
				\; \ell \cdot \frac1\gamma \cdot  N_{L^{-2{n_{\ell-1}}}\vep} \;\}
	\end{equation}
		for every $\ell\geqslant 2$, and so that for each  $\ell\geqslant 1$ there exists an $(\om_\ell,n_\ell, 4\vep)$-separated set  $\mathcal{S}_{\ell} \subset \Gamma_{\ell}$  
		with cardinality 
		$
		\#\mathcal{S}_{\ell} 	 \geqslant \exp(\,(H^{\text{Pinsker}}(\psi) -3\gamma) n_\ell\,).
		$
\end{lemma}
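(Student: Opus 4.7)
The lemma is a fibered analogue of Lemma~\ref{le:K-entropy}, and I would prove it by induction on $\ell$, using the fibered Katok entropy estimates \eqref{eq entropy-Katok-fiber1} and \eqref{eq entropy-Katok-fiber2} in place of the two-sided Katok formula \eqref{eq entropy-Katok-teoE}. The plan is as follows.

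For the base case $\ell=1$, the bound \eqref{eq entropy-Katok-fiber1} with $\ell=1$ asserts that
$$
\liminf_{n \to \infty} \frac{1}{n}\log s(\om_1, n, 8\vep, \Gamma_1) > H^{\text{Pinsker}}(\psi) - 2\gamma.
$$
Hence I can pick $n_1 \geqslant \mathfrak n_1$ so that $s(\om_1, n_1, 8\vep, \Gamma_1) \geqslant \exp((H^{\text{Pinsker}}(\psi) - 3\gamma)\, n_1)$. Any maximal $(\om_1, n_1, 8\vep)$-separated subset $\mathcal S_1 \subset \Gamma_1$ realizing this cardinality is a fortiori $(\om_1, n_1, 4\vep)$-separated, giving the desired set.

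For the inductive step, assume the lemma holds for $1 \leqslant j < \ell$. All four bounds on the right-hand side of \eqref{eq:select-nell} are finite constants depending only on $n_{\ell-1}$ and $\vep$ (the quantities $N_{L^{-2n_{\ell-1}}\vep}$ and $\mathfrak{l}_{L^{-2n_{\ell-1}}\vep}$ come from the strong transitivity property \eqref{eq:top-exact}--\eqref{eq:top-exact-conseq}, and control the transition lengths that will be needed later in the shadowing argument). Using \eqref{eq entropy-Katok-fiber1} or \eqref{eq entropy-Katok-fiber2} (according to the parity of $\ell$), the limit inferior being strictly larger than $H^{\text{Pinsker}}(\psi) - 2\gamma$ implies that for every sufficiently large $n$ one has $s(\om_\ell, n, 8\vep, \Gamma_\ell) \geqslant \exp((H^{\text{Pinsker}}(\psi) - 3\gamma)\, n)$. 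I would then choose $n_\ell$ large enough to simultaneously exceed all four quantities on the right-hand side of \eqref{eq:select-nell} and to ensure this cardinality bound, and take $\mathcal S_\ell$ to be a maximal $(\om_\ell, n_\ell, 8\vep)$-separated subset of $\Gamma_\ell$ (which is in particular $(\om_\ell, n_\ell, 4\vep)$-separated).

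Conceptually this is routine bookkeeping; the only subtlety is not logical but preparatory, namely that the growth conditions in \eqref{eq:select-nell} have been engineered precisely so that the later construction of the fibered Moran set $\mathfrak F_{\widetilde\om}$ will succeed: the term involving $N_{L^{-2n_{\ell-1}}\vep}$ guarantees that the strongly transitive transitions between levels take negligible time compared to $n_\ell$, and the term $\log \vep^{-1} - \log \mathfrak{l}_{L^{-2n_{\ell-1}}\vep}$ ensures that the radius at which one must apply \eqref{eq:top-exact-conseq} remains below the Lebesgue covering number at the previous scale. Thus the only real obstacle is verifying that the four growth conditions are mutually consistent and compatible with the Katok lower bound; since each of them depends only on previously constructed data and $\vep$, one simply takes $n_\ell$ larger than their maximum and the induction closes.
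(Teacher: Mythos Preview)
Your proposal is correct and follows essentially the same inductive approach as the paper, which simply states that the proof is similar to that of Lemma~\ref{le:K-entropy} and omits the details. Your argument is in fact slightly cleaner than the one in Lemma~\ref{le:K-entropy}: since the fibered estimates \eqref{eq entropy-Katok-fiber1}--\eqref{eq entropy-Katok-fiber2} are already phrased in terms of separated sets $s(\om_\ell,n,8\vep,\Gamma_\ell)$ rather than generating sets, you can bypass the generating-to-separated conversion and observe directly that an $(\om_\ell,n_\ell,8\vep)$-separated set is trivially $(\om_\ell,n_\ell,4\vep)$-separated.
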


\begin{proof}
The proof is similar to the one of Lemma~\ref{le:K-entropy}, and for that reason we shall omit it.
\end{proof}

\subsubsection{Construction of fibered Moran sets of irregular points}\label{MSEntropyFiber}

We proceed to construct a point $\om\in \Sigma_\kappa$ and a special Moran subset of $X_\om$ formed by points with irregular behavior.
By Lemma~\ref{le:K-entropy-fiber}, given $k\geqslant 1$ and $1\leqslant i\leqslant k$ one can choose a $(\om_i,n_i,4\vep)$-separated set $\mathcal{S}_{i}\subset \Gamma_{i}$ so that $
\#	\mathcal{S}_{i} 	 \geqslant \exp(\,(H^{\text{Pinsker}}(\psi) -3\gamma) n_i\,).
		$
Given $x_1\in \mathcal{S}_{1}$ and $x_2\in \mathcal{S}_{2}$, one knows that 
$$
B(f^{n_1}_{\om_1}(x_1), L^{-2{n_1}}\vep) \subset f_{\om_1}^{n_1}(\, B_{\om_1}(x_1,n_1,\vep) \,)
	\quad\text{and}\quad
	B(x_2,L^{-n_2}\vep) \subset B_{\om_2}(x_2,n_2,\vep).
$$
The choice of constants in \eqref{eq:select-nell} guarantees that $L^{-n_2}\vep$ is smaller than the Lebesgue covering number ${\mathfrak l}_{L^{-2{n_1}}\vep}$. 
Thus, according to 
~\eqref{eq:top-exact-conseq}, 
one can pick $0\leqslant p_1\leqslant N_{L^{-2{n_1}}\vep}$ so that
$$
f_\kappa^{p_1}(B(f^{n_1}_{\om_1}(x_1), L^{-2{n_1}}\vep)) \supset B(x_2,L^{-n_2}\vep). 
$$
Thus
\begin{align*}
C_{(\om_1,\, \kappa^{p_1},\,\om_2)}(\,x_1,x_2\,)
	& := B_{\om_1}(x_1,n_1,\vep)
	 \; \cap \;
	 (\, f^{p_1}_{\kappa} \circ f_{\om_1}^{n_1}\,)^{-1} \big( B(x_2,L^{-n_2}\vep) \big)
\end{align*}
is a compact and non-empty subset of $X_{\om_1}$
and
\begin{align*}
(f_{\om_2}^{n_2}\circ f^{p_1}_{\kappa} \circ f_{\om_1}^{n_1}) (\,C_{(\om_1,\, \kappa^{p_1},\,\om_2)}(\,x_1,x_2\,)\,)
	 \supset  B(f_{\om_2}^{n_2}(x),L^{-2n_2}\vep) 
\end{align*}
where $\kappa^{p_1}$ denotes the 
finite word $(\kappa, \kappa, \dots, \kappa)\in \{1,2, \dots, \kappa\}^{p_1}$.
Proceeding recursively, for each $k\geqslant 1$, $(x_1, \dots, x_k) \in \mathcal{S}_{1} \times \dots \times \mathcal{S}_{k}$ and 
$1\leqslant i\leqslant k-1$ we obtain an integer  $0\leqslant p_{i} \leqslant N_{L^{-2{n_{i}}}\vep}$ so that
\begin{align*}
	& C_{(\om_1,  \kappa^{p_1},\om_2, \kappa^{p_2},...,\om_k)}(\,x_1, \dots, x_k\,) \\
	& \qquad  :=
	B_{\om_1}(x_1,n_1,\vep) \; \cap \; \bigcap_{i=1}^{k-1} 	
		(\, 
		f^{p_i}_{\kappa} \circ f_{\om_i}^{n_i}
		\circ \dots \circ 
		f^{p_1}_{\kappa} \circ f_{\om_1}^{n_1} 
		\,)^{-1} (\, B(x_{i+1},L^{-n_{i+1}}\vep)\,)
\end{align*}
is a non-empty subset of $X_{\om_1}$.
The next result ensures that one can select a large number of points for which the transition times function $p_i=p_i(\,x_1, \dots, x_i\,)$ is constant, 
and use this to construct a suitable $\om\in\Sigma_\kappa$ whose non-autonomous dynamical system $\mathcal F_\om$
has a irregular set with large entropy. 

\begin{lemma}\label{le:selection-fiber-omega}
	There exists $K_0>0$ and for each $k\geqslant 1$ there exists
	a collection $\mathcal I_k\subset  \mathcal{S}_{1} \times  \mathcal{S}_{2} \times \dots \times \mathcal{S}_{k}$ such that:
	\begin{enumerate}
		\item 
		for any $1\leqslant i \leqslant k-1$ there exists $1\leqslant t_i\leqslant  N_{L^{-2{n_i}}\vep}$ so that 
		$p_i(\,x_1, \dots, x_k\,)=t_k$ for every $(\,x_1, \dots, x_k\,)\in \mathcal I_k$,
		\item $\#  {\mathcal I}_k \geqslant K_0\, e^{  (H^{\text{Pinsker}}(\psi) -4\gamma) \,m_k }$
		\item $d_\om^{m_k}(x, y)>2\vep$
		for any points $x\in C_{(\om_1,  \kappa^{t_1},\om_2, \kappa^{t_2},...,\om_k)}(\,x_1, \dots, x_k\,)$ and 
		$y\in C_{(\om_1,  \kappa^{t_1},\om_2, \kappa^{t_2},...,\om_k)}(\,y_1, \dots, y_k\,)$ associated to distinct 
		$k$-uples $(\,x_1, \dots, x_k\,), (\,y_1, \dots, y_k\,) \in \mathcal I_k$,
	\end{enumerate}
	where $m_k=n_k + \sum_{i=1}^{k-1} (n_i+t_i)$ and $\om \in \bigcap_{k\geqslant 1} [\om_1,  \kappa^{t_1},\om_2, \kappa^{t_2},...,\om_k]$.
\end{lemma}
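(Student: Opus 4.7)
The plan is to follow the same scheme used in Lemma~\ref{le:selection}, iterating a pigeonhole argument on the transition times, but now exploiting the special structure imposed by the strong transitivity of $f_\kappa$: every intermediate transition is performed by iterating a single map, so each $p_i$ lies in the finite set $\{0,1,\ldots,N_{L^{-2n_i}\vep}\}$. This makes the subsequent pigeonhole step clean and, crucially, allows all transition times to be encoded by the \emph{same} sequence $\om \in \Sigma_\kappa$ rather than by a collection of finite words that vary with the $k$-uple.

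First I would build $\mathcal I_k$ inductively on $i=1,2,\dots,k-1$. Start from $\mathcal I_k^{(0)} := \mathcal S_1 \times \cdots \times \mathcal S_k$. Having defined $\mathcal I_k^{(i-1)}$, observe that the $i^{\text{th}}$-transition-time function $(x_1,\ldots,x_k)\mapsto p_i(x_1,\ldots,x_i)$ takes at most $N_{L^{-2n_i}\vep}+1$ values, so by the pigeonhole principle there exist $0\leqslant t_i\leqslant N_{L^{-2n_i}\vep}$ and a subcollection $\mathcal I_k^{(i)}\subset \mathcal I_k^{(i-1)}$ on which $p_i\equiv t_i$, with
\[
\#\mathcal I_k^{(i)} \;\geqslant\; \frac{\#\mathcal I_k^{(i-1)}}{N_{L^{-2n_i}\vep}+1}.
\]
Setting $\mathcal I_k := \mathcal I_k^{(k-1)}$ yields item (1) directly.

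For item (2), Lemma~\ref{le:K-entropy-fiber} gives $\#\mathcal S_i\geqslant e^{(H^{\text{Pinsker}}(\psi)-3\gamma)n_i}$, and the choice $n_\ell \geqslant \ell \,\gamma^{-1} N_{L^{-2n_{\ell-1}}\vep}$ in \eqref{eq:select-nell} ensures that
\[
\log\big(N_{L^{-2n_i}\vep}+1\big) \;\leqslant\; 2\gamma\,\frac{n_{i+1}}{i+1}
\]
for all large $i$. Combining these estimates with the fact that $m_k = n_k + \sum_{i=1}^{k-1}(n_i+t_i) \leqslant 2\sum_{i=1}^{k}n_i$, one obtains
\[
\#\mathcal I_k \;\geqslant\; \frac{\prod_{i=1}^{k}\#\mathcal S_i}{\prod_{i=1}^{k-1}(N_{L^{-2n_i}\vep}+1)} \;\geqslant\; K_0 \, e^{(H^{\text{Pinsker}}(\psi)-4\gamma)\,m_k}
\]
for a suitable constant $K_0>0$ (with the ratio $\sum(n_i+t_i)/m_k$ tending to $1$ providing the room needed to pass from $-3\gamma$ to $-4\gamma$).

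For item (3), suppose $(x_1,\ldots,x_k)\neq(y_1,\ldots,y_k)$ are in $\mathcal I_k$ and let $j$ be the smallest index where they differ. Since all transitions $p_1=t_1,\ldots,p_{j-1}=t_{j-1}$ coincide, any $x\in C_{(\om_1,\kappa^{t_1},\ldots,\om_k)}(x_1,\ldots,x_k)$ and any $y\in C_{(\om_1,\kappa^{t_1},\ldots,\om_k)}(y_1,\ldots,y_k)$ satisfy $f_\om^{M_{j-1}}(x)\in B_{\om_j}(x_j,n_j,\vep)$ and $f_\om^{M_{j-1}}(y)\in B_{\om_j}(y_j,n_j,\vep)$, where $M_{j-1}=\sum_{i=1}^{j-1}(n_i+t_i)$ and $\om$ is obtained by concatenating $\om_1|_{[0,n_1)},\kappa^{t_1},\om_2|_{[0,n_2)},\kappa^{t_2},\ldots$. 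As $\mathcal S_j\subset\Gamma_j$ is $(\om_j,n_j,4\vep)$-separated, there exists $0\leqslant s<n_j$ with $d(f_{\om_j}^{s}(x_j),f_{\om_j}^{s}(y_j))>4\vep$, and the triangle inequality then yields $d(f_\om^{M_{j-1}+s}(x),f_\om^{M_{j-1}+s}(y))>2\vep$, with $M_{j-1}+s<m_k$. Finally, since the constructed $\om$ is independent of the chosen $k$-uple inside $\mathcal I_k$ (the $t_i$'s being common), all items hold simultaneously, completing the proof. The main obstacle is purely bookkeeping: ensuring that $m_k$ is comparable to $\sum n_i$ and that the cumulative $\log$-loss from pigeonhole is absorbed by a single gamma, both of which are arranged by the recursive choice of $(n_\ell)$ in Lemma~\ref{le:K-entropy-fiber}.
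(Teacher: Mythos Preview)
Your proposal is correct and follows essentially the same route as the paper: a pigeonhole selection on each transition time $p_i$ to make it constant, the cardinality lower bound using $\#\mathcal S_i\geqslant e^{(H^{\text{Pinsker}}(\psi)-3\gamma)n_i}$ together with the growth conditions in \eqref{eq:select-nell}, and a triangle-inequality separation argument at the first index $j$ where the $k$-uples differ. Your treatment of item~(3) is in fact slightly more careful than the paper's, which writes the separation at time $m_j$ using $d(x_j,y_j)$ rather than locating the iterate $M_{j-1}+s$ at which the $(\om_j,n_j,4\vep)$-separation of $\mathcal S_j$ is realized; your version yields the stated bound $>2\vep$ directly.
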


\begin{proof}
By the pigeonhole principle, for each $1\leqslant i \leqslant  k-1$ there exists 
$0\leqslant t_i\leqslant N_{L^{-2{n_i}}\vep}$ and a collection 
$\mathcal I_k\subset  \mathcal{S}_{1} \times  \mathcal{S}_{2} \times \dots \times \mathcal{S}_{k}$ such that 
item (1) above holds and, recalling that $n_i\gg N_{L^{-2{n_i}}\vep}$ and $\lim_{k\to\infty} \frac{m_k}{n_k}=1$ 
(recall ~\eqref{eq:select-nell}),
	\begin{align*}\label{eq:card-Ck-fibered-s}
		\#  {\mathcal I_k} 
		& \geqslant \frac{\# \mathcal S_{k} \, \dots\, \# \mathcal S_{2}\, \# \mathcal S_{1}}
					{\prod_{i=1}^{k-1} \, N_{L^{-2{n_i}}\vep}} \\
		& \geqslant e^{  (H^{\text{Pinsker}}(\psi) -3\gamma) \, [n_k+ \dots + n_2 + n_1]}
		\prod_{i=1}^{k-1}  \frac1{n_{i+1}} \, \frac{n_{i+1}}{N_{L^{-2{n_i}}\vep}} \\
		& \geqslant 
	 	 e^{  (H^{\text{Pinsker}}(\psi) -4\gamma) \, m_k}
	\end{align*}
	for every large $k\geqslant 1$, where $m_k=n_k + \sum_{i=1}^{k-1} (n_i+t_i)$. 
	In particular, $\#  {\mathcal I}_k \geqslant K e^{  (H^{\text{Pinsker}}(\psi) -4\gamma) \,m_k }$ for every $k\geqslant 1$.
	Now, if $\om \in \bigcap_{k\geqslant 1} [\om_1,  \kappa^{t_1},\om_2, \kappa^{t_2},...,\om_k]$
	and $x\in C_{(\om_1,  \kappa^{t_1},\om_2, \kappa^{t_2},...,\om_k)}(\,x_1, \dots, x_k\,)$ and 
		$y\in C_{(\om_1,  \kappa^{t_1},\om_2, \kappa^{t_2},...,\om_k)}(\,y_1, \dots, y_k\,)$ are points associated to distinct 
		$k$-uples $(\,x_1, \dots, x_k\,), (\,y_1, \dots, y_k\,) \in \mathcal I_k$ 
	and $j=\inf\{ 1\leqslant i \leqslant k \colon x_i\neq y_i\}$ then
$
		d_\om^{m_k}(x, y)
			\geqslant d(x_j,y_j) - 
		d(f_{\om}^{m_j}(x), x_j) - d(f_{\om}^{m_j}(y), y_j) >\vep.
$
This completes the proof of the lemma.
\end{proof}

The sequence $\om\in \Sigma_k$ above was determined by the concatenation of the cylinders 
${\om_i}_{[0,n_i]}$
and the cylinders determined by the finite words $\kappa^{p_i}$, i.e., 
\begin{equation}\label{eq:omega-select-fiber}
\om \in 
	 ({\om_1}_{[0,n_1]} \cdot \kappa^{p_1})
		\, \cap\,  
	\bigcap_{i\geqslant 1}  \sigma^{-\sum_{j=1}^i(p_j+n_j)} \;  ({\om_{i+1}}_{[0,n_{i+1}]} \cdot \kappa^{p_{i+1}}).
\end{equation}
Consider also the sets
\begin{equation*}\label{eq:Fk-fibered-infty}
\mathfrak{F}_{m_k} (\om) = \bigcup_{(x_1, \dots, x_k) \in \mathcal{S}_{1} \times \dots \times \mathcal{S}_{k}} 
				C_{(\om_1,  \kappa^{p_1},\om_2, \kappa^{p_2},...,\om_k)}(\,x_1, \dots, x_k\,)
\end{equation*}
identified as a subset of $X_\om$, 
where $m_k=n_k+\sum_{i=1}^{k-1} (n_i+t_i)$, and the Moran set
$$
\mathfrak{F}_{{\om}}:=\bigcap_{k\geqslant 1} \mathfrak{F}_{m_k}(\om)  \; \cap \; X_{{\om}}.
$$

\begin{lemma}
$\mathfrak{F}_{{\om}}$ is contained in the irregular set $I_\om(\psi)$ defined by \eqref{def-i-fibras}.
\end{lemma}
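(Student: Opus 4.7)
The plan is to mimic, step by step, the argument used in the proofs of Lemma~\ref{lemma-point-nontypical} and Lemma~\ref{irregular-teoE}, adapted to the non-autonomous dynamical system $\mathcal F_\om$ along the specific sequence $\om\in\Sigma_\kappa$ fixed in~\eqref{eq:omega-select-fiber}. The essential point is that any point $x\in \mathfrak F_\om$ comes equipped with an infinite sequence of tuples $(x_1,x_2,\dots)\in \prod_{i\geqslant 1}\mathcal S_i$ such that, for every $k\geqslant 1$,
$$x\in C_{(\om_1,\kappa^{t_1},\om_2,\kappa^{t_2},\ldots,\om_k)}(x_1,\dots,x_k),$$
and the choice of $\om$ in~\eqref{eq:omega-select-fiber} together with the frequent hitting construction of Subsection~\ref{MSEntropyFiber} forces the orbit of $x$ under $\mathcal F_\om$ to alternately shadow finite pieces of orbits with well-defined Birkhoff averages converging to $\int\varphi_\psi\,d\mu_1$ and $\int\varphi_\psi\,d\mu_2$.

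More precisely, set $M_0=0$ and $M_i=\sum_{j=1}^{i}(n_j+t_j)$, so that $m_k=M_{k-1}+n_k$. The first step is to observe that, for every $i\geqslant 1$ and every $0\leqslant j \leqslant n_i-1$, one has
$$d\big(f_\om^{M_{i-1}+j}(x),\, f_{\om_i}^{j}(x_i)\big) < \vep,$$
because on the interval of length $n_i$ the sequence $\om$ dictates precisely the iteration by $f_{\om_i}$ and $x$ lies in a preimage of $B_{\om_i}(x_i,n_i,\vep)$. Then, by the uniform continuity estimate~\eqref{eq-unifcont} and the fact that $x_i\in\Gamma_i$,
$$\Big|\sum_{j=0}^{n_i-1}\psi(f_\om^{M_{i-1}+j}(x)) - n_i\int\varphi_\psi\,d\mu_{r(i)}\Big| \leqslant n_i\Big(\delta_i+\tfrac{\int\varphi_\psi\,d\mu_2-\int\varphi_\psi\,d\mu_1}{4}\Big),$$
where $r(i)=1$ if $i$ is odd and $r(i)=2$ if $i$ is even.

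The second step is to sum these estimates and absorb the contribution of the $k-1$ transition blocks and of the initial blocks of length $n_1,\dots,n_{k-1}$ into a negligible error term. This is possible thanks to the growth condition~\eqref{eq:select-nell} on $(n_\ell)_\ell$, which ensures
$$\lim_{k\to\infty}\frac{\sum_{i=1}^{k-1}(n_i+t_i)}{m_k}=0\quad\text{and therefore}\quad \frac{n_k}{m_k}\longrightarrow 1,$$
while $\|\psi\|_\infty\cdot \sum_{i=1}^{k-1}(n_i+t_i)=o(m_k)$. Combining the two previous inequalities yields
$$\limsup_{k\to\infty,\; k\text{ even}}\frac{1}{m_k}\sum_{j=0}^{m_k-1}\psi(f_\om^j(x)) \geqslant \int\varphi_\psi\,d\mu_2 - \tfrac{1}{3}\big(\int\varphi_\psi\,d\mu_2-\int\varphi_\psi\,d\mu_1\big),$$
$$\liminf_{k\to\infty,\; k\text{ odd}}\frac{1}{m_k}\sum_{j=0}^{m_k-1}\psi(f_\om^j(x)) \leqslant \int\varphi_\psi\,d\mu_1 + \tfrac{1}{3}\big(\int\varphi_\psi\,d\mu_2-\int\varphi_\psi\,d\mu_1\big),$$
so the Birkhoff averages of $\psi$ along the orbit of $x$ under $\mathcal F_\om$ do not converge; equivalently, $(\om,x)\in I_F(\varphi_\psi)$, whence $x\in I_\om(\psi)$.

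I do not anticipate any genuine obstacle here: everything needed (the choice of $\vep$, the sets $\Gamma_i$, the properties of $x_i\in\mathcal S_i\subset\Gamma_i$, the uniform continuity control, and the fast growth of $(n_\ell)_\ell$ beating the transition times $t_i\leqslant N_{L^{-2n_{i-1}}\vep}$) has been put in place exactly for this. The only care required is bookkeeping the two different types of error terms separately --- the ones coming from shadowing each $x_i$ within $\vep$, and the ones coming from the $k-1$ accumulated transition windows of total length $\sum t_i$. Both are absorbed by the same telescoping argument already used in Lemma~\ref{lemma-point-nontypical}, so the proof essentially reduces to verifying these estimates and does not require a new idea.
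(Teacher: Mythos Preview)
Your proposal is correct and follows exactly the approach the paper indicates: the paper's own proof of this lemma is omitted with the remark that ``the argument follows the lines of the proof of Lemma~\ref{lemma-point-nontypical}'', and you have carried out precisely that argument, adapted to the fibered setting with the sequence $\om$ fixed by~\eqref{eq:omega-select-fiber}. Your bookkeeping of the shadowing intervals, the use of~\eqref{eq-unifcont} and $x_i\in\Gamma_i$, and the absorption of the transition blocks via the growth condition~\eqref{eq:select-nell} (yielding $m_k/n_k\to 1$) are all faithful to the scheme of Lemma~\ref{lemma-point-nontypical} and correct.
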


\begin{proof}
The argument follows the lines of the proof of Lemma~\ref{lemma-point-nontypical}, and for that reason we shall omit it. 
\end{proof}

\medskip

\subsubsection{The relative entropy of $\mathfrak{F}_\om$}

In order to provide lower bounds for the fiber topological entropy of the fibered  Moran set $\mathfrak{F_\om}$ for the non-autonomous dynamical system 
$\mathcal{F}_\om$ we will make use of the 
fibered version of the entropy distribution principle (cf. Proposition~\ref{pro. princ dist. entropy fiber}).
For each $k\geqslant 1$ consider the probability measure $\mu_{\om,k}$ 
supported on points selected among the elements of sets in the family 
$$
\mathcal{C}_k =\{C_{(\om_1,  \kappa^{p_1},\om_2, \kappa^{p_2},...,\om_k)}(\,x_1, \dots, x_k\,) \colon (x_1, x_2, \dots, x_k) \in \mathcal I_k\}.
$$ 
More precisely,
for each $C\in \mathcal{C}_k$ choose a point $z_C\in C$ 
and consider 
the probability measure
$$
\mu_{\om,k}=\frac{1}{\#\mathcal{C}_k}
\sum_{ C\in \mathcal{C}_k}\delta_{z_C}.
$$

\begin{lemma}\label{le:acc-fibered}
Any weak$^*$ accumulation point $\nu_\om$ of the sequence $(\mu_{\om,k})_{k\geqslant 1}$
satisfies $\nu_\om(\mathfrak{F}_\om)=1$.
\end{lemma}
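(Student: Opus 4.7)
The plan is to mimic the argument used in Lemma~\ref{le:acc} in the skew-product setting, adapted to the fibered context. First, I would verify that the construction produces a nested family of compact sets: by construction each set $C_{(\om_1,\kappa^{p_1},\dots,\om_{k+1})}(x_1,\dots,x_{k+1})$ is contained in $C_{(\om_1,\kappa^{p_1},\dots,\om_k)}(x_1,\dots,x_k)$, hence $\mathfrak{F}_{m_{k+1}}(\om)\subset \mathfrak{F}_{m_k}(\om)$ for every $k\geqslant 1$. Moreover, each $\mathfrak{F}_{m_k}(\om)$ is a finite union of compact sets, hence closed, and the Moran set can be written as
$$
\mathfrak{F}_{\om}=\bigcap_{k\geqslant 1} \mathfrak{F}_{m_k}(\om) \cap X_\om,
$$
which is compact.

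Next, I would observe that, by definition, the probability $\mu_{\om,k}$ is purely atomic and supported on a finite selection of points $z_C\in C$ with $C\in \mathcal C_k$. Since every such $C$ is contained in $\mathfrak{F}_{m_k}(\om)$, we have $\mu_{\om,k}(\mathfrak{F}_{m_k}(\om))=1$ for every $k\geqslant 1$. Using the nesting property established above, for every $\ell\geqslant 1$ and every $k\geqslant \ell$,
$$
\mu_{\om,k}(\mathfrak{F}_{m_\ell}(\om)) \,\geqslant\, \mu_{\om,k}(\mathfrak{F}_{m_k}(\om))\,=\,1.
$$

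The final step uses the portmanteau characterization of weak$^*$ convergence. If $(\mu_{\om,k_j})_{j\geqslant 1}$ is a subsequence converging in the weak$^*$ topology to $\nu_\om$, then since $\mathfrak{F}_{m_\ell}(\om)$ is closed we obtain
$$
\nu_\om(\mathfrak{F}_{m_\ell}(\om)) \,\geqslant\, \limsup_{j\to\infty} \mu_{\om,k_j}(\mathfrak{F}_{m_\ell}(\om))\,=\,1
$$
for every $\ell\geqslant 1$. Taking the intersection over $\ell$ and using continuity of $\nu_\om$ from above on the nested compact family, together with the fact that $\mathfrak{F}_{m_\ell}(\om) \subset X_\om$ for every $\ell$, yields $\nu_\om(\mathfrak{F}_\om)=1$. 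The argument is essentially routine once the nested structure is set up; I do not expect any serious obstacle, since the construction of $\mathfrak{F}_\om$ as a decreasing intersection of closed sets carrying the full mass of $\mu_{\om,k}$ (for $k$ large enough) is entirely analogous to the situation of Lemma~\ref{le:acc}.
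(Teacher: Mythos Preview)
Your proposal is correct and follows essentially the same approach as the paper: both use the nesting $\mathfrak{F}_{m_{k+1}}(\om)\subset \mathfrak{F}_{m_k}(\om)$ together with $\mu_{\om,\ell}(\mathfrak{F}_{m_k}(\om))=1$ for $\ell\geqslant k$, then apply the Portmanteau theorem to closed sets and pass to the intersection. The paper's proof is just a more condensed version of what you wrote.
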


\begin{proof}
Let $\nu_\om$ be an weak$^*$ accumulation point of the sequence $(\mu_{\om,k})_{k\geqslant 1}$.
Given $k\ge1$, using Portmanteau theorem on the characterization of weak$^*$ convergence, as $\mu_{\om,\ell}(\mathfrak{F}_{m_k} (\om))=1$ for every $\ell \geqslant k$ then  
$\nu_{\om}(\mathfrak{F}_{m_k} (\om))=1$. Thus $\nu_{\om}\big(\bigcap_{k\geqslant 1} \mathfrak{F}_{m_k} (\om)\big)=1$
as claimed.
\end{proof}

Now, let $\vep>0$ be small and $\mathbf{n}\geqslant 1$ be fixed and large.  Let $B_\om(x,\mathbf{n},\vep)\subset  X_\om$ be a dynamic ball which intersects the set $\mathfrak{F}_\om$ and let $\ell=\ell(\mathbf{n})\geqslant 1$ be so that 
$m_{\ell} \leqslant \mathbf{n} <m_{\ell+1}$, where the sequence $(m_\ell)_{\ell\geqslant 1}$ was defined in Lemma~\ref{le:selection-fiber-omega}.
In order to apply Proposition~\ref{pro. princ dist. entropy fiber} we need the following:

\begin{lemma}\label{lemma:estimative ball entropy-fiber}
	$\mu_{\om,k+\ell}(B_\om((x,\mathbf{n},\vep))\leqslant e^{-\mathbf{n}(H^{\text{Pinsker}}(\psi)-5\gamma)}$ for every large $k\geqslant 1$.

\end{lemma}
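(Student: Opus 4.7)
The plan is to mirror the strategy from Lemma~\ref{lemma:estimative ball entropy} in the skew-product case, now working in the fiber $X_\om$ with the separation provided by Lemma~\ref{le:selection-fiber-omega}(3). If $\mu_{\om,k+\ell}(B_\om(x,\mathbf{n},\vep))=0$ there is nothing to prove, so I may assume that the ball contains at least one selected point $z_C$ with $C\in \mathcal{C}_{k+\ell}$. The first key step is to show that $\mathfrak{F}_{m_{k+\ell}}(\om)\cap B_\om(x,\mathbf{n},\vep)$ is entirely contained in a single cylinder $C_*\in \mathcal{C}_\ell$. Indeed, if $C_1\neq C_2$ in $\mathcal{C}_\ell$ and $y_i\in C_i$, Lemma~\ref{le:selection-fiber-omega}(3) gives $d_\om^{m_\ell}(y_1,y_2)>2\vep$; on the other hand, since $m_\ell\leq \mathbf{n}$ by the choice of $\ell(\mathbf{n})$ and $d_\om^{m_\ell}\leq d_\om^{\mathbf{n}}$, any two points in $B_\om(x,\mathbf{n},\vep)$ are at $d_\om^{m_\ell}$-distance strictly less than $2\vep$, a contradiction.

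The second step is to estimate, for a fixed $C_*\in \mathcal{C}_\ell$, the number of $C\in \mathcal{C}_{k+\ell}$ that meet $C_*$. Applying the same pigeonhole selection used to build $\mathcal{I}_k$ to the $k$ further levels $\ell+1,\dots,k+\ell$, and invoking Lemma~\ref{le:K-entropy-fiber} whose selection of $n_{i+1}$ forces $n_{i+1}\geq (i+1)\gamma^{-1} N_{L^{-2n_i}\vep}$, the loss from the divisor $N_{L^{-2n_i}\vep}$ satisfies $(H^{\text{Pinsker}}(\psi)-3\gamma)\log N_{L^{-2n_i}\vep}<\gamma\, n_{i+1}$ once $i$ is large. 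Hence for every fixed $C_i\in \mathcal{C}_i$,
$$
\#\{C\in \mathcal{C}_{i+1}:C\cap C_i\neq\emptyset\}\geq \frac{\#\mathcal{S}_{i+1}}{N_{L^{-2n_i}\vep}}\geq e^{(H^{\text{Pinsker}}(\psi)-4\gamma)n_{i+1}},
$$
and iterating $k$ times starting from $C_*$ yields
$$
\#\{C\in \mathcal{C}_{k+\ell}:C\cap C_*\neq\emptyset\}\geq e^{(H^{\text{Pinsker}}(\psi)-4\gamma)(n_{\ell+1}+\cdots+n_{k+\ell})}.
$$

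Combining these two steps gives
$$
\mu_{\om,k+\ell}(B_\om(x,\mathbf{n},\vep))\leq \frac{\#\{C\in \mathcal{C}_{k+\ell}:C\cap C_*\neq\emptyset\}}{\#\mathcal{C}_{k+\ell}}\leq e^{-(H^{\text{Pinsker}}(\psi)-4\gamma)(n_{\ell+1}+\cdots+n_{k+\ell})}.
$$
To finish, I rewrite
$$
m_{k+\ell}=\sum_{i=\ell+1}^{k+\ell}n_i+\sum_{i=1}^{\ell}(n_i+t_i)+\sum_{i=\ell+1}^{k+\ell-1}t_i,
$$
and use the fast growth conditions of Lemma~\ref{le:K-entropy-fiber}, namely $n_{i+1}\geq (i+2)n_i$ and $n_{i+1}\gg N_{L^{-2n_i}\vep}\geq t_i$, to absorb both the initial block $\sum_{i=1}^{\ell}(n_i+t_i)$ and the transition tail $\sum_{i=\ell+1}^{k+\ell-1}t_i$ into a $\gamma$-fraction of $m_{k+\ell}$. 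This delivers $(H^{\text{Pinsker}}(\psi)-4\gamma)\sum_{i=\ell+1}^{k+\ell}n_i\geq (H^{\text{Pinsker}}(\psi)-5\gamma)m_{k+\ell}\geq (H^{\text{Pinsker}}(\psi)-5\gamma)\mathbf{n}$ for $k$ large, yielding the claim.

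The principal difficulty is the bookkeeping that ensures the entropy loss from each source (the pigeonhole grouping of transition times, the concentration of mass in $C_*$, and the conversion of the partial sum $\sum_{i>\ell}n_i$ into $\mathbf{n}$) collectively amounts to at most an extra $\gamma$ in the exponent. This is precisely the role of the recursive growth bounds in \eqref{eq:select-nell}, which guarantee that the consecutive levels dominate all previous ones so that no accumulated error can obstruct the bound.
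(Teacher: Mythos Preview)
Your proof is correct and follows essentially the same approach as the paper's: both argue that the ball intersects at most one level-$\ell$ cylinder $C_*$ via the separation in Lemma~\ref{le:selection-fiber-omega}(3), derive the uniform lower bound $\#\{C\in\mathcal{C}_{k+\ell}:C\cap D\neq\emptyset\}\geqslant e^{(H^{\text{Pinsker}}(\psi)-4\gamma)(n_{\ell+1}+\cdots+n_{k+\ell})}$ for every $D\in\mathcal{C}_\ell$ from the construction and the pigeonhole selection, and then convert the resulting ratio bound into the required exponent in $\mathbf{n}$ using the growth conditions in~\eqref{eq:select-nell}. The only cosmetic difference is that you spell out the decomposition of $m_{k+\ell}$ and the absorption of the transition terms a bit more explicitly than the paper does.
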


\begin{proof}
Suppose that  $\mu_{\om,k+\ell}(B_\om(x,\mathbf{n},\vep))>0$ (otherwise there is nothing to prove), hence
$B_\om(x,\mathbf{n},\vep)$ intersects $\mathfrak{F}_{\om,m_{k+\ell}}$.
	We observe that if $y_1\in B_\om(x,\mathbf{n},\vep)\cap C_1$ and  $y_2\in B_\om(x,\mathbf{n},\vep)\cap C_2$ where $C_1,C_2 \in \mathcal{C}_{\ell}$ 
	then item (3) in Lemma~\ref{le:selection-fiber-omega}, guarantees that 
	$ d_{\om}^{\mathbf n}(y_1,y_2) \geqslant d_{\om}^{m_{\ell}}(y_1,y_2)>2\vep$ whenever $C_1$ and $C_2$ are distinct. Thus, 
	all points in the set $B_\om(x,\mathbf{n},\vep) \cap \mathfrak{F}_{\om,m_{k+\ell}}$
	belong to a single fixed set  $C_\ell \in \mathcal{C}_{\ell}$. 
	Furthermore, 
	the construction in the proof of Lemma~\ref{le:selection-fiber-omega} yields that, for every $D\in \mathcal C_\ell$,
	$$
	\# \Big\{C\in \mathcal{C}_{k+\ell} \colon C\cap D\neq\emptyset \Big\} 
		\geqslant \frac{\# \mathcal S_{k+\ell} \, \dots\, \# \mathcal S_{\ell+1}}
					{\prod_{i=\ell}^{k+\ell-1} \, N_{L^{-2{n_i}}\vep}} 
		>  e^{  (H^{\text{Pinsker}}(\psi)-4\gamma) \, [n_{k+\ell} +  \dots + n_{k+1}]} 
	$$
	for every large $k$. In particular, using that 
	\begin{align*}
		\# \{C\in \mathcal{C}_{k+\ell} \colon C\cap C_\ell\neq\emptyset \} 
		& \leqslant  \frac{\#\mathcal{C}_{k+\ell}}{\min\limits_{ D \in \mathcal C_\ell} \;   \# \{C\in \mathcal{C}_{k+\ell} \colon C\cap D\neq\emptyset \} } 
		\nonumber \\
		& < e^{ - (H^{\text{Pinsker}}(\psi)-4\gamma) \, [n_{k+\ell} +  \dots + n_{k+1}]} \; \#\mathcal{C}_{k+\ell},
	\end{align*}
	that 
	$m_{k+\ell} > \mathbf n$, that $\lim_{k\to\infty} \frac{m_k}{n_k}=1$ and the choices at item (1) in Lemma~\ref{le:selection-fiber-omega}, we obtain that
	\begin{align*}
		\mu_{\om,k+\ell}(B_\om(x,\mathbf{n},\vep))
		&
		=
		\displaystyle\frac{1}{\#\mathcal{C}_{k+\ell}} \sum_{ \substack{C\in \mathcal{C}_{k+\ell}}}
		\delta_{z_C}\left(B_\om(x,\mathbf{n},\vep) \right) \\
		&\leqslant
		\displaystyle\frac{\# \{C\in \mathcal{C}_{k+\ell} \colon C\cap C_\ell\neq\emptyset \}  }{\#\mathcal{C}_{k+\ell}} 
		\\
		& < e^{ - (H^{\text{Pinsker}}(\psi)-4\gamma) \, [n_{k+\ell} +  \dots + n_{k+1}]}
		\\
		& < e^{ - (H^{\text{Pinsker}}(\psi)-5\gamma) \, m_{k+\ell}}
		 < e^{ - \mathbf n\, (H^{\text{Pinsker}}(\psi)-5\gamma)}
	\end{align*}
		for every large $k\geqslant 1$, thus concluding the proof of the lemma.
	
\end{proof}

We can now complete the proof of  item (1) in Theorem~\ref{thm:Bb-2}. Indeed, as the assumptions of the entropy distribution principle are satisfied
(as a consequence of Lemmas~\ref{le:acc-fibered} and ~\ref{lemma:estimative ball entropy-fiber} above)
we conclude that 
$$
h^{path}_I(\mathbb S,\psi) \geqslant h_{I_\om(\psi)}(\mathcal F_\om) \geqslant H^{\text{Pinsker}}(\psi)-5\gamma.
$$
As $\gamma>0$ was chosen arbitrary, we get that $h^{path}_I(\mathbb S,\psi) \geqslant  H^{\text{Pinsker}}(\psi)$, as desired. 

\subsection{Moran sets of irregular points with large fibered entropy}\label{MSEntropyFiber-2}
In this subsection we explain the necessary modifications in the argument explored in the previous subsections of in order to prove item (2) in
 Theorem~\ref{thm:Bb-2}, namely that the topological entropy of the set $\Sigma:=\{\om\in \Sigma_\kappa \colon h_{I_\om(\psi)}(\mathcal F_\om) \geqslant H^{\text{Pinsker}}(\psi)\}$ 
is larger or equal than $H^{\text{Pinsker}}_\sigma(\psi)$.

\medskip
Fix $\gamma>0$. By definition (cf. ~\eqref{Hestrela2-sigma}) there exists $\zeta>0$ so that $H^{\text{Pinsker}}_\sigma(\psi,\zeta)\geqslant 
H^{\text{Pinsker}}_\sigma(\psi)-\gamma$. In particular there exist $\mu_1,\mu_2 \in \mathcal M_{erg}(F)$ so that $\int\varphi_\psi\, d\mu_1< \int\varphi_\psi\, d\mu_2$, 
\begin{align}
\label{Hestrela2-sigma-2}
h_{\mu_i}(F\mid\sigma)\geqslant H^{\text{Pinsker}}(\psi) -\zeta
	\quad\text{and}\quad
	h_{\pi_*\mu_i}(\sigma)\geqslant H^{\text{Pinsker}}_\sigma(\psi)-\gamma
\end{align}
for $i=1,2$. As the probability measures $\mu_i$ are ergodic (taking $\delta=\frac14$ in ~\eqref{eq:comparison-entropies}) it follows that 
$
h_{\mu_i}^{1/4}(\ud\omega) = h_{\mu_i}(F\!\mid \!\sigma)  \geqslant H^{\text{Pinsker}}(\psi) -\zeta
$
for $(\pi)_*\mu_i$-almost every $\om\in \Sigma_\kappa$. 
Fix $0<\delta<\frac18$. By the definition of the fibered Katok 
entropy (recall ~\eqref{eq:def-d-Katok}) and 
 Lusin's theorem, for each $i=1,2$ there exists $\tilde \Omega_i\subset \Sigma_\kappa$ and $\vep>0$ small so that  $(\pi_*\mu_i)(\tilde \Omega_i) > 1-\delta$ and 
$\liminf_{n\rightarrow +\infty}\frac{1}{n}\log s(\om,n, 4\vep,\frac14)
	> H^{\text{Pinsker}}(\psi) -2\zeta,
$
for every $\om \in \tilde \Omega_i$, $i=1,2$. Using Lusin's theorem once more, we may take an integer $\mathfrak n_0\geqslant 1$ and a subset $\Omega_i\subset \tilde \Omega_i$ so that $(\pi_*\mu_i)(\Omega_i)\geqslant (1-2\delta)>\frac34$ and 
\begin{equation}\label{eq:fibersuniformCantor}
s(\om,n, 4\vep,\frac14)
	> e^{(H^{\text{Pinsker}}(\psi) -3\zeta)n}
	\quad \text{for every $n\geqslant \mathfrak n_0$, $\om\in \Omega_i$ and $i=1,2$.}
\end{equation}
We diminish $\vep$, if necessary, so that
$
|\psi(x)-\psi(x')| 
	< \frac{\int \varphi \,d\mu_2-\int \varphi \,d\mu_1}{4}
$ 
whenever $d(x,x')<4\vep$.

\medskip
Now we turn our attention to the convergence of Birkhoff averages on $\Sigma_\kappa\times X$. 
 Consider a strictly decreasing sequence of positive numbers $(\delta_k)_{k\geqslant 1}$ tending to zero so that 
$
\delta_1 < \frac14 [\int \varphi_\psi \,d\mu_2-\int \varphi_\psi \,d\mu_1].
$
Using Birkhoff ergodic theorem and Lusin theorem, there exists a strictly increasing sequence of large positive integers 
$(\mathfrak n_k)_{k\geqslant 1}$ so that $\mathfrak n_1>\mathfrak n_0$ and,
for each $\ell\geqslant 1$, the sets
$$
\Gamma_{2\ell-1}:=
\Big\{(\om,x)\in \{1,2,...,\kappa\}^{\Z} \times X: \Big|\frac{1}{n}\sum_{j=0}^{n-1}\varphi(F^j(\om,x))-\int \varphi \,d\mu_1\Big|< \delta_{2\ell-1},\;
\; \forall n\geqslant \mathfrak n_{2\ell-1} \Big\}
$$ 
and
$$
\Gamma_{2\ell}:=
\Big\{
(\om,x)\in \{1,2,...,\kappa\}^{\Z} \times X:
\Big| \frac{1}{n}\sum_{j=0}^{n-1}\varphi(F^j(\om,x)) -\int \varphi \,d\mu_2\Big|< \delta_{2\ell},\;
\; \forall n\geqslant \mathfrak n_{2\ell} \Big\}
$$
satisfy $\mu_{1}(\Gamma_{2\ell-1}) > 1-\delta$ and $\mu_{\om_2}(\Gamma_{2\ell}) > 1-\delta$ for every $\ell\geqslant 1$.
These sets are not suitable to our purposes as we need to decouple information on the shift space $\Sigma_\kappa$ and 
the fibers. This is done as follows.
\smallskip
Let $\mu_i=(\mu_{i,\om})_\om$ be the disintegration of $\mu_i$ in sample measures given by Rokhlin's disintegration theorem. More precisely, 
there exists a $\mu_i$-almost everywhere defined and measurable family of probability measures $\om \mapsto \mu_{i,\om}$
so that $\supp \mu_{i,\om}\subset X_\om$ and 
$\mu_i(A)=\int_{\Sigma_\kappa} \, \mu_{i,\om}(A\cap X_\om)\, d(\pi_*\mu_i)(\om)$ for every measurable set $A\subset \Sigma_\kappa\times X$.
We will use the following quantitative estimate.

\begin{lemma}\label{le:decoupled}
For each $\ell\geqslant 1$ there exist subsets $\Xi_{2\ell-1}, \Xi_{2\ell}\subset \Sigma_\kappa$ so that 
\begin{enumerate}
\item $(\pi_*\mu_1)(\Xi_{2\ell-1})>\frac14$ and $(\pi_*\mu_2)(\Xi_{2\ell})>\frac14$,
\item 
	$\mu_{1,\om}\big( X_\om \cap \Gamma_{2\ell-1}  \big) > \frac14$
	for every $\om\in \Xi_{2\ell-1}$, and
\item 
	$\mu_{2,\om}\big( X_\om \cap \Gamma_{2\ell}  \big) > \frac14$
	for every $\om\in \Xi_{2\ell}$.
\end{enumerate}
\end{lemma}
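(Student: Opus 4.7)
The plan is to produce $\Xi_{2\ell-1}$ and $\Xi_{2\ell}$ directly as the base-point sets where the Rokhlin disintegration of $\mu_i$ already puts enough fibered mass on the corresponding $\Gamma$ set, and then quantify this via a Markov-type inequality. Concretely, I would define
$$
\Xi_{2\ell-1} := \big\{\om \in \Sigma_\kappa : \mu_{1,\om}(X_\om \cap \Gamma_{2\ell-1}) > \tfrac14\big\}
$$
and $\Xi_{2\ell}$ analogously with $\mu_2$ and $\Gamma_{2\ell}$; conditions (2) and (3) then hold by construction, so the only content is the lower bound on $(\pi_*\mu_i)(\Xi_{2\ell-1})$ and $(\pi_*\mu_i)(\Xi_{2\ell})$ in item~(1).

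For item~(1), Rokhlin's disintegration theorem first guarantees that $\om \mapsto \mu_{1,\om}(X_\om \cap \Gamma_{2\ell-1})$ is measurable, so $\Xi_{2\ell-1}$ is a Borel set and the disintegration formula applies. Splitting the integral
$$\mu_1(\Gamma_{2\ell-1}) = \int_{\Sigma_\kappa} \mu_{1,\om}(X_\om\cap \Gamma_{2\ell-1})\, d(\pi_*\mu_1)(\om)$$
over $\Xi_{2\ell-1}$ and its complement, and bounding the integrand above by $1$ on $\Xi_{2\ell-1}$ and by $1/4$ on $\Sigma_\kappa \setminus \Xi_{2\ell-1}$, I get
$$
1 - \delta < \mu_1(\Gamma_{2\ell-1}) \leq (\pi_*\mu_1)(\Xi_{2\ell-1}) + \tfrac14\bigl(1 - (\pi_*\mu_1)(\Xi_{2\ell-1})\bigr).
$$
Rearranging yields $(\pi_*\mu_1)(\Xi_{2\ell-1}) > 1 - 4\delta/3$, and since $\delta < 1/8$ this is strictly larger than $5/6 > 1/4$, as required. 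The argument for $\Xi_{2\ell}$ is verbatim, swapping $\mu_1 \leftrightarrow \mu_2$ and $\Gamma_{2\ell-1} \leftrightarrow \Gamma_{2\ell}$.

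There is no serious obstacle here: the assumption $\delta < 1/8$ is deliberately comfortable, precisely so that a crude Markov inequality on the sample-measure mass delivers both a large base set in $\Sigma_\kappa$ and a uniformly positive fibered mass on $\Gamma$ over that set. The only formal point to watch is the measurability of the fiberwise mass $\om\mapsto \mu_{i,\om}(X_\om\cap \Gamma)$, which is built into Rokhlin's theorem, and the fact that the sample measures are probabilities (so the trivial upper bound $1$ on $\Xi_{2\ell-1}$ is legitimate).
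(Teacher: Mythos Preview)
Your proposal is correct and follows essentially the same approach as the paper: define $\Xi_{2\ell-1}$ as the set of $\om$ where the sample measure $\mu_{1,\om}$ gives mass greater than $1/4$ to the fiber slice of $\Gamma_{2\ell-1}$, then use Rokhlin disintegration and a Markov-type splitting of the integral to bound $(\pi_*\mu_1)(\Xi_{2\ell-1})$ from below. The paper carries out the same estimate with a general threshold $a$ and obtains the slightly weaker bound $(\pi_*\mu_1)(\Xi_{2\ell-1}) > 1-\delta-a$ (then sets $a=\tfrac14$ to get $>\tfrac12$), whereas your rearrangement yields the sharper $1-4\delta/3$; both are comfortably above $\tfrac14$.
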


\begin{proof}
We use the following Fubini type estimate
involving to the sample measures $(\mu_{1,\om})_\om$: 
\begin{align*}
1-\delta < \mu_1(\Gamma_{2\ell-1}) = 
	& \int_{\Sigma_\kappa} \, \mu_{i,\om}(X_\om \cap \Gamma_{2\ell-1})\, d(\pi_*\mu_1)(\om) \\
	& \leqslant 
	\pi_*\mu_1 \Big( \om\in \Sigma_\kappa \colon  \mu_{i,\om}\big( X_\om \cap \Gamma_{2\ell-1} \big) > a \Big) \\
 	& + a \cdot \pi_*\mu_1 \Big( \om\in \Sigma_\kappa \colon  \mu_{i,\om}\big( X_\om \cap \Gamma_{2\ell-1} \big) \leqslant  a \Big)
\end{align*}
and, consequently,
$
\pi_*\mu_1 \Big( \om\in \Sigma_\kappa \colon  \mu_{i,\om}\big( X_\om \cap \Gamma_{2\ell-1} \big) > a \Big) > 1-\delta-a
$
for every $0<a<1$.
The choice $0<\delta<\frac14$ guarantees that 
$
\pi_*\mu_1 \Big( \om\in \Sigma_\kappa \colon  \mu_{i,\om}\big( X_\om \cap \Gamma_{2\ell-1} \big) > \frac14 \Big)
	> \frac12.
$
The claim concerning the sample measures $\mu_{2,\om}$ follows similarly. 
\end{proof}

Observe that $\pi_*\mu_1(\Omega_1\cap \Xi_{2\ell-1})>1-2\delta-\frac14>0$ and $\pi_*\mu_2(\Omega_2\cap \Xi_{2\ell})>1-2\delta-\frac14>0$ 
for every $\ell\geqslant 1$. 
Moreover, by Katok's entropy formula (recall ~\eqref{eq entropy-Katok}) 
one can compute $h_{\pi_*\mu_i}(\sigma)$ ($i=1,2$) using 
the sets $\Omega_1\cap \Xi_{2\ell-1}$ and $\Omega_2\cap \Xi_{2\ell}$:
$$
h_{\pi_*\mu_i}(\sigma)=\lim_{\vep\to 0} \;\Big[ \liminf_{n\rightarrow +\infty}\frac{1}{n}\log N^{\pi_*\mu_i}_n(8\vep,1-2\delta-\frac14)\,\Big],
$$
where $N^{\pi_*\mu_i}_n(8\vep,1-2\delta-\frac14)$ stands for the minimal cardinality of an $(n,8\vep)$-generating subset for a subset of $\Sigma_\kappa$
of $(\pi_*\mu_i)$-measure at least $1-2\delta-\frac14$.
In particular, for each $\ell\geqslant 1$ 
and $\vep>0$ is small then there exist $(n,4\vep)$-separated sets $\Xi_{2\ell-1,n}\subset \Omega_1\cap \Xi_{2\ell-1}$ and $\Xi_{2\ell,n}\subset \Omega_2\cap \Xi_{2\ell}$ such that
\begin{equation*}\label{eq:Xi-card}
\# \Xi_{2\ell,n} \geqslant e^{ (H^{\text{Pinsker}}_\sigma(\psi)-2\gamma)n}
	\quad\text{and}\quad
	\# \Xi_{2\ell-1,n} \geqslant e^{ (H^{\text{Pinsker}}_\sigma(\psi)-2\gamma)n}
\end{equation*}
for every large $n\geqslant 1$ (depending on $\ell$).
Moreover, combining the latter with \eqref{eq:fibersuniformCantor}, 
and item (2) in Lemma~\ref{le:decoupled} 
one can produce a strictly increasing sequence $(n_\ell)_{\ell\geqslant 1}$ of positive integers 
satisfying \eqref{eq:select-nell}	and such that for each $\ell\geqslant 1$:
\begin{itemize}
\item[(i)] $\# \Xi_{2\ell,n_{2\ell}} \geqslant e^{ (H^{\text{Pinsker}}_\sigma(\psi)-2\gamma)n_{2\ell}}
	\quad\text{and}\quad
	\# \Xi_{2\ell-1,n_{2\ell-1}} \geqslant e^{ (H^{\text{Pinsker}}_\sigma(\psi)-2\gamma)n_{2\ell-1}}$
\item[(ii)] for each $\om\in \Xi_{2\ell-1}$ there exists a $(\om,n_{2\ell-1}, 4\vep)$-separated set 
\begin{equation*}\label{fibered-uniform-separated-sets}
		\mathcal{S}_{\om,2\ell-1} \subset X_\om \cap \Gamma_{2\ell-1}
		\quad\text{so that}\quad
		\#\mathcal{S}_{\om,2\ell-1} 	 \geqslant \exp(\,(H^{\text{Pinsker}}(\psi) -3\zeta) n_{2\ell-1}\,).
\end{equation*}
\item[(iii)] for each $\om\in \Xi_{2\ell}$ there exists a $(\om,n_{2\ell}, 4\vep)$-separated set 
\begin{equation*}\label{fibered-uniform-separated-sets}
		\mathcal{S}_{\om,2\ell} \subset X_\om \cap \Gamma_{2\ell}
		\quad\text{so that}\quad
		\#\mathcal{S}_{\om,2\ell} 	 \geqslant \exp(\,(H^{\text{Pinsker}}(\psi) -3\zeta) n_{2\ell}\,).
\end{equation*}
\end{itemize}

These informations can now be used to construct a Moran subset of $\Sigma_\kappa \times X$ of skew-product type whose both the projection on $\Sigma_\kappa$
and the fibers have large entropy (see Figure 8.1 below for an illustration). 
\begin{figure}[htb]
\begin{center}
      \includegraphics[scale = 0.45]{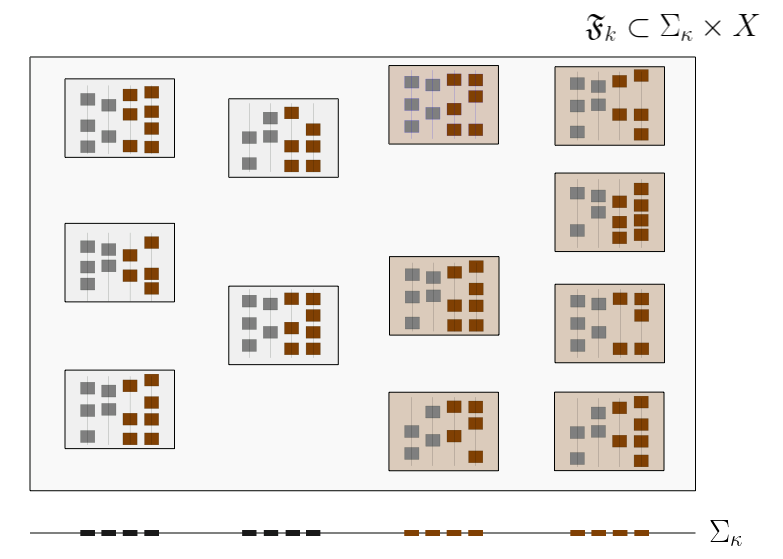}
        \caption{Construction of a specific Moran subset of $\Sigma_\kappa \times X$, formed from typical points for $\mu_1$ (represented in gray) and typical points for $\mu_2$ (represented in brown).}.
\end{center}
\end{figure}
More precisely:

\begin{proposition}\label{prop:skew-Cantor} 
There exists $K>0$ and a sequence $(\mathfrak S_k)_{k\geqslant 1}$ of compact subsets of $\Sigma_\kappa$ and a sequence $(\mathfrak F_k)_{k\geqslant 1}$ of compact subsets of $\Sigma_\kappa \times X$ so that, for each $k\geqslant 1$: 
	\begin{enumerate}
		\item $\mathfrak S_k$ is a finite union of cylinders in $\Sigma_\kappa$ with the same length, 
		\item $\mathfrak F_k \subset \pi^{-1}(\mathfrak S_k)$ and for each $\om\in \mathfrak S_k$ there exist $m_k=m_k(\om)\geqslant 1$ so that 
		$\lim_{k\to\infty} \frac{m_k}{n_k}=1$ and
		there exist $e^{  (H^{\text{Pinsker}}_\sigma(\psi) -4\gamma) \, [n_k+\dots + n_2+n_1]}$ points in $\mathfrak F_k \cap X_\om$ 
		that are $(\om,m_k,\vep)$-separated,
		\item The compact set 
		$$\mathfrak{F}:=\bigcap_{k\geqslant 1} \mathfrak{F}_k \subset \Sigma_\kappa\times X$$ is such that  
		$\mathfrak{F} \,\cap\, X_\om \subset I_\om(\psi)$ for every $\om\in \mathfrak S:=\bigcap_{k\geqslant 1} \mathfrak S_k$,
		\item $h_{I_\om(\psi)}(\mathcal F_\om) \geqslant H^{\text{Pinsker}}(\psi)-5\gamma$ for every $x\in \mathfrak S$.
	\end{enumerate}
\end{proposition}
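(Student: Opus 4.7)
The plan is to adapt the fibered Moran set construction of Subsection~\ref{MSEntropyFiber} so that it runs simultaneously over a large product of cylinders at the shift level, while iteratively pigeonholing the transition times to keep all cylinders in $\mathfrak S_k$ of a common length. The uniform separated sets on the shift and in the fibers provided by conditions (i)-(iii) preceding the proposition (from which one inherits the lower bounds $\#\Xi_{i,n_i} \geqslant e^{(H^{\text{Pinsker}}_\sigma(\psi)-2\gamma) n_i}$ and $\#\mathcal S_{\bar\omega_i, i} \geqslant e^{(H^{\text{Pinsker}}(\psi) -3\zeta) n_i}$ uniformly in $\bar\omega_i$) are exactly what is needed to run the recipe of Subsection~\ref{MSEntropyFiber} in parallel over a large pool of shift-pieces $\bar\omega_i \in \Xi_{i, n_i}$.

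\textbf{Inductive construction.} Set $\mathcal V_1 = \Xi_{1,n_1}$ and $\mathcal X^{(1)}_{\bar\omega_1} = \mathcal S_{\bar\omega_1,1}$. Inductively, at level $k$ one has fixed transition times $(t_1,\dots,t_{k-1})$, an admissible set $\mathcal V_k \subset \prod_{i=1}^{k} \Xi_{i,n_i}$ of shift sequences, and for each $\bar\omega \in \mathcal V_k$ a family $\mathcal X^{(k)}_{\bar\omega}$ of fiber tuples $(x_1,\dots,x_k)$ with $x_i \in \mathcal S_{\bar\omega_i, i}$. To pass to level $k+1$, for each candidate extension $(\bar\omega, (x_1,\dots,x_k), \bar\omega_{k+1}, x_{k+1})$ the strong transitivity of $f_\kappa$ via \eqref{eq:top-exact-conseq} produces a transition time $p_k \in \{0,1,\dots,N_k\}$ with $N_k := N_{L^{-2n_k}\vep}$. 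A pigeonhole over these $N_k+1$ values extracts a uniform $t_k$, retaining at least a fraction $1/(N_k+1)$ of the candidates; a subsequent double counting, of the Fubini flavor used in Lemma~\ref{le:decoupled}, disentangles the surviving tuples into a product $\mathcal V_{k+1} \otimes \{\mathcal X^{(k+1)}_{\bar\omega}\}$ with only a further bounded factor loss. The associated compact sets $C_{(\bar\omega_1,\kappa^{t_1},\dots,\bar\omega_{k+1})}(x_1,\dots,x_{k+1})$ are built via \eqref{GLW-En} and are nested inside those of level $k$, while the cylinders of $\mathfrak S_{k+1}$ extend those of $\mathfrak S_k$ by appending $\kappa^{t_k} \bar\omega_{k+1}$ and hence share the common length $m_{k+1} = m_k + t_k + n_{k+1}$.

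\textbf{Entropy accounting.} The cumulative pigeonhole loss $\prod_{i<k}(N_i+1)$ is controlled by \eqref{eq:select-nell}, which forces $\log N_i \ll \gamma n_{i+1}$, so one obtains $\#\mathcal V_k \geqslant \exp\bigl((H^{\text{Pinsker}}_\sigma(\psi)-3\gamma) \sum_{i\leqslant k} n_i\bigr)$ and, for each $\bar\omega\in\mathcal V_k$, $\#\mathcal X^{(k)}_{\bar\omega} \geqslant \exp\bigl((H^{\text{Pinsker}}(\psi)-4\gamma) \sum_{i\leqslant k} n_i\bigr)$, which in particular gives the fiber count claimed in item (2). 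Since \eqref{eq:select-nell} yields $\sum_{i<k}(n_i+t_i) = o(n_k)$, one has $m_k/n_k \to 1$, and the exponents rewrite in terms of $m_k$ with an arbitrarily small extra loss. Picking one representative from each nested compact piece inside a fiber gives $\#\mathcal X^{(k)}_{\bar\omega}$ points that are pairwise $(\omega,m_k,\vep)$-separated, by the same separation argument as in Lemma~\ref{le:selection-fiber-omega}(3); this verifies items (1) and (2).

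\textbf{Irregularity, fibered entropy, and main obstacle.} Every $(\omega, y)\in \mathfrak F$ with $\omega\in \mathfrak S$ determines nested compact pieces in which the fiber points $x_i$ alternate between $\Gamma_{2\ell-1}$ and $\Gamma_{2\ell}$, so the transition-budget estimate in the proof of Lemma~\ref{lemma-point-nontypical}, together with \eqref{eq:select-nell}, yields $y\in I_\omega(\psi)$, proving item (3). For item (4), the equidistributed atomic measures $\mu_{\omega,k}$ on representatives of the pieces of $\mathfrak F_k\cap X_\omega$ satisfy $\mu_{\omega,k+\ell}(B_\omega(x,\mathbf n,\vep)) \leqslant e^{-\mathbf n(H^{\text{Pinsker}}(\psi)-5\gamma)}$ for large $\mathbf n$, exactly as in Lemma~\ref{lemma:estimative ball entropy-fiber}, whence the fibered entropy distribution principle (Proposition~\ref{pro. princ dist. entropy fiber}) delivers the desired lower bound on $h_{I_\omega(\psi)}(\mathcal F_\omega)$. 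The \emph{main obstacle} is performing the pigeonholes in a way that is both \emph{nested} in $k$ and \emph{uniform} in $\omega$: fixing a common $t_k$ may leave some $\bar\omega\in \mathcal V_k$ with no surviving fiber extension, and the double-counting step restoring a product structure $\mathcal V_{k+1}\otimes \mathcal X^{(k+1)}$ incurs compounded losses that must be absorbed by $\gamma$, which is possible precisely because \eqref{eq:select-nell} makes $(\log N_i)_i$ negligible against $(n_i)_i$.
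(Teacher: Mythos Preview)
Your proposal is correct and follows the same overall architecture as the paper: run the fibered Moran construction of Subsection~\ref{MSEntropyFiber} in parallel over the large families $\Xi_{i,n_i}$ of shift pieces supplied by items (i)--(iii), pigeonhole the transition times so that all cylinders at level $k$ share a common length $m_k$, and then read off items (3)--(4) from the arguments of Lemmas~\ref{lemma-point-nontypical} and~\ref{lemma:estimative ball entropy-fiber} together with the fibered entropy distribution principle.

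The one genuine organizational difference is in how the pigeonholing is arranged. The paper performs it in two decoupled stages: first, for each fixed shift tuple $(\om_1,\dots,\om_k)$ it applies the fiber pigeonhole of Lemma~\ref{le:selection-fiber-omega} so that the transition times $t_i$ become functions of $(\om_1,\dots,\om_i)$ alone (not of the fiber points), and only then applies a second pigeonhole over shift tuples to select a common vector $(t_1,\dots,t_{k-1})$, hence a common $m_k$. Your route instead pigeonholes jointly over all $(\bar\omega,x,\bar\omega_{k+1},x_{k+1})$ at once to fix a single $t_k$, and then restores the product structure by an averaging step. Both work and incur the same negligible loss $\prod_i N_{L^{-2n_i}\vep}$ against $\sum_i n_i$, but the paper's order avoids the extra ``disentangling'' step entirely: once $t_i$ depends only on the shift tuple, the fiber count and the shift count are already in product form. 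Your averaging argument is fine, though the pointer to Lemma~\ref{le:decoupled} is slightly off (that lemma is a measure-theoretic Fubini for sample measures; what you actually use is the elementary counting analogue). You also correctly flag nestedness in $k$ as the main obstacle, which the paper handles only implicitly; doing the two pigeonholes inductively in $k$ (fixing $t_k$ once and for all at stage $k$) resolves it in either approach.
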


\begin{proof}
This construction combines information from
Subsection~\ref{top-fibered1} and the sets described in items (i)-(iii) above.
For each $k\geqslant 1$, each $k$-uple $(\om_1, \om_2, \dots, \om_k) \in \Xi_{1,n_{1}} \times \Xi_{2,n_{2}} \times \dots\times \Xi_{k,n_{k}}$ 
and $(x_1, x_2, \dots, x_k)\in \mathcal{S}_{\om_1,1}\times \mathcal{S}_{\om_2,2} \times \dots \mathcal{S}_{\om_k,k}$
and each $1\leqslant i\leqslant k-1$ there exists an integer  $0\leqslant p_{i} \leqslant N_{L^{-2{n_{i}}}\vep}$ (depending on $\om_j$ and $x_j$, for $1\leqslant j\leqslant i-1$) 
so that
\begin{align*}
	& C_{(\om_1,  \kappa^{p_1},\om_2, \kappa^{p_2},...,\om_k)}(\,x_1, \dots, x_k\,) \\
	& \qquad  :=
	B_{\om_1}(x_1,n_1,\vep) \; \cap \; \bigcap_{i=1}^{k-1} 	
		(\, 
		f^{p_i}_{\kappa} \circ f_{\om_i}^{n_i}
		\circ \dots \circ 
		f^{p_1}_{\kappa} \circ f_{\om_1}^{n_1} 
		\,)^{-1} (\, B(x_{i+1},L^{-n_{i+1}}\vep)\,)
\end{align*}
is a non-empty subset of $X$. Proceeding as in the proof of Lemma~\ref{le:selection-fiber-omega}, one can extract 
a large subcollection of these sets for which the transition time functions depend only on $\om_1, \om_2, \dots, \om_k$. Indeed, 
for each $1\leqslant i \leqslant  k-1$ there exists 
$0\leqslant t_i=t_i((\om_1, \dots, \om_i)) \leqslant N_{L^{-2{n_i}}\vep}$ 
and a collection 
$\mathcal I_k(\om_1, \om_2, \dots, \om_{k}) \subset \mathcal{S}_{\om_1,1}\times \mathcal{S}_{\om_2,2} \times \dots \mathcal{S}_{\om_k,k}$ such that 
$$
\# \mathcal I_k(\om_1, \om_2, \dots, \om_{k}) \geqslant  K e^{  (H^{\text{Pinsker}}(\psi) -4\gamma) \,m_k }
$$
and
$$
p_i(\,x_1, \dots, x_i, \om_1, \dots, \om_i\,)=t_i \quad\text{for every} \; (\,x_1, \dots, x_k\,)\in \mathcal I_k(\om_1, \om_2, \dots, \om_{k}).
$$
Taking $m_k=n_k + \sum_{i=1}^{k-1} (n_i+t_i)$ (which is a function of $(\om_1, \om_2, \dots, \om_k)$), if
$(\,x_1, \dots, x_k\,), (\,y_1, \dots, y_k\,) \in \mathcal I_k(\om_1, \om_2, \dots, \om_k)$ are distinct $k$-uples and one considers points
$x\in C_{(\om_1,  \kappa^{t_1},\om_2, \kappa^{t_2},...,\om_k)}(\,x_1, \dots, x_k\,)$ and 
		$y\in C_{(\om_1,  \kappa^{t_1},\om_2, \kappa^{t_2},...,\om_k)}(\,y_1, \dots, y_k\,)$ then
$d_\om^{m_k}(x, y)>2\vep$ for any 
$$
\om\in \mathfrak S(\om_1, \om_2, \dots, \om_k):=({\om_1}_{[0,n_1]} \cdot \kappa^{t_1})
		\, \cap\,  
	\bigcap_{i\geqslant 1}  \sigma^{-\sum_{j=1}^i(t_j+n_j)} \;  ({\om_{i+1}}_{[0,n_{i+1}]} \cdot \kappa^{t_{i+1}}).
$$
Notice that the cylinder set $\mathfrak S(\om_1, \om_2, \dots, \om_k)\subset \Sigma_\kappa$ has size $m_k$, which is a function of the transition times 
$(t_1, t_2, \dots, t_{k-1})$ given by the pigeonhole principle applied to the fibered sets.  
There are 
$$
\#\Xi_{1,n_{1}} \times \#\Xi_{2,n_{2}} \times \dots\times \#\Xi_{k,n_{k}} \geqslant e^{ (H^{\text{Pinsker}}_\sigma(\psi)-2\gamma)[n_k+\dots + n_2+n_1]}
$$
such cylinders in $\Sigma_\kappa$ and, as $0\leqslant t_i \leqslant N_{L^{-2{n_i}}\vep}$, using the pigeonhole principle once more we get
a subcollection $\mathfrak S_k$ of such cylinder sets formed by cylinders in $\Sigma_\kappa$ of the same length so that
	\begin{align*}
		\#  {\mathfrak S_k} 
		 \geqslant \frac{e^{ (H^{\text{Pinsker}}_\sigma(\psi)-2\gamma)[n_k+\dots + n_2+n_1]}}
					{\prod_{i=1}^{k-1} \, N_{L^{-2{n_i}}\vep}} 
		 \geqslant 
	 	 e^{  (H^{\text{Pinsker}}_\sigma(\psi) -4\gamma) \, [n_k+\dots + n_2+n_1]}.
	\end{align*}

This allow us to consider also the nested sequence of compact sets $\mathfrak F_k\subset \Sigma_\kappa\times X$ defined by
\begin{equation*}\label{eq:Fk-fibered-on-product}
\mathfrak{F}_{k} = \bigcup_{\mathfrak S(\om_1, \om_2, \dots, \om_k) \in \mathfrak S_k} 
		\; \bigcup_{(\,x_1, \dots, x_k\,)\in \mathcal I_k(\om_1, \om_2, \dots, \om_{k})}
				\mathfrak S(\om_1, \om_2, \dots, \om_k) \times C_{(\om_1,  \kappa^{t_1},\om_2, \kappa^{t_2},...,\om_k)}(\,x_1, \dots, x_k\,),
\end{equation*}
as represented by Figure 8.1. 
We emphasize that the transition times $t_i$ are functions of $(\om_1, \om_2, \dots, \om_{i})$. 
These sets comply with the requirements of items (1) and (2) in the proposition.

By construction
$
\mathfrak{F}:=\bigcap_{k\geqslant 1} \mathfrak{F}_k
$
is a Moran set.  It is not hard to check that the same ideas explored in previous sections ensure that items (3) and (4) also hold, we shall omit the details.
\end{proof}

\section{Linear cocycles}\label{sec:linear-cocycles}

The main goal of this section is to prove Theorems~\ref{thm:abstract} and ~\ref{thm:rigidity}
on the irregular behavior and rigidity for linear cocycles.  First we shall comment on the fibered complexity of
the projective cocycle.

\subsection{Projective cocycles and complexities}\label{subsec:projectivelinear}

The projective cocycle $P_A : \Sigma_\kappa \times \mathbf P \mathbb R^d \to \Sigma_\kappa \times \mathbf P \mathbb R^d$ was defined in ~\eqref{eq:skewP}, and is a skew-product
with compact fibers. 
Consider the continuous potential $\varphi_A: \Sigma_\kappa \times \mathbf P \mathbb R^d \to \mathbb R$ given by 
\begin{equation}\label{eq:potential.cocycles}
\varphi_A(\om,v) = \log \frac{\|A(\om)\cdot v\|}{\|v\|},
	\quad \text{for}\; (\om,v) \in \Sigma_\kappa \times \mathbf P \mathbb R^d.
\end{equation}
Given $\nu\in \mathcal M_{erg}(\sigma)$, if the cocycle $A$ is {strongly irreducible} 
then Furstenberg formula ensures that
$$
\lambda_+(A,\nu)=\int \varphi_A(\om,v) \, d\eta(v)  d\nu(\om)
$$ 
where $\eta$ is any $\nu$-stationary measure (see \cite{Furstenberg} or \cite[Theorem~6.8]{{VianaB}}). While the previous expression allows to compute
the top Lyapunov exponent of the linear cocycle $A$ as an average of a continuous potential, it does not provide any immediate regularity of the 
Lyapunov exponent as a function of $\nu$ as this relies ultimately on the analysis of the space of $\nu$-stationary measures as $\nu$ varies. 
\begin{remark}\label{rmk:averages-cocycle}
As a consequence of the Oseledets theorem, if $\om$ belongs to a total probability subset of $\Sigma_\kappa$ then 
 for \emph{every} normalized vector $v\in \mathbb R^d$ the limit
$$
\lim_{n\to\infty} \frac1n\sum_{j=0}^{n-1} \varphi(F_A^j(\om,v))
	= \lim_{n\to\infty}  \frac1n \sum_{j=0}^{n-1} \log \frac{\|A(\sigma^j(\om))\cdot A^{(j)} (\om)v\|}{\|A^{(j)}(\om) v\|}
	= \lim_{n\to\infty}  \frac1n \log \frac{\|A^{(n)} (\om)v\|}{\|v\|}
$$
does exist and coincides with one of the Lyapunov exponents $(\lambda_i(\mu))_{1\leqslant i\leqslant d}$.
\end{remark}

We now observe that the non-wandering set of each projective map $P_{A_i}:\mathbf P \mathbb R^d \to \mathbf P \mathbb R^d$ ($1\leqslant i \leqslant \kappa$) 
is contained in the projectivization of invariant subspaces of $A_i$. In fact, as a consequence of the Jordan canonical form theorem, we have the following
possibilities for the non-wandering set of the restriction of each map $P_{A_i}$ to a projectivization of an eigenspace $E$ associated to $A_i$: 
(i)  if $A_i\mid_E$ has no nilpotent term then $P_{A_i}\mid_{\mathbf P E}$ is the identify or an isometry in case of a real or complex eigenvalue, respectively;
(ii) if $A_i\mid_E$ has nilpotent terms then there exists $F\subsetneq E$ so that ${\mathbf P F}$ is an attractor for $P_{A_i}\mid_E$ and 
$P_{A_i}\mid_{\mathbf P F}$ is the identify or an isometry in case of a real or complex eigenvalue, respectively.
In either case, the refinement of open coverings of $\mathbf P \mathbb R^d$ under the action of $P_{A_i}$ has sublinear growth, and the same holds for any
composition of such maps.Therefore, $h_\mu(P_A) = h_{\pi_*\mu}(\sigma) $ for every $\mu\in \mathcal M_{inv}(P_A)$ and every non-autonomous dynamical system generated by these have zero topological entropy, ie, $\htop(\mathcal F_\om)=0$.

Recall that for each $P_A$-invariant probability $\mu$ one has that $h_\mu(P_A) \leqslant h_{\pi_*\mu}(\sigma)+\int \htop(\mathcal F_\om) d\pi_*\mu$  (cf. \cite{LW}).
As $H^{\text{Pinsker}}(\varphi_A)=0$ then
\begin{align}
h_*(\varphi_A) 
& = \sup\Big\{ 
c\geqslant 0 \colon 
 \text{there exist} \, \mu_1,\mu_2 \in \mathcal M_{erg}(P_A) \,\text{so that} \hfill  \nonumber \\ 
& \hspace{2cm}  \, h_{\pi_*\mu_i}(\sigma)\geqslant c  \;\text{and}\, \int\varphi_A\, d\mu_1< \int\varphi_A\, d\mu_2
\Big\} \nonumber
\end{align}
is bounded above by $\htop(\sigma)=\log\kappa$.
The next lemma relates the top Lyapunov exponents with integrals for the projective cocycle by means of lifted measures.
\begin{lemma}\label{le:lift}
For every $\nu\in \mathcal M_{erg}(\sigma)$ there exists $\mu\in \mathcal M_{erg}(P_A)$ so that 
$
\int \varphi_A(\om,v)\, d\mu(\om,v)
	\lambda_+(A,\nu)
$
and
$
	\pi_*\mu=\nu.
$
\end{lemma}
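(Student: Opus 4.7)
\textbf{Plan for Lemma~\ref{le:lift}.} The strategy is to lift $\nu$ via a suitable measurable section of the top Oseledets subbundle, Cesàro-average its iterates to produce a $P_A$-invariant measure on $\Sigma_\kappa\times\mathbf P\mathbb R^d$, and then pass to an ergodic component. First, since $\log\|A^{\pm 1}\|\in L^1(\nu)$ (the cocycle is continuous on a compact base), Oseledets theorem provides, for $\nu$-a.e.\ $\om$, a measurable splitting $\mathbb R^d=E^1_\om\oplus\cdots\oplus E^k_\om$ with Lyapunov exponents $\lambda_+(A,\nu)=\lambda_1(\nu)>\cdots>\lambda_k(\nu)$. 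Since $\om\mapsto E^1_\om$ is a measurable subbundle of positive dimension, a standard measurable selection argument yields a measurable map $\om\mapsto v(\om)\in E^1_\om\setminus\{0\}$; projectivizing gives a measurable section $s\colon \Sigma_\kappa\to\Sigma_\kappa\times \mathbf P\mathbb R^d$, $s(\om)=(\om,[v(\om)])$.

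Set $\tilde\nu:=s_*\nu$ and consider the Cesàro averages
\begin{equation*}
\mu_n:=\frac1n\sum_{j=0}^{n-1} (P_A^j)_*\tilde\nu.
\end{equation*}
By compactness of $\Sigma_\kappa\times\mathbf P\mathbb R^d$, a subsequence converges in the weak$^*$ topology to some probability $\mu$, which is automatically $P_A$-invariant. Since $\pi\circ P_A=\sigma\circ\pi$ and $\pi_*\tilde\nu=\nu$ is $\sigma$-invariant, we have $\pi_*\mu_n=\nu$ for every $n$, hence $\pi_*\mu=\nu$. Moreover, the cocycle identity $\sum_{j=0}^{n-1}\varphi_A(P_A^j(\om,[v]))=\log(\|A^{(n)}(\om)v\|/\|v\|)$ together with $v(\om)\in E^1_\om$ yields
\begin{equation*}
\frac1n\sum_{j=0}^{n-1}\varphi_A(P_A^j(s(\om)))
=\frac1n\log\frac{\|A^{(n)}(\om)v(\om)\|}{\|v(\om)\|}\;\xrightarrow[n\to\infty]{}\;\lambda_+(A,\nu)
\end{equation*}
for $\nu$-a.e.\ $\om$. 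As $\varphi_A$ is continuous on a compact space, it is bounded, and dominated convergence gives $\int\varphi_A\,d\mu_n\to\lambda_+(A,\nu)$. Since $\varphi_A$ is continuous, $\int\varphi_A\,d\mu_n\to\int\varphi_A\,d\mu$, so $\int\varphi_A\,d\mu=\lambda_+(A,\nu)$.

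Finally, to upgrade $\mu$ to an ergodic lift, decompose $\mu=\int\mu_x\,d\rho(x)$ into $P_A$-ergodic components. Each $\pi_*\mu_x$ is $\sigma$-invariant, and ergodicity of $\nu$ forces $\pi_*\mu_x=\nu$ for $\rho$-a.e.\ $x$. For any such component, Birkhoff's theorem applied to $\mu_x$ combined with Remark~\ref{rmk:averages-cocycle} shows that $\int\varphi_A\,d\mu_x$ equals one of the Oseledets exponents $\lambda_i(\nu)$, in particular $\int\varphi_A\,d\mu_x\leqslant \lambda_+(A,\nu)$. Integrating against $\rho$ yields
\begin{equation*}
\lambda_+(A,\nu)=\int\varphi_A\,d\mu=\int\Big(\int\varphi_A\,d\mu_x\Big)d\rho(x)\leqslant \lambda_+(A,\nu),
\end{equation*}
forcing $\int\varphi_A\,d\mu_x=\lambda_+(A,\nu)$ for $\rho$-a.e.\ $x$. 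Any such $\mu_x$ is the desired ergodic lift.

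The delicate point is the measurable selection of $v(\om)\in E^1_\om$: the subbundle is only almost everywhere defined and measurable, not continuous, and one needs the classical Kuratowski--Ryll-Nardzewski type selection (or explicit coordinates on the Grassmannian of $E^1$) to produce $v$; once this is in hand, the rest is a soft convergence argument, and the identification $\int\varphi_A\,d\mu_x\in\{\lambda_i(\nu)\}$ for each ergodic component is forced by Oseledets on the base $\nu$.
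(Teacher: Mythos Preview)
Your proof is correct. Both your argument and the paper's begin from the Oseledets subbundle $E^1_\om$ associated to the top exponent and finish by passing to an ergodic component, using that each component projects to $\nu$ (by ergodicity of $\nu$) and that its $\varphi_A$-average must be one of the exponents $\lambda_i(\nu)$ (via Remark~\ref{rmk:averages-cocycle}), whence the integral identity follows by averaging over the decomposition.

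The genuine difference is in how the $P_A$-invariant lift of $\nu$ is produced. The paper builds it directly: it places on each fiber $\{\om\}\times\mathbf P E^1_\om$ the normalized Lebesgue measure (or a Dirac mass when $\dim E^1_\om=1$), integrates against $\nu$, and declares the resulting $\mu=\int\mu_\om\,d\nu$ to be $P_A$-invariant. You instead select a single measurable direction $v(\om)\in E^1_\om$, push $\nu$ forward through that section, and Ces\`aro-average the iterates under $P_A$ (a Krylov--Bogolyubov step) to force invariance; the identity $\int\varphi_A\,d\mu=\lambda_+(A,\nu)$ then follows from pointwise Oseledets convergence along $v(\om)$ together with boundedness of $\varphi_A$ and dominated convergence. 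Your route has the advantage of being indifferent to how the projective action of $A(\om)$ distorts volume on $\mathbf P E^1_\om$: you never need the fiberwise reference measure to be preserved, a point the paper's direct construction does not address when $\dim E^1_\om>1$ (projective linear maps need not preserve Lebesgue measure on projective space). The paper's construction, on the other hand, is more explicit about the support of the lift and avoids the limiting argument.
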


\begin{proof}
First we construct invariant measures for the projective cocycle which can express the top Lyapunov exponent in an integral form.
Given $\nu\in \mathcal M_{erg}(\sigma)$, the Oseledets theorem ensures there exists $1\leqslant \tilde \kappa\leqslant \kappa$ and that for $\nu$-almost
every $\om\in \Sigma_\kappa$ there exists a splitting $\{\om\}\times \mathbb R^d=E^1_\om\oplus E^2_\om\oplus \dots \oplus E^{\tilde \kappa}_\om$, varying measurably with $\om$,  so that $\lambda_i(A,\mu)=\lim_{n\to\infty} \frac1n \log \|A^n(\om)v\|$ for every $v\in E^i_\om\setminus\{0\}$ and $1\leqslant i \leqslant \tilde \kappa$. As $\lambda^+(A,\nu)=\lambda_1(A,\nu)$ and $\omega\mapsto \mathbf P E^{1}_\om$ is a measurable family of submanifolds of 
$\mathbf P \mathbb R^d $ of constant dimension, 
one may consider the measurable family of probabilities $(\mu_\om)_\om$ so that $\supp \mu_\om \subset\{\om\}\times \mathbf P \mathbb R^d$ and given by
$$
\mu_\om(E) = 
\begin{cases}
\begin{array}{ll}
\text{Leb}\mid_{\{\om\} \times \mathbf P E^1_\om)}(B) &, \text{if}\; \dim E^1_\om>1 \\
\delta_{(\om,\mathbf P E^1_\om)}(B) &, \text{otherwise}
\end{array}
\end{cases}
$$
for every measurable set $B\in \{\om\} \times \mathbf P\mathbb R^d$. It is clear that the probability $\mu=\int \mu_\om\, d\nu(\om)$ is $P_A$-invariant, 
possibly non-ergodic. By some abuse of notation we still denote by $\mu$ an ergodic component of the previous probability. 
By construction, and Remark~\ref{rmk:averages-cocycle}, one can check that
$\int \varphi_A(\om,v)\, d\mu(\om,v)
	\lambda_+(A,\nu)
$ and $
	\pi_*\mu=\nu$,
as claimed.
\end{proof}

This lemma ensures that every $\sigma$-invariant and ergodic probability measure can be lifted to a $P_A$-invariant probability so that its Lyapunov exponent
can be computed as the time average of the potential $\varphi_A$ according to its lift. 
For general linear cocycles the upper semicontinuity of the top Lyapunov exponent 
can be used (see e.g. the proof of \cite[Proposition~2.12]{Tian2} to prove that 
\begin{equation}\label{eq:T}
h_*(\varphi_A)  
\geqslant \sup \Big\{h_\nu(\sigma) \colon \nu\in \mathcal M_{erg}(\sigma) \;\text{and}\, \lambda_+(A,\nu) >\inf_{\eta\in \mathcal M_{erg}(\sigma)} 
	\lambda_+(A,\eta)  \Big\}.
\end{equation}

\begin{corollary}\label{cor:crit}
Let $\nu_{max}$ be the unique measure of maximal entropy for the shift $\sigma: \Sigma_\kappa \to \Sigma_\kappa$. 
The following hold:
\begin{enumerate}
\item If $\lambda_+(A,\nu_{max}) >\inf_{\eta\in \mathcal M_{erg}(\sigma)} \{ \lambda_+(A,\eta)\} $ then $h_*(\varphi_A)  =\log \kappa$,
\item If the support of $\nu_{max}$ is not contained in a compact subgroup of $SL(d,\mathbb R)$, the cocycle is strongly irreducible
	and some of the matrices has norm one then $h_*(\varphi_A)  =\log \kappa$.
\end{enumerate}
\end{corollary}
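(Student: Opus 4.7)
\textbf{Proof plan for Corollary~\ref{cor:crit}.}

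The plan is to derive (1) directly as a combination of the inequality \eqref{eq:T} with the upper bound $h_*(\varphi_A)\leqslant \log\kappa$ established in the paragraph preceding the corollary, and then reduce (2) to (1) using Furstenberg's theorem together with a simple observation on $SL(d,\mathbb{R})$ matrices of norm one.

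For item (1), recall that the non-wandering set of each projective map $P_{A_i}$ is contained in projectivizations of invariant subspaces on which it acts as an isometry, so $\htop(\mathcal F_\om)=0$ for every $\om\in \Sigma_\kappa$ and $H^{\text{Pinsker}}(\varphi_A)=0$. It was noted that this forces $h_*(\varphi_A)\leqslant \htop(\sigma)=\log\kappa$. On the other hand, $\nu_{max}$ is $\sigma$-ergodic with $h_{\nu_{max}}(\sigma)=\log\kappa$, and by hypothesis $\lambda_+(A,\nu_{max})>\inf_{\eta}\lambda_+(A,\eta)$, so $\nu_{max}$ qualifies on the right-hand side of \eqref{eq:T}. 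Hence $h_*(\varphi_A)\geqslant \log\kappa$, and the desired equality follows.

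For item (2), it suffices to check the hypothesis of (1). First, under the assumption that some generator $A_i$ has $\|A_i\|=1$: since $A_i\in SL(d,\mathbb R)$, the product of its singular values is $1$, and as the largest singular value equals $\|A_i\|=1$, all singular values must equal $1$, so $A_i$ is orthogonal. Consequently $\|A_i^n\|=1$ for every $n\geqslant 1$, and the Dirac measure $\delta_{(i,i,i,\dots)}\in \mathcal M_{erg}(\sigma)$ satisfies $\lambda_+(A,\delta_{(i,i,i,\dots)})=0$, so $\inf_{\eta\in \mathcal M_{erg}(\sigma)} \lambda_+(A,\eta)\leqslant 0$. Second, writing $\nu_{max}=\nu_0^{\mathbb Z}$ with $\nu_0$ the uniform distribution on $\{1,\dots,\kappa\}$, the support of $\nu_0$ is precisely $\{A_1,\dots,A_\kappa\}$, whose generated semigroup is not contained in a compact subgroup of $SL(d,\mathbb R)$ by hypothesis. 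Combined with the strong irreducibility assumption, Theorem~\ref{thm:Furstenberg} yields $\lambda_+(A,\nu_{max})>0$.

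Putting these two facts together gives $\lambda_+(A,\nu_{max})>0\geqslant \inf_{\eta}\lambda_+(A,\eta)$, placing us in the setting of (1) and concluding $h_*(\varphi_A)=\log\kappa$. There is no serious obstacle: the only subtle step is the identification of norm-one $SL(d,\mathbb R)$ matrices with elements of $SO(d,\mathbb R)$, which is a one-line singular value argument; the rest is bookkeeping of already assembled results.
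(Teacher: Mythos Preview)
Your proof is correct and follows essentially the same approach as the paper: item (1) is read off from \eqref{eq:T} together with the a priori bound $h_*(\varphi_A)\leqslant\log\kappa$, and item (2) is reduced to (1) via Furstenberg's theorem. Your singular-value argument that a norm-one $SL(d,\mathbb R)$ matrix is orthogonal (hence the periodic point it determines has $\lambda_+=0$) makes explicit the equality $\inf_\eta \lambda_+(A,\eta)=0$ that the paper simply asserts.
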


\begin{proof}
Item (1) is a trivial consequence of \eqref{eq:T}. As for item (2) it is well known that the probability $\nu_{max}$ is the Bernoulli probability measure with equidistributed weights ie. $\nu_{max}=(\frac1\kappa,\frac1\kappa,\dots, \frac1\kappa)^{\mathbb Z}$. Hence, Furstenberg criterion for
positive Lyapunov exponents of random walks on groups of matrices in Theorem~\ref{thm:Furstenberg} ensures that, under the assumptions of
item (2) one has that $\lambda_+(A,\nu_{max}) >0=\inf_{\eta\in \mathcal M_{erg}(\sigma)} \{ \lambda_+(A,\eta)\}$. Item (2) now follows from item (1).
\end{proof}

We observe that while the previous corollary provides sufficient conditions for $h_*(\varphi_A)$ to be maximal, it is still unclear whether $h_{*}(\varphi_A)$ coincides with 
$\log\kappa$ this holds in full generality.  

\subsection{Lyapunov irregular behavior for linear cocycles}\label{sec:p-abstract}

\medskip
Here we prove Theorem~\ref{thm:abstract}.
Notice first that as the cocycle $A$ takes values in the subgroup $\mathscr G$ of $\mathrm{SL}(d,\mathbb R)$ then 
$\|A^n(\om)^{-1}\|^{-1} \leqslant 1 \leqslant \|A^n(\om)\|$ for every $\om\in\Sigma_\kappa$. In consequence, 
$\lambda_-(A,\mu)\leqslant 0 \leqslant \lambda_+(A,\mu)$ for every $\sigma$-invariant and ergodic probability measure $\mu$.
By assumption, the group generated by the matrices $\{A_1,A_2, \dots, A_\kappa\}$ is a non-compact subgroup of $SL(d, \mathbb R)$. In particular there
exists a matrix $A_j$ so that $\|A_j\|>1$. Assume, without loss of generality that $j=\kappa$. 

The argument to prove that the set of Lyapunov non-typicalpoints is a Baire residual subset of $\Sigma_\kappa$ is somewhat similar to the proof of Theorem~\ref{thm:IFS}, taking into account the fibered dynamics of the projectivized cocycle $P_A$, defined in ~\eqref{eq:skewP}..
Let $\mu$ denote the Dirac measure at $\underline \kappa=(\kappa,\kappa,\kappa, \dots)$.
As $\lambda_-(A,\mu)<\lambda_+(A,\mu)$ there exists a non-trivial 
hyperbolic splitting $\{\underline \kappa\} \times \mathbb R^d= E_{\underline \kappa}^s\oplus E_{\underline \kappa}^u$
for the matrix $A(\kappa)$, hence it admits a positively invariant unstable cone $C^+$ and a negatively invariant 
stable cone $C^-$.
Using adapted norms (see e.g. \cite{Shub}) it is folklore in the theory of uniform hyperbolicity that the previous cones can be chosen of arbitrarily close diameter, 
in order to guarantee the following:

\begin{lemma}\label{le:UH-cones}
There exists $\theta\in (0,1)$ and $\zeta>0$ so that the cones
$$
C^+=\big\{ w=w_+ \,+\, w_- \in E^+\oplus E^- \colon \|w_-\| \leqslant \zeta \|w_+\| \big\}
$$
and 
$$
C^-=\big\{ w=w_+ \,+\, w_- \in E^+\oplus E^- \colon \|w_+\| \leqslant \zeta \|w_-\| \big\}
$$
satisfy the following properties:
\begin{enumerate} 
\item[(i)] $A(\underline \kappa) (C^+)\subsetneq C^+$ and $A(\underline \kappa)^{-1} (C^-)\subsetneq C^-$,
\item[(ii)] $\|A(\underline \kappa)^n \cdot w\| \geqslant \theta^{-n} \|w\|$ for every $w\in C^+$ and $n\geqslant 1$,
\item[(iii)] $\|A(\underline \kappa)^{n} \cdot w\| \leqslant \theta^{n} \|w\|$ whenever $A(\underline \kappa)^{j} \cdot w \in C^-$
	for every $0\leqslant j \leqslant n$.
\end{enumerate}
\end{lemma}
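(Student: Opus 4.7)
The plan is to invoke the classical adapted-norm construction to turn the hyperbolicity of $A:=A(\underline\kappa)$ into a uniform one-step bound, and then thicken $E^+,E^-$ into cones whose aperture is small enough that this uniform bound survives. Since the splitting $\mathbb R^d=E^+\oplus E^-$ is hyperbolic, the spectral radii of $A|_{E^+}^{-1}$ and $A|_{E^-}$ lie strictly below $1$; a standard averaging of the ambient Euclidean norm (cf.~\cite{Shub}) produces a norm $\|\cdot\|$ on $\mathbb R^d$ for which $E^+\perp E^-$ and
$$
\max\{\;\|A|_{E^+}^{-1}\|,\;\|A|_{E^-}\|\;\}\leqslant \rho
$$
for some $\rho\in(0,1)$. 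This adapted norm is the only non-trivial ingredient, and I would take it as the starting point of the argument.

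With $\|\cdot\|$ in hand, define $C^\pm$ as in the statement for a parameter $\zeta>0$ to be specified. For cone invariance, given $w=w_++w_-\in C^+$ the $A$-invariance of the splitting gives $(Aw)_\pm=Aw_\pm$, hence
$$
\frac{\|(Aw)_-\|}{\|(Aw)_+\|}\leqslant \frac{\rho\,\|w_-\|}{\rho^{-1}\|w_+\|}\leqslant \rho^2\zeta<\zeta,
$$
which proves $A(C^+)\subsetneq C^+$ for every $\zeta>0$; the argument for $A^{-1}(C^-)\subsetneq C^-$ is symmetric. For the quantitative expansion, the orthogonality of the adapted norm gives $\|w\|^2\leqslant(1+\zeta^2)\|w_+\|^2$ whenever $w\in C^+$, whence
$$
\|Aw\|\geqslant \|Aw_+\|\geqslant \rho^{-1}\|w_+\|\geqslant \rho^{-1}(1+\zeta^2)^{-1/2}\|w\|,
$$
which by the already-established invariance $A(C^+)\subset C^+$ iterates to $\|A^n w\|\geqslant\bigl(\rho^{-1}(1+\zeta^2)^{-1/2}\bigr)^n\|w\|$.

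The analogous estimate in $C^-$ requires a small twist: for $w\in C^-$ one has $\|w\|\leqslant(1+\zeta^2)^{1/2}\|w_-\|$, while under the hypothesis of item~(iii) the point $Aw$ again lies in $C^-$, so that $\|(Aw)_+\|\leqslant \zeta\|(Aw)_-\|\leqslant \zeta\rho\|w_-\|$ together with $\|(Aw)_-\|\leqslant \rho\|w_-\|$ gives $\|Aw\|\leqslant \rho(1+\zeta^2)^{1/2}\|w\|$; iterating this estimate along the time interval during which the orbit remains in $C^-$ yields $\|A^n w\|\leqslant\bigl(\rho(1+\zeta^2)^{1/2}\bigr)^n\|w\|$. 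To conclude, I would choose $\zeta>0$ small enough that $\rho(1+\zeta^2)^{1/2}<1$ and set $\theta:=\rho(1+\zeta^2)^{1/2}\in(0,1)$, so that (ii) and (iii) hold with the same constant $\theta$. The only genuinely non-trivial input is the adapted-norm step; once that is granted, the lemma reduces to the bookkeeping above and I do not expect any real obstacle.
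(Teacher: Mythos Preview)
Your proof is correct and follows exactly the approach the paper indicates: the paper does not give a proof at all, merely remarking that ``using adapted norms (see e.g.~\cite{Shub}) it is folklore in the theory of uniform hyperbolicity'' that such cones exist. You have supplied precisely the standard adapted-norm computation the paper invokes, so there is nothing to add.
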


In view of the previous lemma, one can construct points with Lyapunov irregular behavior provided that one can control recurrence along the fiber dynamics. 
By construction, the skew-product $P_A$ is locally constant and for each $1\leqslant j \leqslant \kappa$,
the map $f_j: \mathbf P \mathbb R^d \to \mathbf P \mathbb R^d$ defined by $f_j=P_A(\underline j,\cdot)$ (where $\underline j=(j,j,j, \dots)$) 
is a bi-Lipschitz  homeomorphism with Lipschitz constant $L:=\max_{\om\in \Sigma_\kappa} \frac{\|A(\om)\|\;\;}{\|A(\om)^{-1}\|^{-1}}$. 
Additionally,  by the uniform hyperbolicity of the matrix $A(\underline \kappa)$,
there exist open neighborhoods $D_A$ and $D_R$ of an attracting set $A$ and a repelling set $R$ for the projective map
$$
P_A(\underline \kappa, \cdot) : \mathbf P \mathbb R^d\to \mathbf P \mathbb R^d
	\quad\text{where}\quad
	P_A(\underline \kappa, v)= \frac{A(\underline \kappa)\cdot v}{\|A(\underline \kappa)\cdot v\|}
$$
obtained by the projectivization of unstable and stable cones in Lemma~\ref{le:UH-cones}, respectively.
Even though the attracting and repelling sets need not be transitive, the projective map 
$f_\kappa$ admit invariant and ergodic probability measures 
$\nu_+$ and $\nu_-$ supported on a topological attractor and repeller, respectively (see Figures 9.1 and 9.2 below).

\begin{figure}[htb]\label{fig-cocycles1}
\begin{center}
        \includegraphics[scale = 0.4]{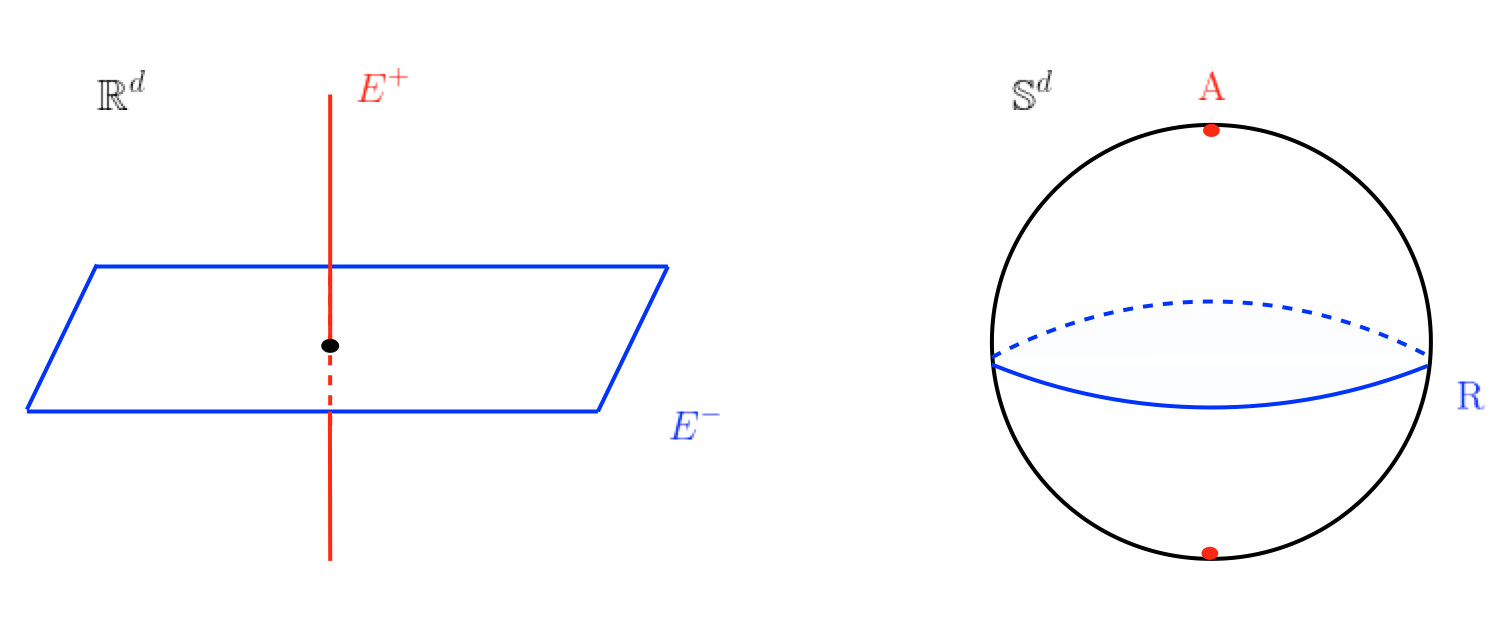}
        \caption{Unstable and stable eigenspaces for the hyperbolic linear map (on the left) generating attracting $A$ and repelling $R$ on the double covering ${\mathbf S}^d$ of the projective space (on the right)}.
\end{center}
\end{figure}

\begin{figure}[htb]\label{fig-cocycles2}
\begin{center}
                \includegraphics[scale = 0.4]{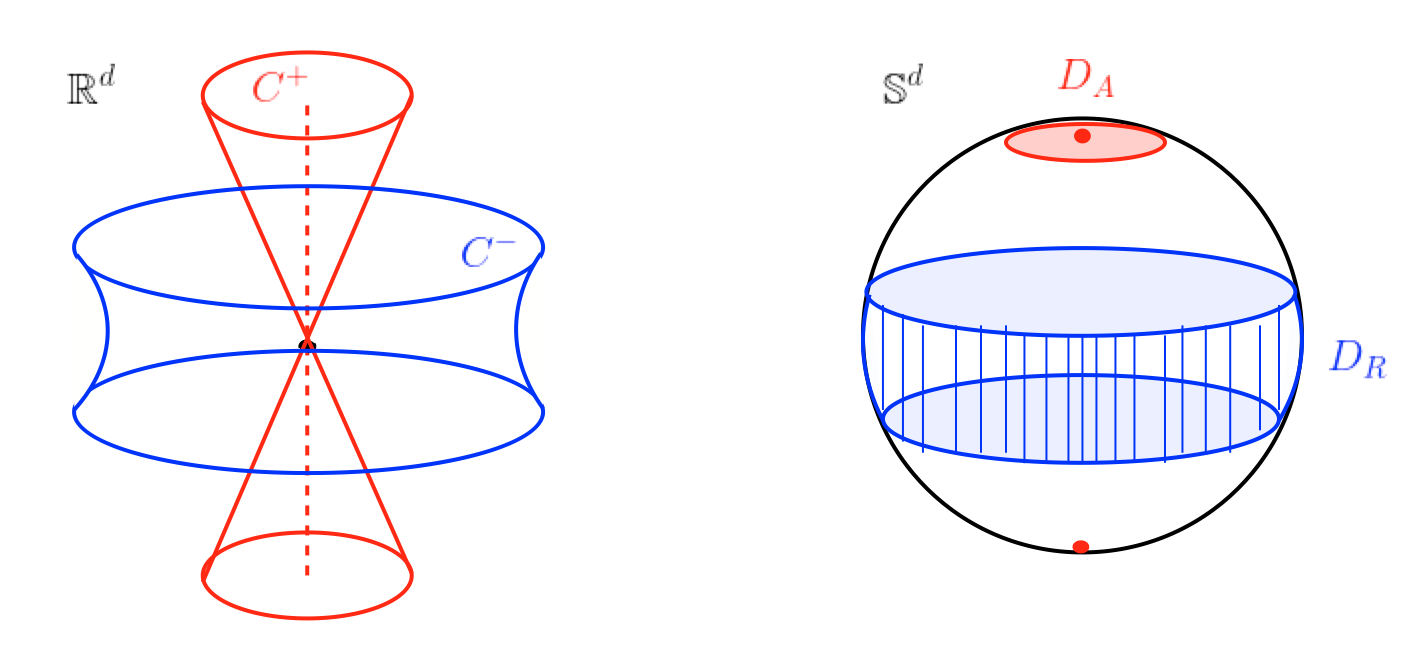}
        \caption{Unstable and stable cones for the original hyperbolic automorphism (on the left) inducing neighborhoods of $A$ and $R$ with attracting and repelling behavior for the projective map (on the right).}
\end{center}
\end{figure}

 By Lemma~\ref{le:UH-cones}, the continuous potential $\varphi_A:\Sigma_\kappa\times \mathbf P\mathbb R^d\to\mathbb R$ defined in
\eqref{eq:potential.cocycles} satisfies
$\varphi_A(\underline \kappa,w_1) \leqslant \log \theta < 0 < -\log \theta \leqslant  \varphi_A(\underline \kappa,w_2)$
for every $w_1\in D_R$ and $w_2\in D_A$. Thus,
\begin{equation}\label{eq:locus}
\int_{\Sigma_\kappa\times \mathbf P \mathbb R^d} \varphi_A(\om,v)\, d(\mu\times \nu_-) < \int_{\Sigma_\kappa\times \mathbf P \mathbb R^d} \varphi_A(\om,v)\, d(\mu\times \nu_+)
\end{equation}
The assumption that the cocycle is strongly projectively accessible is equivalent to say that the semigroup action generated by the collection $G_1=\{id,f_1, f_2, \dots, f_\kappa\}$ has frequent hitting times. While Theorem~\ref{thm:IFS} cannot be applied directly here (the potential $\varphi_A$ depends on both coordinates of the skew-product) the proof is identical, and consists of combining both accessibility of the base and fibered dynamics to produce points with erratic behavior.
Let us skim the details.

\medskip 
Fix an arbitrary $v\in \mathbf P \mathbb R^d$. We claim that there exists a Baire residual subset $\mathscr R_v\subset \Sigma_\kappa$ so that
the Ces\`aro averages
$
\frac1n\sum_{j=0}^{n-1} \varphi_A(F_A^j(\om,v))
$
diverge.
In contrast to the previous sections, the potential $\varphi_A$ does not depend exclusively on the variable in $\mathbf P \mathbb R^d$.  
Assumption ~\eqref{eq:locus} ensures that a source of different asymptotic behavior is located at the fiber
$\{\underline \kappa\} \times \mathbf P \mathbb R^d$. Hence, although the argument is similar to the one in previous sections, 
we have now to keep a control on the recurrence of the base dynamics as well. 
We will
detail the construction of a dense set of points in $\Sigma_\kappa$ with respect to which $v$ has Lyapunov irregular behavior,
as the remaining modifications in the construction of the Baire residual set will carry straightforwardly 
following the strategy used in Subsection~\ref{subsec2}.

\medskip
Choose $N_0\geqslant 1$ such that $2^{-N_0}<\vep$ and let $\zeta>0$ be given by Lemma~\ref{le:UH-cones}.
Given an arbitrary $\om^{(0)}\in \Sigma_\kappa$ and $0<\vep\ll \zeta$,
we claim that there exists a point $\om\in \Sigma_\kappa$
so that $d_{\Sigma_\kappa}(\om, \om^{(0)})<\vep$ but the Ces\`aro averages $\frac1n\sum_{j=0}^{n-1} \varphi_A(F_A^j(\om,v))$
diverge. Let $v_1 \in \mathbf P E_{\underline \kappa}^+$ and $v_2\in \mathbf P  E_{\underline \kappa}^-$  and 
take a sequence of positive integers $(n_k)_{k\geqslant 1}$ satisfying ~\eqref{def:nk}.
We first consider the pair
$$
\big(\, (\omega^{(0)}_1,\omega^{(0)}_2, \dots, \omega^{(0)}_{N_0})\; , \;  A^{(N_0)}(\om^{(0)}) \cdot v\,\big)
	\in \{1,2,\dots, \kappa\}^{N_0} \times \mathbb R^d.
$$

As the projectivized maps are bi-Lipschitz, the strong projective accessibility together with Lemma~\ref{le:est-balls}
implies that there exist a constant $K(L^{-2n_1}\vep)\geqslant 1$, a fibered transition time $0\leqslant p_1 \leqslant K(L^{-2n_1}\vep)$ 
and a finite word
$(\omega^{(1)}_1,\omega^{(1)}_2, \dots, \omega^{(1)}_{p_1}) \in \{1, 2, \dots, \kappa\}^{p_1}$ so that  
$v_1:=A^{(p_1)}(\omega^{(1)}_1,\omega^{(1)}_2, \dots, \omega^{(1)}_{p_1})(A^{(N_0)}(\om^{(0)}) \cdot v)\in D_A$. 
By Lemma~\ref{le:UH-cones},
$$
\log \frac{\|A^{(n_1)} (\underline\kappa)\cdot v_1\|}{\|v_1\|} \geqslant \theta^{-1} n_1.
$$
Now, using once more the strong accessibility property, there exists 
$0\leqslant p_2 \leqslant K(L^{-2n_2}\vep)$
and a sequence $(\omega^{(2)}_1,\omega^{(2)}_2, \dots, \omega^{(2)}_{p_2}) \in \{1, 2, \dots, \kappa\}^{p_2}$ 
so that  
$$
v_2:=A^{(p_2)}(\omega^{(2)}_1,\omega^{(2)}_2, \dots, \omega^{(2)}_{p_2})(v_1)
	\in 
	C^-
$$
and $v_2 \in B(v_-, L^{-2n_2}\vep) \subset \mathbf P (\mathbb R^d \cap C^-)$ (here and throughout, by some abuse of notation, we keep denoting by $v$ the equivalence class of the vector $v\in \mathbb R^d$
in $\mathbf P \mathbb R^d$ ).
Since the maps $f_j$ are $L$-Lipschitz, $v_-$ is a fixed point for $f_\kappa$,  
we conclude that $f_\kappa^j (v_2) \in B(v_-,\zeta)$ for every $0\leqslant j \leqslant n_2$.
Equivalently,  $A^j(\underline \kappa)\cdot v_2 \in C^-$ for every $0\leqslant j \leqslant n_2$.
Using Lemma~\ref{le:UH-cones} once more,
$$
\log \frac{\|A^{(n_2)} (\underline\kappa)\cdot v_2\|}{\|v_2\|} \geqslant \theta n_2.
$$
Repeating this procedure inductively and uniform continuity of $\varphi_A$ we obtain that the sequence
	\begin{equation*}\label{eq:sequence-cocycles}
	{\underline{\omega}} = ( [\om_0]_{[-N_0,N_0]},
					 \underbrace{\kappa, \kappa, \dots \kappa}_{n_1 }, 
					\omega^{(1)}_{1} \omega^{(1)}_{2}  \dots \omega^{(1)}_{{p_1}}, 
					\underbrace{\kappa, \kappa, \dots \kappa}_{n_2 }, 
					\omega^{(2)}_1 \omega^{(2)}_{2}  \dots \omega^{(2)}_{{p_2}}, 
					\underbrace{\kappa, \kappa, \dots \kappa}_{n_3 }, \ldots)
	\end{equation*}
is an element in $\Sigma_\kappa^+$ according to which
$$
\liminf_{n\to\infty} \frac1n\sum_{j=0}^{n-1} \varphi_A(F_A^j(\om,v))
	< \limsup_{n\to\infty} \frac1n\sum_{j=0}^{n-1} \varphi_A(F_A^j(\om,v)).
$$
As $\vep>0$ and $\om^{(0)}\in \Sigma_\kappa$ where chosen arbitrary this proves that the set of points $\om\in \Sigma_\kappa$ for which $v$ has Lyapunov non-typical behavior along the orbit of $\om$ is dense in $\Sigma_\kappa$.
As mentioned before, the modifications to obtain a Baire residual subset of $\Sigma_\kappa$ along the strategy used in Subsection~\ref{subsec2} are left as an easy but thorough 
exercise to the interested reader. 
The second statement in the theorem is an immediate consequence of the first one. Indeed, by separability, there exists a countable and dense subset 
$\mathscr D\subset \mathbf P \mathbb R^d$. Then, 
$
\mathscr R=\bigcap_{v\in \mathscr D} \; \mathscr R_v
$
is Baire generic in $\Sigma_\kappa$ (it is a countable intersection of Baire generic subsets of $\Sigma_\kappa$) 
which satisfies the requirements of the theorem.

\begin{remark}
In the special context of $SL(2,\mathbb R)$-cocycles satisfying the assumptions of Theorem~\ref{thm:abstract},
the top Lyapunov exponent associated to the ma\-xi\-mal entropy measure for the 
shift is strictly positive (cf. Theorem~4 (f) in \cite{DGR}) and, consequently,  
$h_*(\varphi_A)=\log \kappa$ (recall Corollary~\ref{cor:crit}(i)). 
While Theorem 1.1 in \cite{Tian2} guarantees
that the set of points in $\Sigma_\kappa$ for which the top Lyapunov exponent is not well defined have full topological entropy, 
our arguments show that there exist a full topological entropy subset of $\Sigma_\kappa$ whose Lyapunov exponents are not well defined for 
a dense set of directions in $\mathbb R^2$.
\end{remark}

\subsection{A dichotomy leading to Lyapunov irregular behavior}\label{sec:p-rigidity}

Let us prove Theorem~\ref{thm:rigidity}.
Assume that $A: \Sigma_\kappa \to SL(3,\mathbb R)$ is a locally constant hyperbolic cocycle and that its hyperbolic spliting $\Sigma_2\times \mathbb R^3=E^s\oplus E^u$ 
satisfies $\dim E^s=2$.
By the $C^0$-robustness of uniform hyperbolicity, which can be defined in terms of cone fields,
there exists a $C^0$-open neighborhood $\mathscr U \subset C_{\text{loc}}(\Sigma_\kappa,SL(3,\mathbb R))$ 
formed by uniformly hyperbolic cocycles. 
Moreover, in the simpler context of locally constant cocycles, it is immediate to check that the proof of \cite[Theorem~2.8]{BRV}  
guarantees there exists a $C^0$-open and dense subset $\mathscr V\subset \mathscr U$ such that $\mathscr V=\mathscr V_1\cup \mathscr V_2$,  
each $\mathscr V_i$ is a $C^0$-open set and
either: (i) a cocycle $B\in \mathscr V_1$ has one-dimensional finest dominated splitting 
(ii) for each $B\in \mathscr V_2$ there exist two periodic points $\om,\tilde \om \in \Sigma_\kappa$ of periods $p,\tilde p\geqslant 1$, respectively, 
such that $B^{p}(\tilde \om)\mid_{E^s_{B,\tilde \om}}$ has two real eigenvalues and $B^{p}(\om)\mid_{E^s_{B,\om}}$ has a complex eigenvalue
(notice that the $C^1$-topology in the case of diffeomorphisms in \cite{BRV} translates as $C^0$-topology for the linear derivative cocycles $Df$). 
The hyperbolic subspace ${E^s_{B,\om}}$ varies continuously with the cocycle $B$ and the restriction of $B^p$ to that subspace is completely determined
by its complex eigenvalue (up to conjugacy). More precisely,
for each $B\in \mathscr V_2$ there exists $M_B\in GL(3,\mathbb R)$ so that $M_B\mid_{\mathbb R^2\times\{0\}}=E^s_{B,\om}$ and 
$$
(M_B^{-1}B M_B)\mid_{\mathbb R^2\times\{0\}}	
= \lambda_B \begin{pmatrix}
\cos \theta_B & -\sin \theta_B\\
\sin \theta_B & \cos \theta_B 
\end{pmatrix},
$$
where $|\lambda_B|<1$ and $\theta_B\neq 0$.
It makes sense to consider submersion 
$$
\varrho: \mathscr V_2 \to \mathbb R \quad\text{given by}\quad
	B\mapsto \rho(B^{p}(\om)\mid_{E^s_{B,\om}}):=\theta_B,
$$
where $\rho(B^{p}(\om)\mid_{E^s_\om})$ denotes the rotation number of the projectivization of $B^{p}(\om)\mid_{E^s_\om}$ on $\mathbf P {E^s_\om} \simeq\mathbf S^1$. In particular,  
$\{ B\in \mathscr V_2 \colon \rho(B^{p}(\om)\mid_{E^s_\om}) \in \mathbb Q\}$ is a meager subset of $\mathscr V_2$.
As $B\in SL(3,\mathbb R)$, the eigenvalue $\lambda_B$ is uniquely determined by the real eigenvalue on the unstable subbundle. 
Hence, up to conjugacy the matrix $B\mid_{E^s_{B,\om}}$ is completely determined by $\theta_B$ and, consequently, 
$$
\{B \in \mathscr V_2 \colon \rho(B^{p}(\om)\mid_{E^s_{B,\om}}) \in \mathbb Q  \}
$$
is a zero Haar measure subset of $\mathscr V_2$. 
In particular,  there exists a $C^0$-Baire residual and full Haar measure subset $\mathscr R\subset \mathscr U$ so that for every 
$B\in \mathscr R$ 
either:
\begin{enumerate}
\item there are two periodic points for $\sigma$ whose projective matrices correspond to a irrational rotation and 
a Morse-Smale circle diffeomorphism, or
\item has a finest dominated splitting in three one-dimensional subbundles. 
\end{enumerate} 
Up to a continuous change of coordinates we may assume that the stable bundle $E^s_{B,\om}$ does not depend on $\omega$, meaning that
it coincides with a single eigenspace $E^s_B$.  In the first case the cocycle has  strong projective accessibility and by the last statement of Theorem~\ref{thm:abstract}, making use of the previous identification of the subbundle $E^s$ with a single subspace $\mathbb E_B^s$, there exists a dense subset $\mathscr D\subset \mathbf P\mathbb E^s$
	 and a Baire residual subset $\mathscr R\subset \Sigma_\kappa$ so that 
	$$
		\liminf_{n\to\infty} \frac1n \log \|B^n(\om) v\| < \limsup_{n\to\infty} \frac1n \log \|B^n(\om) v\|
	$$
holds for every 
		$\om\in \mathscr R$ and every $v\in \mathscr D$,
as claimed. This completes the proof of the theorem.

\subsection{The absence of Lyapunov irregular behavior is rare}

In the case of locally constant cocycles one can actually show that the absence of a large set of points with Lyapunov irregular behavior leads to a strong rigidity
of the cocycle, namely it is cohomologous to a constant matrix cocycle.  Recall
that two cocycles $A,B: \Sigma_\kappa \to \mathscr G$ over the shift $\sigma$ are \emph{cohomologous} if there exists a change of variables
$C: \Sigma_\kappa \to \mathscr G$ so that 
\begin{equation*}\label{eq:cohom}
A(\om)=C(\sigma(\om)) B(\om) C(\om)^{-1}
	\quad\text{for every } \om\in \Sigma_\kappa.
\end{equation*} 
We are interested in describing geometric conditions under which the dichotomy in the context of Theorem~\ref{thm:rigidity} is mutually exclusive. 
We observe that, in general, periodic points having the same eigenvalues is a condition substantially weaker than saying that the cocycle is cohomologous to a constant cocycle (see e.g. \cite{Sad1}). In other words, the abelian and non-abelian Liv\v{s}ic theorems differ substantially. In the three-dimensional context of Theorem~\ref{thm:rigidity}, due to the diagonalization and absence of nilpotent terms this turns out to be the case. More precisely:

\begin{proposition}\label{cor:rigidity}
Let $\mathscr R\subset \mathscr U$ be the Baire residual subset given by Theorem~\ref{thm:rigidity} and let $A\in \mathscr R$
be such that it admits a finest dominated splitting in three one-dimensional subbundles. The following are equivalent:
\begin{enumerate}
\item the set of Lyapunov non-typical points is empty,
\item all periodic points $\om,\widetilde{\om}\in \Sigma_\kappa$ have the same Lyapunov spectrum,
\item every cocycle $A\in \mathscr R$ is cohomologous to a constant cocycle with simple Lyapunov spectrum.
\end{enumerate}
If any of the three properties above fails then the set of Lyapunov non-typical
points is a Baire generic subset of $\Sigma_\kappa$.
\end{proposition}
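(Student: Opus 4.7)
The plan is to run the cycle $(3)\Rightarrow(1)\Rightarrow(2)\Rightarrow(3)$, and to extract the last statement of the proposition from the contrapositive of $(1)\Rightarrow(2)$. The implication $(3)\Rightarrow(1)$ is immediate: if $A(\om)=C(\sigma\om)\,B\,C(\om)^{-1}$ with $C:\Sigma_\kappa\to GL(3,\mathbb{R})$ continuous and $B$ a constant matrix of simple spectrum, then $A^{n}(\om)=C(\sigma^{n}\om)\,B^{n}\,C(\om)^{-1}$, and the compactness of $\Sigma_\kappa$ bounds $\|C^{\pm 1}\|$ uniformly, so $\frac1n\log\|A^{n}(\om)v\|=\frac1n\log\|B^{n}(C(\om)^{-1}v)\|+o(1)$, whose limit exists for every $v\neq 0$ and equals the logarithm of one of the eigenvalue moduli of $B$.

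For $(1)\Rightarrow(2)$ I proceed by contraposition, which has the added benefit of producing the Baire-generic conclusion at the same time. Since $A\in\mathscr R$ has a continuous one-dimensional finest dominated splitting $\mathbb{R}^{3}=E^{u}\oplus E^{s,1}\oplus E^{s,2}$ and the cocycle is locally constant, the scalar observables $\varphi_{i}(\om):=\log\|A(\om)|_{E^{i}_{\om}}\|$, for $i\in\{u,s{,}1,s{,}2\}$, are H\"older continuous on $\Sigma_\kappa$, and for every unit vector $v\in E^{i}_{\om}$ one has $\frac1n\log\|A^{n}(\om)v\|=\frac1n S_{n}\varphi_{i}(\om)$. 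If some pair of periodic points realize different Lyapunov spectra, then for some $i$ the two associated periodic measures integrate $\varphi_{i}$ to different values, so $\varphi_{i}$ is not cohomologous to a constant. The classical multifractal analysis of Birkhoff averages on subshifts of finite type with the specification property (cf. \cite{Barreira1,CKS}) then yields a Baire-generic subset $\mathscr R_{*}\subset\Sigma_\kappa$ on which $\frac1nS_{n}\varphi_{i}$ diverges, and for each $\om\in\mathscr R_{*}$ and any $v\in E^{i}_{\om}\setminus\{\vec 0\}$ the Lyapunov exponent is not well defined. This falsifies $(1)$ and simultaneously establishes the last assertion of the proposition.

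The core of the argument, and the main obstacle, is the implication $(2)\Rightarrow(3)$. Hypothesis (2) gives, for each $i$, that all periodic averages of $\varphi_{i}$ coincide with a constant $c_{i}$, and the three constants $c_{u},c_{s,1},c_{s,2}$ are pairwise distinct by the very definition of a finest one-dimensional dominated splitting. The additive Liv\v sic theorem for the (symbolically hyperbolic) shift then provides continuous $u_{i}:\Sigma_\kappa\to\mathbb{R}$ with $\varphi_{i}=c_{i}+u_{i}\circ\sigma-u_{i}$. To promote these scalar coboundaries to a matrix conjugacy, the plan is to use the total disconnectedness of $\Sigma_\kappa$ to choose continuous unit vector fields $e_{i}:\Sigma_\kappa\to\mathbb{R}^{3}$ with $e_{i}(\om)\in E^{i}_{\om}$ (possible because any continuous real line bundle over a totally disconnected base is trivial), so that in this frame $A(\om)$ is diagonal, $A(\om)e_{i}(\om)=a_{i}(\om)e_{i}(\sigma\om)$ with $a_{i}:\Sigma_\kappa\to\mathbb{R}^{\ast}$ continuous and $|a_{i}(\om)|=e^{\varphi_{i}(\om)}$. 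The multiplicative Liv\v sic theorem in the abelian group $\mathbb{R}^{\ast}=\{\pm 1\}\times\mathbb{R}_{>0}$ (its $\mathbb{R}_{>0}$-part is provided by the $u_{i}$ already constructed; its $\mathbb{Z}/2$-part is handled by a locally constant character argument along periodic orbits) produces continuous $\alpha_{i}:\Sigma_\kappa\to\mathbb{R}^{\ast}$ and signs $\epsilon_{i}=\pm 1$ such that $a_{i}(\om)=\alpha_{i}(\sigma\om)\,\epsilon_{i}e^{c_{i}}\,\alpha_{i}(\om)^{-1}$. Assembling $C(\om)$ as the change-of-basis matrix $[e_{1}(\om)\,|\,e_{2}(\om)\,|\,e_{3}(\om)]\cdot\mathrm{diag}(\alpha_{1}(\om),\alpha_{2}(\om),\alpha_{3}(\om))$ yields the desired continuous $C:\Sigma_\kappa\to GL(3,\mathbb{R})$ with $A(\om)=C(\sigma\om)\,B\,C(\om)^{-1}$ for $B=\mathrm{diag}(\epsilon_{1}e^{c_{u}},\epsilon_{2}e^{c_{s,1}},\epsilon_{3}e^{c_{s,2}})$, and $B$ has simple spectrum because the $c_{i}$ are pairwise distinct. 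The delicate part of this last step is the sign bookkeeping in the multiplicative Liv\v sic argument; everything else is routine scalar Liv\v sic.
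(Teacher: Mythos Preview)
Your argument follows the same route as the paper's: diagonalize via the continuous one-dimensional dominated splitting, apply scalar Liv\v{s}ic to each diagonal entry, and reassemble into a matrix conjugacy; for $(1)\Rightarrow(2)$ and the final Baire-generic statement, both you and the paper argue by contraposition and invoke specification for the shift together with the classical results on Birkhoff-irregular sets (\cite{Barreira1,CKS}). The implication $(3)\Rightarrow(1)$ is handled identically.

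The one substantive difference lies in $(2)\Rightarrow(3)$. The paper simply writes $\log a_i$ and applies additive Liv\v{s}ic, tacitly assuming $a_i>0$; you correctly factor $\mathbb{R}^{\ast}\cong\{\pm1\}\times\mathbb{R}_{>0}$ and flag the sign part as ``the delicate part''. Your instinct is right, but your claim that the $\mathbb{Z}/2$-part ``is handled by a locally constant character argument along periodic orbits'' is not supported by hypothesis~(2), which constrains only the moduli at periodic points, not their signs. Concretely, take $\kappa=2$ and $A_1=\mathrm{diag}(8,\tfrac12,\tfrac14)$, $A_2=\mathrm{diag}(8,-\tfrac12,-\tfrac14)$, both in $SL(3,\mathbb{R})$: this locally constant cocycle is hyperbolic with $\dim E^s=2$, has finest one-dimensional dominated splitting (hence lies in $\mathscr V_1\subset\mathscr R$), and every periodic point has Lyapunov spectrum $\{\log 8,-\log 2,-\log 4\}$; yet the restriction to $\langle e_2\rangle$ has periodic products $+\tfrac12$ and $-\tfrac12$ over the two fixed points, which obstructs cohomology to any constant scalar. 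So the $\mathbb{Z}/2$-Liv\v{s}ic step genuinely fails here. This gap is shared with the paper's own argument; you have simply located it more precisely. The equivalence $(2)\Leftrightarrow(3)$ as literally stated seems to require either passing to $|a_i|$ throughout (i.e.\ working with the projectivized one-dimensional cocycles) or adding a sign-consistency hypothesis at periodic points.
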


\begin{proof}
Let $A\in \mathscr R$  be as above. We prove the implications $(1) \Rightarrow (2) \Rightarrow (3) \Rightarrow (1)$  separately.

\medskip
\noindent $(1) \Rightarrow (2)$ 
\smallskip

As $A\in \mathscr R$ admits a finest dominated splitting in three one-dimensional subbundles 
$T\mathbb R^3 = E^u\oplus E^{s,1}\oplus E^{s,2}$, each of the subbundles is invariant by the action of the cocycle and 
varies H\"older continuously with the base point.
 This makes the cocycles diagonalizable. Indeed, for each $\om\in\Sigma_\kappa$ there exists a base $\mathcal B_1=(v_1(\om), v_2(\om),v_3(\om))$ of $\mathbb R^3$, varying H\"older continuously with $\om$, so that the change of basis matrix $C(\om)$ between the canonical basis and 
$\mathcal B_1$ is such that the continuous cocycle
$
\tilde A(\om):=C(\sigma(\om))^{-1}\, A(\om)\, C(\om)
$
preserves the canonical axes, hence  
\begin{equation}\label{diag}
\tilde A(\om)
=
\begin{pmatrix}
a_1(\om)& 0&0\\
0& a_2(\om) &0 \\
0 &0 & a_3(\om) 
\end{pmatrix},
\qquad \om\in \Sigma_\kappa.
\end{equation}
Assume now that there exist periodic points $\om,\widetilde{\om}\in \Sigma_\kappa$ with different Lyapunov spectrum.
As the shift $\sigma$ satisfies the specification property 
the Birkhoff averages of some of 
the observables $\log a_i(\cdot)$ ($i=1,2,3$) differ at those periodic points, then the set of irregular points is actually Baire generic, hence non-empty 
(see e.g. \cite{Barreira1}). 

\medskip
\noindent $(2) \Rightarrow (3)$ 
\smallskip

As the cocycle $A$ is diagonalizable, we will assume it is conjugate to $\tilde A$ given by \eqref{diag}. In this context, the assumption is equivalent to say 
that the Birkhoff averages of the continuous observable $\log a_i(\cdot)$ is equal to $\lambda_i$ at all periodic points, for $i=1,2,3$. 
Then, the classical Liv\v{s}ic theorem for H\"older continuous real valued cocycles ensures that for each $i=1,2,3$ there exists a 
H\"older {continuous} function $u_i: \Sigma_\kappa \to \mathbb R$ so that $\log a_i= u_i - u_i\circ \sigma +\lambda_i$ or,
equivalently, $a_i=e^{- u_i\circ \sigma} e^{\lambda_i} e^{u_i}$. Altogether, taking
$$
U(\om)=	
\begin{pmatrix}
e^{u_1(\om)}& 0&0\\
0& e^{u_2(\om)} &0 \\
0 &0 & e^{u_3(\om)}
\end{pmatrix}
$$
we conclude that
\begin{equation*}
[C(\sigma(\om)) U(\sigma(\om))]^{-1}\, A(\om)\, [C(\om) U(\om)]
=
\begin{pmatrix}
\lambda_1 & 0&0\\
0& \lambda_2 &0 \\
0 &0 & \lambda_3
\end{pmatrix}
\qquad 
\end{equation*}
is a constant matrix for every $\om\in \Sigma_\kappa$, which proves that $A$ is cohomologous to a constant cocycle.

\medskip
\noindent $(3) \Rightarrow (1)$ 
\smallskip

Assume there exists a matrix $B_0\in \mathscr G$ and a measurable $C: \Sigma_\kappa \to \mathscr G$ so that
$A(\om)=C(\sigma(\om)) \,B_0\, C(\om)^{-1}$ {for every} $\om\in \Sigma_\kappa$. In this case 
$A^{(n)}(\om)= C(\sigma^n(\om)) B_0^n C(\om)^{-1}$ for every $n\geqslant 1$, and the Lyapunov exponents of $A$
are well defined at all points and coincide with the logarithm of the modulus of the eigenvalues of the matrix $B_0$.

\medskip
Finally, the last statement in the corollary is a consequence of \cite{Barreira1}. 
\end{proof}

\subsection{Projective accessibility and strong irreducibility}

In this short subsection we provide simple examples to relate the assumptions of Furstenberg theorem (Theorem~\ref{thm:Furstenberg}) and Theorem~\ref{thm:abstract}. The first example 
shows that the strong projective accessibility property on the cocycle is a sufficient but not necessary condition for the irregular behavior to be prevalent in the projective space. 

\begin{example}\label{ex:different-types}
Consider the following $SL(3,\mathbb R)$ matrices
$$
A_1
	= 
\begin{pmatrix}
-3 & 0&0\\
0& \frac1{\sqrt3} & \frac1{\sqrt3}\\
0 & \frac1{\sqrt3} & 0
\end{pmatrix},
	\quad
A_2
	= 
\begin{pmatrix}
3 & 0&0\\
0& \frac13 \cos \theta & -\frac13\sin \theta\\ \smallskip
0 & \frac13\sin \theta & \; \frac13\cos \theta
\end{pmatrix}
	\quad\text{and}\quad
A_3
	= 
\begin{pmatrix}
-2 & 0&0\\
0& \frac1{\sqrt2} & \frac1{\sqrt2}\\
0 & \frac1{\sqrt2} & 0
\end{pmatrix},
$$
where 
$\theta\notin \mathbb Q$. 
Let $A: \Sigma_2 \to SL(3,\mathbb R)$ be the locally constant linear cocycle determined by $A\mid_{[i]}=A_i$, $i=1,2$. 
It is clear that for every Bernoulli probability measure
$\nu$ on $\Sigma_2$ one has that $\lambda_+(A,\nu)=\log 3 >0$ and, 
$$
\lim_{n\to\infty} \frac1n \log \| A^n(\om) v \| = \log 3\,
\quad\text{for \emph{every}} \; \om\in \Sigma_2 \; \text{and}\; v \in \mathbb R^3\setminus <e_1>,
$$
where $<e_1>$ stands for the one dimensional subspace generated by $e_1=(1,0,0)$.
In particular, there is no Lyapunov irregular behavior concerning the top Lyapunov exponent and all vectors 
there exists an open and dense subset of $\mathbf P \mathbb R^3$ formed by vectors whose Lyapunov exponent is well defined. 
Still, Theorem~\ref{thm:abstract} guarantees that there exists a dense subset $\mathscr D\subset \mathbf P (\{0\} \times \mathbb R^2)$
so that for for each $v\in \mathscr D$ there exists a Baire generic subset of points $\om\in\Sigma_2$ so that $\lim_{n\to\infty} \frac1n \log \| A^n(\om) v \|$
does not exist.

\smallskip
Consider, alternatively, the locally constant cocycle $B: \Sigma_2 \to SL(3,\mathbb R)$ given by $B\mid_{[1]}=A_1$ and $B\mid_{[2]}=A_3$.
In this case, using that $\mathbb R^3=<e_1> \oplus <e_1>^\perp$ is a dominated splitting for the cocycle, it is simple to verify that for each $v\in \mathbf P\mathbb R^3 \setminus <(1,0,0)>$
$$
\limsup_{n\to\infty} \frac1n\log \|A^n(\om)v\| = \log 3 
	\quad\text{if and only if}\quad 
	\limsup_{n\to\infty} \frac1n\# \{0\leqslant j \leqslant n-1 \colon \sigma^j(\om)\in[1]\} = 1
$$
and
$$
\liminf_{n\to\infty} \frac1n\log \|A^n(\om)v\| = \log 2 
	\quad\text{if and only if}\quad 
	\limsup_{n\to\infty} \frac1n\# \{0\leqslant j \leqslant n-1 \colon \sigma^j(\om)\in[1]\} = 0.
$$
Both properties on the right hand-side above hold for a Baire generic subset of points in $\Sigma_2$
as a consequence of \cite[Theorem~3.1]{Barreira1}. 
\end{example}

\begin{example}\label{ex:irreduciblevsaccessible}
Consider the $SL(2,\mathbb R)$ matrices
$$
A_1
	= 
\begin{pmatrix}
2 & 1\\
1& 1 
\end{pmatrix}
	\quad\text{and}\quad
A_2
	= 
\begin{pmatrix}
 \cos \theta & -\sin \theta\\ \smallskip
\sin \theta & \; \cos \theta
\end{pmatrix},
$$
and take the locally constant cocycles $A,B : \Sigma_2 \to SL(2,\mathbb R)$ given by
and $A\mid_{[1]}=B\mid_{[1]}=A_1$, $A\mid_{[2]}=A_2$ and $B\mid_{[2]}=A_2 \cdot A_1$, for some small $\theta\notin \mathbb Q_+$. 
The cocycle $A$ is projectively accessible as $A_2$ induces due a minimal rotation on $\mathbf P \mathbb R^2$, hence it is strongly irreducible. 
In turn, $B\mid_{[1]}$ and $B\mid_{[2]}$ do not preserve any finite union of one-dimensional subspaces, hence the cocycle $B$ is strongly irreducible. 
Moreover, it is clear that $B$ is a hyperbolic cocycle and, consequently, the projectivizations of these matrices preserve admit a proper attracting  
set in $\mathbf P \mathbb R^2$. Thus $B$ is not strongly projectively accessible.
\end{example}

In the next example we derive an application to the context of symplectic matrices. 

\begin{example}
Given a fixed $\kappa\geqslant 1$ consider the closed subset 
\begin{align}
\mathscr U & =\{(A_1, \dots, A_\kappa) \in \text{Sp}(4,\mathbb R)^{\kappa} \colon \exists 1\leqslant i \leqslant \kappa\; \text{so that}\;\|A_i\|=1\}  \nonumber\\
		&= \bigcup_{i=i}^\kappa \, \{(A_1, \dots, A_\kappa) \in \text{Sp}(4,\mathbb R)^{\kappa} \colon \|A_i\|=1\}, \label{eq:union}
\end{align}
which is a finite union of closed smooth submanifolds 
of the Lie group $\text{Sp}(4,\mathbb R)^{\kappa}$, endowed with the Haar measure. 
We claim that there exists a Baire generic and full Haar measure subset of $\kappa$-uples 
$(A_1, \dots, A_\kappa) \in \mathscr U$ such that either:
\begin{enumerate}
\item the closure of the subgroup of $\text{Sp}(4,\mathbb R)^{\kappa}$ generated by $\{A_1, \dots, A_\kappa\}$ is compact; or 
\item there exists a Baire residual subset $\mathscr R\subset \Sigma_\kappa$ 
and a dense subset $\mathscr D\subset \mathbf P\mathbb R^d$ so that the Lyapunov exponents are not well defined 
along the direction determined by $v\in \mathscr D$, for every $\om\in \mathscr R$.
\end{enumerate}
Using the decomposition ~\eqref{eq:union}, it is enough to prove that the previous dichotomy holds for 
a Baire generic and full Haar measure subset of
$
\{(A_1, \dots, A_\kappa) \in \text{Sp}(4,\mathbb R)^{\kappa} \colon \|A_i\|=1\},
$
for every $1\leqslant i \leqslant \kappa$. 
Every $4\times 4$ symplectic matrix of norm one is parameterized in a one to one way (up to conjugacy) by their eigenvalues $e^{\pm \theta_1 {\bf i}}=a\pm {\bf i}\, b$ and $e^{\pm \theta_2 {\bf i}}=c\pm {\bf i}\, d$ 
(recall Example~\ref{ex:symplectic}). Hence, as 
$$
\{ (\theta_1,\theta_2)\in \mathbb R^2 \colon \exists m,n\in \mathbb Z\; \text{so that}\; (a\pm {\bf i}\, b)^n (c\pm {\bf i}\, d)^m=1 \}
$$
is a meager and zero Lebesgue measure subset of $\mathbb R^2$ then 
the set $\mathcal S_i$ of $\kappa$-uples $(A_1, \dots, A_\kappa) \in \text{Sp}(4,\mathbb R)^{\kappa}$ so that the projectivization of the map $A_i$
is a minimal isometry is Baire generic and has full Lebesgue measure in $\{(A_1, \dots, A_\kappa) \in \text{Sp}(4,\mathbb R)^{\kappa} \colon \|A_i\|=1\}.$
For such $\kappa$-uples the projective cocycle is strongly projectively accessible.
In particular either 
the closure of the subgroup of $\text{Sp}(4,\mathbb R)^{\kappa}$ generated by $\{A_1, \dots, A_\kappa\}$ is compact or the assumptions of Theorem~\ref{thm:abstract}  are met, proving the dichotomy.
\end{example}

\begin{example}\label{ex:meager}
Let $d\geqslant 2$ and $A: \Sigma_\kappa \to \mathrm{SL}(d,\mathbb R)$ be a H\"older continuous 
hyperbolic cocycle admitting a hyperbolic spliting $\mathbb R^d=E_\om^s\oplus E_\om^u$ for every $\om\in \Sigma_\kappa$, with $\dim E_\om^u=1$ 
and such that the positive Lyapunov exponent of all periodic points is a constant $\lambda>1$. The hyperbolic subbundles are well known to be H\"older continuous
\cite{KH}.
Thus, Liv\v{s}ic's theorem ensures that the H\"older continuous observable
$\Sigma_\kappa\ni \om \mapsto \log \|A\mid_{E^u_\om}(\om)\|$ is cohomologous to the constant observable $\log\lambda$ by a H\"older continuous function. In consequence, there exists an open and dense subset of directions in the projectivized space for which the top Lyapunov exponent is well defined at all points: 
for each $\om\in \Sigma_\kappa$ the following limit exists 
$$
\lim_{n\to+\infty} \frac1n \log \|A^{(n)}(\ud\omega)v\| = \log \lambda
	\quad \text{for every} \; v\in (\{\om\}\times \mathbb R^d) \setminus E^s_{\om}.
$$
In consequence, the set of Lyapunov irregular points for the top Lyapunov exponent is meager.
We note that the limit may not exist if one considers alternatively negative iteration, 
in which case the iteration all vectors that do not lie in the one-dimensional
sub-bundle $E^s_{\om}$  approximate the codimension one stable subbundle.
\end{example}

\bigskip
\subsection*{Acknowledgments} This work was initiated during a postdoctoral position of GF at the Federal University 
of Bahia, supported by INCTMat-Brazil. PV was partially supported by the project `New trends in Lyapunov exponents' (PTDC/MAT-PUR/29126/2017), by CMUP (UID/MAT/00144/2013), 
and by Funda\c c\~ao para a Ci\^encia e Tecnologia (FCT) - Portugal, through the grant CEECIND/03721/2017 of the Stimulus of Scientific Employment, Individual Support 2017 Call. The authors are grateful to  Z.~Buczolich, M.~Carvalho, D. Ma, P. Sun and X. Tian for useful comments and references.


\end{document}